\renewcommand{\bar}{\overline}
\renewcommand{\tilde}{\widetilde}
\newcommand{\spa}[2]{\operatorname{span}_{#1}\ensuremath{\left\{#2\right\}}}
\newcommand{\inv}{^{-1}}
\newcommand{\iso}{\cong}
\DeclareMathOperator{\Tr}{Tr}
\DeclareMathOperator{\End}{End}
\DeclareMathOperator{\wt}{wt}
\DeclareMathOperator{\Hom}{Hom}
\DeclareMathOperator{\id}{id}
\DeclareMathOperator{\res}{res}
\DeclareMathOperator{\Ann}{Ann}
\DeclareMathOperator{\ad}{ad}
\newcommand{\Suppess}{\ensuremath{\operatorname{Supp}_{\operatorname{ess}}}}
\newcommand{\Supp}{\ensuremath{\operatorname{Supp}}}
\newcommand{\tensor}{\otimes}
\renewcommand{\phi}{\varphi}
\renewcommand{\epsilon}{\varepsilon}
\newcommand{\setC}{\mathbb{C}}
\newcommand{\setN}{\mathbb{N}}
\newcommand{\setZ}{\mathbb{Z}}
\newcommand{\setQ}{\mathbb{Q}}
\newtheorem{thm}{Theorem}[section]
\newtheorem{prop}[thm]{Proposition}
\newtheorem{lemma}[thm]{Lemma}
\newtheorem{cor}[thm]{Corollary}
\newtheorem{defn}[thm]{Definition}
\theoremstyle{nonumberplain}
\newtheorem{proof}{Proof}
\begin{document}

\title{Irreducible quantum group modules with finite dimensional
  weight spaces. I} \author{Dennis Hasselstrøm Pedersen} \date{}

\maketitle

\begin{abstract}
  In this paper we classify all simple weight modules for a quantum
  group $U_q$ at a complex odd root of unity $q$ when the Lie algebra
  is not of type $G_2$. By a weight module we mean a finitely
  generated $U_q$-module which has finite dimensional weight spaces
  and is a sum of those. Our approach follows the procedures used by
  S. Fernando~\cite{Fernando} and O. Mathieu~\cite{Mathieu} to solve
  the corresponding problem for semisimple complex Lie algebras.
\end{abstract}


\section{Introduction and notation}
\label{sec:intr-notat}
Let $\mathfrak{g}$ be a simple complex Lie algebra not of type
$G_2$. Let $q\in \setC$ be a nonzero element and let
$U_q:=U_q(\mathfrak{g})$ be the quantum group over $\setC$ with $q$ as
the quantum parameter (defined below). We want to classify all simple
weight modules for $U_q$. In the papers~\cite{Fernando}
and~\cite{Mathieu} this is done for $\mathfrak{g}$-modules. Fernando
proves in the paper~\cite{Fernando} that the classification of simple
$\mathfrak{g}$ weight modules essentially boils down to classifying
two classes of simple modules: The finite dimensional simple modules
and the so called 'torsion free' simple modules. The classification of
finite dimensional modules is well known in the classical case (as
well as in the quantum group case) so the remaining problem is to
classify the torsion free simple modules. Olivier Mathieu classifies
these in the classical case in~\cite{Mathieu}. The classification uses
the concept of $\mathfrak{g}$ coherent families which are huge
$\mathfrak{g}$ modules with weight vectors for every possible weight,
see~\cite[Section~4]{Mathieu}. Mathieu shows that every torsion free
simple module is a submodule of a unique irreducible semisimple
coherent family and each of these irreducible semisimple coherent
families contains a so-called admissible simple highest weight module
as well. This reduces the classification to the classification of
admissible simple highest weight modules.

\subsection{Main results}
\label{sec:main-results}
In this paper we will first carry out the reduction done by Fernando
to the quantum group case for $q$ a non-root-of-unity and $q$ an odd
root of unity. Then we carry out the classification of torsion free
simple module in the root of unity case. The corresponding
classification of torsion free simple modules for generic $q$ turns
out to be much harder. We leave this to a subsequent
paper~\cite{DHP2}.

We will follow closely the methods described in the two above
mentioned papers. Many of the results can be directly translated from
the classical case but in several cases we have to approach the
problem a little differently. One of the first differences we
encounter is the fact that in~\cite{Fernando} concepts are defined by
using the root system without first choosing a base. Then later a base
is chosen in an appropiate way. In the quantum group case we define
the quantized enveloping algebra by first choosing a base of the root
system and then defining the simple root vectors $E_\alpha$,
$F_\alpha$, etc. This means that we can't later change the basis like
in~\cite{Fernando}. The solution is to consider 'twists' of modules by
Weyl group elements cf. definition~\ref{def:2}. Another difference is
the fact that we do not a priori have root vectors $E_\beta$ for any
positive root $\beta$ unless $\beta$ is simple. Root vectors can be
constructed but the construction involves a choice of a reduced
expression for the longest element of the Weyl group $w_0$. The root
vectors constructed depend on this choice. So if we want to use root
vectors to define our terms we should prove that our definitions are
independent of the choice of the root vectors. Once the root vectors
are defined we continue like in the classical case with some
differences. Notably the proof of Proposition~\ref{prop:5} is
different. Here we reduce the problem to rank $2$ calculations in the
quantized enveloping algebra. This is also the main reason we exclude
$\mathfrak{g}$ of type $G_2$ in this paper.

In the root of unity case the classification of simple weight modules
reduces completely to the classical case as seen in
Section~\ref{Torsion_free_modules_root_of_unity}. We use the same
procedure as in~\cite{Mathieu} to reduce the problem to classifying
coherent families and then we show that all irreducible coherent
families in the root of unity case can be constructed via classical
$\mathfrak{g}$ coherent families.

\subsection{Acknowledgements}
I would like to thank my advisor Henning H. Andersen for great
supervision and many helpful comments and discussions and Jacob
Greenstein for introducing me to this problem when I was visiting him
at UC Riverside in the fall of 2013. The authors research was
supported by the center of excellence grant 'Center for Quantum
Geometry of Moduli Spaces' from the Danish National Research
Foundation (DNRF95).

\subsection{Notation}
\label{sec:notation}
We will fix some notation: We denote by $\mathfrak{g}$ a fixed simple
Lie algebra over the complex numbers $\setC$. We assume $\mathfrak{g}$
is not of type $G_2$ to avoid unpleasant computations.

Fix a triangular decomposition of $\mathfrak{g}$: Let $\mathfrak{h}$
be a maximal toral subalgebra and let $\Phi \subset \mathfrak{h}^*$ be
the roots of $\mathfrak{g}$ relative to $\mathfrak{h}$. Choose a
simple system of roots $\Pi = \{\alpha_1,\dots,\alpha_n\} \subset
\Phi$. Let $\Phi^+$ (resp. $\Phi^-$) be the positive (resp. negative)
roots. Let $\mathfrak{g}^{\pm}$ be the positive and negative part of
$\mathfrak{g}$ corresponding to the simple system $\Pi$. So
$\mathfrak{g} = \mathfrak{g}^- \oplus \mathfrak{h} \oplus
\mathfrak{g}^+$. Let $W$ be the Weyl group generated by the simple
reflections $s_i := s_{\alpha_i}$. For a $w\in W$ let $l(w)$ be the
length of $W$ i.e. the smallest amount of simple reflections such that
$w=s_{i_1}\cdots s_{i_{l(w)}}$. Let $(\cdot|\cdot)$ be a standard
$W$-invariant bilinear form on $\mathfrak{h}^*$ and
$\left<\alpha,\beta^\vee\right> =
\frac{2(\alpha|\beta)}{(\beta|\beta)}$. Since $(\cdot|\cdot)$ is
standard we have $(\alpha|\alpha)=2$ for any short root $\alpha\in
\Phi$. Let $Q=\spa{\setZ}{\alpha_1,\dots,\alpha_n}$ denote the root
lattice and $\Lambda=\spa{\setZ}{\omega_1,\dots,\omega_n}\subset
\mathfrak{h}^*$ the integral lattice where $\omega_i\in
\mathfrak{h}^*$ are the fundamental weights defined by
$(\omega_i|\alpha_j)=\delta_{ij}$.

Let $U_v=U_v(\mathfrak{g})$ be the corresponding quantized enveloping
algebra defined over $\mathbb{Q}(v)$ as defined in~\cite{Jantzen} with
generators $E_\alpha,F_\alpha,K_\alpha^{\pm 1}$, $\alpha\in\Pi$ and
certain relations which can be found in Chapter~4
of~\cite{Jantzen}. We define $v_\alpha = v^{(\alpha|\alpha)/2}$
(i.e. $v_\alpha = v$ if $\alpha$ is a short root and $v_\alpha = v^2$
if $\alpha$ is a long root) and for $n\in\setZ$, $[n]_v
=\frac{v^n-v^{-n}}{v-v\inv}$.  Let $[n]_\alpha := [n]_{v_\alpha} =
\frac{v_\alpha^n-v_\alpha^{-n}}{v_\alpha-v_\alpha\inv}$. We omit the
subscripts when it is clear from the context. For later use we also
define the quantum binomial coefficients: For $r\in \setN$ and $a\in
\setZ$:
\begin{equation*}
  {a \brack r}_v = \frac{[a][a-1]\cdots [a-r+1]}{[r]!}
\end{equation*}
where $[r]! := [r][r-1]\cdots [2][1]$. Let $A=\setZ[v,v\inv]$ and let
$U_A$ be Lusztigs $A$-form defined in~\cite{MR1066560}, i.e. the $A$
subalgebra generated by the divided powers
$E_\alpha^{(n)}:=\frac{1}{[n]_\alpha!}E_\alpha^{n}$,
$F_\alpha^{(n)}:=\frac{1}{[n]_\alpha!}F_\alpha^{n}$ and $K_\alpha^{\pm
  1}$, $\alpha\in\Pi$.

Let $q\in \setC$ be a nonzero complex number and set $U_q = U_A
\tensor_A \setC_q$ where $\setC_q$ is the $A$-module equal to $\setC$
as a vector space where $v$ is sent to $q$. In the following sections
we will distinguish between whether $q$ is a root of unity or not.

We have a triangular decomposition of Lusztigs $A$-form $U_A = U_A^-
\tensor U_A^0 \tensor U_A^+$ with $U_A^-$ the $A$ subalgebra generated
by $\{F_\alpha^{(n)}|\alpha\in \Pi,n\in \setN\}$ in $U_A$, $U_A^+$ the
$A$ subalgebra generated by $\{E_\alpha^{(n)}|\alpha\in \Pi,n\in \setN
\}$ in $U_A$ and $U_A^0$ the $A$ subalgebra generated by
$\{K_\alpha^{\pm 1}, { K_\alpha ; c \brack r}|\alpha\in \Pi, c\in
\setZ, r\in \setN\}$ in $U_A$ where
\begin{equation*}
  {K_\alpha ; c \brack r} := \prod_{j=1}^r \frac{ K_\alpha v_\alpha^{c+1-j}-K_\alpha\inv v_\alpha^{-c-1+j}}{v_\alpha^{j}-v_\alpha^{-j}}.
\end{equation*}
For later use we also define $[K_\alpha; r]={K_\alpha;r\brack 1}$. We
have the corresponding triangular decomposition of $U_q$: $U_q = U_q^-
\tensor U_q^0 \tensor U_q^+$ with $U_q^{\pm} = U_A^{\pm} \tensor_A
\setC_q$ and $U_q^0 = U_A^0 \tensor_A \setC_q$.

For a $q\in \setC^*=\setC\backslash\{0\}$ define ${a \brack r}_q$ as
the image of ${a \brack r}_v$ in $\setC$.  We will omit the subscript
from the notation when it is clear from the context. We define
$q_\beta\in \setC$ and $[n]_\beta\in \setC$ as the image of
$v_\beta\in A$ and $[n]_\beta\in A$, respectively abusing
notation. Similarly, we will abuse notation and write ${K_\alpha ; c
  \brack r}$ also for the image of ${K_\alpha ; c \brack r}\in U_A$ in
$U_q$. Define for $\mu\in Q$, $K_\mu = \prod_{i=1}^n
K_{\alpha_i}^{a_i}$ if $\mu = \sum_{i=1}^n a_i \alpha_i$ with $a_i\in
\setZ$.

There is a braid group action on $U_v$ which we will describe now. We
use the definition from~\cite[Chapter~8]{Jantzen}. The definition is
slightly different from the original in~\cite[Theorem~3.1]{MR1066560}
(see \cite[Warning~8.14]{Jantzen}). For each simple reflection $s_i$
there is a braid operator that we will denote by $T_{s_i}$ satisfying
the following: $T_{s_i}:U_v\to U_v$ is a $\setQ(v)$ automorphism and for
$i\neq j \in \{1,\dots,n\}$
\begin{align*}
  T_{s_i}(K_\mu)=&K_{s_i(\mu)}
  \\
  T_{s_i}(E_{\alpha_i}) =& -F_{\alpha_i}K_{\alpha_i}
  \\
  T_{s_i}(F_{\alpha_i})=& - K_{\alpha_i}\inv E_{\alpha_i}
  \\
  T_{s_i}(E_{\alpha_j})=&
  \sum_{i=0}^{-\left<\alpha_j,\alpha_i^\vee\right>} (-1)^i
  v_{\alpha_i}^{-i} E_{\alpha_i}^{(r-i)}E_{\alpha_j}E_{\alpha_i}^{(i)}
  \\
  T_{s_i}(F_{\alpha_j})=&
  \sum_{i=0}^{-\left<\alpha_j,\alpha_i^\vee\right>} (-1)^i
  v_{\alpha_i}^{i} F_{\alpha_i}^{(i)}F_{\alpha_j}F_{\alpha_i}^{(r-i)}.
\end{align*}
The inverse $T_{s_i}\inv$ is given by conjugating with the
$\setQ$-algebra anti-automorphism $\Psi$
from~\cite[section~1.1]{MR1066560} defined as follows:
\begin{align*}
  \Psi(E_{\alpha_i}) = E_{\alpha_i}, \quad \Psi(F_{\alpha_i}) =
  F_{\alpha_i}, \quad \Psi(K_{\alpha_i}) = K_{\alpha_i}\inv, \quad
  \Psi(v) = v.
\end{align*}
The braid operators $T_{s_i}$ satisfy braid relations so we can define
$T_w$ for any $w\in W$: Choose a reduced expression of $w$:
$w=s_{i_1}\cdots s_{i_n}$. Then $T_w = T_{s_{i_1}}\cdots T_{s_{i_n}}$
is independent of the chosen reduced expression
by~\cite[Theorem~3.2]{MR1066560}. We have
$T_w(K_\mu)=K_{w(\mu)}$. Furthermore $T_w$ restricts to an
automorphism $T_w:U_A\to U_A$.

Let $w_0$ be the longest element in $W$ and let $s_{i_1}\cdots
s_{i_N}$ be a reduced expression of $w_0$. We define root vectors
$E_\beta$ and $F_\beta$ for any $\beta\in \Phi^+$ by the following:
First of all set
\begin{equation*}
  \beta_{j} = s_{i_1}\cdots s_{i_{j-1}}(\alpha_{i_j}), \, \text{ for } i=1,\dots,N.
\end{equation*}
Then $\Phi^+ = \{\beta_1,\dots,\beta_N\}$. Set
\begin{equation*}
  E_{\beta_j} = T_{s_{i_1}}\cdots T_{s_{i_{j-1}}}(E_{\alpha_{i_j}})
\end{equation*}
and
\begin{equation*}
  F_{\beta_j} = T_{s_{i_1}}\cdots T_{s_{i_{j-1}}}(F_{\alpha_{i_j}}).
\end{equation*}
In this way we have defined root vectors for each
$\beta\in\Phi^+$. These root vectors depend on the reduced expression
chosen for $w_0$ above. For a different reduced expression we might
get different root vectors. It is a fact that if $\beta\in\Pi$ then
the root vectors $E_\beta$ and $F_\beta$ defined above are the same as
the generators with the same notation
(cf. e.g.~\cite[Proposition~8.20]{Jantzen}) so the notation is not
ambigious in this case. By ``Let $E_\beta$ be a root vector'' we just
mean a root vector constructed as above for some reduced expression of
$w_0$.

\subsection{Basic definitions}
\label{sec:basic-definitions}
\begin{defn}
  Let $M$ be a $U_q$-module and $\lambda: U_q^0 \to \setC$ a character
  (i.e. an algebra homomorphism into $\setC$). Then the weight space
  $M_\lambda$ is defined as
  \begin{equation*}
    M_\lambda = \{ m\in M | \forall u\in U_q^0, u m = \lambda(u)m\}.
  \end{equation*}
  Let $X$ denote the set of characters of $U_q^0$.  Let $\wt M$ denote
  all the weights of $M$, i.e. $\wt M = \{ \lambda\in X | M_\lambda
  \neq 0 \}$.  If $q$ is not a root of unity we define for $\mu\in
  \Lambda$ the character $q^\mu$ by $q^\mu(K_\alpha) =
  q^{(\mu|\alpha)}$ for any $\alpha\in \Pi$. We also define
  $q_\beta^\mu = q^{\frac{(\beta|\beta)}{2} \mu}$.  We say that $M$
  only has integral weights if $\mu(K_\alpha)\in \pm q_\alpha^{\setZ}$
  for any $\alpha\in \Pi$, $\mu\in \wt M$.
\end{defn}
If $q$ is not a root of unity then $U_q^0$ is isomorphic to
$\setC[X_1^{\pm 1},\dots,X_n^{\pm 1}]$ and $X$ can be identified with
$(\setC^*)^n$ by sending $\mu\in X$ to
$(\mu(K_{\alpha_1}),\dots,\mu(K_{\alpha_n}))$. When $q$ is a root of
unity the situation is a bit more complex. We will show later that
when $q$ is a root of unity $X$ can be identified with $S\times
\Lambda_l \times \mathfrak{h}^*$ where $S$ is the set of homomorphisms
$Q\to \{\pm 1\}$ and $\Lambda_l$ is a finite set depending on the
order $l$ of the root of unity.  There is an action of $W$ on $X$. For
$\lambda\in X$ define $w\lambda$ by
\begin{equation*}
  (w\lambda)(u) = \lambda(T_{w\inv}(u)).
\end{equation*}
Note that $w q^\mu = q^{w(\mu)}$.

\begin{defn}
  Let $M$ be a $U_q$-module and $w\in W$. Define the twisted module
  ${^w}M$ by the following:
  
  As a vector space ${^w}M=M$ but the action is given by twisting with
  $w\inv$: For $m\in {^w}M$ and $u \in U_q$:
  \begin{equation*}
    u\cdot m = T_{w\inv}(u)m.
  \end{equation*}
  
  We also define ${^{\bar{w}}}M$ to be the inverse twist, i.e. for
  $m\in {^{\bar{w}}}M$, $u\in U_q$:
  \begin{equation*}
    u \cdot m = T_{w\inv}\inv(u) m.
  \end{equation*}
  Hence for any $U_q$-module $M$, ${^{\bar{w}}}(^{w}M) = M = {^w}(^{\bar{w}}M)$.
\end{defn}
Note that $\wt {^w}M = w(\wt M)$ and that ${^w}(^{w'}M)\iso {^{ww'}}M$
for $w,w'\in W$ with $l(ww')=l(w)+l(w')$ because the braid operators
$T_w$ satisfy braid relations. Also ${^{\bar{w}}}(^{\bar{w'}}M) \iso
{^{\bar{w'w}}}M$.

\begin{defn}
  \label{def:1}
  We define the category $\mathcal{F}=\mathcal{F}(\mathfrak{g})$ as
  the full subcategory of $U_q-\operatorname{Mod}$ such that for every
  $M\in \mathcal{F}$ we have
  \begin{enumerate}
  \item $M$ is finitely generated as a $U_q$-module.
  \item $M = \bigoplus_{\lambda\in X} M_\lambda$ and $\dim M_\lambda <
    \infty$.
  \end{enumerate}
\end{defn}
Note that the assignment $M\mapsto {^w}M$ is an endofunctor on
$\mathcal{F}$ (in fact an auto-equivalence).

The goal of this paper is to classify all the simple modules in
$\mathcal{F}$ in the case where $q \in \setC$ is a root of unity. Our
first step is a reduction to so called torsion free simple modules,
see Definition~\ref{def:torsion-free}. This reduction actually works
for generic $q$ as well and we treat that case first, see
Section~\ref{sec:nonroot-unity-case}. Then in
Section~\ref{sec:root-unity-case} we prove the corresponding reduction
when $q$ is a root of $1$. To handle the torsion free simple modules
we need some detailed calculations - found in~\cite{DHP-twist} and
recalled in Section~\ref{sec:u_a-calculations} - on the commutation
relations among quantum root vectors. Then we prove the classification
of torsion free simple modules in
Section~\ref{Torsion_free_modules_root_of_unity} and
Section~\ref{sec:coherent-families}. The classification for generic
$q$ turns out to be somewhat harder and will be the subject of a
subsequent paper~\cite{DHP2}.

\section{Nonroot of unity case: Reduction}
\label{sec:nonroot-unity-case}
In this section we fix a non-root-of-unity $q\in\setC^*$.

\begin{defn}
  \label{def:2}
  Let $M\in\mathcal{F}$ and let $\beta$ be a root. $M$ is called
  $\beta$-finite if for all $\lambda\in \wt M$ we have that
  $q^{\mathbb{N}\beta} \lambda \cap \wt M$ is a finite set. Here
  $q^{\mathbb{N}\beta}$ is the set $\{q^{i\beta}|i\in\setN \}$ and
  $q^{i\beta}\lambda$ just means pointwise multiplication of
  characters.
\end{defn}
As an example consider a highest weight module $M$. For any positive
root $\beta\in \Phi^+$, $M$ is $\beta$-finite. If $M$ is a Verma
module then $M$ is not $\beta$-finite for any negative root
$\beta\in\Phi^-$.

\begin{prop}
  \label{prop:1}
  Let $M\in\mathcal{F}$ and $\beta$ a positive root. Let $E_\beta$ be
  any choice of a root vector corresponding to $\beta$. Then the
  following are equivalent
  \begin{enumerate}
  \item $M$ is $\beta$-finite.
  \item For all $m\in M$, $E_{\beta}^r m = 0$ for $r\gg 0$
  \end{enumerate}
\end{prop}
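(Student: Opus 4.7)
For the direction $(1)\Rightarrow(2)$, I would write $m\in M$ as a finite sum of weight vectors $m=\sum_i m_i$ with $m_i\in M_{\lambda_i}$. Because the braid operators $T_{s_j}$ respect the $Q$-grading of $U_q$, any root vector $E_\beta$ satisfies $K_\mu E_\beta K_\mu^{-1}=q^{(\mu,\beta)}E_\beta$, hence $E_\beta^r m_i\in M_{q^{r\beta}\lambda_i}$. By $\beta$-finiteness the set $\{r\ge 0 : q^{r\beta}\lambda_i\in \wt M\}$ is finite for each $i$, so $E_\beta^r m_i=0$ once $r$ exceeds some $r_i$, whence $E_\beta^r m=0$ for $r\ge\max_i r_i$. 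This direction is independent of the choice of $E_\beta$ and uses neither finite generation nor finite-dimensional weight spaces.

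For $(2)\Rightarrow(1)$ the first move is a reduction to the case $\beta$ simple. For any positive root $\beta$, choose $w\in W$ and a simple root $\alpha$ with $w(\alpha)=\beta$, and take a reduced expression of $w_0$ beginning with a reduced expression of $w$; then by construction $E_\beta=T_w(E_\alpha)$. Since twisting is an auto-equivalence of $\mathcal{F}$, ${^{w^{-1}}}M\in\mathcal{F}$, and on ${^{w^{-1}}}M$ the element $E_\alpha$ acts as $T_w(E_\alpha)=E_\beta$ does on $M$. Using $\wt({^{w^{-1}}}M)=w^{-1}(\wt M)$ and $w^{-1}(q^{k\alpha}\mu)=q^{k\beta}w^{-1}(\mu)$, one checks that $\alpha$-finiteness of ${^{w^{-1}}}M$ is equivalent to $\beta$-finiteness of $M$, and similarly for local nilpotency. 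So I may assume $\beta=\alpha$ is simple.

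Now, with $\alpha$ simple, fix weight-vector generators $m_1,\dots,m_p$ of $M$ with $E_\alpha^{n_i}m_i=0$ from (2). The key algebraic fact is that $(U_q^+)_{k\alpha}=\mathbb{C}E_\alpha^k$: any monomial in the simple-root generators of $U_q^+$ whose weight is $k\alpha=k\alpha_i$ can only use $E_{\alpha_i}$. Decomposing $U_q=U_q^-U_q^0U_q^+$ (with $U_q^0$ acting by scalars on weight vectors), any weight vector in $M$ at $q^{k\alpha}\lambda$ is of the form $u^-u^+m_i$ with $u^+\in(U_q^+)_{k\alpha+\nu}$ and $u^-\in(U_q^-)_{-\nu}$ for some $\nu\in Q^+$. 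Choosing a PBW basis of $U_q^+$ with $E_\alpha$ last forces every nonzero PBW action on $m_i$ to use $E_\alpha^a$ with $a<n_i$. Combining this PBW bookkeeping with the $U_q(\mathfrak{sl}_2)_\alpha$-submodule structure on $\bigoplus_k M_{q^{k\alpha}\lambda}$, local nilpotency of $E_\alpha$, and the finite dimensionality of each weight space (the hypothesis $M\in\mathcal{F}$), one obtains that $q^{\setN\alpha}\lambda\cap \wt M$ is finite.

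The main obstacle is in this final step: the direct PBW bound controls only the pure-$\alpha$ contribution, while $U_q^-$-action can a priori produce weight vectors at $q^{k\alpha}\lambda$ from $(U_q^-)_{-\nu}(U_q^+)_{k\alpha+\nu}m_i$ involving non-simple quantum root vectors (e.g.\ $E_{\alpha+\gamma}$) that raise the $\alpha$-coefficient without applying $E_\alpha$ directly. Ruling out such potentially unbounded contributions for large $k$ uses the interplay between local nilpotency of $E_\alpha$, finite generation of $M$, and the finite-dimensional weight-space hypothesis, and is the technical heart of the proof.
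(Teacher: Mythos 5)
Your $(1)\Rightarrow(2)$ direction is fine and matches the paper, and the reduction to $\beta$ simple via the twist ${^{w^{-1}}}M$ is a legitimate (if unnecessary here) move. However, your $(2)\Rightarrow(1)$ argument has a genuine gap, and you name it yourself: controlling the power of $E_\alpha$ in a PBW monomial does \emph{not} control the $\alpha$-coefficient of the weight. Non-simple positive root vectors such as $E_{\alpha+\gamma}$ raise that coefficient by arbitrarily much without any $E_\alpha$ appearing, and nothing in your bookkeeping prevents $(U_q^-)_{-\nu}(U_q^+)_{k\alpha+\nu}m_i$ from populating $M_{q^{k\alpha}\lambda}$ for all $k$. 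Saying that ruling this out ``is the technical heart of the proof'' is correct, but you have not done it, and it is unclear that a direct PBW analysis can do it: a priori there is nothing stopping $M$ from having nonzero weight spaces along $q^{\setN\alpha}\lambda$ even if each weight space is individually finite-dimensional. The finiteness one must extract is of a global combinatorial nature, and PBW combinatorics on the generators alone is the wrong tool.

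The paper takes a completely different, indirect route that avoids these issues. Suppose $M$ satisfies (2) but is not $\beta$-finite. Then there is a $\lambda$ and infinitely many $j_i$ with $M_{q^{j_i\beta}\lambda}\neq 0$; using (2), one may raise each nonzero vector there to a nonzero vector $m_i\in M_{q^{j_i'\beta}\lambda}$ killed by $E_\beta$, with $j_i'\to\infty$. Now restrict to the $U_{q_\beta}(\mathfrak{sl}_2)$-subalgebra $D=\left<E_\beta,K_\beta^{\pm1},F_\beta\right>$. Each $Dm_i$ is a highest weight $\mathfrak{sl}_2$-module, and Kac's formula $E_\beta^{(j)}F_\beta^{(j)}m_i={K_\beta;0\brack j}m_i$ shows $F_\beta^{(j_i')}m_i\neq 0$, so each $Dm_i$ contributes a nonzero vector $v_i$ of weight $\lambda$ (after normalising $\lambda$ so that $\lambda(K_\beta)\notin\pm q_\beta^{-\setN_{>0}}$). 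The quantum $\mathfrak{sl}_2$ Casimir $C_\beta$ acts on $Dm_i$ by a scalar determined by $\mu_i(K_\beta)$, and these scalars are pairwise distinct for distinct $i$, so the $v_i$ are linearly independent. This contradicts $\dim M_\lambda<\infty$. Note that this uses only finite-dimensionality of a single weight space, not finite generation, and never invokes a PBW basis. If you want to repair your proof, this contradiction-plus-Casimir mechanism is the missing idea; the PBW route as you have set it up does not obviously close.
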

\begin{proof}
  Note that $E_\beta M_\lambda \subset M_{q^\beta \lambda}$.  This
  shows that $1.$ implies $2.$. Now assume $2.$ and assume $M$ is not
  $\beta$-finite. Then we must have a $\lambda\in \wt M$, an
  increasing sequence $\{j_i\}_{i\in\mathbb{N}}\subseteq \setN$,
  weights $\mu_i = q^{j_i \beta}\lambda \in \wt M$ and weight vectors
  $0\neq m_{i}\in M_{\mu_i}$ such that $E_\beta m_i = 0$. If
  $\lambda(K_\beta) = \pm q_\beta^{j}$ for some $j\in \setZ$ then we
  can asssume without loss of generality that $j\in \setN$ since
  otherwise we can replace $\lambda$ by $q^{j_i\beta}\lambda$ for some
  sufficiently large $j_i$.

  Now consider the subalgebra $D$ of $U_q$ generated by $E_\beta$,
  $K_\beta^{\pm 1}$ and $F_\beta$ where $F_\beta$ is the corresponding
  root vector to $E_\beta$ (i.e. if $E_\beta = T_w(E_{\alpha_i})$ then
  $F_\beta = T_w(F_{\alpha_i})$). This is a subalgebra isomorphic to
  $U_{q_{\beta}}(\mathfrak{sl}_2)$. For each $i$ we get a
  $U_{q_\beta}(\mathfrak{sl}_2)$-module $D m_i$ with highest weight
  $\mu_i$. We claim that in each of those modules we have a weight
  vector $v_i\in Dm_i$ of weight $\lambda$:

  To prove the claim it is enough to show that $F_\beta^{(j_i)} m_{i}
  \neq 0$ since $F_\beta$ decreases the weight by $\beta$
  (i.e. $F_\beta M_\mu \subset M_{q^{-\beta}\mu}$). To show
  this we show that $E_\beta^{(j_i)} F_\beta^{(j_i)} m_i \neq 0$. In
  the following we will use Kac's formula:
  \begin{equation*}
    E_\beta^{(r)}F_\beta^{(s)} = \sum_{j\geq 0} F_\beta^{(s-j)} { K_\beta ; 2j-r-s \brack j} E_\beta^{(r-j)}.
  \end{equation*}
  This is a well known formula that can be found in
  e.g.~\cite[Lemma~1.7]{Jantzen} (although in this reference it is
  written in a slightly different form).
  \begin{align*}
    E_\beta^{(j_i)} F_\beta^{(j_i)} m_i =& \sum_{s\geq 0}
    F_\beta^{(j_i-s)} { K_\beta ; 2s - 2j_i \brack s}
    E_\beta^{(j_i-s)} m_i
    \\
    =& {K_\beta ; 0 \brack j_i} m_i
    \\
    =& \prod_{t=1}^{j_i} \frac{q_\beta^{1-t}\mu_i(K_\beta)-
      q_\beta^{t-1} \mu_i(K_\beta)\inv}{q_\beta^t - q_\beta^{-t}} m_i
    \\
    =& \prod_{t=1}^{j_i} \frac{q_\beta^{2j_i+1-t}\lambda(K_\beta)-
      q_\beta^{-2j_i+t-1} \lambda(K_\beta)\inv}{q_\beta^t -
      q_\beta^{-t}} m_i.
  \end{align*}
  This is zero if and only if $\lambda(K_\beta) = \pm
  q_\beta^{-2j_i-1+t}$ for some $t = 1,\dots,j_i$. Note that the power
  of $q$ is negative in all cases here so this is not the case by the
  assumption above.  So $F_\beta^{(j_i)}m_i \neq 0$ and we are done
  proving the claim.  So we have $0\neq v_i \in Dm_i$ of weight
  $\lambda$ for $i\in\setN$.
  
  Consider the $U_{q_\beta}(\mathfrak{sl}_2)$ element $C_\beta =
  F_\beta E_\beta + \frac{q_\beta K_\beta + q_\beta\inv
    K_\beta\inv}{(q_\beta- q_\beta\inv)^2}$. Then $C_\beta$ acts on $D
  m_i$ by the scalar
  \begin{equation*}
    \frac{q_\beta \mu_i(K_\beta) + q_\beta\inv \mu_i(K_\beta)\inv}{(q_\beta - q_\beta\inv)}.
  \end{equation*}
  
  If $C_\beta$ acts in the same way on $Dm_i$ and $Dm_k$ then we must
  have either $\mu_i(K_\beta) = \mu_k(K_\beta)$ (i.e. $i=j$) or
  $\mu_i(K_\beta) = q_\beta^{-2} \mu_j(K_\beta)\inv$. The second case
  implies that $\lambda(K_\beta)=\pm q_\beta^{-a}$ for some $a\in
  \setN$ which we have ruled out above. So the vectors $v_i$ are
  linearly independent.  Hence $M$ contains an infinite set of
  linearly independent vectors of weight $\lambda$. This contradicts
  the fact that $M\in \mathcal{F}$.
\end{proof}

\begin{prop}
  \label{prop:2}
  Let $\beta$ be a positive root and $E_\beta$ a root vector
  corresponding to $\beta$. Let $M\in\mathcal{F}$. The set
  $M^{[E_\beta]}=\{m\in M| \dim \left<E_\beta\right> m < \infty \}$ is
  a $U_q$-submodule of $M$.
\end{prop}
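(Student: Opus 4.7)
The statement has two parts: that $M^{[E_\beta]}$ is a vector subspace, and that it is stable under the $U_q$-action. I would first note that for any weight vector $m$, the nonzero iterates $E_\beta^j m$ lie in distinct weight spaces (of weights $q^{j\beta}\wt(m)$) and are therefore linearly independent, so $\dim \langle E_\beta \rangle m < \infty$ is equivalent to the existence of some $k$ with $E_\beta^k m = 0$; decomposing a general vector into its finitely many weight components, the equivalence persists. Closure under linear combinations is then immediate, since $E_\beta^{\max(a_1,a_2)}(c_1 m_1 + c_2 m_2) = 0$ whenever $E_\beta^{a_i} m_i = 0$.

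For $U_q$-stability it suffices to check closure under each generator $K_\gamma^{\pm 1}, E_\gamma, F_\gamma$, $\gamma\in\Pi$. The Cartan case is dispensed of by the scalar commutation $E_\beta K_\gamma = q^{-(\gamma|\beta)}K_\gamma E_\beta$, which gives $E_\beta^k K_\gamma m = q^{-k(\gamma|\beta)}K_\gamma E_\beta^k m$. For $u \in \{E_\gamma, F_\gamma\}$ my goal is to prove an identity of the shape
\begin{equation*}
  E_\beta^{n+N}\, u = \sum_{i=0}^{N-1} v_{n,i}\, E_\beta^{n-i}
\end{equation*}
with $v_{n,i}\in U_q$ and a bounded $N = N(u,\beta)$ independent of $n$. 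Applied to $m$ with $E_\beta^k m = 0$, this yields $E_\beta^{n+N} u\cdot m = 0$ as soon as $n \geq k + N - 1$, proving $u \cdot m \in M^{[E_\beta]}$.

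The identity itself is produced by iteratively pushing $E_\beta$ to the right through $u$, using: the quantum Serre relations for commuting $E_\beta$ past $E_\gamma$ (in the simple-$\beta$ case giving a bound $N = 1 - \langle\gamma,\beta^\vee\rangle$), the Kac formula already recalled in the proof of Proposition~\ref{prop:1} for commuting $E_\beta$ past $F_\beta$, and the trivial relation $[E_\beta, F_\gamma] = 0$ when $\beta \ne \gamma$ are both simple. For non-simple $\beta$, I would write $E_\beta = T_w(E_{\alpha_i})$ and apply the algebra automorphism $T_w$ to the analogous simple-root identities; the needed commutation relations between $E_\beta$ and each generator of $U_q$ can then be read off via an induction on $l(w)$ together with PBW, as detailed in Section~\ref{sec:u_a-calculations}.

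The main obstacle is the bookkeeping for this quantum Leibniz expansion. Unlike the classical case, the quantum adjoint action of $E_\beta$ is not literally nilpotent on $U_q$: iterated skew-commutators of $E_\beta$ with $F_\gamma$ tend to produce surplus $E_\beta$-factors on the right rather than terminating in zero. What makes the argument still work is that these surplus factors are exactly what is needed to annihilate $m \in M^{[E_\beta]}$; assembling the quantum binomial coefficients and $K_\beta$-correction terms so that this cancellation is transparent is the one non-routine step, after which the verification proceeds in direct analogy with Fernando's classical argument in~\cite{Fernando}.
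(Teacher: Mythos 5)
Your proposal is correct and follows essentially the same route as the paper: reduce to closure under each generator, dispatch the Cartan case by a scalar commutation, handle $F_\gamma$ via the Kac formula (and $[E_\beta,F_\gamma]=0$ when $\beta\neq\gamma$ are simple), handle $E_\gamma$ via commutation relations bounding how far $E_\beta$-powers must be pushed, and transport to non-simple $\beta$ by applying the automorphism $T_w$. The only cosmetic difference is that you invoke the quantum Serre relations abstractly to get the bound $N=1-\langle\gamma,\beta^\vee\rangle$, whereas the paper spells out the explicit rank-$2$ identities from Lusztig case by case; the effect is the same.
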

\begin{proof}
  Assume first that $\beta$ is a simple root. We want to show that for
  $v\in M^{[E_\beta]}$ we have for each $u\in U_q$, $uv\in
  M^{[E_\beta]}$. It is enough to show this for $u=F_\alpha$,
  $u=K_\alpha$ and $u=E_\alpha$ for all simple roots $\alpha$. If
  $u=K_\alpha$ there is nothing to show since $K_\alpha$ acts
  diagonally on $M$. If $u=F_\alpha$ for $\alpha\neq \beta$ there is
  nothing to show since $E_\beta$ and $F_\alpha$ commute. If
  $\alpha=\beta$ then we get the result from the identity
  \begin{equation*}
    E_\alpha^{(r)}F_\alpha = F_\alpha E_\alpha^{(r)} + E_{\alpha}^{(r-1)} [K_\alpha; r-1]
  \end{equation*}
  found in e.g.~\cite[section 4.4]{Jantzen}. Finally if $u= E_\alpha$
  and $\alpha\neq \beta$ then from the rank $2$ calculations
  in~\cite[section 5.3]{MR1066560} we get:
  \begin{itemize}
  \item If $(\alpha|\beta)=0$:
    \begin{equation*}
      E_\beta^{(r)}E_\alpha = E_\alpha E_\beta^{(r)}.
    \end{equation*}
  \item If $(\alpha|\beta) = -1$:
    \begin{equation*}
      E_\beta^{(r)} E_\alpha = q^r E_\alpha E_\beta^{(r)} + q E_{\alpha+\beta}E_\beta^{(r-1)} 
    \end{equation*}
    where $E_{\alpha+\beta} := T_{s_\alpha}(E_\beta)$.
  \item If $(\alpha|\beta) = -2$ and $\left< \alpha, \beta^\vee\right>
    = -2$:
    \begin{equation*}
      E_\beta^{(r)} E_\alpha = q^{2r} E_\alpha E_\beta^{(r)} + q^{r+1} E_{\alpha+\beta}E_\beta^{(r-1)} + q^2 E_{2\beta + \alpha} E_\beta^{(r-2)}
    \end{equation*}
    where $E_{\alpha+\beta} := T_{s_\alpha}(E_\beta)$ and
    $E_{2\beta+\alpha} := T_{s_\alpha}T_{s_\beta}(E_\alpha)$.
  \item If $(\alpha|\beta) = -2$ and $\left< \alpha, \beta^\vee\right>
    = -1$: In this case we get from the calculations in~\cite[section
    5.3]{MR1066560} that
    \begin{equation*}
      E_\alpha E_\beta^{(r)} = q^{2r} E_\beta^{(r)}E_\alpha + q^2 E_\beta^{(r-1)}E_{\alpha+\beta}
    \end{equation*}
    where $E_{\alpha+\beta} := T_{s_\beta}(E_\alpha)$.
    
    After using the $\setQ$-algebra anti automorphism $\Psi$
    from~\cite[section 1.1]{MR1066560} we get
    \begin{equation*}
      E_\beta^{(r)} E_\alpha = q^{2r} E_\alpha E_\beta^{(r)} + q^2 E'_{\alpha+\beta} E_\beta^{(r-1)}
    \end{equation*}
    where $E'_{\alpha+\beta} = \Psi(E_{\alpha+\beta}) =
    T_{s_\beta}\inv(E_\alpha)$.
  \end{itemize}
  In all cases we get that if $E_\beta^{(n)}m=0$ for $n>>0$ then
  $E_\beta^{(n)}E_\alpha m=0$ for $n>>0$.  This proves that $uv \in
  \{m\in M| \dim \left<E_\beta\right> m < \infty \}$ in this case
  also.

  If $\beta$ is not simple then $E_\beta = T_w(E_{\alpha'})$ for some
  simple root $\alpha'$ and some $w\in W$. Since $T_w$ is an
  automorphism we have $T_w(U_q)=U_q$ so instead of proving the claim
  for $u=E_\alpha$, $K_\alpha$ and $F_\alpha$ we can show it for
  $u=T_w(E_\alpha)$, $T_w(K_\alpha)$ and $T_w(F_\alpha)$ so the claim
  follows from the calculations above.
\end{proof}

\begin{lemma}
  \label{lemma:5}
  Let $E_\beta$ and $E_\beta'$ be two choices of root vectors. Then
  $M^{[E_\beta]}=M^{[E_\beta']}$
\end{lemma}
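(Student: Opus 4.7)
The plan is to use Proposition~\ref{prop:1} as a bridge: the notion of $\beta$-finiteness is purely a condition on the set of weights of the module (it makes no reference to any root vector), while the proposition equates it, for modules in $\mathcal{F}$, with a condition stated in terms of an arbitrary root vector $E_\beta$. Since any two choices of root vectors are linked via this weight-theoretic middle man, swapping $E_\beta$ for $E_\beta'$ should be automatic once we set things up correctly.

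Concretely, I would show $M^{[E_\beta]} \subseteq M^{[E_\beta']}$ (the reverse inclusion then follows by symmetry). Pick $m \in M^{[E_\beta]}$ and form the cyclic submodule $N := U_q m \subseteq M$. By Proposition~\ref{prop:2}, $M^{[E_\beta]}$ is a $U_q$-submodule, and since it contains $m$ it contains all of $N$; in particular every element of $N$ is annihilated by a sufficiently high power of $E_\beta$. Moreover $N$ itself lies in $\mathcal{F}$: it is finitely generated by $m$, and its weight spaces $N_\lambda = N \cap M_\lambda$ are finite dimensional subspaces of the weight spaces of $M$.

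Now I apply Proposition~\ref{prop:1} to $N$ with the root vector $E_\beta$: the second condition holds by construction, so $N$ is $\beta$-finite. But $\beta$-finiteness of $N$ refers only to the set $\wt N$ and $q^{\setN \beta}$, with no mention of any root vector. Reapplying Proposition~\ref{prop:1} to $N$ in the other direction, now with the different choice $E_\beta'$, yields that $E_\beta'^{r} n = 0$ for $r \gg 0$ for every $n \in N$. Taking $n = m$ gives $m \in M^{[E_\beta']}$, completing the inclusion.

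There is no real obstacle here; the only subtle point is that Proposition~\ref{prop:1} is stated for objects of $\mathcal{F}$, so one cannot simply apply it to the (possibly non-finitely-generated) submodule $M^{[E_\beta]}$ directly. Passing to the cyclic submodule $U_q m$ sidesteps this entirely and keeps us inside $\mathcal{F}$.
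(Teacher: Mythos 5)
Your proof is correct and follows essentially the same route as the paper: Proposition~\ref{prop:2} shows $M^{[E_\beta]}$ is a $U_q$-submodule, and Proposition~\ref{prop:1} is then invoked in both directions to pass through the root-vector-independent notion of $\beta$-finiteness. The one thing you add is the passage to the cyclic submodule $N = U_q m$ to guarantee membership in $\mathcal{F}$ before applying Proposition~\ref{prop:1} --- the paper applies that proposition to $M^{[E_\beta]}$ directly, tacitly using that a submodule of a finitely generated module over the Noetherian ring $U_q$ is again finitely generated; your workaround is a harmless and slightly more self-contained variant.
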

\begin{proof}
  Suppose we have two root vectors $E_{\beta}$ and $E_\beta'$. By
  Proposition~\ref{prop:2} and Proposition~\ref{prop:1} we have $\dim
  \left<E_\beta'\right> m < \infty$ for all $m\in M^{[E_\beta]}$ so
  $M^{[E_\beta]}\subset M^{[E_\beta']}$. Symmetrically we have also
  $M^{[E_\beta']}\subset M^{[E_\beta]}$.
\end{proof}

\begin{defn}
  Let $\beta$ be a positive root and $E_\beta$ a root vector
  corresponding to $\beta$.  Define $M^{[\beta]} = \{m\in M| \dim
  \left<E_\beta\right> m < \infty \}$.
\end{defn}
By Lemma~\ref{lemma:5} this definition is independent of the chosen
root vector.

Everything here that is done for a positive root $\beta$ can be done
for a negative root just by replacing the $E$'s with $F$'s, i.e. for a
negative root $\beta\in \Phi^-$, $M^{[\beta]} = \{m\in M| \dim \left<
  F_{-\beta} \right> m < \infty \}$ and so on.

\begin{defn}
  Let $M\in \mathcal{F}$. Let $\beta\in\Phi$. $M$ is called
  $\beta$-free if $M^{[\beta]}=0$.
\end{defn}

Note that $M$ is $\beta$-finite if and only if $M^{[\beta]}=M$ so
$\beta$-free is, in a way, the opposite of being
$\beta$-finite. Suppose $L\in \mathcal{F}$ is a simple module and
$\beta$ a root. Then by Proposition~\ref{prop:2} $L$ is either
$\beta$-finite or $\beta$-free.

\begin{defn}
  Let $M\in \mathcal{F}$. Define $F_M = \{\beta \in \Phi| \text{$M$ is
    $\beta$-finite}\}$ and $T_M = \{ \beta \in \Phi | \text{$M$ is
    $\beta$-free} \}$. For later use we also define $F_M^s := F_M \cap
  (-F_M)$ and $T_M^s := T_M \cap (-T_M)$ to be the symmetrical parts
  of $F_M$ and $T_M$.
\end{defn}
Note that $\Phi = F_L \cup T_L$ for a simple module $L$ and this is a
disjoint union.

\begin{defn}
  \label{def:torsion-free}
  A module $M$ is called torsion free if $T_M = \Phi$.
\end{defn}

\begin{prop}
  \label{prop:3}
  Let $L$ be a simple module and $\beta$ a root. $L$ is $\beta$-free
  if and only if $q^{\setN \beta}\wt L \subset \wt L$.
\end{prop}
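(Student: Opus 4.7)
The plan is to use the dichotomy for simple modules established just before the statement: by Proposition~\ref{prop:2} (combined with the fact that for a simple $L$ the submodule $L^{[\beta]}$ is either $0$ or all of $L$) a simple $L\in\mathcal{F}$ is either $\beta$-finite or $\beta$-free. So I can dispose of each direction by contradiction/contrapositive with the other alternative. I will write the proof for $\beta\in\Phi^+$; the case $\beta\in\Phi^-$ is identical after swapping $E_{-\beta}\leftrightarrow F_\beta$.

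For the $(\Leftarrow)$ direction I argue contrapositively: assume $L$ is not $\beta$-free, hence $\beta$-finite. Pick any $\lambda\in\wt L$; by Proposition~\ref{prop:1} the set $q^{\setN\beta}\lambda\cap\wt L$ is finite, so there is a maximal $n\in\setN$ with $q^{n\beta}\lambda\in\wt L$. Then $\mu:=q^{n\beta}\lambda\in\wt L$ but $q^{\beta}\mu=q^{(n+1)\beta}\lambda\notin\wt L$, contradicting $q^{\setN\beta}\wt L\subset\wt L$.

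For the $(\Rightarrow)$ direction, assume $L$ is $\beta$-free, so $L^{[\beta]}=0$. Fix $\lambda\in\wt L$ and a nonzero $m\in L_\lambda$. Since $m\notin L^{[\beta]}$, the subspace $\langle E_\beta\rangle m$ is infinite-dimensional, so we cannot have $E_\beta^{r}m=0$ for any $r$: indeed, since $q$ is not a root of unity, the characters $q^{n\beta}\lambda$ for $n\in\setN$ are pairwise distinct (they take distinct values $q^{n(\beta|\beta)}\lambda(K_\beta)$ on $K_\beta$), so the vectors $E_\beta^{n}m\in L_{q^{n\beta}\lambda}$ that are nonzero are automatically linearly independent; if some $E_\beta^{r}m$ vanished, the span would collapse to a finite-dimensional space. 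Therefore $E_\beta^{n}m\neq 0$ for every $n\in\setN$, which shows $L_{q^{n\beta}\lambda}\neq 0$ and hence $q^{n\beta}\lambda\in\wt L$. As $\lambda$ was arbitrary, $q^{\setN\beta}\wt L\subset\wt L$.

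There is no real obstacle here; the only point that requires any care is the passage from $\dim\langle E_\beta\rangle m=\infty$ to $E_\beta^{n}m\neq 0$ for all $n$, which uses exactly the running hypothesis that $q$ is not a root of unity so that consecutive $E_\beta$-iterates land in pairwise distinct weight spaces.
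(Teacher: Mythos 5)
Your proof is correct and follows essentially the same route as the paper's: the backward direction uses the $\beta$-finite/$\beta$-free dichotomy for simple modules (from Proposition~\ref{prop:2}), and the forward direction unwinds $L^{[\beta]}=0$ to get that $E_\beta^n m\neq 0$ for all $n$ and hence that iterated application of $E_\beta$ populates the weight spaces $L_{q^{n\beta}\lambda}$. Two small remarks: in the $(\Leftarrow)$ direction the citation of Proposition~\ref{prop:1} is unnecessary, since ``$\beta$-finite'' already means by definition that $q^{\setN\beta}\lambda\cap\wt L$ is finite for every $\lambda$; and in the $(\Rightarrow)$ direction the linear-independence remark is a detour --- the cleaner observation is simply that $E_\beta^r m=0$ would force $E_\beta^s m=0$ for all $s\geq r$, making $\langle E_\beta\rangle m$ finite-dimensional, which already contradicts $m\notin L^{[\beta]}$.
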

\begin{proof}
  Assume $L$ is $\beta$-free and $\beta\in \Phi^+$. Let $E_{\beta}$ be
  a corresponding root vector. The proof is similar for $\beta\in
  \Phi^-$ but with $F$ instead of $E$. Then for all $0 \neq m\in L$,
  $E_\beta^{(r)}m \neq 0$. If $\lambda \in \wt L$ then there exists $0
  \neq m_\lambda \in L_\lambda$ and since $E_\beta^{(r)}m_\lambda \in
  L_{q^{r\beta}\lambda}$ the implication follows. For the other way
  assume $q^{\setN \beta}\wt L \subset \wt L$. Then $L$ is clearly not
  $\beta$-finite. Since $L$ is simple $L$ must then be $\beta$-free.
\end{proof}

\begin{prop}
  \label{prop:4}
  Let $L\in\mathcal{F}$ be a simple module. $T_L$ is a closed subset
  of the roots $\Phi$. That is if $\beta,\gamma \in T_L$ and $\beta +
  \gamma \in \Phi$. Then $\beta + \gamma \in T_L$.
\end{prop}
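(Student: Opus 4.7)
The plan is to reduce everything to the weight-space criterion of Proposition~\ref{prop:3} and then verify the required containment of weights for $\beta+\gamma$ directly. Since $L$ is a simple module and $\beta+\gamma \in \Phi$, it makes sense to ask whether $L$ is $(\beta+\gamma)$-free, and Proposition~\ref{prop:3} applies to this root.

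First I would record, via Proposition~\ref{prop:3}, that $\beta\in T_L$ and $\gamma\in T_L$ translate to
\begin{equation*}
  q^{\setN\beta}\wt L \subset \wt L \quad\text{and}\quad q^{\setN\gamma}\wt L \subset \wt L.
\end{equation*}
Next, I would show by induction on $n$ that $q^{n(\beta+\gamma)}\lambda \in \wt L$ for every $\lambda \in \wt L$ and every $n\in\setN$: assuming $\mu := q^{(n-1)(\beta+\gamma)}\lambda \in \wt L$, we get $q^\beta \mu \in \wt L$ by the first containment above, and then $q^\gamma(q^\beta\mu) = q^{\beta+\gamma}\mu = q^{n(\beta+\gamma)}\lambda \in \wt L$ by the second. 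Hence $q^{\setN(\beta+\gamma)}\wt L \subset \wt L$, and applying Proposition~\ref{prop:3} one more time (this is where simplicity of $L$ enters, via the dichotomy of Proposition~\ref{prop:2}) yields that $L$ is $(\beta+\gamma)$-free, i.e.\ $\beta+\gamma\in T_L$.

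There is essentially no obstacle here once Proposition~\ref{prop:3} is available: the entire argument is just a two-line computation with characters. The only point deserving a moment of attention is that Proposition~\ref{prop:3} is stated for any root, positive or negative, so it can be applied to $\beta$, $\gamma$, and $\beta+\gamma$ irrespective of their signs. In particular, the cases where one of $\beta,\gamma$ lies in $\Phi^+$ and the other in $\Phi^-$ require no separate treatment.
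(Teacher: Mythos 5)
Your proof is correct and matches the paper's argument: both reduce to the weight-space criterion of Proposition~\ref{prop:3}, compose the two containments to get $q^{\setN(\beta+\gamma)}\wt L \subset \wt L$, and conclude by applying Proposition~\ref{prop:3} again. You simply spell out the containment step by induction, which the paper states more tersely.
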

\begin{proof}
  Since $L$ is $\beta$-free we have $q^{\setN \beta}\wt L \subset \wt
  L$ and since $L$ is $\gamma$ free we get further $q^{\setN
    \gamma}q^{\setN \beta} \wt L \subset \wt L$ so therefore
  $q^{\setN(\beta+\gamma)}\wt L \subset \wt L$ hence $L$ is $(\beta +
  \gamma)$ free.
\end{proof}

\begin{prop}
  \label{prop:5}
  Let $M\in \mathcal{F}$ be a $U_q$-module. $F_M$ is a closed subset
  of $\Phi$. That is if $\beta,\gamma \in F_M$ and $\beta + \gamma \in
  \Phi$ then $\beta + \gamma \in F_M$.
\end{prop}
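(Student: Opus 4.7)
The plan is to reduce the statement to a calculation in the rank $2$ subalgebra attached to $\beta$ and $\gamma$, and then conclude by an induction on a local-nilpotence integer attached to $m$. By Proposition~\ref{prop:1}, the goal is: for every $m\in M$, $E_{\beta+\gamma}^n m = 0$ for $n\gg 0$ (with $F$ in place of $E$ if the relevant root is negative). I first apply a Weyl group twist from Definition~\ref{def:2}: since $F_{{^w}M} = w(F_M)$ and, for any pair of roots with $\beta\neq -\gamma$, there is a Weyl chamber in which both lie, I may assume $\beta,\gamma\in\Phi^+$, and hence $\beta+\gamma\in\Phi^+$. Let $\Phi' = \Phi\cap(\mathbb{Q}\beta+\mathbb{Q}\gamma)$; this is a rank $2$ subsystem containing $\beta+\gamma$ (so not $A_1\times A_1$) and, as we exclude $G_2$, it is of type $A_2$ or $B_2/C_2$. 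Choosing a reduced expression of $w_0$ that begins with a reduced expression of the longest element of $W(\Phi')$ places the root vectors $E_\delta$, $\delta\in\Phi'^+$, inside a rank $2$ subalgebra $U_q^+(\Phi')$ satisfying the commutation relations recalled in the proof of Proposition~\ref{prop:2} from~\cite[Section~5.3]{MR1066560}.

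The main technical input is a PBW identity in $U_q^+(\Phi')$. In the $A_2$ case a straightforward induction on $n$ gives
\[
E_\beta^n E_\gamma^n = c_n\, E_{\beta+\gamma}^n + \sum_{c=1}^{n} d_{n,c}\, E_\gamma^{c} E_{\beta+\gamma}^{n-c} E_\beta^{c}
\]
with a nonzero scalar $c_n\in\setC$; in the $B_2/C_2$ case an analogous identity holds, with additional PBW monomials involving $E_{2\beta+\gamma}$ or $E_{\beta+2\gamma}$, but all correction terms still carry an $E_\beta^c$, $c\ge 1$, as the rightmost factor. Solving for $E_{\beta+\gamma}^n$ and applying to $m$, the main term $E_\beta^n E_\gamma^n m$ vanishes for $n > A_\gamma(m) := \min\{a : E_\gamma^a m = 0\}$ (finite since $\gamma\in F_M$, by Proposition~\ref{prop:1}), leaving
\[
c_n\, E_{\beta+\gamma}^n m = -\sum_{c=1}^{n} d_{n,c}\, E_\gamma^{c} E_{\beta+\gamma}^{n-c} v_c, \qquad v_c := E_\beta^c m.
\]

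I would then conclude by induction on $A_\beta(m) := \min\{a : E_\beta^a m = 0\}$, finite since $\beta\in F_M$. The base case $A_\beta(m)=0$ gives $m=0$ and is trivial. For the inductive step, $v_c = 0$ once $c \ge A_\beta(m)$, and for $1\le c < A_\beta(m)$ the vector $v_c$ satisfies $A_\beta(v_c) \le A_\beta(m) - c < A_\beta(m)$, so the inductive hypothesis supplies a bound $N_c$ with $E_{\beta+\gamma}^{n-c} v_c = 0$ for $n - c > N_c$. Choosing $n$ larger than $A_\gamma(m)$ and larger than every $N_c + c$ makes every term on the right vanish, so $E_{\beta+\gamma}^n m = 0$.

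The main obstacle is establishing the PBW identity above with nonzero leading coefficient $c_n$ and bookkeeping all correction terms, a computation purely internal to the rank $2$ subalgebra. It is essentially immediate in type $A_2$ but requires more care in $B_2/C_2$ because of the extra root vector $E_{2\beta+\gamma}$ (or $E_{\beta+2\gamma}$) that appears in the PBW basis. The analogous identity in type $G_2$ would additionally need to accommodate $E_{3\beta+\gamma}$ and $E_{3\beta+2\gamma}$, and the associated $q$-commutator combinatorics becomes substantially more involved, which is the stated reason for excluding $G_2$ throughout the paper.
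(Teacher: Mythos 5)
Your overall strategy --- twist into a rank $2$ situation and then use a commutation identity in the rank $2$ subalgebra to show $E_{\beta+\gamma}$ acts locally nilpotently --- is the same as the paper's. However, the specific identity you invoke and the auxiliary induction it forces are different from the paper's argument, and both of the two steps that differ have gaps.

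\textbf{The rank $2$ reduction.} You twist so that $\beta,\gamma\in\Phi^+$, set $\Phi'=\Phi\cap(\mathbb{Q}\beta+\mathbb{Q}\gamma)$, and then ``choose a reduced expression of $w_0$ that begins with a reduced expression of the longest element of $W(\Phi')$.'' This only makes sense when $\Phi'$ is a \emph{standard} parabolic subsystem of $\Phi$, i.e.\ generated by a subset of $\Pi$; in general it is not, even after arranging $\beta,\gamma\in\Phi^+$. For instance in $A_3$ the roots $\alpha_1$ and $\alpha_2+\alpha_3$ are both positive and generate a rank $2$ $A_2$ subsystem that is not generated by two simple roots, so the ``longest element of $W(\Phi')$'' has no reduced expression sitting at the beginning of a reduced expression of $w_0$. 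The paper handles this by choosing $w$ more carefully: if $(\beta\mid\gamma)<0$ one twists so that $\beta,\gamma$ are simultaneously simple, and if $(\beta\mid\gamma)\geq 0$ one shows $\beta-\gamma\in\Phi$ and twists so that $\gamma$ and $\beta-\gamma$ are simultaneously simple; in both cases one lands in a standard rank $2$ parabolic. You would need an argument of this kind.

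\textbf{The $B_2/C_2$ identity.} For type $A_2$ your asserted form
\[
E_\beta^{n}E_\gamma^{n} \;=\; c_n\,E_{\beta+\gamma}^{n} \;+\; \sum_{c\geq 1} d_{n,c}\,E_\gamma^{c}E_{\beta+\gamma}^{n-c}E_\beta^{c}
\]
is forced by weight considerations (the PBW monomials of weight $n(\beta+\gamma)$ are exactly $E_\gamma^{c}E_{\beta+\gamma}^{n-c}E_\beta^{c}$), so the only nontrivial point there is $c_n\neq 0$. But in $B_2/C_2$ the claim that ``all correction terms still carry an $E_\beta^{c}$, $c\geq 1$, as the rightmost factor'' is not correct. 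Take $\beta$ short and $\gamma$ long with PBW order $E_\gamma,E_{\beta+\gamma},E_{2\beta+\gamma},E_\beta$. The monomials of weight $n(\beta+\gamma)$ are $E_\gamma^{a_3+a_4}E_{\beta+\gamma}^{n-2a_3-a_4}E_{2\beta+\gamma}^{a_3}E_\beta^{a_4}$ with $a_3,a_4\geq 0$ and $2a_3+a_4\leq n$, so there are monomials with $a_4=0$ and $a_3\geq 1$, namely $E_\gamma^{a_3}E_{\beta+\gamma}^{n-2a_3}E_{2\beta+\gamma}^{a_3}$, which end in $E_{2\beta+\gamma}$ rather than $E_\beta$. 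Such terms really do arise when you straighten $E_\beta^{n}E_\gamma^{n}$ (moving $E_\beta$ past $E_{\beta+\gamma}$ creates $E_{2\beta+\gamma}$, and moving $E_{2\beta+\gamma}$ past $E_\gamma$ produces $E_{\beta+\gamma}^2$, so $E_\beta$ can be entirely consumed). Your induction on $A_\beta(m)$ does not control these: to conclude $E_\gamma^{a_3}E_{\beta+\gamma}^{\cdot}E_{2\beta+\gamma}^{a_3}m$ vanishes you would need to already know $2\beta+\gamma\in F_M$, which is exactly what is being proved.

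The paper sidesteps both difficulties. After twisting to a standard rank $2$ parabolic with $\alpha\in\Pi$ it quotes Lusztig's \emph{divided-power} identity
\[
E_{\alpha+\beta}^{(k)} \;=\; \sum_{t=0}^{k}(-1)^{t}q^{-t}\,E_\beta^{(k-t)}E_\alpha^{(k)}E_\beta^{(t)},
\]
together with its variants for the other $B_2/C_2$ cases (including the two-step chain through $E_{\alpha_1+\alpha_2}$ to reach $E_{2\alpha_1+\alpha_2}$). The crucial feature is that \emph{every} term carries the factor $E_\alpha^{(k)}$ with $k$ equal to the target exponent, sandwiched between two $E_\beta$ powers; so once $E_\beta^{(t)}m=0$ for $t\geq T$ and $E_\alpha^{(k)}(E_\beta^{(t)}m)=0$ for $k\geq K_t$ ($t<T$), the whole sum vanishes for $k$ large. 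No auxiliary induction is needed, and no loose PBW monomial escapes. If you want to keep your alternative route you would have to either prove an identity in which the troublesome $E_{2\beta+\gamma}$ monomials do not occur (which the weight analysis suggests is false), or strengthen the induction to handle them, and separately fix the reduction to a \emph{standard} rank $2$ parabolic.
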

\begin{proof}
  Let $\alpha,\beta\in F_M$ with $\alpha+\beta\in\Phi$. We have to
  show that $\alpha+\beta\in F_M$. First let us show the claim if the
  root system $\Phi$ is a rank $2$ root system. In this case the claim
  will follow from the rank $2$ calculations
  in~\cite{MR1066560}. Assume $\Pi = \{\alpha_1,\alpha_2\}$. Assume
  first that we have $\alpha\in\Pi$ and $\beta\in \Phi^+$. We show
  below that we can always reduce to this situation. We can assume
  $\alpha=\alpha_1$ by renumbering if neccesary. We now have 5
  possibilites:

  Case 0) $(\alpha_1,\alpha_2)=0$ is clear.

  Case 1): $(\alpha_1|\alpha_2)=-1$. The only possibility for
  $\beta\in \Phi^+$ such that $\alpha+\beta$ is a root is
  $\beta=\alpha_2$. Set $E_{\alpha+\beta} = T_{s_\beta}(E_{\alpha})$
  then Lusztig shows in~\cite[section 5.5]{MR1066560} that
  \begin{equation*}
    E_{\alpha+\beta}^{(k)} = \sum_{t=0}^k (-1)^{t}q^{-t} E_\beta^{(k-t)}E_\alpha^{(k)}E_\beta^{(t)}.
  \end{equation*}
  The difference in the definition of the braid operators
  between~\cite{Jantzen} and~\cite{MR1066560} means that we have to
  multiply the formula in~\cite{MR1066560} by $(-1)^{k}$ since (using
  the notation of~\cite{MR1066560}) $E_{12} = -E_{\alpha+\beta}$.  Let
  $m\in M$. Then there exists a $T\in \setN$ such that
  $E_\beta^{(t)}m=0$ for $t\geq T$ since $M$ is $\beta$-finite. Let
  $m_t = E_\beta^{(t)}m$, $t=0,1,\dots,T$. For each $m_t$ there is a
  $K_t\in \setN$ such that $E_\alpha^{(k)}m_t = 0$ for $k\geq K_t$
  since $M$ is $\alpha$-finite. Set $K=\max\{T,K_0,\dots,K_T\}$ then
  the above identity shows that $E_{\alpha+\beta}^{(k)}m = 0$ for
  $k\geq K$

  Case 2): $\left<\alpha_1,\alpha_2^\vee\right> = -2$. In this case
  $\beta = \alpha_2$ is the only possibility to choose
  $\beta\in\Phi^+$ such that $\alpha+\beta\in\Phi$. Set
  $E_{\alpha+\beta} = T_\alpha(E_\beta)$ then by~\cite[section
  5.5]{MR1066560}:
  \begin{equation*}
    E_{\alpha+\beta}^{(k)} = \sum_{t=0}^k (-1)^{t}q^{-2t} E_\alpha^{(k-t)}E_\beta^{(k)}E_\alpha^{(t)}
  \end{equation*}
  and the same argument as above works.
  
  Case 3): $\left<\alpha_2,\alpha_1^\vee\right> = -2$ and $\beta=
  \alpha_2$. Set $E_{\alpha+\beta} = T_{\beta}(E_\alpha)$ then
  \begin{equation*}
    E_{\alpha+\beta}^{(k)} = \sum_{t=0}^k (-1)^{t}q^{-2t} E_\beta^{(k-t)}E_\alpha^{(k)}E_\beta^{(t)}
  \end{equation*}
  and the argument follows like in case 1) and 2).

  Case 4): $\left<\alpha_2,\alpha_1^\vee\right> = -2$ and $\beta=
  \alpha_1 + \alpha_2$. In this case set $E_\beta =
  E_{\alpha_1+\alpha_2}= T_{\alpha_2}(E_{\alpha_1})$ and
  $E_{\alpha+\beta} = E_{2\alpha_1+\alpha_2}
  =T_{\alpha_2}T_{\alpha_1}(E_{\alpha_2})$. We want a property similar
  to the one in the other cases. We want to show that there exists
  $c_t\in \setQ(q)$ such that
  \begin{equation*}
    E_{2\alpha_1+\alpha_2}^{(k)} = \sum_{t=0}^k c_t E_{\alpha_1}^{(k-t)}E_{\alpha_1+\alpha_2}^{(k)} E_{\alpha_1}^{(t)}.
  \end{equation*}
  We will use notation like in~\cite{MR1066560} so set
  $E_1=E_{\alpha_1}$, $E_{12}=E_{\alpha_1+\alpha_2}$ and $E_{112}=
  E_{2\alpha_1+\alpha_2}$. Let $k\in \setN$. By 5.3 (h)
  in~\cite{MR1066560}
  \begin{equation*}
    E_1^{(k)}E_{12}^{(k)} = (-1)^{k}q^k \prod_{i=1}^k (q^{2i}+1) E_{112}^{(k)} + \sum_{s=0}^{k-1} (-1)^s q^{s-s(k-s)-s(t-s)}\left( \prod_{i=1}^s (q^{2i}+1) \right) E_{12}^{(k-s)}E_{112}^{(s)} E_{1}^{(k-s)}
  \end{equation*}
  so
  \begin{equation*}
    E_{112}^{(k)} = (-1)^k c
    \left( E_1^{(k)}E_{12}^{(k)} - \sum_{s=0}^{k-1} (-1)^s q^{s-s(k-s)-s(t-s)}\left( \prod_{i=1}^s (q^{2i}+1) \right) E_{12}^{(k-s)}E_{112}^{(s)} E_{1}^{(k-s)}\right)
  \end{equation*}
  where $c=\left(q^k \prod_{i=1}^k (q^{2i}+1)\right)\inv$.
  
  We will show by induction over $s<k$ that there exists $a_i\in
  \setQ(q)$ such that
  \begin{equation*}
    E_{12}^{(k-s)}E_{112}^{(s)}E_1^{(k-s)} = \sum_{i=0}^s a_i E_1^{(i)} E_{12}^{(k)} E_{1}^{(k-i)}.
  \end{equation*}
  The induction start $s=0$ is obvious. Now observe that again from
  5.3 (h) in~\cite{MR1066560} we have for $s<k$:
  \begin{align*}
    E_1^{(s)} E_{12}^{(k)} =& (-1)^s q^{s-s(k-s)}\prod_{i=1}^s
    (q^{2i}+1) E_{12}^{(k-s)} E_{112}^{(s)}
    \\
    &+ \sum_{n=0}^{s-1} (-1)^n q^{n-n(s-n)-n(k-n)}\left( \prod_{i=1}^n
      (q^{2i}+1) \right) E_{12}^{(k-n)} E_{112}^{(n)} E_{1}^{(s-n)}.
  \end{align*}
  So
  \begin{equation*}
    E_{12}^{(k-s)} E_{112}^{(s)} = (-1)^s \left(q^{s-s(k-s)}\prod_{i=1}^s (q^{2i}+1)\right)\inv \left( E_1^{(s)} E_{12}^{(k)} - \sum_{n=0}^{s-1} (-1)^n b_n E_{12}^{(k-n)} E_{112}^{(n)} E_{1}^{(s-n)}\right)
  \end{equation*}
  where $b_n\in \setQ(q)$ are the coefficients above. Hence
  \begin{equation*}
    E_{12}^{(k-s)} E_{112}^{(s)} E_1^{(k-s)} = (-1)^s b E_1^{(s)} E_{12}^{(k)}E_1^{(k-s)} + \sum_{n=0}^{s-1} (-1)^{s+n}b_n' E_{12}^{(k-n)} E_{112}^{(n)} E_{1}^{(k-n)}
  \end{equation*}
  for some coefficients $b$ and $b_n' \in\setQ(q)$. This identity
  completes the induction over $s$.
  
  So to sum up we have proven that there exists $c_t\in \setQ(q)$ such
  that
  \begin{equation*}
    E_{2\alpha_1+\alpha_2}^{(k)} = \sum_{t=0}^k c_t E_{\alpha_1}^{(k-t)}E_{\alpha_1+\alpha_2}^{(k)} E_{\alpha_1}^{(t)}.
  \end{equation*}
  (Note for later use in the root of unity case that the $c_t$ are in
  the localization of $\setZ[q,q\inv]$ in the elements $(q^{2i}+1)$
  for $i\in\setN$ which are nonzero unless $q$ is an $l$th root of
  unity with $l$ even).  Now the proof goes as above.

  The above 5 cases are the only possible cases with the above
  assumptions since we have excluded $G_2$.

  We will now show how to reduce the problem to rank $2$.  Assume
  $\beta,\gamma\in F_M$ and $\beta+\gamma \in \Phi$. We will first
  show:
  
  \begin{itemize}
  \item There exists a $w\in W$ such that $w(\beta)\in\Pi$ and
    $w(\gamma)\in \Phi^+$.
  \end{itemize}
  
  Let $w_0=s_{i_1}\cdots s_{i_N}$ be a reduced expression and let
  $\beta_j = s_{i_1}\cdots s_{i_{j-1}}(\alpha_{i_j})$. Then $\Phi^+ =
  \{\beta_1,\dots,\beta_N\}$. Assume first that both $\beta$ and
  $\gamma$ are positive. Then $\beta=\beta_j$ and $\gamma=\beta_r$ for
  some $j$ and $r$. Without loss of generality we can assume
  $j<r$. Then we can set $w=s_{i_{j-1}}\cdots s_{i_{1}}$ in this
  case. If $\beta$ and $\gamma$ are both negative then $w_0(\beta)$
  and $w_0(\gamma)$ are both positive and we can do as before. Assume
  $\beta<0$ and $\gamma>0$. Assume $\beta=-\beta_j$ and $\gamma=
  \beta_r$ for some $j$ and $r$. Without loss of generality we can
  assume $j<r$. Then set $w=s_{i_{j}}\cdots s_{i_1}$. The claim has
  been shown.

  Next we will show:
  \begin{itemize}
  \item There exists a $w\in W$ such that $w(\beta)$ and $w(\gamma)$
    is contained in a rank $2$ subsystem of the roots.
  \end{itemize}
  If $(\beta|\gamma)<0$ then there exists a simple system $\Pi'$ of
  $\Phi$ such that $\beta$ and $\gamma$ are in $\Pi'$. But since all
  simple system of a root system are $W$ conjugate the claim
  follows. Assume $(\beta|\gamma)\geq 0$. Then
  $\left<\beta+\gamma,\gamma^\vee\right> \geq
  \left<\gamma|\gamma^\vee\right> = 2$ so $s_\gamma( \beta + \gamma) =
  \beta + \gamma - \left<\beta+\gamma,\gamma^\vee\right>\gamma \leq
  \beta - \gamma$. So $\beta-\gamma$ is a root in this case. Since we
  have excluded $G_2$ this means that the $\gamma$ string through
  $\beta$ is $\beta-\gamma,\beta,\beta+\gamma$ and therefore
  $\left<\beta+\gamma,\gamma^\vee\right> = 2$ or equivalently $\left<
    \beta,\gamma^\vee\right> = 0$. So
  $(\beta-\gamma|\gamma)=-(\gamma|\gamma)<0$. Hence there is a simple
  system of roots $\Pi'$ such that $\gamma,\beta-\gamma\in \Pi'$. So
  there exists $w$ such that $w(\gamma)$ and $w(\beta-\gamma)$ are
  simple roots. Since $w(\beta) = w(\gamma) + w(\beta-\gamma)$ we see
  that $w(\beta)$ and $w(\gamma)$ are contained in a rank $2$
  subsystem of $\Phi$.  So the second claim is proven.
  
  Note that ${^w}M$ is $w(\beta)$ and $w(\gamma)$ finite: Since $\wt
  {^w}M = w(\wt M)$ we have that a $\mu \in \wt {^w}M$ is of the form
  $\mu = w(\lambda)$ for some $\lambda \in \wt M$. Now $q^{\setN
    w(\beta)}\mu \cap \wt {^w}M = w( q^{\setN \beta}\lambda \cap \wt
  M)$ is finite because $M$ was $\beta$-finite. All in all we get that
  for some $w$ we have $w(\beta+\gamma)\in F_{{^w}M}$. But since
  $F_{{^w}M}=w(F_M)$ this shows that $\beta+\gamma\in F_M$.
\end{proof}

Let $L$ be a simple module. Since $F_L$ and $T_L$ are both closed
subsets of $\Phi$ we get from~\cite[Lemma 4.16]{Fernando} that $P_L:=
F_L \cup T_L^s$ is a parabolic subset of the roots - i.e. $P_L \cup
(-P_L) = \Phi$ and $P_L$ is a closed subset of $\Phi$.

Since $P_L \cup (-P_L) = \Phi$ we must have for some $w\in W$, $\Phi^+
\subset w(P_L)$. From now on we will assume $\Phi^+ \subset P_L$ since
otherwise we can just describe the module ${^w}L$ and then untwist
once we have described this module.  So we assume $P_L = \Phi^+ \cup
\left<\Pi'\right>$ where $\Pi' \subset \Pi$ and where
$\left<\Pi'\right>$ denotes the subset of $\Phi$ generated by $\Pi'$,
i.e. $\left<\Pi'\right>=\setZ\Pi' \cap \Phi$.

Let $\mathfrak{p}$ be the parabolic Lie algebra corresponding to $P_L$
i.e. $\mathfrak{p} = \mathfrak{h}\oplus \bigoplus_{\beta\in P_L}
\mathfrak{g}_\beta$ and let $\mathfrak{l}$ and $\mathfrak{u}$ be the
Levi part and the nilpotent part of $\mathfrak{p}$ respectively
i.e. $\mathfrak{l} = \mathfrak{h}\oplus \bigoplus_{\beta\in P_L^s}
\mathfrak{g}_\beta$ and $\mathfrak{u} = \bigoplus_{\beta\in
  P_L\backslash P_L^s} \mathfrak{g}_\beta$. We can define
$U_q(\mathfrak{p})$, $U_q(\mathfrak{l})$ and
$U_q(\mathfrak{u})$. Furthermore we can define $U_q(\mathfrak{u}^-)$
where $\mathfrak{u^-}$ is the nilpotent part of the opposite parabolic
$\mathfrak{p}^-$ corresponding to $(-P_L)$.  We have
$U_q(\mathfrak{p}) = U_q(\mathfrak{l}) U_q(\mathfrak{u})$ and
$U_q(\mathfrak{g}) = U_q(\mathfrak{u^-}) U_q(\mathfrak{p})$.

Here is how we define the above subalgebras: (Defined like
in~\cite{deg_of_parabolic}) Assume $P_L = \Phi^+ \cup \left< \Pi'
\right>$. Let $w_0^{\mathfrak{l}}$ be the longest element in the Weyl
group $W^{\mathfrak{l}}$ corresponding to $\Pi'$. Let $w_0$ be the
longest element in $W$. Set
$\bar{w}=w_0(w_0^{\mathfrak{l}})\inv$. Choose a reduced expression
$w_0=s_{j_1}\cdots s_{j_k}s_{i_1}\cdots s_{i_h}$ such that
$w_0^\mathfrak{l} = s_{i_1}\cdots s_{i_h}$. Let
$\{E_\beta,F_\beta|\beta\in\Phi^+\}$ be the root vectors defined by
this reduced expression.

Set
\begin{align*}
  \beta_t^1 &= \beta_{t+k} = {\bar{w}}{s_{i_1}}\cdots
  {s_{i_{t-1}}}(\alpha_{i_t}), \quad t = 1,\dots,h
  \\
  \beta_t^2 &= \beta_t = {s_{j_1}}\cdots {s_{j_{t-1}}}(\alpha_{j_t}), \quad
  t = 1,\dots, k.
\end{align*}
This means that
\begin{align*}
  F_{\beta_t^1} &= T_{\bar{w}}T_{s_{i_1}}\cdot
  T_{s_{i_{t-1}}}(F_{\alpha_{i_t}}), \quad t = 1,\dots,h
  \\
  F_{\beta_t^2} &= T_{s_{j_1}}\cdot T_{s_{j_{t-1}}}(F_{\alpha_{j_t}}), \quad
  t = 1,\dots, k
\end{align*}
and similarly for the $E$'s.

We define
\begin{equation*}
  U_q(\mathfrak{p}) = \left< E_{\beta_j} , K_\mu, F_{\beta_{i}^1} \right>_{j=1,\dots,N,\mu\in Q,i=1,\dots h},
\end{equation*}
\begin{equation*}
  U_q(\mathfrak{l}) = \left< E_{\beta_i^1} , K_\mu, F_{\beta_{i}^1} \right>_{\mu\in Q,i=1,\dots h}
\end{equation*}
and
\begin{equation*}
  U_q(\mathfrak{u}) = \left< E_{\beta_i^2} \right>_{i=1,\dots,k}.
\end{equation*}

Similarly we define $U_q(\mathfrak{u}^-) = \left< F_{\beta_i^2}
\right>_{i=1,\dots,h}$. All of these are subalgebras of
$U_q(\mathfrak{g})$ are independent of the chosen reduced expression
of $w_0$ and $w_0^{\mathfrak{l}}$. Furthermore $U_q(\mathfrak{p})$ and
$U_q(\mathfrak{l})$ are Hopf subalgebras of $U_q(\mathfrak{g})$ as
stated in~\cite[Proposition~5 and Lemma~2]{deg_of_parabolic}.

There is a $Q$ grading on $U_q$ with $\deg E_\alpha=\alpha$, $\deg
F_\alpha = -\alpha$ and $\deg K_\beta^{\pm 1} = 0$ as described in
e.g.~\cite[section 4.7]{Jantzen}. This induces a grading on
$U_q^{\pm}$ and on $U_q(\mathfrak{u})$ and $U_q(\mathfrak{u}^-)$.  We
will define $U_q(\mathfrak{u})^{>0}$ and $U_q(\mathfrak{u}^-)^{<0}$ to
be the subalgebras consisting of elements with nonzero degree
(i.e. the augmentation ideals).

\begin{defn}
  Let $\mathfrak{p}$ be a standard parabolic sub Lie algebra of
  $\mathfrak{g}$ and let $\mathfrak{l}$, $\mathfrak{u}$ and
  $\mathfrak{u^-}$ be defined as above. Let $N$ be a
  $U_q(\mathfrak{l})$-module. We define
  \begin{equation*}
    \mathcal{M}(N) = U_q(\mathfrak{g}) \tensor_{U_q(\mathfrak{p})} N,
  \end{equation*}
  where $N$ is considered as a $U_q(\mathfrak{p})$-module with
  $U_q(\mathfrak{u})$ acting trivially, i.e. through the coidentity
  $\epsilon: U_q(\mathfrak{u})\to \setC$ sending everything of nonzero
  degree to zero.
\end{defn}

\begin{defn}
  If $M$ is a $U_q(\mathfrak{g})$-module we define
  \begin{equation*}
    M^{\mathfrak{u}} = \{ m\in M | x m = \epsilon(x)m, \, x\in U_q(\mathfrak{u}) \}.
  \end{equation*}
\end{defn}

\begin{prop}
  \label{prop:6}
  Let $M$ be a $U_q(\mathfrak{g})$-module. $M^{\mathfrak{u}}$ is a
  $U_q(\mathfrak{l})$-module.
\end{prop}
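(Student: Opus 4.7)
The plan is to reduce the statement to a commutation identity inside $U_q(\mathfrak{p})$. For $m \in M^{\mathfrak{u}}$ and $u \in U_q(\mathfrak{l})$ I need $x(um) = \epsilon(x)(um)$ for every $x \in U_q(\mathfrak{u})$. The scalar part of $x$ gives no condition, so after decomposing $U_q(\mathfrak{u}) = \setC \cdot 1 \oplus U_q(\mathfrak{u})^{>0}$ it is enough to show $(xu)m = 0$ for every $x \in U_q(\mathfrak{u})^{>0}$. This will follow from the purely algebraic inclusion
\[
  U_q(\mathfrak{u})^{>0} \cdot U_q(\mathfrak{l}) \subseteq U_q(\mathfrak{l}) \cdot U_q(\mathfrak{u})^{>0}
\]
inside $U_q(\mathfrak{p})$, because then $xu = \sum_i u_i y_i$ with $u_i \in U_q(\mathfrak{l})$ and $y_i \in U_q(\mathfrak{u})^{>0}$ yields $(xu)m = \sum_i u_i(y_i m) = 0$.

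To establish the inclusion I would use the $Q$-grading together with the PBW decomposition $U_q(\mathfrak{p}) = U_q(\mathfrak{l}) \cdot U_q(\mathfrak{u})$ coming from the chosen reduced expression of $w_0$. Splitting into $Q$-homogeneous summands, I may assume $x$ and $u$ are homogeneous of weights $\nu$ and $\mu$ respectively. The product $xu$ then has weight $\nu+\mu$ and decomposes as $xu = \sum_j l_j v_j$ with $l_j \in U_q(\mathfrak{l})$ and $v_j \in U_q(\mathfrak{u})$ each homogeneous and $\operatorname{wt}(l_j)+\operatorname{wt}(v_j)=\nu+\mu$. The claim is that every $v_j$ actually lies in $U_q(\mathfrak{u})^{>0}$: if some $v_j$ were a nonzero scalar, then $\operatorname{wt}(l_j)=\nu+\mu$ would be forced to lie in $\langle \Pi' \rangle$ because $l_j\in U_q(\mathfrak{l})$, and this is precisely what one can rule out by a weight count.

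The main obstacle is thus the weight-exclusion statement $\nu + \mu \notin \langle \Pi' \rangle$. Since $\mu \in \langle \Pi' \rangle \subseteq \setZ \Pi'$, this reduces to showing $\nu \notin \setZ \Pi'$. Now $\nu$ is a positive integer combination of roots in $\Phi^+ \setminus \langle \Pi' \rangle$, and every such root has, when expanded in the basis $\Pi$, at least one strictly positive coefficient on some $\alpha \in \Pi \setminus \Pi'$. Because all positive roots have non-negative simple-root coefficients, these contributions cannot cancel, so $\nu$ has a strictly positive coefficient on some $\alpha \in \Pi \setminus \Pi'$ and therefore does not lie in $\setZ \Pi'$. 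Once this combinatorial exclusion is in place the weight argument above goes through verbatim, giving the desired inclusion and hence the proposition.
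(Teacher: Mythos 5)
Your proof is correct and uses essentially the same approach as the paper: the key point in both is the grading argument showing $U_q(\mathfrak{u})^{>0}\,U_q(\mathfrak{l}) \subset U_q(\mathfrak{l})\,U_q(\mathfrak{u})^{>0}$ by ruling out, on weight grounds, any scalar component in the $U_q(\mathfrak{u})$-factor of the PBW decomposition. The paper's version is terser (it simply asserts the degree cannot lie in $\setZ\Pi'$); you spell out the same weight-count more explicitly, which is a reasonable thing to do.
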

\begin{proof}
  We will show that for $u\in U_q(\mathfrak{l})$,
  $U_q(\mathfrak{u})^{>0} u \cap U_q(\mathfrak{g})
  U_q(\mathfrak{u})^{>0} \neq \emptyset$. This is true by simple
  grading considerations. We know that $U_q(\mathfrak{u})^{>0}u
  \subset U_q(\mathfrak{l}) U_q(\mathfrak{u}) =
  U_q(\mathfrak{l})U_q(\mathfrak{u})^{>0}+U_q(\mathfrak{l})$. But the
  degree of a homogeneous element $u'u\in U_q^-$ with $u'\in
  U_q(\mathfrak{u})^{>0}$ cannot be in $\setZ \Pi'$ since that would
  mean $u' \in U_q(\mathfrak{l})$. So $U_q(\mathfrak{u})^{>0}u \subset
  U_q(\mathfrak{l})U_q(\mathfrak{u})^{>0}$.
\end{proof}

\begin{prop}
  \label{prop:7}
  Let $N$ be a $U_q(\mathfrak{l})$-module and let $M$ be a
  $U_q(\mathfrak{g})$-module. There are natural vector space
  isomorphisms
  \begin{equation*}
    \Phi = \Phi_{M,N}: \Hom_{U_q(\mathfrak{g})}(\mathcal{M}(N),M) \iso \Hom_{U_q(\mathfrak{l})}(N,M^{\mathfrak{u}}).
  \end{equation*}
\end{prop}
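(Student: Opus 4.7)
The plan is to prove this as a standard Frobenius reciprocity / tensor--Hom adjunction, adapted to the observation (essentially Proposition~\ref{prop:6}) that a $U_q(\mathfrak{l})$-linear map into $M^{\mathfrak{u}}$ is the same as a $U_q(\mathfrak{p})$-linear map into $M$, because $U_q(\mathfrak{p}) = U_q(\mathfrak{l}) U_q(\mathfrak{u})$ and $U_q(\mathfrak{u})$ acts on $N$ through the counit $\epsilon$.

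First I would define the forward map $\Phi$. Given $f\in \Hom_{U_q(\mathfrak{g})}(\mathcal{M}(N),M)$, set $\Phi(f)(n) = f(1\tensor n)$ for $n\in N$. To see that the image lies in $M^{\mathfrak{u}}$, for any $x\in U_q(\mathfrak{u})$ compute
\begin{equation*}
  x\cdot f(1\tensor n) = f(x\tensor n) = f(1\tensor \epsilon(x)n) = \epsilon(x)f(1\tensor n),
\end{equation*}
using that $x\tensor n = 1\tensor xn = 1\tensor \epsilon(x)n$ in the balanced tensor product since $x\in U_q(\mathfrak{p})$ acts on $N$ through $\epsilon$. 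Linearity over $U_q(\mathfrak{l})\subset U_q(\mathfrak{p})$ is immediate from $U_q(\mathfrak{g})$-linearity of $f$.

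Next I would construct the inverse $\Psi$. Given $g\in \Hom_{U_q(\mathfrak{l})}(N,M^{\mathfrak{u}})$, first extend $g$ to a $U_q(\mathfrak{p})$-linear map $\tilde g\colon N\to M$ by declaring $U_q(\mathfrak{u})$ to act via $\epsilon$ on the source; this is consistent on $N$ by hypothesis and consistent on the image because $g(N)\subset M^{\mathfrak{u}}$. Then by the universal property of the tensor product (or equivalently, by the standard adjunction $U_q(\mathfrak{g})\tensor_{U_q(\mathfrak{p})}(-)\dashv \operatorname{res}^{U_q(\mathfrak{g})}_{U_q(\mathfrak{p})}$) the formula
\begin{equation*}
  \Psi(g)(u\tensor n) = u\cdot \tilde g(n), \qquad u\in U_q(\mathfrak{g}),\ n\in N,
\end{equation*}
gives a well-defined $U_q(\mathfrak{g})$-linear map $\mathcal{M}(N)\to M$: well-definedness over $U_q(\mathfrak{p})$ amounts to $up\tensor n - u\tensor pn\mapsto u\cdot(\tilde g(pn) - p\cdot\tilde g(n)) = 0$ for $p\in U_q(\mathfrak{p})$, which is exactly the $U_q(\mathfrak{p})$-linearity of $\tilde g$.

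Finally I would check that $\Phi$ and $\Psi$ are mutually inverse and natural in $M$ and $N$. In one direction, $\Phi(\Psi(g))(n) = \Psi(g)(1\tensor n) = \tilde g(n) = g(n)$. In the other, $\Psi(\Phi(f))(u\tensor n) = u\cdot f(1\tensor n) = f(u\tensor n)$, and since elements of the form $u\tensor n$ generate $\mathcal{M}(N)$ we conclude $\Psi(\Phi(f)) = f$. Naturality in both variables is a straightforward diagram chase. There is no serious obstacle here: the only point that deserves attention is the extension of $g$ from $U_q(\mathfrak{l})$-linearity to $U_q(\mathfrak{p})$-linearity, which relies on the triangular factorization $U_q(\mathfrak{p}) = U_q(\mathfrak{l})U_q(\mathfrak{u})$ and the fact that $U_q(\mathfrak{u})$ acts on both $N$ and $g(N)\subset M^{\mathfrak{u}}$ through $\epsilon$.
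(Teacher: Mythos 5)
Your proof is correct and follows essentially the same route as the paper: define $\Phi(f)(n) = f(1\tensor n)$, define $\Psi(g)(u\tensor n) = u\cdot g(n)$, and check they are mutually inverse. The paper simply states the two formulas and leaves the verifications (image in $M^{\mathfrak{u}}$, well-definedness over $U_q(\mathfrak{p})$, mutual inverses) as ``easy to check,'' whereas you have written them out; the content is the same.
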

\begin{proof}
  If $f:\mathcal{M}(N)\to M$ is a $U_q(\mathfrak{g})$-module map then
  $\Phi(f):N\to M^{\mathfrak{u}}$ is defined by
  $\Phi(f)=f^{\mathfrak{u}}\circ (1\tensor \id_N)$, where $1 \tensor
  \id_N : N \to \mathcal{M}(N)^{\mathfrak{u}}$ is given by $n\mapsto
  1\tensor n$ and $f^{\mathfrak{u}}:\mathcal{M}(N)^{\mathfrak{u}}\to
  M^{\mathfrak{u}}$ is the restriction of $f$ to
  $\mathcal{M}(N)^{\mathfrak{u}}$.
  
  The inverse map $\Psi$ is given by: For $g:N\to M^{\mathfrak{u}}$,
  $\Psi(g)(u\tensor n) = u g(n)$. It is easy to check that $\Phi$ and
  $\Psi$ are inverse to each other.
\end{proof}

\begin{prop}
  \label{prop:8}
  If $X$ is a simple $U_q(\mathfrak{l})$-module then $\mathcal{M}(X)$
  has a unique simple quotient $L(X)$.
\end{prop}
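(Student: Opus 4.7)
The plan is the standard generalized Verma module argument, adapted to this quantum parabolic setup. First I would invoke the factorization $U_q(\mathfrak{g}) = U_q(\mathfrak{u}^-) U_q(\mathfrak{p})$ recorded in the excerpt, together with the PBW theorem for quantum groups, to identify $\mathcal{M}(X) \iso U_q(\mathfrak{u}^-) \tensor_\setC X$ as a vector space. Under this identification, the subspace $1 \tensor X$ is stable under $U_q(\mathfrak{l})$ (since $U_q(\mathfrak{u})$ acts on $X$ through $\epsilon$), inherits the original $U_q(\mathfrak{l})$-module structure of $X$, and generates $\mathcal{M}(X)$ as a $U_q(\mathfrak{g})$-module.

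Given a proper $U_q(\mathfrak{g})$-submodule $N \subsetneq \mathcal{M}(X)$, the intersection $N \cap (1 \tensor X)$ is a $U_q(\mathfrak{l})$-submodule of $1 \tensor X \iso X$, hence by simplicity of $X$ equals either $0$ or all of $1 \tensor X$; the second option would force $N \supseteq U_q(\mathfrak{g}) \cdot (1 \tensor X) = \mathcal{M}(X)$, contradicting properness, so $N \cap (1 \tensor X) = 0$. To extract a unique maximal proper submodule I need this property to survive arbitrary sums, and for this I would use the $Q$-grading: since $X$ is a weight module (as is implicit in the paper's setting) and simple over $U_q(\mathfrak{l})$, all its weights lie in a single $\setZ \Pi'$-coset, while every $\beta \in P_L \setminus P_L^s$ has a strictly positive coefficient on some simple root outside $\Pi'$. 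Projecting $Q \to Q/\setZ \Pi'$, the weights of $U_q(\mathfrak{u}^-)^{<0} \tensor X$ are pushed strictly off the coset of $\wt X$, so $\mathcal{M}(X)_\lambda = (1 \tensor X)_\lambda$ for each $\lambda \in \wt X$. Consequently every proper submodule $N$ satisfies $N_\lambda = 0$ for $\lambda \in \wt X$, a condition preserved under arbitrary sums of submodules. Hence $N^* := \sum_{N \subsetneq \mathcal{M}(X)} N$ still avoids $1 \tensor X$, is proper, and is the unique maximal proper submodule; the quotient $L(X) := \mathcal{M}(X)/N^*$ is then the unique simple quotient.

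The main obstacle is precisely the weight-grading step in the uniqueness argument: on its own, knowing that each proper submodule meets $1 \tensor X$ trivially is insufficient, because intersection with a subspace does not commute with sums of submodules. The combinatorial fact $\setZ \Pi' \cap \setN(P_L \setminus P_L^s) = \{0\}$ is what salvages the argument, and it implicitly relies on $X$ being a weight module so that $\mathcal{M}(X)$ decomposes into weight spaces and every $U_q(\mathfrak{g})$-submodule is a direct sum of its weight spaces.
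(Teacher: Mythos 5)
Your proof takes essentially the same route as the paper (which in turn cites Fernando's Proposition~3.3): identify $\mathcal{M}(X) \iso U_q(\mathfrak{u}^-) \tensor X$, show every proper submodule meets $1 \tensor X$ trivially, and let $L(X)$ be the quotient by the sum $N$ of all proper submodules. The weight-grading detail you supply --- that $\mathcal{M}(X)_\lambda = (1 \tensor X)_\lambda$ for $\lambda \in \wt X$ via $\setZ\Pi' \cap \setN(P_L\setminus P_L^s)=\{0\}$, so that ``$M\cap(1\tensor X)=0$'' is equivalent to vanishing of weight spaces and hence passes to the sum --- is exactly what the paper's terse assertion ``$N$ is proper since $N\cap(1\tensor X)=0$'' leaves implicit, and you argue it correctly.
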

\begin{proof}
  The proof is exactly the same as the proof of Proposition 3.3
  in~\cite{Fernando}: Suppose $M$ is a submodule of
  $\mathcal{M}(X)$. If $0\neq v\in M\cap (1\tensor X)$ then $U_q v =
  U_q U_q(\mathfrak{l})v = U_q (1\tensor X) = \mathcal{M}(X)$ so
  $M\cap (1\tensor X)=0$ for every proper submodule $M$. Let $N$ be
  the sum of all proper submodules. $N$ is proper since $N\cap
  (1\tensor X)=0$ and maximal since it is the sum of all proper
  submodules.
\end{proof}

Let $\mathcal{F}(\mathfrak{l})$ denote the full subcategory of
$U_q(\mathfrak{l})$-modules that consists of modules that are finitely
generated over $U_q$ and are weight modules with finite dimensional
weight spaces.

\begin{prop}
  \label{prop:9}
  The maps $L:N \mapsto L(N)$ and $F:V\mapsto V^{\mathfrak{u}}$
  determine a bijective correspondence between the simple modules in
  $\mathcal{F}(\mathfrak{l})$ and the simple modules $M$ in
  $\mathcal{F}(\mathfrak{g})$ that have $M^{\mathfrak{u}}\neq 0$. $L$
  and $F$ are inverse to each other.
\end{prop}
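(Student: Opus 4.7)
The plan is to show that $L$ and $F$ are mutually inverse by verifying both $F\circ L\cong \id$ and $L\circ F\cong \id$. Well-definedness requires two checks: for simple $N\in\mathcal{F}(\mathfrak{l})$, the PBW-type decomposition $\mathcal{M}(N)\cong U_q(\mathfrak{u}^-)\tensor N$ as vector spaces shows $\mathcal{M}(N)$ is cyclic with finite-dimensional weight spaces (a simple $U_q(\mathfrak{l})$-module is $\xi$-homogeneous for any functional $\xi\colon Q\to \setZ$ vanishing on $\Pi'$, which bounds the relevant $U_q(\mathfrak{u}^-)$ contributions), hence its simple quotient $L(N)$ from Prop.~\ref{prop:8} lies in $\mathcal{F}(\mathfrak{g})$; and $M^{\mathfrak{u}}$ is a $U_q(\mathfrak{l})$-module by Prop.~\ref{prop:6}.

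The core technical ingredient I intend to establish is the following \textbf{recoverability lemma}: for any $M\in\mathcal{F}(\mathfrak{g})$ and any $U_q(\mathfrak{l})$-submodule $N'\subseteq M^{\mathfrak{u}}$,
\begin{equation*}
  (U_q(\mathfrak{g})\cdot N')\cap M^{\mathfrak{u}} = N'.
\end{equation*}
Granting the lemma, both halves of the bijection follow formally. For $F\circ L\cong \id$: the proof of Prop.~\ref{prop:8} shows that $N$ embeds into $L(N)^{\mathfrak{u}}$ via the quotient map, and simplicity of $L(N)$ gives $U_q(\mathfrak{g})\cdot N = L(N)$, so the lemma (with $N' = N$) forces $L(N)^{\mathfrak{u}} = L(N)\cap L(N)^{\mathfrak{u}} = N$. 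For $L\circ F\cong \id$: given simple $M$ with $M^{\mathfrak{u}}\neq 0$ and any nonzero $U_q(\mathfrak{l})$-submodule $N'\subseteq M^{\mathfrak{u}}$, simplicity of $M$ gives $U_q(\mathfrak{g})\cdot N' = M$, and the lemma yields $N' = M^{\mathfrak{u}}$; hence $M^{\mathfrak{u}}$ is $U_q(\mathfrak{l})$-simple (and lies in $\mathcal{F}(\mathfrak{l})$, being cyclic with weight spaces contained in those of $M$). Then by Frobenius reciprocity (Prop.~\ref{prop:7}), the identity $M^{\mathfrak{u}}\to M^{\mathfrak{u}}$ corresponds to a $U_q(\mathfrak{g})$-map $\mathcal{M}(M^{\mathfrak{u}})\to M$ whose image contains $M^{\mathfrak{u}}\neq 0$ and hence equals $M$; so $M$ is a simple quotient of $\mathcal{M}(M^{\mathfrak{u}})$ and $M\cong L(M^{\mathfrak{u}})$ by Prop.~\ref{prop:8}.

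The main obstacle is the proof of the lemma. Using the triangular decomposition $U_q(\mathfrak{g}) = U_q(\mathfrak{u}^-)U_q(\mathfrak{l})U_q(\mathfrak{u})$ and the fact that $U_q(\mathfrak{u})$ acts through the counit on $N'$, one reduces to $U_q(\mathfrak{g})\cdot N' = U_q(\mathfrak{u}^-)\cdot N' = N' + U_q(\mathfrak{u}^-)^{<0}\cdot N'$. Choosing $\xi\colon Q\to \setZ$ with $\xi|_{\Pi'}=0$ and $\xi(\alpha)>0$ for $\alpha\in\Pi\setminus\Pi'$, the two summands sit in distinct $\xi$-degrees (the second strictly smaller), and since $M^{\mathfrak{u}}$ is $\xi$-graded the lemma reduces to ruling out nonzero $\mathfrak{u}$-invariants inside $U_q(\mathfrak{u}^-)^{<0}\cdot N'$. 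This last step requires a careful PBW/commutation analysis: for a PBW monomial $F_{\gamma_1}^{(n_1)}\cdots F_{\gamma_r}^{(n_r)}n'$ with $\gamma_i\in\Phi^+\setminus\langle\Pi'\rangle$ and $n'\in N'$, applying suitable $E_\beta$'s for $\beta\in\mathfrak{u}$ peels off the $F$'s one at a time using the quantum commutation relations (of the type recalled in the proof of Prop.~\ref{prop:2} and developed further in the forthcoming Section~\ref{sec:u_a-calculations}), eventually producing a nonzero element of $N'$ that contradicts $\mathfrak{u}$-invariance.
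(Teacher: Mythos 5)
Your high-level reduction is sound: the ``recoverability lemma'' $(U_q(\mathfrak{g})\cdot N')\cap M^{\mathfrak{u}}=N'$, applied once with $M=L(N)$, $N'=N$ and once with $M$ simple, $N'\subseteq M^{\mathfrak{u}}$ arbitrary nonzero, does formally yield both $F\circ L\cong\id$ and $L\circ F\cong\id$ together with simplicity of $M^{\mathfrak{u}}$. This is a clean reorganization, and the grading reduction to $U_q(\mathfrak{u}^-)^{<0}\cdot N'\cap M^{\mathfrak{u}}=0$ is correct. However, the lemma as you have stated it --- for \emph{arbitrary} $M\in\mathcal{F}(\mathfrak{g})$ --- is false, and the proof you sketch for it cannot work.

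Concretely, take $\mathfrak{g}=\mathfrak{sl}_2$, $\mathfrak{p}=\mathfrak{b}$ (so $\mathfrak{l}=\mathfrak{h}$, $\Pi'=\emptyset$), $q$ generic, and let $M=\mathcal{M}(\setC_\lambda)$ be a reducible Verma module. Then $M^{\mathfrak{u}}=\ker E_\alpha$ contains both the highest weight vector $v_\lambda$ and the singular vector $F_\alpha^{(n)}v_\lambda$ generating the maximal submodule. With $N'=\setC v_\lambda$ one has $U_q\cdot N'=M$, so $(U_q\cdot N')\cap M^{\mathfrak{u}}=M^{\mathfrak{u}}\supsetneq N'$. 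The same example shows why the ``peel off $F$'s with commutation relations'' strategy must fail: applying $E_\alpha$ to the PBW monomial $F_\alpha^{(n)}v_\lambda$ yields zero precisely because the vector is singular in $M$ --- the commutation identities hold in $U_q$, but $M$ is a quotient of the free $U_q(\mathfrak{u}^-)$-module, so a nonzero monomial can evaluate to zero and its $E$-images can vanish without contradiction. Your argument would establish the lemma inside $\mathcal{M}(N')$ itself, but not in a general module $M$ receiving a map from it, and in particular it never invokes simplicity of $M$, which is the essential hypothesis.

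The paper's proof handles exactly this step by using simplicity of $V$ directly: given $0\neq u'v\in V^{\mathfrak{u}}$ with $u'\in (U_q(\mathfrak{u}^-)^{<0})_\gamma$ homogeneous and $v\in V^{\mathfrak{u}}$, simplicity produces $u\in U_q$ with $u\,u'v=v$; weight homogeneity forces $u\in (U_q)_{-\gamma}$, and since $-\gamma$ has a strictly positive component outside $\setZ\Pi'$, the triangular decomposition gives $(U_q)_{-\gamma}\subset U_q(\mathfrak{p}^-)U_q(\mathfrak{u})^{>0}$, whence $u\,u'v=0$ --- a contradiction. To repair your write-up, restrict the lemma to simple $M$ and replace the PBW-peeling argument with this simplicity-plus-degree argument (or, equivalently, adopt the paper's organization: first prove $V^{\mathfrak{u}}$ is simple for simple $V$, then use Frobenius reciprocity from Proposition~\ref{prop:7} for both directions).

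Two smaller remarks. First, your justification that $\mathcal{M}(N)$ has finite-dimensional weight spaces via $\xi$-homogeneity of a simple $U_q(\mathfrak{l})$-module is essentially the right idea, but it deserves to be spelled out with the fact that all weights of a simple $N$ lie in a single $q^{\setZ\Pi'}$-coset, so for a fixed weight $\lambda$ of $\mathcal{M}(N)$ only finitely many $\eta$ in the $\mathfrak{u}^-$-cone can satisfy $N_{q^\eta\lambda}\neq 0$. Second, your route to $L\circ F\cong\id$ via Frobenius reciprocity after establishing simplicity of $M^{\mathfrak{u}}$ matches the paper's final paragraph, so once the lemma is repaired the rest is faithful to the original.
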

The second part of the proof is just a quantum version of the proof of
Proposition~3.8 in~\cite{Fernando}. The first part is shown a little
differently here.
\begin{proof}
  First we will show that if $V$ is a simple
  $U_q(\mathfrak{g})$-module with $V^{\mathfrak{u}}\neq 0$ then
  $V^{\mathfrak{u}}$ is a simple $U_q(\mathfrak{l})$-module: Assume
  $0\neq V_1 \subset V^{\mathfrak{u}}$ is a
  $U_q(\mathfrak{l})$-submodule of $V^{\mathfrak{u}}$. We will show
  that $V_1=V^{\mathfrak{u}}$. Since $V$ is a simple
  $U_q(\mathfrak{g})$-module we have $V=U_q(\mathfrak{g})V_1$. Now as
  a vector space we have
  \begin{align*}
    V = U_q(\mathfrak{g})V_1 = U_q(\mathfrak{u^-}) U_q(\mathfrak{l})
    U_q(\mathfrak{u}) V_1 &= U_q(\mathfrak{u^-}) U_q(\mathfrak{l})
    (U_q(\mathfrak{u})^{>0}+\setC) V_1
    \\
    &= U_q(\mathfrak{u^-}) U_q(\mathfrak{l}) V_1
    \\
    &= U_q(\mathfrak{u^-}) V_1
    \\
    &= (U_q(\mathfrak{u^-})^{<0} + \setC)V_1
    \\
    &= U_q(\mathfrak{u}^-)^{<0}V_1 + V_1.
  \end{align*}
  
  We are done if we show $U_q(\mathfrak{u}^-)^{<0}V^{\mathfrak{u}}\cap
  V^{\mathfrak{u}}=0$. Observe that
  $U_q(\mathfrak{u}^-)^{<0}V^{\mathfrak{u}}$ is a $U_q(\mathfrak{l})$
  module since
  $U_q(\mathfrak{l})U_q(\mathfrak{u}^-)^{<0}=U_q(\mathfrak{u}^-)^{<0}U_q(\mathfrak{l})$. Assume
  $v\in V^{\mathfrak{u}}$ and assume we have a $u'\in
  U_q(\mathfrak{u^-})^{<0}$ such that $u'v \in V^{\mathfrak{u}}$. We
  can assume $u'\in (U_q(\mathfrak{u}^-)^{<0})_{\gamma}$ for some
  $\gamma\in Q$ and $v\in V_\mu$ for some $\mu\in X$. Assume $u'v\neq
  0$. Then since $V$ is simple there exists a $u\in U_q$ such that
  $uu' v = v$ but by weight considerations we must have $u\in
  (U_q)_{-\gamma}\subset U_q(\mathfrak{p}^-)U_q(\mathfrak{u})^{>0}$ so
  $u u'v = 0$ since $u'v\in V^\mathfrak{u}$. A contradiction.

  Now assume $N$ is a simple $U_q(\mathfrak{l})$
  module. $L(N)^{\mathfrak{u}}$ is simple by the above. Let $\Phi$ be
  the isomorphism from Proposition~\ref{prop:7} and consider
  $\Phi(p):N \to L(N)^{\mathfrak{u}}$ where $p:\mathcal{M}(N)\to L(N)$
  is the cannocial projection from $\mathcal{M}(N)$ to $L(N)$. Since
  $\Phi$ is an isomorphism the map $\Phi(p)$ is nonzero. Since $N$ is
  simple by assumption and $L(N)^{\mathfrak{u}}$ is simple by the
  above we get that $\Phi(p)$ is an isomorphism.

  Suppose $V$ is a simple $U_q(\mathfrak{g})$-module such that
  $V^{\mathfrak{u}}$ is nonzero. Let
  $f=\Phi\inv(\id):M(V^{\mathfrak{u}})\to V$ where $\id:
  V^{\mathfrak{u}}\to V^{\mathfrak{u}}$ is the identity map. Then $f$
  is nonzero and therefore surjective because $V$ is simple. But since
  $L(V^{\mathfrak{u}})$ is the unique simple quotient of
  $M(V^{\mathfrak{u}})$ we get $L(V^{\mathfrak{u}})=V$.
\end{proof}

Let $\mathfrak{p}$ be a standard parabolic subalgebra of
$\mathfrak{g}$ and define $U_q(\mathfrak{p})$, $\mathfrak{l}$,
$U_q(\mathfrak{l})$ etc. as above. Let $\Phi^{\mathfrak{l}}$ be the
roots corresponding to $\mathfrak{l}$ i.e. such that $\mathfrak{l} =
\mathfrak{h} \oplus \bigoplus_{\beta\in \Phi^{\mathfrak{l}}}
\mathfrak{g}_\beta$. Then for $\beta\in \Phi^{\mathfrak{l}}$ and a
$U_q(\mathfrak{l})$-module $M$ we define $\beta$-finite, $\beta$-free,
$M^{[\beta]}$ etc. as above. The definitions, lemmas and propositions
above still hold in this case as long as we require $\beta \in
\Phi^{\mathfrak{l}}$ so that we actually have root vectors
$E_\beta,F_\beta\in U_q(\mathfrak{l})$. We define $T_M:=\{\beta\in
\Phi^{\mathfrak{l}}| M^{[\beta]}=0\}$ and $F_M:=\{\beta\in
\Phi^{\mathfrak{l}}| M^{[\beta]}=M\}$ i.e. as before but only for
roots in $\Phi^{\mathfrak{l}}$.

By now we have reduced the problem of classifying simple modules in
$\mathcal{F}(\mathfrak{g})$ somewhat. If $L\in \mathcal{F}$ is a
simple module we know that there exists some $w$ such that
$\Phi^+\subset P_{{^w}L}$. Define $\mathfrak{l}$, $U_q(\mathfrak{l})$
from $L$ etc. as above, then $\Phi^{\mathfrak{l}} = \left< \Pi'
\right>=F_L^s \cup T_L^s$ where $\Pi'$ is the subset of simple roots
such that $P_L = \Phi^+ \cup \left< \Pi' \right>$. From the above we
get then that ${^w}L$ is completely determined by the simple
$U_q(\mathfrak{l})$-module $({^w}L)^{\mathfrak{u}}$. So we have
reduced the problem to looking at simple $U_q(\mathfrak{l})$-modules
$N$ satisfying $\Phi^{\mathfrak{l}} =F_N^s \cup T_N^s$.

We claim that $\Pi' = \Pi_{F_N^s}' \cup \Pi_{T_N^s}'$ such that $F_N^s
= \left< \Pi_{F_N}' \right>$ and $T_N^s = \left< \Pi_{T_N}' \right>$
and such that none of the simple roots in $\Pi_{F_N^s}'$ are connected
to any simple root from $\Pi_{T_N^s}'$. Suppose $\alpha \in F_N^s$ is
a simple root and suppose $\alpha'\in \Pi'$ is a simple root that is
connected to $\alpha$ in the Dynkin diagram. So $\alpha+\alpha'$ is a
root. There are two possibilities. Either $\alpha+\alpha' \in F_N$ or
$\alpha+\alpha'\in T_N$. If $\alpha+\alpha'\in F_N$: Since $F_N^s$ is
symmetric we have $-\alpha\in F_N^s$ and since $F_N$ is closed
$\alpha' = \alpha+\alpha' + (-\alpha) \in F_N$. If $\alpha+\alpha'\in
T_N$ and $\alpha' \in T_N$ then we get similarly $\alpha \in T_N$
which is a contradiction. So $\alpha'\in F_N$. We have shown that if
$\alpha\in F_N$ then any simple root connected to $\alpha$ is in $F_N$
also. So $F_N$ and $T_N$ contains different connected components of
the Dynkin diagram for $\Phi^\mathfrak{l}$.

Let $\tau = c(\mathfrak{l}) \oplus \mathfrak{g}_{F_N^s} \oplus
\mathfrak{h}_{F_N^s}$ and $\mathfrak{t} = \mathfrak{g}_{T_N^s}\oplus
\mathfrak{h}_{T_N^s}$. Define
\begin{equation*}
  U_q(\tau) = \left< E_\alpha,K_\alpha,K_\beta,F_\alpha\right>_{\alpha\in \Pi_{F_N^s}',\beta\in \Phi \backslash \Phi^\mathfrak{l}}
\end{equation*}
and
\begin{equation*}
  U_q(\mathfrak{t}) = \left< E_\alpha,K_\alpha,F_\alpha\right>_{\alpha\in \Pi_{T_N^s}'}.
\end{equation*}
Then by construction $U_q(\mathfrak{g}) \iso U_q(\tau) \tensor_\setC
U_q(\mathfrak{t})$ as a vector space via $u_1\tensor u_2 \mapsto
u_1u_2$ for $u_1\in U_q(\tau)$ and $u_2\in U_q(\mathfrak{t})$.

To continue we want to use a result similar to~\cite{Lemire} Theorem 1
which says that there is a 1-1 correspondence between simple
$U_q(\mathfrak{l})$-modules and simple $(U_q(\mathfrak{l}))_0$
modules. Since Lemire's result is for Lie algebras we will prove the
same for quantum group modules but the proofs are essentially the
same. In the following $\mathfrak{l}$ is the Levi part of some
standard parabolic subalgebra $\mathfrak{p}$ and $U_q(\mathfrak{l})$
is defined as above. Note in particular that the results work for
$\mathfrak{l}=\mathfrak{g}$ by choosing
$\mathfrak{p}=\mathfrak{g}$. For easier notation we will set $C_q :=
(U_q(\mathfrak{l}))_0$.

\begin{lemma}
  \label{lemma:1}
  Let $V$ be a simple $U_q(\mathfrak{l})$-module and $\lambda$ a
  weight of $V$. Then $V_\lambda$ is a simple $C_q$-module.
\end{lemma}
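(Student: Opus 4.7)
The plan is to mimic Lemire's argument, exploiting the $Q$-grading on $U_q(\mathfrak{l})$ together with the fact that (since we are in the non-root-of-unity case) distinct degrees produce distinct weights.

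First I would verify the easy half: that $V_\lambda$ is a $C_q$-submodule of the restricted $C_q$-module $V$. Since $C_q = (U_q(\mathfrak{l}))_0$ consists of elements commuting with every $K_\alpha$ in the sense of the $Q$-grading, any $u\in C_q$ satisfies $K_\alpha u = u K_\alpha$, so $u V_\lambda \subset V_\lambda$. This also shows that $V = \bigoplus_{\mu\in\wt V} V_\mu$ is a decomposition of $V$ as a $C_q$-module.

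For simplicity, I would fix a nonzero $v\in V_\lambda$ and an arbitrary $w\in V_\lambda$, and show $w\in C_q v$. Since $V$ is a simple $U_q(\mathfrak{l})$-module, $U_q(\mathfrak{l})v = V$, so there exists $u\in U_q(\mathfrak{l})$ with $uv=w$. Using the $Q$-grading $U_q(\mathfrak{l}) = \bigoplus_{\mu\in Q} U_q(\mathfrak{l})_\mu$ recalled just before the lemma, I decompose $u=\sum_\mu u_\mu$ with $u_\mu\in U_q(\mathfrak{l})_\mu$. A short computation with $K_\alpha$ shows $u_\mu v \in V_{q^\mu\lambda}$, where $(q^\mu\lambda)(K_\alpha) = q^{(\alpha|\mu)}\lambda(K_\alpha)$. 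Here I use that $q$ is not a root of unity: if $\mu\neq 0$ then the pairing $(\cdot|\cdot)$ is nondegenerate so $(\alpha|\mu)\neq 0$ for some simple $\alpha$, and hence $q^{(\alpha|\mu)}\neq 1$, giving $q^\mu\lambda\neq \lambda$. Thus the different $u_\mu v$ lie in pairwise distinct weight spaces, and the only one contributing to $w\in V_\lambda$ is $u_0 v$. Since $u_0\in U_q(\mathfrak{l})_0 = C_q$, we conclude $w = u_0 v \in C_q v$, proving $V_\lambda = C_q v$ and hence simplicity.

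There is no real obstacle here beyond bookkeeping; the only point requiring care is the assertion that distinct $\mu\in Q$ yield distinct weight characters $q^\mu\lambda$, which rests squarely on $q$ being generic and which is indeed the reason the corresponding statement in the root-of-unity setting will need a separate treatment.
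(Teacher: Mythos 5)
Your proof is correct and follows the same route as the paper: decompose $U_q(\mathfrak{l})v = \bigoplus_\nu U_q(\mathfrak{l})_\nu v$ via the $Q$-grading, note $U_q(\mathfrak{l})_\nu v\subset V_{q^\nu\lambda}$, and project onto the $\lambda$-weight space to isolate $U_q(\mathfrak{l})_0 v = C_q v$; you simply spell out the weight-separation step that the paper leaves implicit. One small correction to your closing remark: the paper reuses this lemma and its proof unchanged in the root-of-unity setting, and this is legitimate because $\mu\mapsto q^\mu\lambda$ remains injective there as well — a character $\lambda\in X=\Lambda_l\times\mathfrak{h}^*$ also records the values $\lambda\bigl({K_\alpha;0\brack l}\bigr)$, and by Lemma~\ref{lemma:28} one has $(q^{l\beta}\lambda)^1=\lambda^1+\beta$, so even when $q^{(\mu|\alpha)}=1$ for all $\alpha$ the second component of the weight moves; hence no separate treatment is needed.
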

\begin{proof}
  It is enough to show that for $v\in V_\lambda$ nonzero we have
  $V_\lambda = C_q v$ but this follows since $V_\lambda =
  (U_q(\mathfrak{l}) v)_\lambda = (\bigoplus_\nu U_q(\mathfrak{l})_\nu
  v)_\lambda = U_q(\mathfrak{l})_0 v$
\end{proof}

\begin{lemma}
  \label{lemma:2}
  Assume $V_1$ and $V_2$ are simple $U_q(\mathfrak{l})$-modules. Let
  $\lambda\in \wt V_1$ and assume $(V_1)_\lambda \iso (V_2)_\lambda$
  as $C_q$-modules. Then $V_1 \iso V_2$.
\end{lemma}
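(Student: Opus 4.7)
The plan is to apply the standard diagonal/graph trick. Let $\phi\colon (V_1)_\lambda \iso (V_2)_\lambda$ denote the given $C_q$-module isomorphism. Note that since $(V_1)_\lambda \neq 0$ we automatically have $\lambda \in \wt V_2$ as well. Consider $V_1 \oplus V_2$ as a $U_q(\mathfrak{l})$-module and form the graph
\begin{equation*}
  \Gamma = \{(v,\phi(v)) \mid v \in (V_1)_\lambda\} \subset (V_1\oplus V_2)_\lambda,
\end{equation*}
which is a $C_q$-submodule of the weight-$\lambda$ space. Let $W = U_q(\mathfrak{l})\cdot \Gamma$ be the $U_q(\mathfrak{l})$-submodule of $V_1\oplus V_2$ it generates.

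First I would identify $W_\lambda$. Any element of $W_\lambda$ is a sum of expressions $u\cdot (v,\phi(v))$ with $u \in U_q(\mathfrak{l})$, and contributing to the weight-$\lambda$ component forces the relevant parts of $u$ to lie in the weight-zero subalgebra, which is exactly $C_q = (U_q(\mathfrak{l}))_0$. Since $\phi$ is $C_q$-equivariant, such an action sends $(v,\phi(v))$ to $(cv,\phi(cv))$. Thus $W_\lambda = \Gamma$.

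Next I would consider the projection maps $p_i\colon W \to V_i$ for $i=1,2$; these are $U_q(\mathfrak{l})$-module homomorphisms. Each $p_i$ is nonzero (by construction $p_i(\Gamma) = (V_i)_\lambda \neq 0$), and since $V_i$ is simple, $p_i$ is surjective. To conclude, I would show each $p_i$ is also injective. The kernel $\ker p_1 = W \cap (0 \oplus V_2)$ is a $U_q(\mathfrak{l})$-submodule of $0 \oplus V_2$; its $\lambda$-weight component equals $\Gamma \cap (0 \oplus V_2) = 0$ by the explicit description of $\Gamma$. If $\ker p_1$ were nonzero, simplicity of $V_2$ would force $\ker p_1 = 0 \oplus V_2$, contradicting $(V_2)_\lambda \neq 0$. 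Hence $p_1$ (and symmetrically $p_2$) is an isomorphism, giving $V_1 \iso W \iso V_2$.

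No step presents a serious obstacle; the only point requiring care is verifying $W_\lambda = \Gamma$, which is where the definition $C_q = (U_q(\mathfrak{l}))_0$ and the $C_q$-linearity of $\phi$ are used in an essential way. Everything else is formal simplicity-of-submodules reasoning of the kind already appearing in the proof of Proposition~\ref{prop:9}.
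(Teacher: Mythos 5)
Your proof is correct, and it takes a genuinely different route from the paper's.

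The paper mimics Lemire's argument: fix $0 \neq v_i \in (V_i)_\lambda$, set $M_i = \Ann_{C_q}(v_i)$ (a maximal left ideal of $C_q$ by Lemma~\ref{lemma:1}), and then show, using the $Q$-grading on $U_q(\mathfrak{l})/U_q(\mathfrak{l})M_i$ and the simplicity of its $\lambda$-component, that $U_q(\mathfrak{l})M_i$ sits inside a \emph{unique} maximal left ideal $M_i'$ of $U_q(\mathfrak{l})$. This forces $M_i' = \Ann_{U_q(\mathfrak{l})}(v_i)$ and $V_i \iso U_q(\mathfrak{l})/M_i'$, and the isomorphism $\phi$ on the $\lambda$-component is then transported to an isomorphism $U_q(\mathfrak{l})/M_1' \to U_q(\mathfrak{l})/M_2'$. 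You instead use the graph/diagonal trick: form $\Gamma = \{(v,\phi(v))\}$ inside $(V_1\oplus V_2)_\lambda$, generate $W = U_q(\mathfrak{l})\Gamma$, show via the grading and $C_q$-equivariance of $\phi$ that $W_\lambda = \Gamma$, and then observe both projections $W\to V_i$ are isomorphisms. The two arguments lean on the same two core inputs --- the $Q$-grading (so that acting by degree $\gamma \neq 0$ moves $\lambda$ to a different weight $q^\gamma\lambda$) and simplicity of $(V_i)_\lambda$ over $C_q$ and of $V_i$ over $U_q(\mathfrak{l})$ --- but organize them differently. Your version is the shorter and more conceptual one; the paper's version has the minor advantage that the ``unique maximal ideal $M'$ containing $M$'' construction it builds is then reused verbatim in the proof of Lemma~\ref{lemma:3}, which your argument would not directly supply.
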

\begin{proof}
  Let $0\neq v_i \in (V_i)_\lambda$, $i=1,2$. Then $(V_i)_\lambda \iso
  C_q/ \Ann_{C_q}(v_i)$ as $C_q$-modules since $(V_i)_\lambda$ is
  simple (Lemma~\ref{lemma:1}). Let $M=\Ann_{C_q}(v_1)$, then $M$ is a
  maximal left ideal in $C_q$ since $C_q / M$ is simple. We will show
  that there exists a unique maximal ideal $M'$ of $U_q(\mathfrak{l})$
  containing $M$. Let $M'' = U_q(\mathfrak{l})M$. Then $M'' \neq
  U_q(\mathfrak{l})$ because $M \neq C_q$ and so there is a maximal
  ideal $M'$ containing $M''$. To show uniqueness we will show that
  $U_q(\mathfrak{l}) / M''$ has a unique maximal submodule (and
  therefore a unique simple quotient). Clearly $U_q(\mathfrak{l}) /
  M'' = \bigoplus_\gamma (U_q(\mathfrak{l}) / M'')_\gamma$. Let $N$ be
  a submodule of $U_q(\mathfrak{l}) / M''$. Then $N = \bigoplus_\gamma
  N \cap (U_q(\mathfrak{l}) / M'')_\gamma$. Since
  $(U_q(\mathfrak{l})/M'')_\lambda = (C_q / M) \iso (V_{1})_\lambda$
  is a simple $C_q$-module we have either $N\cap
  (U_q(\mathfrak{l})/M'')_\lambda = (U_q(\mathfrak{l})/M'')_\lambda$
  or $N \cap (U_q(\mathfrak{l})/M'')_\lambda = 0$. In the first case
  we have $1+M'' \in N$ and so $N = U_q(\mathfrak{l})/M''$. So all
  proper submodules of $U_q(\mathfrak{l})/M''$ have $N \cap
  (U_q(\mathfrak{l})/M'')_\lambda = 0$. Let $N_0$ be the sum of all
  proper submodules. Then this is the unique maximal submodule of
  $U_q(\mathfrak{l})/M''$.  So there is a unique maximal submodule
  $M'$ of $U_q(\mathfrak{l})$ containing $M$.

  Set $M_i = \Ann_{C_q}(v_i)$. Then from the above we get unique
  maximal left ideals $M_i'$ of $U_q(\mathfrak{l})$ containing
  $M_i$. By the uniqueness we have $M_i' =
  \Ann_{U_q(\mathfrak{l})}(v_i)$ and we have $V_i \iso
  U_q(\mathfrak{l}) / M_i'$. Let $\phi: C_q/M_1 \to C_q / M_2$ be the
  isomorphism between $(V_1)_\lambda$ and $(V_2)_\lambda$ and suppose
  $\phi(1+M_1)=x+M_2$. Then define $\Phi:U_q/M_1' \to U_q / M_2'$ by
  $\Phi(u + M_1') = ux + M_2'$. Then $\Phi$ is a
  $U_q(\mathfrak{l})$-isomorphism because $\Phi$ is a nonzero
  homomorphism between two simple modules.
\end{proof}

\begin{lemma}
  \label{lemma:3}
  Let $\lambda\in X$. Let $N$ be a simple $C_q$-module such that
  $K_\alpha n = \lambda(K_\alpha)n$, for all $\alpha\in \Pi$ and $n\in
  N$. Then there exists a simple $U_q(\mathfrak{l})$-module $V$ such
  that $N\iso V_\lambda$ as a $C_q$-module.
\end{lemma}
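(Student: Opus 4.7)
The plan is to realize $V$ as a simple quotient of the induced module
\[
M := U_q(\mathfrak{l}) \tensor_{C_q} N,
\]
where $U_q(\mathfrak{l})$ is viewed as a right $C_q$-module via the inclusion $C_q = U_q(\mathfrak{l})_0 \subset U_q(\mathfrak{l})$ and acts on $M$ by left multiplication on the first factor. Since the $Q$-grading on $U_q(\mathfrak{l})$ is preserved by right multiplication by $C_q$, we get a vector-space decomposition $M = \bigoplus_{\mu \in Q} U_q(\mathfrak{l})_\mu \tensor_{C_q} N$.

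First I would check that $M$ is a weight module with $M_\lambda \iso N$. For $u \in U_q(\mathfrak{l})_\mu$ one has $K_\alpha u = q^{(\mu|\alpha)} u K_\alpha$, which combined with the hypothesis $K_\alpha n = \lambda(K_\alpha) n$ gives
\[
K_\alpha (u \tensor n) = q^{(\mu|\alpha)} \lambda(K_\alpha)\, (u \tensor n),
\]
so $U_q(\mathfrak{l})_\mu \tensor_{C_q} N \subseteq M_{q^\mu \lambda}$. Because $q$ is not a root of unity and the form $(\cdot|\cdot)$ restricted to $Q$ is nondegenerate with integer values, the map $\mu \mapsto q^\mu \lambda$ is injective on $Q$. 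Hence the grading decomposition of $M$ coincides with its weight decomposition; in particular $M_\lambda = C_q \tensor_{C_q} N \iso N$ as $C_q$-modules. Since $q$ is not a root of unity, any character of $U_q^0$ is determined by its values on the $K_\alpha$, so $M$ is genuinely a $U_q^0$-weight module.

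Next I would construct $V$ by quotienting out the largest submodule that avoids $M_\lambda$. Let $V_0$ be the sum of all $U_q(\mathfrak{l})$-submodules $V' \subseteq M$ with $V' \cap M_\lambda = 0$; this property is preserved under sums, so $V_0 \cap M_\lambda = 0$ and $V_0$ is the unique maximal such submodule. Set $V := M/V_0$. Then $V_\lambda$ is canonically identified with the image of $1 \tensor N$, so $V_\lambda \iso N$. For any nonzero submodule $W \subseteq V$, its preimage $\tilde W \supsetneq V_0$ must satisfy $\tilde W \cap M_\lambda \neq 0$ by maximality of $V_0$, hence $W_\lambda \neq 0$. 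But $W_\lambda$ is a nonzero $C_q$-submodule of the simple $C_q$-module $V_\lambda \iso N$, so $W_\lambda = V_\lambda$. Since $M$, and hence $V$, is generated as a $U_q(\mathfrak{l})$-module by $1 \tensor N$, we get $V = U_q(\mathfrak{l}) V_\lambda \subseteq W$, so $W = V$. Therefore $V$ is simple with $V_\lambda \iso N$ as $C_q$-modules.

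The main technical obstacle is the first step: one must verify that the right $C_q$-action on each homogeneous piece $U_q(\mathfrak{l})_\mu$ does not collapse the tensor product, so that the canonical map $n \mapsto 1 \tensor n$ is actually an injection $N \hookrightarrow M_\lambda$ of $C_q$-modules. This, together with the injectivity of $\mu \mapsto q^\mu \lambda$, rests squarely on the non-root-of-unity hypothesis; a different argument would be needed at roots of unity.
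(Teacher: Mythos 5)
Your proof is correct and, up to a change of language, is the same construction as the paper's. Because $N$ is a simple $C_q$-module, fixing any $0\neq n\in N$ identifies $N\iso C_q/\Ann_{C_q}(n)$, and then your induced module is $U_q(\mathfrak{l})\tensor_{C_q}N \iso U_q(\mathfrak{l})/U_q(\mathfrak{l})\Ann_{C_q}(n)$, which is exactly the quotient $U_q(\mathfrak{l})/M''$ that appears in the paper (via the proof of Lemma~\ref{lemma:2}). Your $V_0$ is the unique maximal submodule and your $V=M/V_0$ is the paper's $U_q(\mathfrak{l})/M'$; the remaining content in the paper's write-up, the verification $C_q\cap M' = M$, is what you recast as $V_\lambda\iso N$.

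One remark on the ``technical obstacle'' you raise at the end: there is no real issue there. The right $C_q$-module $U_q(\mathfrak{l})$ decomposes as $\bigoplus_{\mu\in Q}U_q(\mathfrak{l})_\mu$, each summand a $C_q$-sub-bimodule, so tensoring over $C_q$ with $N$ automatically gives $M=\bigoplus_\mu U_q(\mathfrak{l})_\mu\tensor_{C_q}N$ with degree-zero piece $C_q\tensor_{C_q}N\iso N$; the map $n\mapsto 1\tensor n$ is always injective, with no extra hypothesis. What you genuinely need is the injectivity of $\mu\mapsto q^\mu\lambda$ on $Q$, to identify the degree grading with the weight grading so that $M_\lambda$ is precisely the degree-zero piece. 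That is fine here since $q$ is not a root of unity, and in fact the same injectivity still holds at odd roots of unity once one remembers that a character $\lambda\in X$ also records the values $\lambda\bigl({K_\alpha;0\brack l}\bigr)$ and that $(q^{l\beta}\lambda)^1=\lambda^1+\beta$, which is why the paper can reuse this lemma verbatim in Section~\ref{sec:root-unity-case}. So your closing caveat about needing ``a different argument at roots of unity'' is more pessimistic than necessary.
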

\begin{proof}
  Let $0\neq n \in N$ and set $M=\Ann_{C_q}(n)$. Then there exists a
  maximal left ideal $M'$ of $U_q(\mathfrak{l})$ like in the proof of
  Lemma~\ref{lemma:2}. Set $V= U_q(\mathfrak{l})/M'$. This is a simple
  module since $M'$ is maximal. We claim that $V_\lambda \iso N$ as
  $C_q$-modules. This follows from the fact that $C_q \cap M' = M$:
  
  $M \subset C_q \cap M'$ by definition. Take any $x\in C_q \cap M'$
  and assume $x \not \in M$. Since $M$ is maximal in $C_q$ we must
  have $y\in C_q$ such that $yx - 1 \in M$ hence $1 \in M'$. This is a
  contradiction. So $M=C_q \cap M'$.
\end{proof}

It now follows that we have just like Theorem~1 in~\cite{Lemire} the
theorem:
\begin{thm}
  \label{thm:Lemire}
  Let $\lambda\in X$.  There is a $1-1$ correspondence between simple
  $U_q(\mathfrak{l})$-modules $V$ with weight $V_\lambda\neq 0$ and
  simple $C_q$ modules with weight $\lambda$ given by: For $V$ a
  $U_q(\mathfrak{l})$-module, $V_\lambda$ is the corresponding simple
  $C_q$-module.
\end{thm}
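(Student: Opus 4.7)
The plan is to assemble the three preceding lemmas into the desired bijection. Define the forward map by sending a simple $U_q(\mathfrak{l})$-module $V$ with $V_\lambda \neq 0$ to the weight space $V_\lambda$ regarded as a $C_q$-module. This is well-defined on isomorphism classes because any $U_q(\mathfrak{l})$-isomorphism $V \iso V'$ restricts to a $C_q$-isomorphism $V_\lambda \iso V'_\lambda$ (the restriction of a $U_q(\mathfrak{l})$-map to weight spaces is a $C_q$-map since $C_q$ acts diagonally on weights). Lemma~\ref{lemma:1} guarantees that the output $V_\lambda$ is indeed a simple $C_q$-module, and by construction it has weight $\lambda$.

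Next I would verify injectivity. Suppose $V_1, V_2$ are simple $U_q(\mathfrak{l})$-modules with $V_\lambda$-part nonzero and with $(V_1)_\lambda \iso (V_2)_\lambda$ as $C_q$-modules. Then Lemma~\ref{lemma:2} directly supplies a $U_q(\mathfrak{l})$-isomorphism $V_1 \iso V_2$, giving injectivity of the correspondence on isomorphism classes.

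For surjectivity, let $N$ be a simple $C_q$-module whose weight equals $\lambda$, i.e.\ on which the $K_\alpha$'s act by the scalars $\lambda(K_\alpha)$ for $\alpha \in \Pi$. Lemma~\ref{lemma:3} produces a simple $U_q(\mathfrak{l})$-module $V$ with $V_\lambda \iso N$ as $C_q$-modules; in particular $V_\lambda \neq 0$, so $V$ lies in the source of the correspondence and maps to (the isomorphism class of) $N$.

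Combining these three steps yields the $1$--$1$ correspondence, with inverse $N \mapsto V$ produced by Lemma~\ref{lemma:3}. There is no real obstacle left; the only subtle point worth being careful about is that the assignment $V \mapsto V_\lambda$ needs the weight $\lambda$ to actually appear in $V$ (built into the statement), and that Lemma~\ref{lemma:3} uses the hypothesis on the action of the $K_\alpha$ on $N$ to ensure the resulting simple quotient of $U_q(\mathfrak{l})$ has $\lambda$ as a genuine weight rather than a shifted one.
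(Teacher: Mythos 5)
Your proof is correct and matches the paper's approach exactly: the paper itself derives the theorem as an immediate consequence of Lemmas~\ref{lemma:1}, \ref{lemma:2}, and \ref{lemma:3}, which supply well-definedness/simplicity, injectivity, and surjectivity respectively, just as you lay out. The only thing you add beyond the paper's terse statement is the explicit check that the assignment respects isomorphism classes, which is harmless and correct.
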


The next lemma we will prove is the equivalent of Lemma~4.5
in~\cite{Fernando}. The proof goes in almost exactly the same way.

\begin{lemma}
  \label{lemma:4}
  Let $L$ be a simple $U_q(\mathfrak{l})$-module. Let
  $U_q(\mathfrak{t})$ and $U_q(\tau)$ be defined as above. There
  exists a simple $U_q(\tau)$-module $L_1$ and a simple
  $U_q(\mathfrak{t})$-module $L_2$ such that $L \iso L_1 \tensor_\setC
  L_2$ as a $U_q(\mathfrak{l})=U_q(\tau)\tensor_\setC
  U_q(\mathfrak{t})$ module. Furthermore if
  $\Pi_{T_L^s}'=\bigcup_{i=1}^s \Pi_{(T_L^s)_i}'$ where
  $\Pi_{(T_L^s)_i}'$ are the different connected components in
  $\Pi_{T_L^s}'$ set $\mathfrak{t}_i = \mathfrak{g}_{(T_L)_i} \oplus
  \mathfrak{h}_{(T_L)_i}$ and $U_q(\mathfrak{t}_i) = \left<
    F_\alpha,K_\alpha,E_\alpha \right>_{\alpha\in
    \Pi_{(T_L^s)_i}'}$. Then $U_q(\mathfrak{t}) \iso
  U_q(\mathfrak{t}_1)\tensor_\setC \cdots \tensor_\setC
  U_q(\mathfrak{t}_s)$ and there exists simple
  $U_q(\mathfrak{t}_i)$-modules $(L_2)_i$ such that $L_2 \iso (L_2)_1
  \tensor_\setC \cdots \tensor_\setC (L_2)_s$ as
  $U_q(\mathfrak{t}_1)\tensor_\setC \cdots \tensor_\setC
  U_q(\mathfrak{t}_s)$-modules.
\end{lemma}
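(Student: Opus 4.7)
\textbf{Setup via Theorem~\ref{thm:Lemire}.} The plan is to move the decomposition problem down to the degree-zero subalgebra $C_q = (U_q(\mathfrak{l}))_0$, where the simple pieces are finite dimensional over the algebraically closed field $\setC$. First I would establish the algebra isomorphism $U_q(\mathfrak{l}) \iso U_q(\tau) \tensor_\setC U_q(\mathfrak{t})$. The disjointness of $\Pi_{F_L^s}'$ and $\Pi_{T_L^s}'$ in the Dynkin diagram (established in the paragraph just before the lemma) makes the $E$'s and $F$'s on the two sides commute, and $(\alpha|\alpha')=0$ makes the $K$-conjugations trivial; combined with the observation that the Cartan parts of $U_q(\tau)$ and $U_q(\mathfrak{t})$ jointly generate $K_\mu$ for every $\mu\in Q$, a PBW-type count yields the isomorphism. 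Restricting to degree zero (and using $\setZ \Pi_{F_L^s}' \cap \setZ \Pi_{T_L^s}' = 0$) gives $C_q \iso C_q^\tau \tensor_\setC C_q^{\mathfrak{t}}$.

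\textbf{Building the decomposition.} Fix $\lambda\in \wt L$. By Lemma~\ref{lemma:1}, $L_\lambda$ is a finite dimensional simple $C_q$-module, so the standard fact about finite dimensional simple modules over a tensor product of $\setC$-algebras (with $\setC$ algebraically closed) gives $L_\lambda \iso M_1\tensor_\setC M_2$ with $M_1$ simple over $C_q^\tau$ and $M_2$ simple over $C_q^{\mathfrak{t}}$. Writing $\lambda = \lambda_1\cdot\lambda_2$ for the corresponding decomposition of the character, apply the obvious $U_q(\tau)$- and $U_q(\mathfrak{t})$-analogues of Lemma~\ref{lemma:3} to lift $M_1$ to a simple $U_q(\tau)$-module $L_1$ with $(L_1)_{\lambda_1}\iso M_1$, and $M_2$ to a simple $U_q(\mathfrak{t})$-module $L_2$ with $(L_2)_{\lambda_2}\iso M_2$. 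Then $L_1\tensor L_2$ is a $U_q(\mathfrak{l})$-module whose $\lambda$-weight space is $M_1\tensor M_2 \iso L_\lambda$.

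\textbf{Simplicity of $L_1\tensor L_2$ and iteration.} To close the argument I need $L_1\tensor L_2$ to be simple, so that Lemma~\ref{lemma:2} produces the required isomorphism $L\iso L_1\tensor L_2$. Each weight space of $L_1\tensor L_2$ is a pure tensor $(L_1)_{\xi_1}\tensor (L_2)_{\xi_2}$ since the algebra splitting makes a character of $U_q(\mathfrak{l})^0$ factor uniquely, and each such weight space is simple over $C_q$ by the first paragraph. Given a nonzero submodule $N\subset L_1\tensor L_2$, pick a nonzero weight vector $w\in N$; simplicity of the containing weight space over $C_q$ forces $C_q w$ to equal the whole weight space, then acting by $U_q(\tau)\tensor 1$ uses the simplicity of $L_1$ to give $L_1\tensor (L_2)_{\xi_2}\subset N$, and finally $1\tensor U_q(\mathfrak{t})$ gives $L_1\tensor L_2\subset N$. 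This is the main technical step, and it is tractable here only because the algebra splitting is compatible with the $Q$-grading, making the weight spaces "diagonal". For the second statement, the mutually unconnected components $\Pi_{(T_L^s)_i}'$ yield $U_q(\mathfrak{t}) \iso U_q(\mathfrak{t}_1)\tensor_\setC \cdots \tensor_\setC U_q(\mathfrak{t}_s)$ by the same commutation/PBW argument, and iterating the first part of the lemma (splitting off one $U_q(\mathfrak{t}_i)$ factor at a time) decomposes $L_2$ as $(L_2)_1\tensor \cdots \tensor (L_2)_s$ with each $(L_2)_i$ simple over $U_q(\mathfrak{t}_i)$.
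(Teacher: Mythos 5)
Your overall strategy coincides with the paper's: reduce via Theorem~\ref{thm:Lemire} to the degree-zero subalgebra $C_q$, split the finite-dimensional simple $C_q$-module $L_\lambda$ as a tensor product, lift, and conclude. There is, however, a real inaccuracy in your first step. The map $U_q(\tau)\tensor_\setC U_q(\mathfrak{t})\to U_q(\mathfrak{l})$, $u_1\tensor u_2\mapsto u_1u_2$, is an isomorphism of \emph{vector spaces}, not of algebras --- the paper states exactly this. The obstruction is not the $K_\alpha$ with $\alpha\in\Pi'$ (your orthogonality observation handles those) but the $K_\beta$ with $\beta\in\Phi\backslash\Phi^{\mathfrak{l}}$ that the paper's definition puts into $U_q(\tau)$: for $\alpha'\in\Pi_{T_L^s}'$ the pairing $(\beta|\alpha')$ need not vanish, so $K_\beta$ does not commute with $E_{\alpha'},F_{\alpha'}\in U_q(\mathfrak{t})$, and the naive tensor product algebra structure is wrong. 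Your argument can be repaired because what you actually use is the degree-zero statement: every element of $(U_q(\mathfrak{t}))_0$ has $Q$-degree zero, hence commutes with every $K_\mu$, so $(U_q(\tau))_0$ and $(U_q(\mathfrak{t}))_0$ do commute inside $C_q$. But you should argue at that level directly rather than deducing it from a false algebra isomorphism of $U_q(\mathfrak{l})$.

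The paper also treats the tensor splitting of $L_\lambda$ more carefully than your appeal to a ``standard fact.'' It passes to the images $R_1,R_2$ of $(U_q(\tau))_0$ and $(U_q(\mathfrak{t}))_0$ in $\End_\setC(L_\lambda)$, shows each $R_i$ is a full matrix algebra by an annihilator argument, and then invokes a classical result (\cite{MR0010543}) to get $\End_\setC(L_\lambda)\iso R_1\tensor R_2$ and hence $L_\lambda\iso E_1\tensor_\setC E_2$. That route avoids assuming that the multiplication map $(U_q(\tau))_0\tensor_\setC(U_q(\mathfrak{t}))_0\to C_q$ is injective, which it need not be (the two degree-zero Cartan parts overlap). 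The paper also just sets $L_1:=U_q(\tau)E_1$ and $L_2:=U_q(\mathfrak{t})E_2$ as subspaces of $L$ rather than lifting abstractly via Lemma~\ref{lemma:3}, which makes the comparison of weight spaces immediate. On the other hand, you explicitly verify that $L_1\tensor L_2$ is simple, a step the paper leaves implicit when it applies Theorem~\ref{thm:Lemire}; that is a useful addition. Your iteration for the second assertion mirrors the paper's closing remark and is fine.
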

\begin{proof}
  Let $\lambda$ be one of the weights of $L$. Then we know that $E:=
  L_\lambda$ is a simple finite dimensional $C_q$-module. Let $R$
  (respectively $R_1$ and $R_2$) denote the image of $C_q$
  (respectively $U_q(\tau)_0$ and $U_q(\mathfrak{t})_0$) in
  $\End_{\setC}(E)$. Since $E$ is simple we have
  $R=\End_{\setC}(E)$. Since $R_1 E \neq 0$ there exists a nontrivial
  $R_1$-submodule of $\res_{R_1}^R E$ and since $E$ is finite
  dimensional there exists a \emph{simple} $R_1$-submodule $E_1$ of
  $\res_{R_1}^R E$. The simplicity of $E_1$ implies that the
  representation $R_1\to \End_\setC(E_1)$ is surjective. The kernel of
  $R_1 \to \End_\setC(E_1)$ must be $\Ann_{R_1}(E_1)$. But if this is
  nonzero then since $E = R E_1 = R_2 E_1$ and since $R_1$ and $R_2$
  commutes we see that $\Ann_{R}(E)$ will be nonzero which is a
  contradiction since $R=\End_\setC(E)$. So $R_1 \iso \End_\setC(E_1)$
  is simple. Similarly there exists a simple $R_2$-module $E_2$ and
  $R_2\iso \End_\setC(E_2)$ is simple. Now as in the proof of
  Lemma~4.5 in~\cite{Fernando} we get $R\iso R_1 \tensor R_2$
  (using~\cite[Theorem~7.1D]{MR0010543}). Since $R=\End_\setC(E)$ it
  has exactly one simple module up to isomorphism. This implies that
  $E \iso E_1\tensor_\setC E_2$ as $R$-modules.

  Now set $L_1 = U_q(\tau) E_1$ and $L_2 = U_q(\mathfrak{t}) E_2$. We
  have $L_\lambda = E \iso E_1\tensor_\setC E_2 = (L_1\tensor_\setC
  L_2)_\lambda$ and by Theorem~\ref{thm:Lemire} this implies that
  $L\iso L_1\tensor_\setC L_2$.

  The second part of the lemma is proved in the same way. The only
  thing we used about $U_q(\tau)$ and $U_q(\mathfrak{t})$ was that
  $U_q(\mathfrak{l}) = U_q(\tau)U_q(\mathfrak{t})$ and that
  $U_q(\tau)_0$ and $U_q(\mathfrak{t})_0$ commutes. The same is true
  for $U_q(\mathfrak{t})$ and the $U_q(\mathfrak{t}_i)$'s.
\end{proof}

To summarize we have the following equivalent of Theorem~4.18
in~\cite{Fernando}:

\begin{thm}
  \label{thm:classification}
  Suppose $L\in\mathcal{F}$ is a simple $U_q(\mathfrak{g})$
  module. Let $w\in W$ be such that $P_{{^w}L}$ is standard
  parabolic. With notation as above: $({^w}L)^\mathfrak{u}$ is a
  simple $U_q(\mathfrak{l})$-module and this module decomposes into a
  tensor product $X_{\operatorname{fin}}\tensor_\setC
  X_{\operatorname{fr}}$ where $X_{\operatorname{fin}}$ is a finite
  dimensional simple $U_q(\tau)$-module and $X_{\operatorname{fr}}$ is
  a torsion free $U_q(\mathfrak{t})$-module. Furthermore if
  $\mathfrak{t}= \mathfrak{t}_1\oplus \cdots \oplus \mathfrak{t}_s$ as
  a sum of ideals then $X_{\operatorname{fr}} = X_1\tensor_\setC
  \cdots \tensor_\setC X_s$ for some simple
  $U_q(\mathfrak{t}_i)$-modules.

  Given the pair $(X_{\operatorname{fin}},X_{\operatorname{fr}})$ and
  the $w\in W$ defined above then $L$ can be recovered as
  ${^{\bar{w}}}L(X_{\operatorname{fin}}\tensor_\setC\nobreak
  X_{\operatorname{fr}}))$.
\end{thm}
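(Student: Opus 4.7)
The plan is to assemble the structural results of this section. Since $P_L \cup (-P_L) = \Phi$, I can pick $w \in W$ such that $\Phi^+ \subset P_{{^w}L}$; writing $M := {^w}L$, this forces $P_M = \Phi^+ \cup \langle \Pi' \rangle$ to be standard parabolic for some $\Pi' \subset \Pi$, and $\mathfrak{p}, \mathfrak{l}, \mathfrak{u}, \mathfrak{t}, \tau$ are defined accordingly. The overall strategy is: first show $M^{\mathfrak{u}} \neq 0$, so that Proposition~\ref{prop:9} identifies $M \iso L(N)$ with $N := M^{\mathfrak{u}}$ simple; then apply Lemma~\ref{lemma:4} to $N$ to extract the two tensor factors $X_{\operatorname{fin}}$ and $X_{\operatorname{fr}}$; and finally read off the remaining properties from the tensor decomposition.

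The main obstacle is the nonvanishing $M^{\mathfrak{u}} \neq 0$. Every root of $\mathfrak{u}$ lies in $F_M \setminus F_M^s$, so by Proposition~\ref{prop:1} each associated $E_\beta$ acts locally nilpotently on $M$. I would fix a nonzero weight vector $v_0 \in M_\lambda$ and show that the $U_q(\mathfrak{u})$-submodule $U_q(\mathfrak{u}) v_0$ is finite-dimensional: using a PBW-type basis of $U_q(\mathfrak{u})$ in the root vectors $E_{\beta_i^2}$ together with the commutation relations collected in the proof of Proposition~\ref{prop:2}, one bounds the exponents in $E_{\gamma_1}^{(a_1)} \cdots E_{\gamma_k}^{(a_k)} v_0$ by an inductive sweep through the factors, using local nilpotence of each $E_{\gamma_i}$ at each stage and the finite-dimensionality of every weight space of $M$. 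Then any weight vector of maximal weight inside this finite-dimensional submodule is annihilated by $U_q(\mathfrak{u})^{>0}$, producing a nonzero element of $M^{\mathfrak{u}}$.

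Once $M^{\mathfrak{u}} \neq 0$, Proposition~\ref{prop:9} yields that $N = M^{\mathfrak{u}}$ is a simple $U_q(\mathfrak{l})$-module with $M \iso L(N)$. The analysis preceding Lemma~\ref{lemma:4} shows $\Pi' = \Pi'_{F_N^s} \sqcup \Pi'_{T_N^s}$ with no Dynkin edges between the two parts, so that $U_q(\mathfrak{l}) \iso U_q(\tau) \tensor_\setC U_q(\mathfrak{t})$ (as a vector space, via multiplication). Applying Lemma~\ref{lemma:4} to $N$ then gives $N \iso L_1 \tensor_\setC L_2$ with $L_1$ a simple $U_q(\tau)$-module and $L_2$ a simple $U_q(\mathfrak{t})$-module, and its second part further decomposes $L_2 \iso (L_2)_1 \tensor_\setC \cdots \tensor_\setC (L_2)_s$ according to the connected components $\mathfrak{t}_i$ of $\mathfrak{t}$. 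I would then set $X_{\operatorname{fin}} := L_1$, $X_{\operatorname{fr}} := L_2$ and $X_i := (L_2)_i$.

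Both remaining properties transport across the tensor decomposition, using that $U_q(\tau)$ and $U_q(\mathfrak{t})$ commute so each factor acts only on its own tensor factor. For $\beta$ a root of $\mathfrak{t}$, if $w \in X_{\operatorname{fr}}$ were $\beta$-torsion then $v \tensor w$ would be $\beta$-torsion in $N$ for any nonzero $v \in X_{\operatorname{fin}}$, contradicting $\beta \in T_N^s$; hence $X_{\operatorname{fr}}$ is torsion free, and the same argument applies to each $X_i$. Conversely, for $\beta$ a root of the semisimple part of $\tau$ the element $E_\beta$ acts locally nilpotently on $N$, and fixing a nonzero $w \in X_{\operatorname{fr}}$ forces the same on $X_{\operatorname{fin}}$; combined with simplicity and finite-dimensional weight spaces (inherited from $N$ by fixing a nonzero vector in the other factor), the standard integrability argument (cf.~\cite[Chapter~5]{Jantzen}) shows $X_{\operatorname{fin}}$ is finite-dimensional. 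Finally, the reconstruction $L \iso {^{\bar{w}}}L(X_{\operatorname{fin}} \tensor_\setC X_{\operatorname{fr}})$ is immediate from $L \iso {^{\bar{w}}}M$ and $M \iso L(N)$.
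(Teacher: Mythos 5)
Your proof is correct and takes the same route the paper intends: pick $w$ so that $P_{{^w}L}$ is standard parabolic, establish $({^w}L)^{\mathfrak u}\neq 0$, invoke Proposition~\ref{prop:9} to identify $({^w}L)^{\mathfrak u}$ as a simple $U_q(\mathfrak l)$-module with ${^w}L\cong L(({^w}L)^{\mathfrak u})$, feed that into Lemma~\ref{lemma:4} for the tensor decomposition, and then untwist. The paper presents Theorem~\ref{thm:classification} as a summary of the preceding results and does not write out a proof; the one point it silently passes over is precisely the nonvanishing $({^w}L)^{\mathfrak u}\neq 0$, which you address explicitly. Your inductive-sweep argument for this (bounding the exponents in a PBW monomial $E_{\gamma_1}^{(a_1)}\cdots E_{\gamma_k}^{(a_k)}v_0$ one factor at a time, from the rightmost factor outward, using local nilpotence of each $E_{\gamma_i}$) is sound: at each stage only finitely many tails survive and for each tail the next exponent is bounded, so $U_q(\mathfrak u)v_0$ has only finitely many nonzero weights. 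It would be worth saying explicitly that the sweep must proceed from the innermost (rightmost) factor outward — local nilpotence of $E_{\gamma_i}$ only bounds $a_i$ once the vector $E_{\gamma_{i+1}}^{(a_{i+1})}\cdots E_{\gamma_k}^{(a_k)}v_0$ is held fixed — and that a weight of $U_q(\mathfrak u)v_0$ maximal for the pairing with the fundamental coweight $\sum_{\alpha\in\Pi\setminus\Pi'}\omega_\alpha^\vee$ then yields the desired highest weight vector, since $E_\beta\in U_q(\mathfrak u)$ for all $\beta\in\Phi(\mathfrak u)$. Your derivation of the finite-dimensionality of $X_{\operatorname{fin}}$ and the torsion-freeness of $X_{\operatorname{fr}}$ from the tensor factorisation and the identities $F_N^s=F_M^s$, $T_N^s=T_M^s$ is also correct and is the natural way to read these properties off the decomposition.
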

So the problem of classifying simple modules in $\mathcal{F}$ is
reduced to the problem of classifying finite dimensional simple
modules of $U_q(\tau)$ and classifying torsion free simple modules of
$U_q(\mathfrak{t})$ where $\mathfrak{t}$ is a simple Lie algebra. In
the next section we will show that we can make the same reduction if
$q$ is an odd root of unity. The procedure is similar but there are
some differences, e.g. because the $\mathfrak{sl}_2$ theory is a
little different.

\section{Root of unity case: Reduction}
\label{sec:root-unity-case}
We will now consider the root of unity case. In this section
$q\in\setC$ will be assumed to be a primitive $l$'th root of unity
where $l$ is odd.

\begin{lemma}
  \label{lemma:23}
  Let $\lambda\in X$ and $\alpha\in \Pi$. Then $\lambda(K_\alpha) =
  \pm q_\alpha^{k}$ for some $k\in \{0,\dots,l-1\}$.
\end{lemma}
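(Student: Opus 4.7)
My plan is to reduce the statement to the single relation $K_\alpha^{2l} = 1$ in $U_q^0$. Assuming this relation, applying the character $\lambda$ gives $\lambda(K_\alpha)^{2l} = 1$, so $\lambda(K_\alpha)$ is a $2l$-th root of unity in $\setC$. Since $l$ is odd, $\gcd(2,l) = 1$, and the group $\mu_{2l}$ of $2l$-th roots of unity factors as $\mu_l \cdot \{\pm 1\}$. Moreover $q_\alpha$ is a primitive $l$-th root of unity: for $\alpha$ short we have $q_\alpha = q$ directly, and for $\alpha$ long we have $q_\alpha = q^2$, which again has order $l$ because $\gcd(2,l) = 1$. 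Hence $\mu_{2l} = \{\pm q_\alpha^k : k = 0, \dots, l-1\}$, giving exactly the claimed form.

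To prove $K_\alpha^{2l} = 1$ in $U_q^0$, I will extract a relation from the integral quantum binomial ${K_\alpha; 0 \brack l}$. Clearing denominators in the definition gives, in $U_A$, the identity
\begin{equation*}
\prod_{j=1}^l (v_\alpha^j - v_\alpha^{-j}) \cdot {K_\alpha; 0 \brack l} = \prod_{j=1}^l \left( K_\alpha v_\alpha^{1-j} - K_\alpha^{-1} v_\alpha^{j-1} \right).
\end{equation*}
Pulling $v_\alpha^{1-j}K_\alpha^{-1}$ out of the $j$-th factor on the right rewrites that side as $v_\alpha^{-l(l-1)/2} K_\alpha^{-l} \prod_{j=0}^{l-1}(K_\alpha^2 - v_\alpha^{2j})$. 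Both sides are genuine elements of $U_A$, so this is a valid identity there.

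Now I specialize $v \mapsto q$. On the left, the factor corresponding to $j = l$ is $q_\alpha^l - q_\alpha^{-l} = 0$, so the whole left-hand side collapses to $0$ in $U_q^0$. On the right, since $q_\alpha^2$ also has order $l$, the values $q_\alpha^{2j}$ for $j = 0, \dots, l-1$ exhaust the $l$-th roots of unity, so $\prod_{j=0}^{l-1}(K_\alpha^2 - q_\alpha^{2j}) = K_\alpha^{2l} - 1$. Therefore $0 = q_\alpha^{-l(l-1)/2} K_\alpha^{-l} (K_\alpha^{2l} - 1)$ in $U_q^0$, and multiplying by the unit $K_\alpha^l$ yields $K_\alpha^{2l} = 1$, completing the argument. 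The only thing that needs care is the factorization on the right and the bookkeeping of the $v_\alpha$-exponents; once those are in place, the fact that exactly one factor of the left-hand side degenerates at $v = q$ does all the remaining work.
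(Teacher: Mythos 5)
Your proof is correct and takes a modestly different route from the paper's. The paper invokes a relation from Section~6.4 of Lusztig, namely ${K_\alpha; 0 \brack l-1}{K_\alpha;-l+1 \brack 1} = {l \brack l-1}_{v_\alpha} {K_\alpha;0 \brack l}$, specializes $v\mapsto q$ so that the right-hand side vanishes (since ${l\brack l-1}_{q_\alpha}=0$), then applies the character $\lambda$ and uses that $\setC$ is a domain to force one of the two $K_\alpha$-factors on the left to vanish, which upon unwinding yields $\lambda(K_\alpha)=\pm q_\alpha^k$. You instead clear denominators directly in the definition of ${K_\alpha;0\brack l}$ inside $U_A^0$, specialize, and observe that the scalar on the left degenerates at $v=q$ while the right-hand side factors as a unit times $K_\alpha^{2l}-1$; this gives the cleaner and strictly stronger intermediate statement $K_\alpha^{2l}=1$ as a ring identity in $U_q^0$, from which the claim is immediate from the structure of $2l$-th roots of unity when $l$ is odd. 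Both proofs rest on the same phenomenon (one scalar factor of an integral relation vanishing at $q$ while the $K_\alpha$-dependent part survives), but yours is self-contained, needing no external citation, and isolates the underlying ring relation explicitly rather than going through the character. The bookkeeping checks out: the pulled-out factor is indeed $v_\alpha^{-l(l-1)/2}K_\alpha^{-l}$, and the identity $\prod_{j=0}^{l-1}(K_\alpha^2-q_\alpha^{2j})=K_\alpha^{2l}-1$ correctly uses that $q_\alpha^2$ remains a primitive $l$-th root of unity because $\gcd(2,l)=1$, a hypothesis you invoke in the right places.
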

\begin{proof}
  By Section 6.4 in~\cite{MR1066560} we have the following relation in
  $U_A$:
  \begin{equation*}
    {K_\alpha; 0 \brack l-1}{K_\alpha;-l+1 \brack 1} = {l \brack l-1}_{v_\alpha} {K_\alpha;0 \brack l}.
  \end{equation*}
  Since ${l \brack l-1}_{q_\alpha} = 0$ when $q$ is an $l$'th root of
  unity we must have that either
  $q_\alpha^{-l+1}\lambda(K_\alpha)-q_\alpha^{l-1}\lambda(K_\alpha)\inv=0$
  or
  $q_\alpha^{1-k}\lambda(K_\alpha)-q_\alpha^{k-1}\lambda(K_\alpha)\inv=0$
  for some $k\in\{1,\dots,l-1\}$. Writing out what these equations
  imply we get that $\lambda(K_\alpha) = \pm q_\alpha^k$ for some
  $k\in\{0,\dots,l-1\}$.
\end{proof}

\begin{defn}
  \begin{equation*}
    \Lambda_l=\{\lambda \in \Lambda| 0\leq
    \left<\lambda,\alpha^\vee \right> < l, \, \forall \alpha\in \Pi\}
  \end{equation*}
\end{defn}

\begin{lemma}
  \label{lemma:25}
  Let $\lambda:U_q^0 \to \setC$ be an algebra homomorphism. Then
  $\lambda$ is completely determined by its values on $K_{\alpha}$ and
  ${K_{\alpha};0\brack l}$ with $\alpha \in \Pi$.  Choosing a
  homomorphism $\sigma:Q \to \{\pm 1\}$, an element $\lambda^0 \in
  \Lambda_l$ and an element $\lambda^1 \in \mathfrak{h}^*$ determines
  a homomorphism $\lambda\in X$ as follows: For $\alpha\in \Pi$:
  \begin{align*}
    \lambda(K_\alpha) =& \sigma(\alpha)q^{( \lambda^0|\alpha)}
    \\
    \lambda({K_\alpha; 0 \brack l}) =& \left< \lambda^1,\alpha^\vee
    \right>.
  \end{align*}
  All algebra homomorphisms $\lambda:U_q^0 \to \setC$ are of this form,
  i.e. $X = S \times \Lambda_l \times \mathfrak{h}^*$ in this case,
  where $S$ is the set of homomorphisms $\sigma:Q\to \{\pm 1\}$.
\end{lemma}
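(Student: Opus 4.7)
My plan is to establish the bijection $X \iso S \times \Lambda_l \times \mathfrak{h}^*$ in three stages: identify a small algebra generating set for $U_q^0$, parametrize the allowed values on the $K_\alpha$, and show that ${K_\alpha; 0 \brack l}$ contributes a free complex parameter for each simple coroot.

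First I would use Lusztig's PBW-type basis of $U_A^0$ consisting of ordered monomials in the $K_\mu$ and the ${K_\alpha; 0 \brack n}$, and argue that after specialization $U_q^0$ is generated as a $\setC$-algebra by $\{K_\alpha^{\pm 1}, {K_\alpha; 0 \brack l} : \alpha \in \Pi\}$. For $1 \leq n < l$ this is immediate: $[n]_{q_\alpha}!$ is a nonzero complex number (since $l$ is odd, $q_\alpha$ is again a primitive $l$-th root of unity, so $[m]_{q_\alpha} = 0$ precisely when $l \mid m$), and therefore ${K_\alpha; 0 \brack n}$ reduces to an explicit polynomial in $K_\alpha^{\pm 1}$. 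For $n \geq l$, write $n = rl + s$ with $0 \leq s < l$ and iterate a quantum Lucas--type identity in $U_A^0$ to express ${K_\alpha; 0 \brack n}$ as a polynomial in $K_\alpha^{\pm 1}$ and ${K_\alpha; 0 \brack l}$. Hence any character $\lambda$ is determined by $\lambda(K_\alpha)$ and $\lambda({K_\alpha; 0 \brack l})$ for $\alpha\in\Pi$.

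Second, Lemma~\ref{lemma:23} forces $\lambda(K_\alpha) \in \{\pm q_\alpha^k : 0 \leq k < l\}$, a set of $2l$ elements. I would match this with $S \times \Lambda_l$ by writing $\lambda(K_\alpha) = \sigma(\alpha)q^{(\lambda^0|\alpha)}$: the sign $\sigma(\alpha) \in \{\pm 1\}$ is arbitrary and extends multiplicatively to a homomorphism $\sigma: Q \to \{\pm 1\}$, while $\lambda^0 \in \Lambda_l$ is chosen so that $\langle \lambda^0, \alpha^\vee \rangle$ realizes the exponent $k$ (using $q_\alpha^k = q^{k(\alpha|\alpha)/2}$ together with $q^{(\omega_i|\alpha_j)} = q^{\delta_{ij}}$ to match normalizations). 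Independently, $\lambda({K_\alpha; 0 \brack l})$ ranges freely over $\setC$, parametrized as $\langle \lambda^1, \alpha^\vee\rangle$ for some $\lambda^1 \in \mathfrak{h}^*$ specified independently on each simple coroot.

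Conversely, given a triple $(\sigma, \lambda^0, \lambda^1)$, the formulas prescribe $\lambda$ on the generators; because $U_q^0$ is commutative and the only relation constraining $\lambda(K_\alpha)$ is the vanishing of ${K_\alpha; 0 \brack l-1}[K_\alpha; -l+1]$ from Section~6.4 of~\cite{MR1066560}, which is satisfied by construction, the assignment extends uniquely to an algebra homomorphism. The main obstacle I anticipate is the quantum Lucas reduction for $n > l$: establishing it sharply enough to rule out additional constraints from the ${K_\alpha; 0 \brack n}$ with $n > l$. Once that is in place, the correspondence $X = S \times \Lambda_l \times \mathfrak{h}^*$ follows.
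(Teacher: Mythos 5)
Your proposal is correct and takes essentially the same route as the paper: both reduce $\bigl\{{K_\alpha;0\brack n}\bigr\}_{n<l}$ to polynomials in $K_\alpha^{\pm1}$ using $[n]_{q_\alpha}\neq 0$, handle $n\geq l$ via the factorization and shift identities from Lusztig's Section~6.4, and invoke Lemma~\ref{lemma:23} to constrain $\lambda(K_\alpha)$. The only point to flesh out is that your ``iterate a quantum Lucas-type identity'' step needs both the product factorization of ${K_\beta;0\brack rl+t}$ into shifted $l$-binomials \emph{and} the shift recursion ${K_\beta;c\brack l}-{K_\beta;c+1\brack l}=-q_\beta^{c+1}K_\beta{K_\beta;c\brack l-1}$ to bring those shifted $l$-binomials back to ${K_\beta;0\brack l}$, exactly as the paper does.
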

\begin{proof}
  We will use the relations for $U_A$ from Section~6.4
  of~\cite{MR1066560}. Let $\beta\in \Pi$. If $\lambda(K_\beta)=d$
  then $\lambda(K_\beta\inv)=d\inv$ and the value on ${K_\beta;c\brack
    t} = \prod_{i=1}^t
  \frac{q_\beta^{c-i+1}K_\beta-q_\beta^{i-1-c}K_\beta\inv}{q_\beta^i -
    q_\beta^{-i}}$ for $0\leq t< l$ is also determined. The relations
  \begin{equation*}
    {K_\beta ; c \brack l} - {K_\beta ; c+1 \brack l} = -q_\beta^{c+1}K_\beta {K_\beta; c \brack l-1}
  \end{equation*}
  determine the values on ${K_\beta ; c \brack t}$ for all $c\in
  \setZ$ if the value on ${K_\beta ; 0 \brack t}$ and the value on
  $K_\beta$ is known.  Finally if $c=rl+t$ with $0\leq t < l$ we have
  \begin{align*}
    {K_\beta; 0 \brack rl + t} =& {K_\beta; 0 \brack
      rl}{K_\beta;-rl\brack t}
    \\
    =& r\inv {K_\beta;0 \brack (r-1)l}{K_\beta;-(r-1)l \brack
      l}{K_\beta;-rl\brack t}
    \\
    \vdots&
    \\
    =& (r!)\inv \prod_{s=0}^{r-1} {K_\beta;-sl \brack l}
    {K_\beta;-rl\brack t}.
  \end{align*}
  So determining the value on $K_\beta$ and ${K_\beta;0\brack l}$
  determines the value on all of $U_q^0$.
  
  If $\sigma,\lambda^0,\lambda^1$ is chosen as above it is easy to
  check that the relations from Section~6.4 in~\cite{MR1066560} are
  satisfied. That all characters are of this form follows from
  Lemma~\ref{lemma:23}.
\end{proof}
It can be noted in the above that $\lambda^1 = \lambda \circ
\operatorname{Fr}'|_{\mathfrak{h}}$ where
$\operatorname{Fr}':U(\mathfrak{g})\to
U_q(\mathfrak{g})/\left<K_\alpha^l-1|\alpha\in \Pi\right>$ is the
Frobenius map from~\cite{KL2}.  We will restrict to modules of type
$\mathbf{1}$ meaning $\sigma(\alpha)=1$ for all $\alpha\in \Pi$ in the
above. It is standard how to get from modules of type $\mathbf{1}$ to
modules of any other type $\sigma$
(cf. e.g.~\cite[Section~5.1-5.4]{Jantzen}).

Since we restrict to modules of type $1$ we will assume from now on
that $X = \Lambda_l \times \mathfrak{h}^*$. A weight $\lambda\in X$
will also be written as $(\lambda^0,\lambda^1)\in \Lambda_l\times
\mathfrak{h}^*$.

\begin{lemma}
  \label{lemma:28}
  Let $\lambda\in X$ with $\lambda^0$ and $\lambda^1$ defined as in
  Lemma~\ref{lemma:25}. Let $\beta\in \Phi^+$, $c\in \setZ$,
  \begin{equation*}
    \lambda({K_\beta ; c+1 \brack l})=
    \begin{cases}
      \lambda({K_\beta ; c \brack l})+1 &\text{ if } \left<
        \lambda^0,\beta^\vee \right>+c \equiv -1 \mod l
      \\
      \lambda({K_\beta ; c \brack l}) &\text{ otherwise }.
    \end{cases}
  \end{equation*}
\end{lemma}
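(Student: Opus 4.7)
The plan is to apply $\lambda$ to the $U_A$-identity
\begin{equation*}
  {K_\beta; c+1 \brack l} - {K_\beta; c \brack l} = q_\beta^{c+1}K_\beta {K_\beta; c \brack l-1},
\end{equation*}
which is the relation recorded in Section~6.4 of~\cite{MR1066560} when $\beta\in\Pi$. I would extend it to an arbitrary $\beta\in\Phi^+$ by writing $\beta=w(\alpha_i)$ for some $w\in W$, $\alpha_i\in\Pi$, and applying the braid automorphism $T_w$, using $T_w(K_{\alpha_i})=K_\beta$ and $q_{\alpha_i}=q_\beta$.

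Two quantities then need to be evaluated. First, by Lemma~\ref{lemma:25} and the definition of $\lambda^0$, one has $\lambda(K_\beta)=q^{(\lambda^0|\beta)}=q_\beta^{a}$ where $a:=\left<\lambda^0,\beta^\vee\right>$. Second, the defining product for ${K_\beta;c\brack l-1}$ only involves denominators $[j]_\beta$ with $1\leq j\leq l-1$, none of which vanishes at a primitive odd $l$-th root of unity, so $\lambda$ can be computed directly from the formula to give
\begin{equation*}
  \lambda\!\left({K_\beta;c\brack l-1}\right) = \prod_{j=1}^{l-1}\frac{[a+c+1-j]_\beta}{[j]_\beta} = {a+c\brack l-1}_{q_\beta}.
\end{equation*}

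The key step is to evaluate this quantum binomial at the root of unity. The numerator $[a+c]_\beta[a+c-1]_\beta\cdots[a+c-l+2]_\beta$ runs over $l-1$ consecutive integers and omits exactly the residue class of $a+c+1$ modulo $l$. Since $[k]_\beta$ is $l$-periodic in $k$ and vanishes precisely when $l\mid k$, the product is zero unless $a+c+1\equiv 0\pmod l$. When this congruence holds, $l$-periodicity reduces the numerator to $[-1]_\beta[-2]_\beta\cdots[-(l-2)]_\beta=(-1)^{l-2}[1]_\beta[2]_\beta\cdots[l-2]_\beta$, and combining this with the identity $[l-1]_\beta=-1$ (which comes from $q_\beta^l=1$) and $(-1)^{l-2}=-1$ (since $l$ is odd) yields ${a+c\brack l-1}_{q_\beta}=1$.

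Putting the pieces together, $\lambda({K_\beta;c+1\brack l})-\lambda({K_\beta;c\brack l})$ equals $q_\beta^{c+1+a}\cdot 1$ in the exceptional case $\left<\lambda^0,\beta^\vee\right>+c\equiv -1\pmod l$ and vanishes otherwise; in the exceptional case $c+1+a\equiv 0\pmod l$ forces $q_\beta^{c+1+a}=1$, giving the claimed dichotomy. I expect the only slightly delicate point to be checking that the $U_A$-identity really does transport along $T_w$ to yield the correct relation for non-simple $\beta$ with the conventions of the paper; once this is in hand everything reduces to the bookkeeping with quantum integers at a root of unity sketched above.
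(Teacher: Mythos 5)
Your approach matches the paper's: apply $\lambda$ to the root-of-unity specialization of Lusztig's relation $(b4)$ from \cite[Section~6.4]{MR1066560}, rewrite $\lambda\bigl({K_\beta;c\brack l-1}\bigr)$ as ${a+c\brack l-1}_{q_\beta}$ with $a=\left<\lambda^0,\beta^\vee\right>$, and note that this quantum binomial vanishes unless $a+c\equiv -1\pmod l$, equalling $1$ (and forcing $q_\beta^{a+c+1}=1$) when the congruence holds. Two small points: the $T_w$ transport you flag at the end is unneeded, since the Pascal relation among the ${K_\beta;c\brack r}$ is a formal identity in $K_\beta^{\pm1}$ and $v_\beta$ that makes sense for any $\beta\in\Phi^+$ once $K_\beta=K_\mu$ is defined for $\mu\in Q$; and in your evaluation of the binomial the reduced numerator should have $l-1$ factors, $[-1]_\beta[-2]_\beta\cdots[-(l-1)]_\beta=(-1)^{l-1}[l-1]_\beta!=[l-1]_\beta!$ for odd $l$, which cancels the denominator directly — the paper simply records ${l-1\brack l-1}_{q_\beta}=1$.
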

\begin{proof}
  Set $a=\left< \lambda^0,\beta^\vee\right>$. By $(b4)$ in Section~6.4
  of~\cite{MR1066560}
  \begin{align*}
    \lambda\left({K_\beta ; c\brack l}\right) =& \lambda\left(
      {K_\beta ; c-1\brack l} + q_\beta^{c} K_\beta {K_\beta ; c-1
        \brack l-1}\right)
    \\
    =& \lambda\left( {K_\beta ; c-1\brack l}\right) + q_\beta^{c+a}
    {a+c-1 \brack l-1}_{q_\beta}.
  \end{align*}
  ${a+c-1 \brack l-1}_{q_\beta}$ is zero unless $a+c-1 \equiv -1 \mod
  l$. If $a+c-1\equiv -1 \mod l$ then $a+c \equiv 0 \mod l$ and so
  $q_\beta^{a+c}=1$ and ${a+c-1\brack l-1}_{q_\beta} = {l-1 \brack
    l-1}_{q_\beta} = 1$.
\end{proof}

For a character $\lambda\in X$ and a $\mu\in Q$ we define
$q^\mu \lambda$ as follows:
\begin{align*}
  (q^\mu \lambda)(K_\alpha) =& q^{(\mu|\alpha)}\lambda(K_\alpha) =
  q_\alpha^{\left<\mu,\alpha^\vee\right>}\lambda(K_\alpha)
  \\
  (q^\mu \lambda)\left({K_\alpha ; c \brack l}\right) =&
  \lambda\left({K_\alpha ; c+\left<\mu,\alpha^\vee\right> \brack
      l}\right).
\end{align*}
With this notation we get for a module $M$ that $E_\alpha^{(r)}
M_\lambda \subset M_{q^{r \alpha}\lambda}$ and $F_\alpha^{(r)}
M_\lambda \subset M_{q^{-r\alpha}\lambda}$. Note also that
$(q^{l\beta}\lambda)^1= \lambda^1 + \beta$.

We use the same definitions as in
Section~\ref{sec:nonroot-unity-case}:

\begin{defn}
  Let $M\in \mathcal{F}$ and let $\beta\in \Phi$. We call $M$
  $\beta$-finite if $q^{\setN \beta}\lambda\cap \wt M$ is a finite set
  for all $\lambda\in \wt M$ where $q^{\setN \beta}\lambda =
  \{q^{r\beta}\lambda|r\in \setN\}$.
\end{defn}

The weight vectors $E_\beta$ and $F_\beta$ for positive $\beta$ that
are not simple are defined just as before by choosing a reduced
expression of $w_0$. By~\cite[Section~5.6]{MR1066560} the divided
powers $E_\beta^{(r)}:= \frac{1}{[r]_\beta!}E_\beta^r$, $r\in\setN$
are all contained in $U_A$ and by abuse of notation we use the same
symbol for the corresponding elements in $U_q$.

\begin{prop}
  \label{prop:10}
  Let $M\in\mathcal{F}$ and let $\beta$ be a positive root. Let
  $E_\beta$ be any choice of root vector corresponding to
  $\beta$. Then the following are equivalent:
  \begin{enumerate}
  \item $M$ is $\beta$-finite.
  \item For all $m\in M$, $E_\beta^{(r)}m = 0$ for $r>>0$
  \end{enumerate}
\end{prop}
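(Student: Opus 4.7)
The plan is to mimic the proof of Proposition~\ref{prop:1} while making two adjustments forced by $q$ being a root of unity. First, divided powers $E_\beta^{(r)}$ must be used throughout, since $E_\beta^l=0$ in $U_q$ while $E_\beta^{(l)}\neq 0$. Second, the $\mathfrak{sl}_2$-Casimir step at the end of Proposition~\ref{prop:1} cannot work: shifts by $l\beta$ leave $\mu(K_\beta)$ invariant, so the Casimir eigenvalue cannot distinguish the modules $Dm_i$. The implication 1.\,$\Rightarrow$\,2.\ goes exactly as in Proposition~\ref{prop:1}: $E_\beta^{(r)}M_\lambda\subset M_{q^{r\beta}\lambda}$, and $\beta$-finiteness forces this weight out of $\wt M$ for large $r$; extend to arbitrary $m$ using its finite weight-support.

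For 2.\,$\Rightarrow$\,1., I would argue by contradiction. Assume (2) and that $M$ is not $\beta$-finite, so some $\lambda$ has $q^{\setN\beta}\lambda\cap\wt M$ infinite. Because $q^{l\beta}$ acts trivially on the $\Lambda_l$-part of a weight and only shifts the $\mathfrak{h}^*$-part (Lemma~\ref{lemma:25}), pigeonholing over the residue modulo $l$ and replacing $\lambda$ by $q^{t\beta}\lambda$ for some $t\in\{0,\dots,l-1\}$ one may assume $\{r\in\setN:q^{rl\beta}\lambda\in\wt M\}$ is infinite. Choose an increasing sequence $r_i$ in this set, nonzero $m'_i\in M_{\mu_i}$ with $\mu_i=q^{r_il\beta}\lambda$, and let $s_i\geq 0$ be maximal with $E_\beta^{(s_i)}m'_i\neq 0$ (exists by~(2)). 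Setting $m_i=E_\beta^{(s_i)}m'_i$, the divided-power identity $E_\beta^{(r)}E_\beta^{(s_i)}={r+s_i\brack r}_\beta E_\beta^{(r+s_i)}$ forces $E_\beta^{(r)}m_i=0$ for every $r\geq 1$. After passing to a subsequence on which $s_i\bmod l$ is constant and absorbing that constant into $\lambda$ via another twist, each $m_i$ has weight of the form $q^{lr_i\beta}\lambda$ (with new $\lambda$ and relabeled $r_i\to\infty$).

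Kac's formula now collapses to $E_\beta^{(lr_i)}F_\beta^{(lr_i)}m_i=\mu_i({K_\beta;0\brack lr_i})m_i$, and I would evaluate the scalar using the decomposition ${K_\beta;0\brack lr_i}=(r_i!)^{-1}\prod_{s=0}^{r_i-1}{K_\beta;-sl\brack l}$ from the proof of Lemma~\ref{lemma:25}, the recurrence of Lemma~\ref{lemma:28}, and the identity $\mu({K_\beta;0\brack l})=\langle\mu^1,\beta^\vee\rangle$ (known for simple $\beta$ by Lemma~\ref{lemma:25} and extended to any positive root by observing ${K_\beta;0\brack l}=T_w({K_{\alpha'};0\brack l})$ for $\beta=w(\alpha')$, so that $\mu({K_\beta;0\brack l})=(w^{-1}\mu)({K_{\alpha'};0\brack l})=\langle\mu^1,\beta^\vee\rangle$). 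Since $\mu_i^1=\lambda^1+r_i\beta$, the scalar evaluates to $(r_i!)^{-1}\prod_{s=0}^{r_i-1}(\langle\lambda^1,\beta^\vee\rangle+2r_i-s)$, which is nonzero for all $r_i$ beyond a bound depending only on $\langle\lambda^1,\beta^\vee\rangle$. Hence $v_i:=F_\beta^{(lr_i)}m_i\in M_\lambda$ is nonzero for large $i$.

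It remains to show the $v_i$ are linearly independent, and this is where I expect the main difficulty to lie, since the root-of-unity Casimir argument fails. The replacement is a triangularity argument via $E_\beta^{(lr_n)}$: given $\sum_{i=1}^nc_iv_i=0$, apply $E_\beta^{(lr_n)}$; the annihilation $E_\beta^{(r)}m_i=0$ for $r\geq 1$ collapses the Kac expansion of $E_\beta^{(lr_n)}F_\beta^{(lr_i)}m_i$ to the single term $s=lr_n$, which vanishes for $i<n$ because $F_\beta^{(lr_i-lr_n)}$ has negative index, and equals the nonzero scalar above times $m_n$ for $i=n$. Hence $c_n=0$, and induction kills the remaining coefficients. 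This produces infinitely many linearly independent vectors in $M_\lambda$, contradicting $M\in\mathcal{F}$. The conceptual point is that at a root of unity the separating information between the $Dm_i$ sits not in $K_\beta$ but in the $U_q^0$-elements ${K_\beta;c\brack l}$, i.e.\ in the $\mathfrak{h}^*$-coordinate $\lambda^1$ of the weight, and the triangularity argument harvests it cleanly.
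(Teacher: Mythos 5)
Your proof is correct and takes essentially the same route as the paper's: both extract divided-power highest weight vectors $m_i$ from the failure of $\beta$-finiteness, use Kac's formula to reduce $E_\beta^{(rl)}F_\beta^{(rl)}m_i$ to a scalar given by a generalized binomial coefficient, conclude $v_i = F_\beta^{(rl)}m_i\neq 0$ for large $r$, and replace the $\mathfrak{sl}_2$-Casimir step of Proposition~\ref{prop:1} (which you correctly diagnose as failing at a root of unity) by the triangularity argument of applying the largest $E_\beta^{(lr_n)}$. The one imprecision is your claimed identity $\mu({K_\beta;0\brack l})=\left<\mu^1,\beta^\vee\right>$ for non-simple $\beta$: the proposed braid-twist derivation would require the $W$-action on $X$ to respect the decomposition $X=\Lambda_l\times\mathfrak{h}^*$ in the naive way $(w\inv\mu)^1 = w\inv(\mu^1)$, which Lemma~\ref{lemma:25} does not establish; the paper avoids this by treating $\lambda({K_\beta;0\brack l})$ as an unspecified complex number and using only that Lemma~\ref{lemma:28} gives $\mu_i({K_\beta;0\brack l})=\lambda({K_\beta;0\brack l})+2r_i$, which in fact is all your argument uses, so the gap is cosmetic rather than substantive.
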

\begin{proof}
  Clearly $1.$ implies $2.$ since $E_\beta^{(r)}M_\lambda \subset
  M_{q^{r\beta}\lambda}$. Assume $2.$ and suppose $M$ is not $\beta$
  finite.

  We must have a $\lambda\in \wt M$, an increasing sequence
  $\{j_i\}_{i\in\mathbb{N}}$, weights $\mu_i = q^{j_i \beta}\lambda
  \in \wt M$ and weight vectors $m_{i}\in M_{\mu_i}$ such that
  $E_\beta^{(r)} m_i = 0$ for all $r\in\setN\backslash\{0\}$. We can
  assume without loss of generality that if
  $\lambda\left({K_{\beta};0\brack l}\right) \in \setZ$ then
  $\lambda\left({K_{\beta};0\brack l}\right) \in \setZ_{>0}$ by
  Lemma~\ref{lemma:28}.

  Now consider the subalgebra $D_\beta$ of $U_q$ generated by
  $E_\beta^{(r)}$, $K_\beta^{\pm 1}$ and $F_\beta^{(r)}$ for
  $r\in\setN$ where $F_\beta$ is the root vector corresponding to
  $E_\beta$ (i.e. if $E_\beta = T_w(E_{\alpha_i})$ then $F_\beta =
  T_w(F_{\alpha_i})$). For each $i$ we get a $D_\beta$-module $D_\beta
  m_i$ with highest weight $\mu_i$. We claim that in each of these
  modules we have at least one weight vector with one of the weights
  $\lambda,q^{-\beta} \lambda,\dots, q^{-(l-1)\beta}\lambda$. So we
  want to show for each $m_i$ that at least one of the vectors
  $F_\beta^{(j_i)}m_i,F_{\beta}^{(j_i+1)}m_i,\dots,F_\beta^{(j_i+l-1)}m_i$
  is nonzero. We must have that one of the numbers $j_i,\dots,j_i+l-1$
  is congruent to $0$ modulo $l$. Lets call this number $k$. Say
  $k=rl$. Now we have
  \begin{align*}
    E_\beta^{(k)} F_\beta^{(k)} m_i =& \sum_{s\geq 0} F_\beta^{(k-s)}
    { K_\beta ; 2s - 2k \brack s} E_\beta^{(k-s)} m_i
    \\
    =& {K_\beta ; 0 \brack rl} m_i
    \\
    =& \frac{1}{r!}\prod_{s=0}^{r-1} {K_\beta ; -sl \brack l} m_i
    \\
    =& \frac{1}{r!}\prod_{s=0}^{r-1} \left(c_i -s\right)m_i
    \\
    =& {c_i \choose r} m_i
  \end{align*}
  where $c_i = \mu_i\left({K_\beta;0 \brack l}\right)$. To show that
  this is nonzero we must show that $c_i \not\in \{0,\dots,r-1\}$. If
  $\lambda\left( {K_\beta;0\brack l}\right)$ is not an integer then
  this is automatically fullfilled. Otherwise we know $j_i = rl-t$ for
  some $t=0,\dots,l-1$. So $\mu_i = q^{(rl-t)\beta}\lambda$ and by
  Lemma~\ref{lemma:28}
  \begin{equation*}
    c_i = \mu_i\left( {K_\beta;0\brack l}\right) = q^{(rl-t)\beta}\lambda\left( {K_\beta;0\brack l}\right) = \lambda \left( {K_\beta;0\brack l}\right) + r-1 \geq r.
  \end{equation*}

  Since there are infinitely many $m_i$'s we must have infinitely many
  weight vectors $\{v_j\}$ of weight one of the weights
  $\lambda,\lambda-\beta,\dots, \lambda-(l-1)\beta$.
  
  To show that they are linearly independent let $v_1,\dots, v_n$ be a
  finite set of the above weight vectors. They are all of the form
  $F_\beta^{(k_i)}m_i$ for some $i$ and some $k_i$. Assume $v_n$ is
  the vector where the power $k_n$ is maximal. Then
  $E_\beta^{(k_n)}v_i=0$ for $i\neq n$ and $E_\beta^{(k_n)}v_n \neq
  0$. It follows by induction on $n$ that the set $\{v_1,\dots,v_n\}$
  is linearly independent.
\end{proof}

We define $M^{[\beta]} = \{m\in M| \dim \left< E_\beta^{(r)} |
  r\in\setN\right> m < \infty \}$.  Proposition~\ref{prop:2} and
Lemma~\ref{lemma:5} carry over with the same proof. In particular
$M^{[\beta]}$ is independent of the choice of root vector
$E_\beta$. Again we call $M$ $\beta$-free if $M^{[\beta]}=0$. Again we
can show everything with $F$'s instead of $E$'s if $\beta$ is
negative.

Propositions~\ref{prop:3}~and~\ref{prop:4} carry over with almost
identical proofs. Setting $l=1$ in the propositions and their proofs
below would make the proofs identical.

\begin{prop}
  \label{prop:13}
  Let $M\in\mathcal{F}$ be a simple module and $\beta$ a root. Then
  $M$ is $\beta$-free if and only if $q^{\setN l\beta}\wt M \subset
  \wt M$.
\end{prop}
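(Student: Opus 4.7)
The plan is to mirror the proof of Proposition~\ref{prop:3}, adapted to the root-of-unity setting by combining Proposition~\ref{prop:10} with the arithmetic of divided powers at an $l$th root of unity.

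For the backward implication, I would argue: if $q^{\setN l\beta}\wt M\subset \wt M$, then for any $\lambda\in\wt M$ the weights $q^{rl\beta}\lambda$, $r\in\setN$, are pairwise distinct (their $\mathfrak{h}^*$-components $\lambda^1+r\beta$ are), so $q^{\setN\beta}\lambda\cap\wt M$ is infinite and $M$ fails to be $\beta$-finite. The root-of-unity analogues of Proposition~\ref{prop:2} and Lemma~\ref{lemma:5}, stated to carry over with the same proof just before the present proposition, ensure that $M^{[\beta]}$ is a $U_q$-submodule; simplicity forces $M^{[\beta]}\in\{0,M\}$, and $M^{[\beta]}=M$ is ruled out via Proposition~\ref{prop:10}. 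Hence $M^{[\beta]}=0$ and $M$ is $\beta$-free.

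For the forward implication, assuming $M$ is $\beta$-free, fix $\lambda\in\wt M$ and a nonzero $m\in M_\lambda$. The plan is to exploit two instances of the quantum Lucas theorem at $q_\beta$ (which is a primitive $l$th root of unity since $(\beta|\beta)/2\in\{1,2\}$ is coprime to the odd integer $l$) applied to the divided-power identity $E_\beta^{(a)}E_\beta^{(b)}={a+b\brack a}_{q_\beta}E_\beta^{(a+b)}$. From Proposition~\ref{prop:10} and $M^{[\beta]}=0$ one has $E_\beta^{(r)}m\neq 0$ for infinitely many $r\in\setN$. Writing $r=lr_1+r_0$ with $0\le r_0<l$, Lucas gives ${r\brack r_0}_{q_\beta}=1$, whence $E_\beta^{(r)}=E_\beta^{(r_0)}E_\beta^{(lr_1)}$; so $E_\beta^{(r)}m\neq 0$ forces $E_\beta^{(lr_1)}m\neq 0$, and the set $T=\{r_1\in\setN:E_\beta^{(lr_1)}m\neq 0\}$ is infinite.

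The decisive step, which trivialises when $l=1$, is to show $T$ is downward closed. Lucas also computes ${lr_1\brack l}_{q_\beta}=r_1$, so $E_\beta^{(l)}E_\beta^{(l(r_1-1))}=r_1E_\beta^{(lr_1)}$ for $r_1\ge 1$; therefore $E_\beta^{(lr_1)}m\neq 0$ implies $E_\beta^{(l(r_1-1))}m\neq 0$. An infinite downward-closed subset of $\setN$ must equal $\setN$, so $E_\beta^{(lr_1)}m\in M_{q^{lr_1\beta}\lambda}$ is nonzero for every $r_1\in\setN$, giving $q^{lr_1\beta}\lambda\in\wt M$. As $\lambda\in\wt M$ was arbitrary, $q^{\setN l\beta}\wt M\subset\wt M$, completing the proof.
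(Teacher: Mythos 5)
Your proof is correct and takes essentially the same route as the paper's. The backward implication matches the paper verbatim in content, and for the forward direction the paper argues by contradiction — if some $E_\beta^{(rl)}m=0$ then the nonvanishing of ${rl+i\brack i}_\beta$ gives $E_\beta^{(rl+i)}m=0$ for all $i$, forcing $m\in M^{[\beta]}$ — while you phrase the same Lucas-type nonvanishing of quantum binomial coefficients as two downward-propagation steps, which is just the contrapositive organization of the identical idea.
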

\begin{proof}
  Assume $\beta$ is positive.  If $q^{\setN l \beta}\wt M \subset \wt
  M$ then $M$ is clearly not $\beta$-finite and since $M$ is simple we
  have by Proposition~\ref{prop:2} that $M$ is $\beta$-free in this
  case. For the other way assume $M$ is $\beta$-free and assume we
  have a weight vector $0\neq m \in M_\lambda$ such that
  $E_\beta^{(rl)}m = 0$ for some $r\in\setN$. For any $i\in\setN$,
  ${i+rl \brack i}_\beta\neq 0$ so
  \begin{equation*}
    E_\beta^{(rl+i)} m= {i+rl \brack i}_\beta \inv E_\beta^{(i)} E_\beta^{(rl)} m = 0
  \end{equation*}
  But this implies that $m\in M^{[\beta]}$ which contradicts the
  assumption that $M$ is $\beta$-free.  If $\beta$ is negative we do
  the same with $F$'s instead of $E$'s.
\end{proof}

\begin{prop}
  \label{prop:14}
  Let $L\in\mathcal{F}$ be a simple module. $T_L$ is a closed subset
  of $\Phi$.
\end{prop}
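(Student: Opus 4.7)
The plan is to mimic the proof of Proposition~\ref{prop:4} from the non-root-of-unity case, replacing Proposition~\ref{prop:3} with its root-of-unity analog Proposition~\ref{prop:13}. The only thing that changes is that instead of the invariance condition $q^{\setN \beta}\wt L \subset \wt L$, we now work with the condition $q^{\setN l \beta}\wt L \subset \wt L$, but the $l$ factor commutes nicely with addition of roots.

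More concretely, assume $\beta, \gamma \in T_L$ with $\beta + \gamma \in \Phi$. By Proposition~\ref{prop:13}, the $\beta$-freeness of $L$ gives $q^{\setN l\beta}\wt L \subset \wt L$, and similarly $\gamma$-freeness gives $q^{\setN l\gamma}\wt L \subset \wt L$. Applying these one after the other yields $q^{\setN l\gamma} q^{\setN l\beta}\wt L \subset \wt L$, and since pointwise multiplication of characters satisfies $q^{rl\gamma} q^{rl\beta} = q^{rl(\beta+\gamma)}$, we conclude $q^{\setN l(\beta+\gamma)}\wt L \subset \wt L$. By the converse direction of Proposition~\ref{prop:13}, this implies $L$ is $(\beta+\gamma)$-free, i.e.\ $\beta+\gamma \in T_L$.

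There is no real obstacle here; the hard work was already done in establishing Proposition~\ref{prop:13}, which handled the subtle behavior of the $K$-weight shifts at a root of unity (where $\lambda(K_\beta)$ is determined only modulo $q_\beta^l$, so one must pass to $l$-step translates to track the weight). Once that proposition is in hand, closure of $T_L$ under addition is a purely formal semigroup argument on the invariance of $\wt L$ under the lattice translations $q^{\setN l\beta}$, valid for $\beta$ positive or negative without change.
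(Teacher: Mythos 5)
Your proof is correct and is essentially identical to the paper's: both deduce $q^{\setN l\beta}\wt L \subset \wt L$ and $q^{\setN l\gamma}\wt L \subset \wt L$ from Proposition~\ref{prop:13}, compose them, and note that the diagonal gives $q^{\setN l(\beta+\gamma)}\wt L \subset \wt L$, hence $\beta+\gamma \in T_L$ by the converse direction of Proposition~\ref{prop:13}. The paper's proof is just as terse; no differences to note.
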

\begin{proof}
  Assume $\beta,\gamma\in T_L$ with $\beta+\gamma\in \Phi$. Then since
  $\beta\in T_L$, $q^{\setN l \beta}\wt L\subset \wt L$. Since
  $\gamma\in T_L$ we get then $q^{\setN l\gamma}q^{\setN l\beta}\wt L
  \subset \wt L$ so $q^{\setN l(\beta+\gamma)} \wt L \subset \wt L$.
\end{proof}

\begin{prop}
  \label{prop:12}
  Let $L\in\mathcal{F}$ be a simple module. $F_L$ and $T_L$ are closed
  subsets of $\Phi$ and $\Phi=F_L\cup T_L$ (disjoint union).
\end{prop}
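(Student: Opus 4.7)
The plan is to dispatch the three assertions separately: the disjoint‐union statement, closedness of $T_L$, and closedness of $F_L$.

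For the decomposition $\Phi=F_L\cup T_L$ with disjoint union, I would invoke the root‐of‐unity analogue of Proposition~\ref{prop:2} (stated immediately after Proposition~\ref{prop:10} as carrying over with the same proof): the subspace $L^{[\beta]}=\{m\in L\mid \dim\langle E_\beta^{(r)}\mid r\in\setN\rangle m<\infty\}$ is a $U_q$-submodule of $L$. Since $L$ is simple, $L^{[\beta]}$ is either $0$ or all of $L$; the former says $\beta\in T_L$ and the latter, together with Proposition~\ref{prop:10}, says $\beta\in F_L$. Thus every root belongs to exactly one of $F_L$ and $T_L$. Closedness of $T_L$ is Proposition~\ref{prop:14}, already established.

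The real content is closedness of $F_L$, and I would follow Proposition~\ref{prop:5} verbatim. The Weyl‐group reduction there (for $\beta,\gamma\in F_L$ with $\beta+\gamma\in\Phi$, producing $w\in W$ with $w(\beta)$ simple and $w(\gamma)$ positive, both lying in a rank $2$ subsystem) is purely combinatorial and uses only the exclusion of $G_2$; it transports unchanged, using that $^wM\in\mathcal{F}$ is still weight‐finite and that $F_{^wM}=w(F_M)$. It therefore suffices to settle the rank $2$ case, where one expresses the divided power $E_{\alpha+\beta}^{(k)}$ (or $E_{2\alpha_1+\alpha_2}^{(k)}$ in case 4) as a finite sum
\begin{equation*}
E_{\alpha+\beta}^{(k)}=\sum_{t=0}^{k} c_t\, E_\beta^{(k-t)} E_\alpha^{(\cdot)} E_\beta^{(t)}
\end{equation*}
with coefficients $c_t\in\setQ(q)$ obtained from~\cite[section~5.3 and 5.5]{MR1066560}. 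Given $m\in L$, $\beta$-finiteness provides $T\in\setN$ with $E_\beta^{(t)}m=0$ for $t\geq T$, and $\alpha$-finiteness applied to each of the finitely many vectors $E_\beta^{(t)}m$ ($t<T$) gives an integer $K$ with $E_\alpha^{(r)}E_\beta^{(t)}m=0$ for all $r\geq K$ and all $t<T$; hence $E_{\alpha+\beta}^{(r)}m=0$ for $r\gg 0$, so $\alpha+\beta\in F_L$.

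The main obstacle is to ensure that the rank $2$ identities survive specialization at $q$. In cases 0–3 the coefficients lie in $\setZ[q,q^{-1}]$, so no issue arises. The delicate case is case 4, where the parenthetical remark after case 4 in the proof of Proposition~\ref{prop:5} already records that the $c_t$ live in the localization of $\setZ[q,q^{-1}]$ at the elements $q^{2i}+1$, $i\in\setN$. These denominators are invertible in $\setC$ precisely when $q^{2i}\neq -1$ for all $i$; since we are assuming $q$ is a primitive $l$'th root of unity with $l$ odd, $-1$ is not a power of $q^2$, so every $q^{2i}+1$ is nonzero. Therefore the identities of Proposition~\ref{prop:5} specialize to $U_q$ without change, and the argument above applies, finishing the proof.
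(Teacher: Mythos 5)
Your proposal is correct and follows essentially the same route as the paper: closedness of $T_L$ is Proposition~\ref{prop:14}, closedness of $F_L$ repeats the Proposition~\ref{prop:5} argument with the observation that the only denominators introduced (the $q^{2i}+1$ in case 4) are nonzero because $l$ is odd, and the disjoint decomposition $\Phi=F_L\cup T_L$ comes from $L^{[\beta]}$ being a $U_q$-submodule of the simple module $L$. You are merely more explicit than the paper about the last point, which the paper treats as already established in the earlier discussion surrounding Proposition~\ref{prop:10}.
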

\begin{proof}
  $T_L$ is closed by Proposition~\ref{prop:14}. $F_L$ is closed by the
  same proof as the proof of Proposition~\ref{prop:5}. Note that the
  constants in the proof of Proposition~\ref{prop:5} that are inverted
  are all nonzero even when $q$ is a $l$'th root of unity as long as
  $l$ is odd.
\end{proof}

We define $P_L$ like in Section~\ref{sec:nonroot-unity-case} and we
assume like above that $P_L$ is standard parabolic by considering
${^w}L$ for an appropiate $w\in W$.  The subalgebras
$U_q(\mathfrak{p})$, $U_q(\mathfrak{l})$, $U_q(\mathfrak{u})$,
$U_q(\mathfrak{u}^-)$ etc. are defined as above but this time with
divided powers. For example we have
\begin{equation*}
  U_q(\mathfrak{p}) = \left< E_{\beta_j}^{(r)},K_\mu,F_{\beta_i^1}^{(r)}\right>_{j=1,\dots,N,\mu\in Q, i=1,\dots, h, r\in\setN}
\end{equation*}
and so on. Now the rest of the lemmas and proposition carry over with
the same proofs as before and we have the following equivalent of
Theorem~\ref{thm:classification}:

\begin{thm}
  \label{thm:classification_root_of_unity}
  Suppose $L\in\mathcal{F}$ is a simple $U_q(\mathfrak{g})$
  module. Let $w\in W$ be such that $P_{{^w}L}$ is standard
  parabolic. With notation as above: $({^w}L)^\mathfrak{u}$ is a
  simple $U_q(\mathfrak{l})$-module and this module decomposes into a
  tensor product $X_{\operatorname{fin}}\tensor_\setC
  X_{\operatorname{fr}}$ where $X_{\operatorname{fin}}$ is a finite
  dimensional simple $U_q(\tau)$-module and $X_{\operatorname{fr}}$ is
  a torsion free simple $U_q(\mathfrak{t})$-module. Furthermore if
  $\mathfrak{t}= \mathfrak{t}_1\oplus \cdots \oplus \mathfrak{t}_s$ as
  a sum of ideals then $X_{\operatorname{fr}} = X_1\tensor_\setC
  \cdots \tensor_\setC X_s$ as $U_q(\mathfrak{t}_1)\tensor\cdots
  \tensor U_q(\mathfrak{t}_s)$-module for some simple
  $U_q(\mathfrak{t}_i)$-modules $X_i$, $i=1,\dots,s$.

  Given the pair $(X_{\operatorname{fin}},X_{\operatorname{fr}})$ and
  the $w\in W$ defined above then $L$ can be recovered as
  ${^{\bar{w}}}(L(X_{\operatorname{fin}}\tensor_\setC\nobreak
  X_{\operatorname{fr}}))$.
\end{thm}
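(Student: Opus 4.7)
The plan is to transplant the proof of Theorem~\ref{thm:classification} to the root of unity setting, relying on the analogues of Propositions~\ref{prop:3}, \ref{prop:4}, \ref{prop:5} that were already established as Propositions~\ref{prop:13}, \ref{prop:14}, \ref{prop:12}. Once those are in place the structural argument runs in parallel.

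First I would invoke Proposition~\ref{prop:12} to write $\Phi = F_L \sqcup T_L$ with both sides closed, and conclude via~\cite[Lemma~4.16]{Fernando} that $P_L := F_L \cup T_L^s$ is a parabolic subset of $\Phi$. Picking $w\in W$ with $\Phi^+ \subset w(P_L)$ and replacing $L$ by ${^w}L$ makes $P_{{^w}L} = \Phi^+ \cup \langle \Pi' \rangle$ standard parabolic for some $\Pi' \subset \Pi$. Next I would apply the root-of-unity analogue of Proposition~\ref{prop:9} to deduce that $({^w}L)^{\mathfrak{u}}$ is a simple $U_q(\mathfrak{l})$-module with $L(({^w}L)^{\mathfrak{u}}) \iso {^w}L$. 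Then I would decompose $\Pi' = \Pi'_{F_N^s} \sqcup \Pi'_{T_N^s}$ into Dynkin-disconnected pieces as in Section~\ref{sec:nonroot-unity-case}, obtaining a tensor decomposition $U_q(\mathfrak{l}) \iso U_q(\tau) \tensor_\setC U_q(\mathfrak{t})$ with commuting factors, and run the root-of-unity analogue of Lemma~\ref{lemma:4} to split $({^w}L)^{\mathfrak{u}} \iso X_{\operatorname{fin}} \tensor_\setC X_{\operatorname{fr}}$. Finite-dimensionality of $X_{\operatorname{fin}}$ follows because every root in $\Pi'_{F_N^s}$ lies in $F_N^s$, so by Proposition~\ref{prop:10} the divided powers of the associated root vectors act locally finitely on $X_{\operatorname{fin}}$; combined with finite-dimensional weight spaces and simplicity this forces $\dim X_{\operatorname{fin}} < \infty$. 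Torsion-freeness of $X_{\operatorname{fr}}$ is immediate from the defining property of $\Pi'_{T_N^s}$. The further decomposition $X_{\operatorname{fr}} \iso X_1 \tensor_\setC \cdots \tensor_\setC X_s$ along the simple ideals of $\mathfrak{t}$ follows by iterating the same Lemma~\ref{lemma:4}-style argument on the connected components $\Pi'_{(T_L^s)_i}$. Finally, the recovery formula ${^{\bar{w}}}(L(X_{\operatorname{fin}} \tensor_\setC X_{\operatorname{fr}})) \iso L$ follows from the Proposition~\ref{prop:9} bijection together with the twist identity ${^{\bar{w}}}({^w}L) \iso L$.

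The main obstacle, essentially already discharged by the author in this section, is checking that every ingredient actually survives specialization at an odd root of unity. The delicate points are: (i) the description of $X$ in Lemma~\ref{lemma:25}, so that the weight-space decomposition is still meaningful; (ii) that $\beta$-finiteness is correctly tested on divided powers rather than plain powers, per Proposition~\ref{prop:10}; (iii) that the triangular decomposition $U_q = U_q(\mathfrak{u}^-) U_q(\mathfrak{l}) U_q(\mathfrak{u})$ persists after specialization, as in~\cite{deg_of_parabolic}; and (iv) that the rank-$2$ scalars appearing in the proof of Proposition~\ref{prop:5} remain invertible for odd $l$, which was flagged inside that proof. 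With these points verified, the weight-grading arguments driving Propositions~\ref{prop:6}, \ref{prop:7}, \ref{prop:9} and Lemma~\ref{lemma:4} (including Theorem~\ref{thm:Lemire}) transport verbatim, and the theorem follows.
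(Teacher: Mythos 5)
Your proposal is correct and follows essentially the same route as the paper: the paper itself explicitly states that once the root-of-unity analogues of Propositions~\ref{prop:3}, \ref{prop:4}, \ref{prop:5} (namely Propositions~\ref{prop:13}, \ref{prop:14}, \ref{prop:12}) and the divided-power definitions of the parabolic subalgebras are in place, ``the rest of the lemmas and proposition carry over with the same proofs as before.'' Your checklist of delicate points—Lemma~\ref{lemma:25}'s description of $X$, testing $\beta$-finiteness on divided powers via Proposition~\ref{prop:10}, the specialized triangular decomposition, and the non-vanishing of the rank-$2$ scalars for odd $l$—matches exactly what the text flags as needing verification.
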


So in the root of unity case we have also that to classify simple
modules in $\mathcal{F}$ we just have to classify finite dimensional
modules of $U_q(\tau)$ and 'torsion free' modules over
$U_q(\mathfrak{t})$, where $\mathfrak{t}$ can be assumed to be a
simple Lie algebra.

\section{$U_A$ formulas}
\label{sec:u_a-calculations}
In this section we recall from~\cite{DHP-twist} some formulas for
commuting root vectors with each other that will be used later. Note
that in~\cite{DHP-twist} the braid operators that we here call $T_w$
are denoted by $R_w$. In~\cite{DHP-twist} $T_w$ denotes twisting
functors.

Recall that $A=\setZ[v,v\inv]$ where $v$ is an indeterminate and $U_A$
is the $A$-subspace of $U_v$ generated by the divided powers
$E_{\alpha}^{(n)}$, $F_\alpha^{(n)}$, $n\in\setN$ and $K_\alpha$,
$K_\alpha\inv$.
\begin{defn}
  \label{sec:twisting-functors-2}
  Let $x\in (U_v)_\mu$ and $y\in (U_v)_\gamma$ then we define
  \[ [x,y]_v:=xy-v^{-(\mu|\gamma)}yx
  \]
\end{defn}

\begin{thm}
  \label{thm:DP}
  Suppose we have a reduced expression of $w_0 = s_{i_1}\cdots
  s_{i_N}$ and define root vectors
  $F_{\beta_1},\dots,F_{\beta_N}$. Let $i<j$. Let $A=\setZ[v,v\inv]$
  and let $A'$ be the localization of $A$ in $[2]$ if the Lie algebra
  contains any $B_n,C_n$ or $F_4$ part. Then
  \begin{equation*}
    [F_{\beta_j},F_{\beta_i}]_v=F_{\beta_j}F_{\beta_i}-v^{-(\beta_i|\beta_j)}F_{\beta_i}F_{\beta_j}\in \spa{A'}{F_{\beta_{j-1}}^{a_{j-1}}\cdots F_{\beta_{i+1}}^{a_{i+1}}}
  \end{equation*}
\end{thm}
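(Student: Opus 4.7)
The plan is to reduce the identity to a rank-$2$ calculation via the braid group action, then invoke the convexity of the PBW enumeration coming from the reduced expression of $w_0$ to constrain which monomials can appear on the right-hand side.

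First I would apply the inverse braid operator $\bigl(T_{s_{i_1}} \cdots T_{s_{i_{i-1}}}\bigr)^{-1}$ to both sides of the proposed identity. Since each $T_{s_k}$ is a $\setQ(v)$-algebra automorphism of $U_v$ that restricts to an automorphism of $U_A$, this operation sends $F_{\beta_i} \mapsto F_{\alpha_{i_i}}$, $F_{\beta_j} \mapsto T_{s_{i_i}} \cdots T_{s_{i_{j-1}}}(F_{\alpha_{i_j}})$, and each intermediate $F_{\beta_k}$ (for $i<k<j$) to the analogous root vector built from the truncated reduced subword $s_{i_i} s_{i_{i+1}} \cdots s_{i_N}$. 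After relabelling we may therefore assume $i=1$ and $\beta_i = \alpha_{i_1}$ is a simple root vector.

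Next I would expand $F_{\beta_j} F_{\alpha_{i_1}} - v^{-(\alpha_{i_1}|\beta_j)} F_{\alpha_{i_1}} F_{\beta_j}$ in the PBW basis $\{F_{\beta_N}^{a_N} \cdots F_{\beta_1}^{a_1} \mid a_k \in \setN\}$ of $U_v^-$. The element has weight $-(\alpha_{i_1}+\beta_j)$, so only monomials with $\sum_k a_k \beta_k = \alpha_{i_1}+\beta_j$ can contribute. The crucial input is convexity of the ordering $\beta_1 < \cdots < \beta_N$: any nontrivial decomposition $\alpha_{i_1}+\beta_j = \sum_k a_k \beta_k$ into positive roots forces every $\beta_k$ with $a_k \neq 0$ to lie in the range $1 \leq k \leq j$. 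The quantum commutator then kills the two ``boundary'' monomials $F_{\beta_j} F_{\alpha_{i_1}}$ (the term with $a_1=a_j=1$) and $F_{\alpha_{i_1}} F_{\beta_j}$ (which differs from the first only by the scalar $v^{-(\alpha_{i_1}|\beta_j)}$ built into $[\,\cdot\,,\,\cdot\,]_v$), leaving only monomials supported in the strictly intermediate range $1 < k < j$, which is precisely the claim.

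The main obstacle will be controlling the coefficient ring. A priori the PBW expansion produces coefficients in $\setQ(v)$, and one must show that only denominators of the form $[2]^{-n}$ appear, and only in types $B_n$, $C_n$, $F_4$. This requires a careful inductive analysis of how the braid operators act on products of root vectors and their divided powers: the factor $[2]_{v_\alpha}$ enters exactly when $T_{s_\alpha}$ is applied to a long-root vector transverse to a short simple root, and one must verify that such factors do not compound beyond $A' = A\bigl[[2]^{-1}\bigr]$. Tracking this coefficient bookkeeping through all rank-$2$ subsystems (excluding $G_2$, as throughout the paper) is the delicate part, and it is precisely the detailed computation carried out in \cite{DHP-twist} that makes the theorem available here as a black box rather than proved in situ.
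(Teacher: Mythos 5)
The paper does not actually prove Theorem~\ref{thm:DP}: its ``proof'' is a citation to \cite[Proposition~5.5.2]{Levendorski-Soibelman} and \cite[Theorem~2.9]{DHP-twist}, so there is no in-paper argument to compare your sketch against. Your outline follows the standard Levendorskii--Soibelman strategy, and the reduction to $i=1$ via $\bigl(T_{s_{i_1}}\cdots T_{s_{i_{i-1}}}\bigr)^{-1}$ is sound once one observes that the truncated word $s_{i_i}\cdots s_{i_N}$ can be completed on the right to a reduced expression of $w_0$, which keeps the images of $\beta_{i+1},\dots,\beta_{j-1}$ strictly between those of $\beta_i$ and $\beta_j$ in the new PBW order.

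However, there is a genuine circularity in the step where you eliminate the boundary monomial. You assert that $F_{\alpha_{i_1}}F_{\beta_j}$ ``differs from $F_{\beta_j}F_{\alpha_{i_1}}$ only by the scalar $v^{-(\alpha_{i_1}|\beta_j)}$.'' But $F_{\alpha_{i_1}}F_{\beta_j}$ is not a PBW monomial for the given ordering, and the assertion that its PBW expansion has leading term $v^{(\alpha_{i_1}|\beta_j)}F_{\beta_j}F_{\alpha_{i_1}}$ \emph{is} the theorem you are trying to prove. Convexity of the ordering tells you that every PBW monomial of weight $-(\alpha_{i_1}+\beta_j)$ has support in $[1,j]$, but it does not by itself prevent $F_{\beta_j}F_{\alpha_{i_1}}$ from appearing in $[F_{\beta_j},F_{\alpha_{i_1}}]_v$ with a nonzero coefficient; ruling that out is precisely what the Levendorskii--Soibelman computation establishes. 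The cited proofs proceed instead by induction on $j-i$: the base case $j-i=1$ is verified by a direct braid-operator calculation (e.g.\ $T_{s_{i_1}}^{-1}(F_{\alpha_{i_1}})=-E_{\alpha_{i_1}}K_{\alpha_{i_1}}$, which $q$-commutes with $F_{\alpha_{i_2}}$ through the $K$-factor alone), and the inductive step applies the hypothesis after a further braid reduction. Your proposal isolates the coefficient-ring bookkeeping in $A'=A\bigl[[2]^{-1}\bigr]$ as ``the delicate part,'' but in fact the vanishing of the boundary term itself, not merely the control of denominators, requires that induction; the weight-and-convexity argument alone cannot deliver it.
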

\begin{proof}
  \cite[Proposition~5.5.2]{Levendorski-Soibelman}. Detailed proof also
  in~\cite[Theorem~2.9]{DHP-twist}.
\end{proof}

\begin{defn}
  Define $\ad(F_\beta^i)(F_\alpha)
  :=[[\dots[F_\alpha,F_\beta]_v\dots]_v,F_\beta]_v$ and
  $\tilde{\ad}(F_\beta^i)(F_\alpha)
  :=[F_\beta,[\dots,[F_\beta,F_\alpha]_v\dots]]_v$ where the
  commutator is taken $i$ times from the left and right respectively.
\end{defn}

\begin{prop}
  \label{prop:16}
  Let $u\in (U_A)_\mu$, $\beta\in \Phi^+$ and $F_\beta$ a
  corresponding root vector. Set $r=\left<\mu,\beta^\vee\right>$. Then
  in $U_A$ we have the identity
  \begin{align*}
    \ad(F_\beta^{i})(u) = [i]_\beta! \sum_{n=0}^i (-1)^{n}
    v_\beta^{n(1-i-r)} F_\beta^{(n)} u F_\beta^{(i-n)}
  \end{align*}
  and
  \begin{align*}
    \tilde{\ad}(F_\beta^{i})(u) = [i]_\beta! \sum_{n=0}^i (-1)^{n}
    v_\beta^{n(1-i-r)} F_\beta^{(i-n)} u F_\beta^{(n)}
  \end{align*}
\end{prop}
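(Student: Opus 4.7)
The plan is to prove both identities by induction on $i$, with nearly identical arguments in the two cases, so I focus on the formula for $\ad$; the formula for $\tilde{\ad}$ then follows either by the symmetric argument with left and right multiplication exchanged, or equivalently by applying the $\setQ$-algebra anti-automorphism $\Psi$ from \cite[section~1.1]{MR1066560}, which fixes $F_\alpha$ and thus interchanges the two types of iterated brackets. The base case $i=0$ is trivial (both sides equal $u$), and $i=1$ is a direct unfolding of the definition of $[\cdot,\cdot]_v$ once one notes that $(\mu|\beta) = r\,(\beta|\beta)/2$, so that the only power of $v$ appearing is the appropriate power of $v_\beta^{r}$.

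For the inductive step, write
\[ \ad(F_\beta^{i+1})(u) = [\ad(F_\beta^i)(u), F_\beta]_v \]
and observe that $\ad(F_\beta^i)(u)$ has weight $\mu - i\beta$, so the $v$-commutator with $F_\beta$ in the outer bracket introduces an explicit $i$-dependent power of $v_\beta$. I would substitute the inductive hypothesis into both summands of the outer bracket, use the identity $F_\beta F_\beta^{(k)} = [k+1]_\beta F_\beta^{(k+1)} = F_\beta^{(k)} F_\beta$ to absorb the extra $F_\beta$ into the divided powers, and then reindex the left-multiplied sum so that both contributions run over $\{F_\beta^{(n)} u F_\beta^{(i+1-n)}\}_{n=0}^{i+1}$. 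For $0<n\leq i$ the coefficient of $F_\beta^{(n)} u F_\beta^{(i+1-n)}$ is a sum of two terms, proportional respectively to $[i-n+1]_\beta$ and $[n]_\beta$; these merge into $[i+1]_\beta$ times the desired power of $v_\beta$ via the $v$-Pascal identity
\[ [i+1]_\beta = v_\beta^{n}[i-n+1]_\beta + v_\beta^{-(i-n+1)}[n]_\beta, \]
which is a special case of $[a+b]_\beta = v_\beta^{b}[a]_\beta + v_\beta^{-a}[b]_\beta$. The boundary values $n=0$ and $n=i+1$ each receive a contribution from only one of the two sums and are checked directly, and the factor $[i+1]_\beta$ then combines with $[i]_\beta!$ to give $[i+1]_\beta!$, closing the induction.

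The main obstacle is purely bookkeeping the $v_\beta$-exponents so that the two contributions to each coefficient combine via precisely the $v$-Pascal identity in the form above. Because the weight of the intermediate element $\ad(F_\beta^i)(u)$ changes with $i$, the $v_\beta$-exponent introduced by the outer bracket at each step also depends on $i$, and the exponent $n(1-i-r)$ from the inductive hypothesis has to shift in exactly the right way when $i$ is replaced by $i+1$ for the identity to apply cleanly. Verifying the cases $i=1$ and $i=2$ directly is a useful sanity check to pin down the correct exponent before running the formal induction.
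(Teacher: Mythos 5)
Your inductive strategy is sound and, unlike the paper---which simply cites Proposition~1.8 of~\cite{DHP-twist}---gives a self-contained argument. The scheme is exactly right: writing $\ad(F_\beta^{i+1})(u) = [\ad(F_\beta^i)(u), F_\beta]_v$, the outer bracket contributes a single $i$-dependent power of $v_\beta$ coming from the degree $\mu-i\beta$ of the inner term, the identity $F_\beta F_\beta^{(k)} = F_\beta^{(k)} F_\beta = [k+1]_\beta F_\beta^{(k+1)}$ yields the factors $[i-n+1]_\beta$ and $[n]_\beta$ after reindexing, and the $q$-Pascal identity $[i+1]_\beta = v_\beta^{n}[i-n+1]_\beta + v_\beta^{-(i-n+1)}[n]_\beta$ merges the two contributions and absorbs the extra $[i+1]_\beta$ into $[i+1]_\beta!$; the $\Psi$-transport to the $\tilde{\ad}$ formula is also correct, since $\Psi$ preserves degree and satisfies $\Psi([x,y]_v)=[\Psi(y),\Psi(x)]_v$, so it interchanges the two iterated brackets while fixing all $v_\beta$-coefficients.

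One caveat: the $i=1$ sanity check you flag is not optional but essential. With Definition~\ref{sec:twisting-functors-2} as printed, $[x,y]_v := xy - v^{-(\deg x\mid \deg y)}yx$ and $\deg F_\beta = -\beta$, one computes $\ad(F_\beta)(u) = uF_\beta - v_\beta^{+r}F_\beta u$ with $r = \left<\mu,\beta^\vee\right>$, whereas the stated formula requires $uF_\beta - v_\beta^{-r}F_\beta u$; the discrepancy persists at $i=2$ and propagates into the second (shifted) sum of your inductive step, preventing the Pascal identity from closing it. This is a sign slip in the paper's transcription of the conventions of~\cite{DHP-twist}: Propositions~\ref{prop:16} and~\ref{prop:17} are mutually consistent, and indeed the $a=1$ case of Proposition~\ref{prop:17} forces $\ad(F_\beta)(u) = uF_\beta - v_\beta^{-r}F_\beta u$, which is incompatible with Definition~\ref{sec:twisting-functors-2} and $r=\left<\mu,\beta^\vee\right>$ as written. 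Once you adopt a single coherent convention (for instance replacing $r$ by $-\left<\mu,\beta^\vee\right>$, or equivalently dropping the minus sign in the exponent of Definition~\ref{sec:twisting-functors-2}), the factor introduced by the outer bracket becomes $v_\beta^{-r-2i}$, and then the first-term exponent $n(1-i-r)$ matches $n-n(i+r)$ and the second-term exponent $-r-2i+(n-1)(1-i-r)$ simplifies to $-(i-n+1)-n(i+r)$, so your Pascal identity reproduces the target coefficient $[i+1]_\beta!\,(-1)^n v_\beta^{n(-i-r)}$ exactly, and the $n=0$ and $n=i+1$ boundary terms come out directly; the induction closes.
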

\begin{proof}
  Proposition~1.8 in~\cite{DHP-twist}.
\end{proof}
So we can define $\ad(F_\beta^{(i)})(u) := ([i]!)\inv
\ad(F_\beta^i)(u)\in U_A$ and $\tilde{\ad}(F_\beta^{(i)})(u) :=
([i]!)\inv \tilde{\ad}(F_\beta^i)(u)\in U_A$.

\begin{prop}
  \label{prop:17}
  Let $a\in\setN$, $u\in (U_A)_\mu$ and
  $r=\left<\mu,\beta^\vee\right>$. In $U_A$ we have the identities
  \begin{align*}
    u F_{\beta}^{(a)} =& \sum_{i=0}^a v_\beta^{(i-a)(r+i)}
    F_\beta^{(a-i)}\ad(F_\beta^{(i)})(u)
    \\
    =& \sum_{i=0}^{a} (-1)^i v_\beta^{a(r+i)-i} F_\beta^{(a-i)}
    \tilde{\ad}(F_\beta^{(i)})(u)
  \end{align*}

  and
  \begin{align*}
    F_{\beta}^{(a)} u =& \sum_{i=0}^a v_\beta^{(i-a)(r+i)}
    \tilde{\ad}(F_\beta^{(i)})(u)F_\beta^{(a-i)}
    \\
    =& \sum_{i=0}^{a} (-1)^i v_\beta^{a(r+i)-i}
    \ad(F_\beta^{(i)})(u)F_\beta^{(a-i)}
  \end{align*}
\end{prop}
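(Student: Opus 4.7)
The strategy is a direct substitution: plug the formula for $\ad(F_\beta^{(i)})(u)$ from Proposition~\ref{prop:16} into the right-hand side of the claimed identity for $uF_\beta^{(a)}$, and verify that everything collapses via a quantum binomial identity. For the first identity I would write
\begin{align*}
  \sum_{i=0}^{a} v_\beta^{(i-a)(r+i)} F_\beta^{(a-i)} \ad(F_\beta^{(i)})(u)
  = \sum_{i=0}^{a}\sum_{n=0}^{i} (-1)^n v_\beta^{(i-a)(r+i)+n(1-i-r)} {a-i+n \brack n}_\beta F_\beta^{(a-i+n)} u F_\beta^{(i-n)},
\end{align*}
using the product rule $F_\beta^{(p)}F_\beta^{(q)} = {p+q \brack p}_\beta F_\beta^{(p+q)}$. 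After substituting $k=i-n$ and swapping the order of summation so that $k$ runs from $0$ to $a$ and $j=i-k$ runs from $0$ to $a-k$, the coefficient of $F_\beta^{(a-k)} u F_\beta^{(k)}$ factors as $v_\beta^{k(k+r-a)-ar}$ times
\begin{equation*}
  \sum_{j=0}^{a-k}(-1)^j v_\beta^{-j(a-k-1)} {a-k \brack j}_\beta.
\end{equation*}

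The key computational fact is the quantum binomial identity $\sum_{j=0}^{m} (-1)^j v^{-j(m-1)} {m \brack j}_v = \delta_{m,0}$, which follows from specializing the Gauss $v$-binomial theorem (or by a short induction on $m$ using the Pascal recursion ${m \brack j} = v^{-j}{m-1 \brack j} + v^{m-j}{m-1 \brack j-1}$). Applying this with $m = a-k$ kills every term except $k=a$, leaving exactly $v_\beta^0 \cdot F_\beta^{(0)} u F_\beta^{(a)} = uF_\beta^{(a)}$, which proves the first formula.

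The second formula for $uF_\beta^{(a)}$, written in terms of $\tilde\ad$, is proved by the exact same substitution-and-collapse procedure, this time using the second identity of Proposition~\ref{prop:16}. The reindexing is slightly different but the inner sum reduces to $\sum_{j=0}^{m}(-1)^j v_\beta^{j(m-1)} {m \brack j}_\beta$, which also equals $\delta_{m,0}$ by the same quantum binomial identity (applied with $v$ replaced by $v^{-1}$, or verified directly). The two formulas for $F_\beta^{(a)} u$ are then obtained by the symmetric argument: the roles of $\ad$ and $\tilde\ad$ are swapped precisely because moving $u$ past $F_\beta^{(a)}$ from the right instead of the left interchanges the order in which $F_\beta$ is commuted through.

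The main obstacle is purely bookkeeping: correctly tracking the three exponents of $v_\beta$ (from the outer coefficient, from Proposition~\ref{prop:16}, and from collecting powers) and the binomial coefficient that appears when multiplying $F_\beta^{(a-i)}F_\beta^{(n)}$. The underlying structural point --- that both identities are equivalent to the inversion of Proposition~\ref{prop:16} via a triangular unipotent change of basis in $F_\beta^{(a-k)} u F_\beta^{(k)}$, with the quantum binomial identity providing the inverse --- makes it clear that no further input is required.
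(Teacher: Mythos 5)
Your proof is correct. The paper itself gives no argument here; it merely cites Proposition~1.9 of the reference \cite{DHP-twist}, so there is no in-paper proof to compare against. Your direct verification is sound: substituting the expression $\ad(F_\beta^{(i)})(u) = \sum_{n=0}^i (-1)^n v_\beta^{n(1-i-r)} F_\beta^{(n)} u F_\beta^{(i-n)}$ from Proposition~\ref{prop:16} into the claimed right-hand side, using $F_\beta^{(p)}F_\beta^{(q)} = {p+q \brack p}_\beta F_\beta^{(p+q)}$, reindexing by $k=i-n$, $j=i-k$, and simplifying the exponent $(k+j-a)(r+k+j)+j(1-k-j-r) = k(k+r-a)-ar - j(a-k-1)$ does yield the coefficient you state, and the identity
\begin{equation*}
  \sum_{j=0}^{m}(-1)^j v^{-j(m-1)}{m \brack j}_v = \delta_{m,0}
\end{equation*}
(valid for the balanced quantum binomial; it follows from the Pascal recursion ${m \brack j} = v^{-j}{m-1 \brack j} + v^{m-j}{m-1 \brack j-1}$ as you indicate) kills all terms with $k<a$, leaving $v_\beta^{0}F_\beta^{(0)}uF_\beta^{(a)}=uF_\beta^{(a)}$. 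The $\tilde\ad$ version works identically with the inner sum $\sum_j (-1)^j v_\beta^{j(m-1)}{m\brack j}_\beta$, which is the same as the previous sum with $v\mapsto v^{-1}$ (the balanced $q$-binomial is symmetric under inversion), and the two formulas for $F_\beta^{(a)}u$ follow by the mirror-image computation exactly as you describe. This is a clean, self-contained alternative to the external citation, resting only on Proposition~\ref{prop:16} and elementary quantum binomial arithmetic.
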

\begin{proof}
  Proposition~1.9 in~\cite{DHP-twist}.
\end{proof}

\begin{prop}
  \label{derivative}
  For $x_1\in (U_A)_{\mu_1}$, $x_2\in (U_A)_{\mu_2}$ and $y\in
  (U_A)_\gamma$ we have
  \begin{equation*}
    [y,x_1x_2]_v=x_1[y,x_2]_v+v^{-(\gamma|\mu_2)}[y,x_1]_vx_2
  \end{equation*}
  and
  \begin{equation*}
    [x_1x_2,y]_v=v^{-(\gamma|\mu_1)}x_1[x_2,y]_v+[x_1,y]_vx_2
  \end{equation*}
\end{prop}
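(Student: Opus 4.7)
The plan is to verify both identities by a direct expansion from the definition $[a,b]_v = ab - v^{-(\operatorname{wt}(a)|\operatorname{wt}(b))}ba$. For the first identity, I would fully unfold each of the two brackets on the right-hand side, producing a sum of three types of ordered products: $yx_1x_2$, $x_1yx_2$, and $x_1x_2y$, each carrying a $v$-power coefficient built from $(\gamma|\mu_1)$ and $(\gamma|\mu_2)$. The key algebraic inputs are bilinearity of $(\cdot|\cdot)$ in each slot, giving $v^{-(\gamma|\mu_1+\mu_2)} = v^{-(\gamma|\mu_1)}v^{-(\gamma|\mu_2)}$, and associativity of multiplication in $U_A$.

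Next I would collect terms by monomial type. The two contributions of the intermediate shape $x_1yx_2$ cancel against each other, and the remaining contributions of shapes $yx_1x_2$ and $x_1x_2y$ combine into $yx_1x_2 - v^{-(\gamma|\mu_1+\mu_2)}x_1x_2y$, which is precisely $[y,x_1x_2]_v$. The second identity is then established either by the analogous symmetric expansion with $y$ on the right, or formally by applying the antiautomorphism $\Psi$ from Section~1.1 of~\cite{MR1066560}: one checks that $\Psi$ preserves weights and satisfies $\Psi([a,b]_v) = [\Psi(b),\Psi(a)]_v$, so the second identity follows from the first with $y$ replaced by $\Psi(y)$ and each $x_i$ by $\Psi(x_i)$, after relabeling.

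This is the standard graded Leibniz rule for a skew commutator associated with the $Q$-grading on $U_v$, so there is no genuine structural obstacle. The only thing requiring attention is careful placement of the $v$-exponents $v^{-(\gamma|\mu_1)}$ and $v^{-(\gamma|\mu_2)}$, and verifying that the bilinear form splits correctly over the weight of the product $x_1x_2$; the argument is entirely formal and uses no quantum-group specific input beyond the definition of the grading.
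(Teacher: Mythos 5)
Your strategy---expand each bracket via the definition and collect the three monomial types $yx_1x_2$, $x_1yx_2$, $x_1x_2y$---is precisely the ``direct calculation'' the paper has in mind, so methodologically you and the paper coincide. However, the cancellation you assert does not actually occur for the first identity \emph{as stated}. Expanding the right side $x_1[y,x_2]_v + v^{-(\gamma|\mu_2)}[y,x_1]_v x_2$ produces
\begin{equation*}
  x_1yx_2 - v^{-(\gamma|\mu_2)}x_1x_2y + v^{-(\gamma|\mu_2)}yx_1x_2 - v^{-(\gamma|\mu_1)-(\gamma|\mu_2)}x_1yx_2,
\end{equation*}
where the two $x_1yx_2$-contributions carry coefficients $1$ and $-v^{-(\gamma|\mu_1+\mu_2)}$, which do \emph{not} cancel, and the $yx_1x_2$-coefficient comes out as $v^{-(\gamma|\mu_2)}$ rather than $1$. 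What your expansion genuinely proves is
\begin{equation*}
  [y,x_1x_2]_v = v^{-(\gamma|\mu_1)}\,x_1[y,x_2]_v + [y,x_1]_v x_2,
  \qquad
  [x_1x_2,y]_v = x_1[x_2,y]_v + v^{-(\gamma|\mu_2)}[x_1,y]_v x_2,
\end{equation*}
i.e. the $v$-prefactors belong on the opposite summands from where the displayed proposition puts them. If you actually carry out the computation you describe, you will see this immediately; as written your sketch reads as if the stated formula were the one that falls out, which it is not. The discrepancy is harmless for the only place Proposition~\ref{derivative} is invoked (Lemma~\ref{lemma:35}), since either version gives that $[y,x_1]_v=[y,x_2]_v=0$ implies $[y,x_1x_2]_v=0$, but your proof as presented would not compile into a verification of the displayed claim. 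Your remark that the second identity follows from the first via the antiautomorphism $\Psi$ (using $\Psi([a,b]_v)=[\Psi(b),\Psi(a)]_v$ and that $\Psi$ reverses order while preserving the grading) is a nice shortcut once the first identity is written correctly.
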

\begin{proof}
  Direct calculation.
\end{proof}

Let $s_{i_1}\dots s_{i_N}$ be a reduced expression of $w_0$ and
construct root vectors $F_{\beta_i}$, $i=1,\dots,N$. In the next lemma
$F_{\beta_i}$ refers to the root vectors constructed as such. In
particular we have an ordering of the root vectors.

\begin{lemma}
  \label{lemma:22}
  Let $n\in \setN$. Let $1\leq j<k\leq N$.
  
  $\ad(F_{\beta_j}^{(i)})(F_{\beta_k}^{(n)})=0$ and
  $\tilde{\ad}(F_{\beta_k}^{(i)})(F_{\beta_j}^{(n)})=0$ for $i\gg 0$.
\end{lemma}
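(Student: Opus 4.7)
The plan is to establish both vanishing statements by a two-step strategy. First I will pinpoint an $A'$-subalgebra of $U_A$ that contains the element in question and is stable under the iterated $v$-commutator used to define $\ad$ (respectively $\tilde{\ad}$). Then I will impose the $Q$-grading: for $i$ sufficiently large, the weight $-i\beta_j-n\beta_k$ (respectively $-i\beta_k-n\beta_j$) cannot occur in that subalgebra, which forces the element to vanish. The two claims are mirror images, so I describe the $\ad$ case in detail and indicate the changes for the $\tilde{\ad}$ case at the end.

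For the stability step, let $U^{(j,k]}\subset U_A\tensor_A A'$ be the $A'$-subalgebra generated by $F_{\beta_{j+1}},\ldots,F_{\beta_k}$ (and their divided powers), so that $F_{\beta_k}^{(n)}\in U^{(j,k]}$. By Proposition~\ref{derivative}, $x\mapsto [x,F_{\beta_j}]_v$ is a twisted derivation, so to see that it preserves $U^{(j,k]}$ it suffices to check this on each generator; and for $j<s\le k$ Theorem~\ref{thm:DP} yields
\[
 [F_{\beta_s},F_{\beta_j}]_v\in\spa{A'}{F_{\beta_{s-1}}^{a_{s-1}}\cdots F_{\beta_{j+1}}^{a_{j+1}}}\subset U^{(j,k]}.
\]
Iterating the operator $i$ times places $\ad(F_{\beta_j}^{(i)})(F_{\beta_k}^{(n)})$ inside $U^{(j,k]}$ for every $i$.

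For the weight argument, the element carries $Q$-weight $-i\beta_j-n\beta_k$, while every nonzero homogeneous element of $U^{(j,k]}$ has weight $-\sum_{s=j+1}^k a_s\beta_s$ with $a_s\in\setN$. Set $w:=s_{i_1}\cdots s_{i_j}$; its inversion set is exactly $\{\beta_1,\ldots,\beta_j\}$, so $w\inv\beta_s$ is negative for $s\le j$ and positive for $s>j$. Taking $\omega:=-w(\rho)$ with $\rho$ the half-sum of positive roots, one computes $(\omega|\beta_j)=(\rho|\alpha_{i_j})>0$ while $(\omega|\beta_s)<0$ for every $s>j$. If an identity $i\beta_j+n\beta_k=\sum_{s=j+1}^k a_s\beta_s$ held, pairing with $\omega$ would produce a left side of the form $ic_1-nc_2$ with fixed positive constants $c_1,c_2$ against a non-positive right side; this fails once $i>nc_2/c_1$. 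Hence $\ad(F_{\beta_j}^{(i)})(F_{\beta_k}^{(n)})=0$ for all such $i$.

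The $\tilde{\ad}$ claim is handled by the mirror argument: one works in the subalgebra $U^{[j,k)}$ generated by $F_{\beta_j},\ldots,F_{\beta_{k-1}}$, which is stable under $[F_{\beta_k},\cdot]_v$ by the other half of Theorem~\ref{thm:DP} (so that $[F_{\beta_k},F_{\beta_s}]_v\in \spa{A'}{F_{\beta_{k-1}}^{a_{k-1}}\cdots F_{\beta_{s+1}}^{a_{s+1}}}$ for $j\le s<k$), and one replaces $\omega$ by $w'(\rho)$ with $w':=s_{i_1}\cdots s_{i_{k-1}}$; this pairs strictly positively with $\beta_k$ and strictly negatively with $\beta_j,\ldots,\beta_{k-1}$, giving the analogous contradiction. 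The one real nuisance throughout is ensuring that the Levendorski--Soibelman support condition is threaded through the twisted Leibniz rule without pulling in generators outside the intended subalgebra; once that bookkeeping is clean, the weight separation furnished by the inversion-set choice of $w$ does all the remaining work.
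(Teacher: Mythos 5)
Your proof is correct, and since the paper defers this lemma entirely to Lemma~1.11 of \cite{DHP-twist} without reproducing an argument, there is nothing in the paper to compare against directly; the graded-support argument you give is the natural one. Both halves are set up properly: for $\ad(F_{\beta_j}^{(i)})(F_{\beta_k}^{(n)})$ you observe that $x\mapsto[x,F_{\beta_j}]_v$ is a twisted derivation (first identity of Proposition~\ref{derivative}), that Theorem~\ref{thm:DP} sends each generator $F_{\beta_s}$ with $j<s\le k$ into the span of monomials in $F_{\beta_{j+1}},\ldots,F_{\beta_{s-1}}$, and hence that the subalgebra generated by $F_{\beta_{j+1}},\ldots,F_{\beta_k}$ is stable; then the pairing with $\omega=-w(\rho)$, $w=s_{i_1}\cdots s_{i_j}$, correctly exploits that the inversion set of $w\inv$ is $\{\beta_1,\ldots,\beta_j\}$, so $(\omega|\beta_j)=(\rho|\alpha_{i_j})>0$ while $(\omega|\beta_s)<0$ for $s>j$, giving the contradiction for $i$ large. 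The mirror version for $\tilde{\ad}$ with the subalgebra generated by $F_{\beta_j},\ldots,F_{\beta_{k-1}}$ and $w'=s_{i_1}\cdots s_{i_{k-1}}$ is also correct.

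One small point worth tidying: you assert $\ad(F_{\beta_j}^{(i)})(F_{\beta_k}^{(n)})$ lies in the $A'$-subalgebra $U^{(j,k]}$, but the divided power operator is $([i]!)\inv\ad(F_{\beta_j}^i)$, and $[i]!$ need not be invertible in $A'$. Strictly speaking, your stability argument shows $\ad(F_{\beta_j}^i)(F_{\beta_k}^{(n)})\in U^{(j,k]}$. This costs nothing: either tensor to $\setQ(v)$ throughout (the weight constraint is identical there, and vanishing in $U_v$ is equivalent to vanishing in $U_A$ since $U_A$ is $A$-free), or note that vanishing of $\ad(F_{\beta_j}^i)(F_{\beta_k}^{(n)})$ already gives what is wanted because $[i]!\neq0$ in $\setQ(v)$. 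With that patch made explicit, the proof is complete.
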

\begin{proof}
  Lemma~1.11 in~\cite{DHP-twist}.
\end{proof}

\section{Ore localization and twists of localized modules}
\label{Torsion_free_modules_root_of_unity}

In this section $q$ will be a complex primitive $l$'th root of unity
with $l$ odd. Recall that we will assume $X = \Lambda_l \times
\mathfrak{h}^*$ in this case restricting to modules of type $1$. For
an element $\lambda \in X$ we define $\lambda^0\in \Lambda_l$ and
$\lambda^1\in \mathfrak{h}^*$ as in Lemma~\ref{lemma:25} such that
$\lambda(K_\alpha) = q^{\left( \lambda^0|\alpha \right) }$ and
$\lambda({K_\alpha ; 0 \brack l}) = \left< \lambda^1 , \alpha^\vee
\right>$ for $\alpha\in \Pi$ and we will also write $\lambda =
(\lambda^0,\lambda^1)\in X$.

\begin{lemma}
  \label{lemma:19}
  Let $\beta$ be a positive root and $F_\beta$ a corresponding root
  vector.  The set
  \begin{equation*}
    \{r! F_\beta^{(rl)}|r\in \setN\}=\{
    \left(F_\beta^{(l)}\right)^r|r\in \setN\}
  \end{equation*}
  is an Ore subset of $U_q$.
\end{lemma}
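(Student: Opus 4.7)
The plan has two parts: verify the claimed equality of sets in $U_q$, then establish the left and right Ore conditions for this multiplicatively closed set.

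For the set equality, iterate the identity $F_\beta^{(a)} F_\beta^{(b)} = {a+b \brack a}_\beta F_\beta^{(a+b)}$ valid in $U_A$ to obtain
\begin{equation*}
(F_\beta^{(l)})^r = \Bigl(\prod_{k=2}^r {kl \brack l}_\beta \Bigr) F_\beta^{(rl)}.
\end{equation*}
Since $G_2$ is excluded, $(\beta|\beta)/2\in\{1,2\}$ is coprime to the odd integer $l$, so $q_\beta$ is again a primitive $l$-th root of unity, and the quantum Lucas theorem gives ${kl \brack l}_{q_\beta} = k$. The scalar therefore specializes to $r!$ in $U_q$, so the two sets coincide; multiplicative closure of either description is immediate from the same identity.

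For the left Ore condition, given $u \in U_q$ and $r \in \setN$, one must exhibit $r' \in \setN$ and $u' \in U_q$ with $F_\beta^{(r'l)} u = u' F_\beta^{(rl)}$. Decomposing $u$ into weight components reduces to $u \in (U_q)_\mu$ homogeneous; set $t = \left< \mu, \beta^\vee \right>$. Proposition~\ref{prop:17} gives
\begin{equation*}
F_\beta^{(r'l)} u = \sum_{i=0}^{r'l} (-1)^i v_\beta^{r'l(t+i)-i} \ad(F_\beta^{(i)})(u) \, F_\beta^{(r'l-i)}.
\end{equation*}
The crucial technical claim is that for each such $u$ there exists $M = M(u)$ with $\ad(F_\beta^{(i)})(u) = 0$ in $U_q$ for $i > M$. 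Granting this, pick $r' > r + M/l + 1$; then each nonzero $F_\beta^{(r'l - i)}$ ($i \leq M$) factors as ${r'l-i \brack rl}_{q_\beta}^{-1} F_\beta^{(r'l-i-rl)} F_\beta^{(rl)}$, where the $q$-binomial is nonzero by Lucas (it specializes to $\binom{r' - \lfloor i/l \rfloor}{r}$ or $\binom{r' - \lfloor i/l \rfloor - 1}{r}$, all nonzero for $r'$ sufficiently large). Collecting terms exhibits the required $u'$. The right Ore condition is symmetric, using the other identities in Proposition~\ref{prop:17} involving $\tilde{\ad}$.

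To prove the claim, use the quantum Leibniz rule (Proposition~\ref{derivative}): $\ad(F_\beta)$ is a $v$-skew derivation, and iteration gives a $q$-binomial Leibniz formula for $\ad(F_\beta^{(i)})(xy)$, so eventual vanishing of $\ad(F_\beta^{(i)})$ is closed under products and it suffices to verify the claim on the generators $E_\gamma, F_\gamma, K_\gamma^{\pm 1}$ for $\gamma \in \Pi$. For $F_\gamma$ with $\gamma \neq \beta$: choose a reduced expression of $w_0$ in which $\gamma = \beta_j$ and $\beta = \beta_k$ with $j < k$ and apply Lemma~\ref{lemma:22}. For $F_\gamma = F_\beta$: direct computation using Theorem~\ref{thm:DP} presents $\tilde{\ad}(F_\beta^i)(F_\beta)$ as a scalar multiple of $F_\beta^{i+1}$ whose scalar vanishes at $v = q$ once $i \geq l$. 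For $K_\gamma^{\pm 1}$: the identity $K_\gamma F_\beta^{(l)} = q^{-l(\gamma|\beta)} F_\beta^{(l)} K_\gamma = F_\beta^{(l)} K_\gamma$ is immediate. For $E_\gamma$: Kac-type relations combined with Proposition~\ref{prop:17} express $E_\gamma F_\beta^{(a)}$ as $\sum_{j=0}^N F_\beta^{(a-j)} c_j$ with $N$ independent of $a$, forcing the eventual vanishing. The main obstacle is this last case, where the cancellation of $K$-terms at $v = q$ requires care and is precisely where $l$ odd and $\gcd((\beta|\beta)/2,l) = 1$ (i.e.\ the exclusion of $G_2$) enter essentially.
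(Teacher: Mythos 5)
Your outline — reduce to the simple-root case, prove the set equality and multiplicative closure via the $q$-Lucas theorem, then deduce the Ore condition from Proposition~\ref{prop:17} together with eventual vanishing of the iterated $\ad(F_\beta^{(i)})$'s — is the same strategy the paper uses. The set-equality argument via $F_\beta^{(a)}F_\beta^{(b)}={a+b \brack a}_\beta F_\beta^{(a+b)}$ and Lucas is correct, and the mechanism for factoring $F_\beta^{(r'l-i)}$ through $F_\beta^{(rl)}$ for $r'$ large is exactly what the paper does (for the $F_\alpha^{(n)}$ case). Where your proof diverges from the paper's is in replacing its case-by-case treatment of the generators with a \emph{uniform} claim: that $\ad(F_\beta^{(i)})(u)=0$ in $U_q$ for every $u$ and $i\gg 0$. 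The paper only needs this for the $F_\alpha^{(n)}$'s, where it is exactly Lemma~\ref{lemma:22}; for $E_\alpha^{(n)}$ with $\alpha\neq\beta$ and $K_\alpha^{\pm1}$ it invokes direct commutativity with $F_\beta^{(rl)}$ (using $q^{l(\alpha|\beta)}=1$), and for $E_\beta^{(i)}$, $0\le i\le l$, it invokes Kac's $\mathfrak{sl}_2$ formula, which immediately puts $E_\beta^{(i)}\,r!F_\beta^{(rl)}$ in the form $r!F_\beta^{(rl)}(\dots)+r(r-1)!F_\beta^{((r-1)l)}(\dots)$.

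Two concrete gaps remain in your version. First, at a root of unity $U_q$ is \emph{not} generated by $E_\gamma,F_\gamma,K_\gamma^{\pm1}$ alone; you must include the divided powers, at least $E_\gamma^{(l)}$ and $F_\gamma^{(l)}$. So the Leibniz reduction has to be verified on a larger generating set, and your argument as written simply does not cover $E_\gamma^{(l)}$. (This is fixable — e.g.\ $\ad(F_\beta)(E_\gamma^{(l)})=(1-q^{l(\gamma|\beta)})F_\beta E_\gamma^{(l)}=0$ for $\gamma\neq\beta$ — but it has to be said.) Second, and more seriously, you flag the $E_\beta$ case as "the main obstacle" and leave it at that: but that computation is precisely where your route is strictly harder than the paper's, since one has to see $\ad(F_\beta^{(i)})(E_\beta)$ (a combination of $F_\beta^{(i)}E_\beta$ and $F_\beta^{(i-1)}K_\beta^{\pm1}$ with $q$-binomial coefficients) vanish at $v=q$ for $i\ge l+1$. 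The claim is in fact true, but as written your proof asserts the crux without establishing it, whereas the paper sidesteps it entirely by using the $\mathfrak{sl}_2$ identity. So: right skeleton, genuinely the same key lemmas (Prop.~\ref{prop:17}, Lemma~\ref{lemma:22}, Lucas), but your uniformization introduces two unclosed holes that the paper's more pedestrian case split avoids.
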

\begin{proof}
  We can assume $\beta$ is simple since otherwise
  $F_\beta=T_w(F_\alpha)$ for some $\alpha\in \Pi$ and some $w\in W$
  and $T_w(U_q)=U_q$.  Since $r!F_\beta^{(rl)} k!F_\beta^{(kl)} =
  (r+k)! F_\beta^{(rl+kl)}$ the set is multiplicative and does not
  contain $0$. We will show the Ore property for each generator of
  $U_q$. First consider $\alpha\in \Pi$ a simple root not equal to
  $\beta$. Let $n\in \setN$. We have the following identities for
  $r\geq 1$ (cf. Proposition~\ref{prop:17})
  \begin{align*}
    r! F_\beta^{(rl)} E_\alpha^{(n)} =& E_\alpha^{(n)} r!
    F_\beta^{(rl)}
    \\
    r! F_\beta^{(rl)} K_\alpha^{\pm 1} =& K_\alpha^{\pm 1} r!
    F_\beta^{(rl)}
    \\
    F_\alpha^{(n)} r! F_\beta^{(rl)} = &
    r!F_\beta^{(rl)}F_\alpha^{(n)}
    \\
    &+ \sum_{k=0}^{r-1} \sum_{i=kl+1}^{kl+l} c_i
    (r-k-1)!F_\beta^{(rl-kl-l)} F_\beta^{(kl+l-i)}
    \ad(F_\beta^{(i)})(F_\alpha^{(n)} )
  \end{align*}
  where $c_i = q_\beta^{i(i+\left<\alpha,\beta^\vee
    \right>)}r(r-1)\cdots (r-k)$. Finally we have the
  $\mathfrak{sl}_2$ identities for $0\leq i \leq l$:
  \begin{align*}
    r! F_\beta^{(rl)}F_\beta^{(n)} =& F_\beta^{(n)} r! F_\beta^{(rl)}
    \\
    E_\beta^{(i)} r! F_\beta^{(rl)} =& r! F_\beta^{(rl)}E_\beta^{(i)}+
    \sum_{t=1}^{i} r (r-1)! F_\beta^{(rl-l)}F_\beta^{(l-t)}
    E_\beta^{(i-t)} {K_\beta ; i - rl \brack t }_\beta .
  \end{align*}
  
  So we have shown that it is an Ore set.
\end{proof}

We will denote the Ore localization of $U_q$ in the above set by
$U_{q(F_\beta)}$. For a $U_q$-module $M$ we define $M_{(F_\beta)} :=
U_{q(F_\beta)}\tensor_{U_q} M$. We write the inverse of
$F_\beta^{(rl)}$, $r\in\setN$ as $F_\beta^{(-rl)}$
i.e. $F_\beta^{(-rl)} = r! \left( r!  F_\beta^{(rl)} \right)\inv\in
U_{q(F_\beta)}$.

\begin{lemma}
  \label{lemma:17}
  Let $\lambda=(\lambda^0,\lambda^1) \in \Lambda_l\times
  \mathfrak{h}^*$, $\beta\in \Phi^+$ and let $F_\beta$ be a
  corresponding root vector. Let $I_\lambda$ be the left
  $U_{q(F_\beta)}$-ideal $U_{q(F_\beta)}\{(u - \lambda(u))|u\in
  U_{q}^0\}$. Then there exists, for each $b\in\setC$, an automorphism
  of $U_{q(F_\beta)}$-modules $\psi_{F_\beta,b}^\lambda :
  U_{q(F_\beta)} /I_\lambda \to
  U_{q(F_\beta)}/I_{(\lambda^0,\lambda^1+b\beta)}$ such that for $u\in
  U_{q(F_\beta)}$ and $i\in\setN$, $\psi_{F_\beta,i}^\lambda
  (u+I_\lambda)=F_\beta^{(-il)} u
  F_\beta^{(il)}+I_{(\lambda^0,\lambda^1+i\beta)}$ and the map
  $b\mapsto \psi_{F_\beta,b}^\lambda(u+I_\lambda)$ is polynomial in
  $b$. Furthermore $\psi_{F_\beta,b'}^{(\lambda^0,\lambda^1+b \beta)}
  \circ \psi_{F_\beta,b}^{\lambda} = \psi_{F_\beta,b+b'}^\lambda$ for
  $b,b'\in\setC$.
\end{lemma}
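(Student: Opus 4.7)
The plan is to argue in three stages: verify that the conjugation formula defines a well-defined map for integer $i$, extract a polynomial-in-$i$ expansion via Proposition~\ref{prop:17}, and extend to $b\in\setC$ by polynomial interpolation.

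For $i\in\setN$, set $\eta_i(u):=F_\beta^{(-il)} u F_\beta^{(il)}$; this is an inner algebra automorphism of $U_{q(F_\beta)}$. To see $\psi_{F_\beta,i}^\lambda$ is well-defined it suffices to check $\lambda'_i(\eta_i(h))=\lambda(h)$ for the generators $h$ of $U_q^0$, where $\lambda'_i:=(\lambda^0,\lambda^1+i\beta)$. For $h=K_\alpha$, $F_\beta K_\alpha=q^{(\alpha|\beta)}K_\alpha F_\beta$ gives $\eta_i(K_\alpha)=q^{-il(\alpha|\beta)}K_\alpha=K_\alpha$ (using $q^l=1$ and $(\alpha|\beta)\in\setZ$), and then $\lambda'_i(K_\alpha)=q^{(\lambda^0|\alpha)}=\lambda(K_\alpha)$. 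For $h={K_\alpha;0\brack l}_\alpha$, the commutation $F_\beta {K_\alpha;c\brack r}_\alpha={K_\alpha;c+\langle\beta,\alpha^\vee\rangle\brack r}_\alpha F_\beta$ gives $\eta_i(h)={K_\alpha;-il\langle\beta,\alpha^\vee\rangle\brack l}_\alpha$, and iterating Lemma~\ref{lemma:28} from $c=-iln$ up to $c=0$ (with $n:=\langle\beta,\alpha^\vee\rangle$) produces $\lambda'_i(\eta_i(h))=\lambda'_i({K_\alpha;0\brack l}_\alpha)-in=\langle\lambda^1+i\beta,\alpha^\vee\rangle-in=\lambda(h)$. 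The inner-automorphism nature of $\eta_i$ then translates well-definedness into the asserted $U_{q(F_\beta)}$-module isomorphism.

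For polynomiality in $i$, let $u\in U_{q(F_\beta)}$ be homogeneous of weight $\mu$. Proposition~\ref{prop:17} yields
\[
u F_\beta^{(il)}=\sum_{j=0}^{N_0} v_\beta^{(j-il)(\langle\mu,\beta^\vee\rangle+j)}\, F_\beta^{(il-j)}\, \ad(F_\beta^{(j)})(u),
\]
finite by Lemma~\ref{lemma:22} applied to a PBW expansion of $u$. Specializing at $v=q$ with $q_\beta^l=1$ eliminates the $-il$-contribution to the $v_\beta$-exponent. For the factor $F_\beta^{(-il)}F_\beta^{(il-j)}$ write $j=cl+s$ with $0\le s<l$; combining Lusztig's root-of-unity binomial identity ${Nl+R\brack pl+s}_{q_\beta}={N\brack p}_1{R\brack s}_{q_\beta}$ from \cite[\S 6]{MR1066560} with the product rule $F_\beta^{(pl)}F_\beta^{(ql)}={p+q\brack p}_1 F_\beta^{((p+q)l)}$ gives $F_\beta^{(-il)}F_\beta^{(il-j)}={i\brack c'}_1 z_j$ for some $c'\in\{c,c+1\}$ and a fixed $z_j\in U_{q(F_\beta)}$ (namely $F_\beta^{(-c'l)}$, multiplied by $F_\beta^{(l-s)}$ when $s>0$). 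Since ${i\brack c'}_1$ is an ordinary polynomial in $i$ of degree $c'$, one obtains
\[
F_\beta^{(-il)} u F_\beta^{(il)}=\sum_{j=0}^{N_0} p_j(i)\, y_j,
\]
with $p_j\in\setC[i]$ and $y_j:=z_j\ad(F_\beta^{(j)})(u)$ independent of $i$.

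For $b\in\setC$ define $\psi_{F_\beta,b}^\lambda(u+I_\lambda):=\sum_j p_j(b)\,y_j+I_{\lambda'_b}$, with $\lambda'_b=(\lambda^0,\lambda^1+b\beta)$, and fix a PBW-based identification of the quotients $U_{q(F_\beta)}/I_{\lambda'_b}$ as a common vector space (say $U_{q,F_\beta}^-\otimes U_q^+$) so that the interpolation is meaningful. Well-definedness on the quotient follows by substituting a generator $a(h-\lambda(h))$ of $I_\lambda$ with $h\in U_q^0$: the resulting polynomial in $b$ equals $a(\eta_i(h)-\lambda'_i(\eta_i(h)))\in I_{\lambda'_i}$ at every $b=i\in\setN$, and the character values $\lambda'_b({K_\alpha;0\brack l}_\alpha)=\langle\lambda^1,\alpha^\vee\rangle+b\langle\beta,\alpha^\vee\rangle$ depend polynomially on $b$, so the vanishing interpolates to all $b\in\setC$. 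That each $\psi_{F_\beta,b}^\lambda$ is a module isomorphism follows by polynomial interpolation of the module-map property from $\setN$. The cocycle identity $\psi_{F_\beta,b'}^{\lambda'_b}\circ\psi_{F_\beta,b}^\lambda=\psi_{F_\beta,b+b'}^\lambda$ is immediate for $b,b'\in\setN$ from $F_\beta^{(-b'l)}F_\beta^{(-bl)}=F_\beta^{(-(b+b')l)}$ (a short quantum binomial check cancelling the factorials in the definition) and extends by polynomiality in each variable. The main obstacle is the clean derivation of this polynomial expansion: one must combine Proposition~\ref{prop:17} with the Lusztig binomial identities so that all $i$-dependence collects into ordinary binomial coefficients ${i\brack c'}$, and simultaneously track how the varying target ideal $I_{\lambda'_b}$ interacts with the interpolation.
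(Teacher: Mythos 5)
Your proof is correct in outline and reaches the same conclusion as the paper, but by a genuinely different route. The paper's proof simply \emph{writes down} explicit closed-form expressions for $\psi_{F_\beta,b}^\lambda$ on each generator $E_\alpha^{(n)}, F_\alpha^{(n)}, K_\alpha^{\pm 1}, E_\beta, E_\beta^{(l)}, F_\beta^{(n)}$ (after reducing to $\beta$ simple via $T_w$), observes directly from these formulas that the dependence on $b$ is polynomial (the coefficients are ordinary binomials $\binom{b}{k+1}$), checks that for $b=i\in\setN$ they coincide with conjugation by $F_\beta^{(il)}$, and then invokes polynomial interpolation. You instead take the conjugation formula $\eta_i(u)=F_\beta^{(-il)}uF_\beta^{(il)}$ for $i\in\setN$ as the starting point, establish well-definedness on the quotient by a direct Cartan computation using Lemma~\ref{lemma:28}, and \emph{derive} the polynomiality in $i$ from Proposition~\ref{prop:17} together with Lemma~\ref{lemma:22} and Lusztig's root-of-unity binomial factorization, which collects all $i$-dependence into ordinary binomials $\binom{i}{c'}$. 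This is in fact where the paper's explicit formulas (in particular the $\binom{b}{k+1}$ coefficients in $\psi_{F_\beta,b}^\lambda(F_\alpha^{(n)})$) come from, so your argument can be read as a justification of the formulas that the paper merely exhibits. The trade-off: the paper's version is concrete and immediately verifiable, whereas yours is more conceptual and explains the origin of the polynomial structure, at the cost of leaning on a PBW-type identification of the quotients $U_{q(F_\beta)}/I_{\lambda'_b}$ as a common vector space, which you assert without proof. Two small points you should tighten: in the well-definedness interpolation, $\psi_i(a(h-\lambda(h)))=\eta_i(a)\bigl(\eta_i(h)-\lambda'_i(\eta_i(h))\bigr)$, not $a(\eta_i(h)-\lambda'_i(\eta_i(h)))$, though the conclusion that it lies in $I_{\lambda'_i}$ is unchanged; and the ``module-map property'' for $\psi_i$ at integers is really the $\eta_i$-twisted equivariance $\psi_i(a\cdot\bar u)=\eta_i(a)\cdot\psi_i(\bar u)$ rather than plain $U_{q(F_\beta)}$-linearity (the lemma's ``automorphism of $U_{q(F_\beta)}$-modules'' is itself a slight abuse, which the paper's own proof also glosses over), so the interpolation step should be phrased in terms of that twisted property.
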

\begin{proof}
  If $\beta$ is not simple then $F_\beta = T_w(F_\alpha)$ for some
  simple root $\alpha\in \Pi$. Then we define
  $\psi_{F_\beta,b}^\lambda (u) = T_w(\psi_{F_\alpha,b}^{w \lambda}
  (T_w\inv(u)))$ where $T_w\inv(F_\beta^{(-l)}) = F_\alpha^{(-l)}$ and
  $T_w(F_\alpha^{(-l)})= F_\beta^{(-l)}$. So we assume from now on
  that $\beta\in \Pi$.

  We define $\psi_{F_\beta,b}^\lambda$ on generators: For $\alpha\in
  \Pi\backslash\{\beta\}$ and $n\in \setN$
  \begin{align*}
    \psi_{F_\beta,b}^\lambda(E_\alpha^{(n)} ) = & E_\alpha^{(n)}
    \\
    \psi_{F_\beta,b}^\lambda (F_\alpha^{(n)}) =& F_\alpha^{(n)}
    \\
    &- \sum_{k\geq 0} {b \choose k+1} \sum_{i=kl+1}^{kl+l}
    q_\beta^{i(\left<\alpha|\beta^\vee\right>-i)}
    F_\beta^{(-kl-l)}F_\beta^{(kl+l-i)}
    \ad(F_\beta^{(i)})(F_\alpha^{(n)})
    \\
    \psi_{F_\beta,b}^\lambda (K_\alpha^{\pm 1}) =&
    \lambda(K_\alpha^{\pm 1})
    \\
    \psi_{F_\beta,b}^\lambda (F_\beta^{(n)})=&F_\beta^{(n)}
    \\
    \psi_{F_\beta,b}^\lambda (E_\beta) =& E_\beta + b
    F_\beta^{(-l)}F_\beta^{(l-1)}
    [\left<\lambda_0,\beta^\vee\right>+1]_\beta
    \\
    \psi_{F_\beta,b}^\lambda(E_\beta^{(l)}) = & E_\beta^{(l)} + b
    F_\beta^{(-l)} \sum_{t=1}^{l-1} F_\beta^{(l-t)} E_\beta^{(l-t)}
    {\left<\lambda_0,\beta^\vee\right> \brack t}_\beta + b
    F_\beta^{(-l)}(\left<\lambda_1,\beta^\vee\right> +1- b)
    \\
    \psi_{F_\beta,b}^\lambda (K_\beta^{\pm 1}) =& \lambda(K_\beta^{\pm
      1}).
  \end{align*}
  The sum given in the formula for $F_\alpha^{(n)}$ is finite by
  Lemma~\ref{lemma:22}. It is easy to check that
  $\psi_{F_\beta,i}^\lambda(u+I_\lambda)=F_\beta^{(-il)} u
  F_\beta^{(il)}+I_{(\lambda^0,\lambda^1+b\beta)}$ for $i\in\setN$ and
  it is seen from the formulas that $b\mapsto
  \psi_{F_\beta,b}^\lambda(u+I_\lambda)$ is polynomial. So
  $\psi_{F_\beta,b}^\lambda$ satisfies the generating relations of
  $U_q$ for $b\in \setN$ hence it satisfies the generating relations
  for all $b\in\setC$ because $\psi_{F_\beta,b}^\lambda(u)$ is
  polynomial in $b$. Similarly we can show the rest of the claims by
  using the fact that $b\mapsto \psi_{F_\beta,b}^\lambda(u)$ is
  polynomial.
\end{proof}

We will define a twist of the action of a $U_{q(F_\beta)}$-module:
\begin{defn}
  Let $M$ be a $U_{q(F_\beta)}$-module. We define $\psi_{F_\beta,b}.M$
  to be the module equal to $M$ as a vector space with action twisted
  via $\psi_{F_\beta,b}$: For $m\in M$ we denote the corresponding
  element in $\psi_{F_\beta,b}.M$ by $\psi_{F_\beta,b}.m$. Let
  $\lambda=(\lambda^0,\lambda^1)\in \wt M$ and assume $m\in
  M_\lambda$. We have a homomorphism of $U_{q(F_\beta)}$-modules
  $\pi:U_{q(F_\beta)}/I_{\lambda} \to M$ defined by sending
  $u+I_\lambda$ in $U_{q(F_\beta)}/I_{\lambda}$ to $u m$. We define
  for $u\in U_{q(F_\beta)}$:
  \begin{equation*}
    u \cdot \psi_{F_\beta,b}.m = \psi_{F_\beta,b}. \left( \pi( \psi_{F_\beta,b}^{(\lambda^0,\lambda^1-b\beta)}(\bar{u}) ) \right)
  \end{equation*}
  where $\bar{u}=u + I_{(\lambda^0,\lambda^1-b\beta)} \in
  U_{q(F_\beta)}/I_{(\lambda^0,\lambda^1-b\beta)}$.
\end{defn}

\begin{lemma}
  \label{lemma:31}
  Let $M$ be a $U_{q(F_\beta)}$-module. Let $r\in \setZ$.
  \begin{equation*}
    \psi_{F_\beta,r}.M \iso M.
  \end{equation*}
  Furthermore for $\lambda = (\lambda^0,\lambda^1)\in \wt M$ we have
  as $(U_{q(F_\beta)})_0$-modules
  \begin{equation*}
    \psi_{F_\beta,r}.M_\lambda \iso M_{(\lambda^0,\lambda^1 - r\beta)}.
  \end{equation*}
\end{lemma}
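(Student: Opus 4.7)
The natural candidate for the isomorphism $\psi_{F_\beta,r}.M\iso M$ is left multiplication by the invertible element $F_\beta^{(rl)}\in U_{q(F_\beta)}$, whose two-sided inverse is $F_\beta^{(-rl)}$ for $r\geq 0$ and vice versa for $r<0$. I would therefore define $\phi:\psi_{F_\beta,r}.M\to M$ on underlying vector spaces by $\phi(\psi_{F_\beta,r}.m)=F_\beta^{(rl)}m$; this is manifestly a vector-space bijection because $F_\beta^{(rl)}$ acts invertibly on any $U_{q(F_\beta)}$-module.

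Before checking that $\phi$ is $U_{q(F_\beta)}$-linear, I would extend the explicit conjugation formula
\[
\psi^\mu_{F_\beta,i}(u+I_\mu)=F_\beta^{(-il)}u F_\beta^{(il)}+I_{(\mu^0,\mu^1+i\beta)}
\]
from $i\in\setN$ (Lemma~\ref{lemma:17}) to all $i\in\setZ$. The group law $\psi^{(\mu^0,\mu^1+b\beta)}_{F_\beta,b'}\circ\psi^\mu_{F_\beta,b}=\psi^\mu_{F_\beta,b+b'}$ applied with $b+b'=0$ forces $\psi^{(\mu^0,\mu^1+i\beta)}_{F_\beta,-i}$ to be the two-sided inverse of conjugation by $F_\beta^{(il)}$, and in the localization $U_{q(F_\beta)}$ this inverse is visibly conjugation by $F_\beta^{(-il)}$, proving the formula for negative $i$ as well.

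With the extended formula in hand, for $m\in M_\lambda$ and any $u\in U_{q(F_\beta)}$ the definition of the twisted action collapses to
\[
u\cdot\psi_{F_\beta,r}.m=\psi_{F_\beta,r}.\bigl(F_\beta^{(-rl)}u F_\beta^{(rl)}m\bigr),
\]
and applying $\phi$ yields
\[
\phi(u\cdot\psi_{F_\beta,r}.m)=F_\beta^{(rl)}F_\beta^{(-rl)}u F_\beta^{(rl)}m=u F_\beta^{(rl)}m=u\cdot\phi(\psi_{F_\beta,r}.m),
\]
so $\phi$ is a $U_{q(F_\beta)}$-module isomorphism, establishing the first assertion.

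For the second assertion I would pin down the weight of $\psi_{F_\beta,r}.m$ when $m\in M_\lambda$. Since $q^l=1$ and $(\alpha|\beta)\in\setZ$, the element $F_\beta^{(rl)}$ commutes with each $K_\alpha$, while moving $F_\beta^{(rl)}$ past ${K_\alpha;c\brack l}$ shifts $c$ by $rl\left<\beta,\alpha^\vee\right>$; hence $F_\beta^{(-rl)}{K_\alpha;0\brack l}F_\beta^{(rl)}={K_\alpha;-rl\left<\beta,\alpha^\vee\right>\brack l}$, which has eigenvalue $\left<\lambda^1-r\beta,\alpha^\vee\right>$ on $m$ by iterated application of Lemma~\ref{lemma:28}. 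Thus $\psi_{F_\beta,r}.m$ has weight $(\lambda^0,\lambda^1-r\beta)$, and the same calculation shows $F_\beta^{(rl)}m\in M_{(\lambda^0,\lambda^1-r\beta)}$. Therefore $\phi$ restricts to a bijection $\psi_{F_\beta,r}.M_\lambda\iso M_{(\lambda^0,\lambda^1-r\beta)}$, which is automatically $(U_{q(F_\beta)})_0$-equivariant by the preceding paragraph. The only substantive step is the extension of the conjugation formula to negative $r$; everything else is formal.
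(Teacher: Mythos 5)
Your proof is correct and takes essentially the same approach as the paper: both define the isomorphism by $\psi_{F_\beta,r}.m\mapsto F_\beta^{(rl)}m$ and verify equivariance via the conjugation formula $\psi^{(\lambda^0,\lambda^1-r\beta)}_{F_\beta,r}(u+I_{(\lambda^0,\lambda^1-r\beta)}) = F_\beta^{(-rl)}uF_\beta^{(rl)}+I_\lambda$. The only difference is that you explicitly justify extending the conjugation identity to negative $r$ via the group law from Lemma~\ref{lemma:17}, a step the paper leaves implicit.
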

\begin{proof}
  The isomorphism in both cases is given by $\psi_{F_\beta,r}.m
  \mapsto F_\beta^{(rl)}m$. Using the fact that
  $\psi_{F_\beta,r}^{(\lambda^0,\lambda^1-r\beta)}(u+I_{(\lambda^0,\lambda^1-r\beta)})
  = F_\beta^{(-rl)}u F_\beta^{(rl)}+I_{\lambda}$ it is easy to show
  that this is a homomorphism and the inverse is given by multiplying
  by $F_\beta^{(-rl)}$.
\end{proof}

\begin{defn}
  \label{def:commuting_roots}
  Let $\Sigma\subset \Phi^+$. Then $\Sigma$ is called a set of
  commuting roots if there exists an ordering of the roots in
  $\Sigma$; $\Sigma=\{\beta_1,\dots,\beta_s\}$ such that for some
  reduced expression of $w_0$ and corresponding construction of the
  root vectors $F_\beta$ we have: $[F_{\beta_j},F_{\beta_i}]_q=0$ for
  $1\leq i < j \leq s$.

  For any subset $I\subset \Pi$, let $Q_I$ be the subgroup of $Q$
  generated by $I$, $\Phi_I$ the root system generated by $I$ ,
  $\Phi_I^+=\Phi^+ \cap \Phi_I$ and $\Phi_I^- = -\Phi_I^+$.
\end{defn}

We have the following equivalent of Lemma~4.1 in~\cite{Mathieu}:
\begin{lemma}
  \label{lemma:7}
  \begin{enumerate}
  \item Let $I\subset \Pi$ and let $\alpha\in I$. There exists a set
    of commuting roots $\Sigma'\subset \Phi_I^+$ with
    $\alpha\in\Sigma'$ such that $\Sigma'$ is a basis of $Q_I$.
  \item Let $J,F$ be subsets of $\Pi$ with $F\neq \Pi$. Let $\Sigma'
    \subset \Phi_J^+ \backslash \Phi_{J\cap F}^+$ be a set of
    commuting roots which is a basis of $Q_J$. There exists a set of
    commuting roots $\Sigma$ which is a basis of $Q$ such that
    $\Sigma' \subset \Sigma \subset \Phi^+ \backslash \Phi_F^+$
  \end{enumerate}
\end{lemma}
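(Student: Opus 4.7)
The plan is to follow Mathieu's proof of Lemma~4.1 in~\cite{Mathieu} while re-interpreting the commutation condition through Theorem~\ref{thm:DP}. The useful sufficient criterion is the following: if some reduced expression of $w_0$ places the ordered roots $\beta_1,\dots,\beta_s$ in consecutive positions $\beta_{k+1},\dots,\beta_{k+s}$, and $\Sigma = \{\beta_1,\dots,\beta_s\}$ is $\setZ$-linearly independent, then for any pair $i<j$ the right hand side of Theorem~\ref{thm:DP} is contained in $\spa{A'}{F_{\beta_{i+1}}^{a_{i+1}}\cdots F_{\beta_{j-1}}^{a_{j-1}}}$, and no positive integer combination of $\beta_{i+1},\dots,\beta_{j-1}$ can equal $\beta_i+\beta_j$ (by linear independence, since all roots involved belong to the basis $\Sigma$), so that $[F_{\beta_j},F_{\beta_i}]_v=0$. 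Consequently both parts of the lemma reduce to the combinatorial problem of exhibiting a $\setZ$-basis of the relevant root lattice that can be read off from consecutive positions in some reduced expression, avoiding $\Phi_F^+$ in the case of part~(2).

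For part~(1), I would reduce to the case where $I$ is connected, so that $\Phi_I$ is irreducible of type $A$, $B$, $C$, $D$, or exceptional ($E_6, E_7, E_8, F_4$). In each type I would write down an explicit reduced expression of the longest element $w_0^I$ of $W_I$ whose first $|I|$ letters produce a $\setZ$-basis of $Q_I$ containing the prescribed simple root $\alpha$. For instance, in type $A_n$ with $\alpha=\alpha_i$ one may use an expression starting with $s_i s_{i-1}\cdots s_1 s_{i+1}s_i\cdots s_2 \cdots$, whose first $n$ produced roots form a basis containing $\alpha_i$ itself, and the linear independence of the consecutive block then forces commutation via the criterion above.

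For part~(2), fix a simple root $\alpha_0 \in \Pi\setminus F$ (possible since $F\neq\Pi$). For every simple root $\alpha_i \in \Pi\setminus J$ I would choose a positive root $\gamma_i$ supported on the shortest path in the Dynkin diagram from $\alpha_i$ to $\alpha_0$, so that $\gamma_i \in \Phi^+\setminus\Phi_F^+$ and $\Sigma'\cup\{\gamma_i : \alpha_i\in\Pi\setminus J\}$ is still $\setZ$-linearly independent and spans $Q$. I would then splice a reduced expression witnessing the commutation of $\Sigma'$ inside $w_0^J$ together with one that places the $\gamma_i$ in consecutive slots immediately afterwards, using the standard fact that any reduced expression of $w_0^J$ extends to a reduced expression of $w_0$.

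The main obstacle is the explicit type-by-type realisation: for types $D_n$ and the exceptionals one must choose the sequences of simple reflections carefully so that the $\gamma_i$ and $\Sigma'$ appear as the desired consecutive blocks \emph{and} the blocks avoid $\Phi_F^+$. This is precisely the combinatorial content of Mathieu's classical argument, and the exclusion of $G_2$ throughout the paper is consistent here, since the extra-long root strings of $G_2$ would destroy the ``no intermediate combination'' argument that makes the criterion above collapse to vanishing of the $q$-commutator.
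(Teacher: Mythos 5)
Your sufficient criterion (a set of roots occupying consecutive positions in some reduced-expression ordering and being $\setZ$-linearly independent forces all the $q$-commutators to vanish) is correct, and it does follow cleanly from Theorem~\ref{thm:DP}. But the reduction you then draw from it --- that both parts of the lemma reduce to placing a $\setZ$-basis in consecutive positions of a reduced expression --- is not the right reduction, and it is not what the paper does. The paper's inductive construction produces a commuting set whose members are \emph{not} consecutive. It orders $\Pi=\{\alpha_1,\dots,\alpha_n\}$ so that $J'=\{\alpha_1,\dots,\alpha_p\}$, $J=\{\alpha_1,\dots,\alpha_q\}$, and every subsequent $\alpha_s$ is connected to an earlier one; it then takes the root $\beta_{s+1}=w_s(\alpha_{s+1})$, where $w_s$ is the longest element of the parabolic on $\{\alpha_1,\dots,\alpha_s\}$. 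This $\beta_{s+1}$ sits at position $N_s+1$ of the reduced expression, so the already-constructed $\beta_1,\dots,\beta_s$ are scattered among the earlier $N_s$ positions, with many other roots of $\Phi^+_{\{\alpha_1,\dots,\alpha_s\}}$ interleaved. The reason the $q$-commutators still vanish is a weight argument, not a consecutivity argument: $\beta_{s+1}$ contains $\alpha_{s+1}$ with coefficient $1$, while every intermediate root in the Theorem~\ref{thm:DP} span lies in $\Phi^+_{\{\alpha_1,\dots,\alpha_s\}}$ and so has $\alpha_{s+1}$-coefficient $0$, which rules out any monomial of the required weight.

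This matters because your plan has a genuine gap at precisely the point where you invoke consecutivity. In part~(2) the set $\Sigma'$ is handed to you as a commuting set, which by Definition~\ref{def:commuting_roots} only guarantees \emph{some} reduced expression of $w_0$ and some ordering making the $q$-commutators among the $\Sigma'$ vanish; nothing forces $\Sigma'$ to occupy consecutive slots, and in general it cannot (already in $A_2$ the basis $\{\alpha_1,\alpha_2\}$ of $Q$ is never consecutive in any reduced expression of $w_0$, which is consistent with it not being a commuting set --- but the point is that consecutivity is a strong extra constraint that a general commuting set need not satisfy). Your ``splicing'' step --- extending a reduced expression of $w_0^J$ to one of $w_0$ and then \emph{placing the $\gamma_i$ in consecutive slots immediately afterwards} --- is exactly the step that has no justification: the roots appearing after the $w_0^J$-block are $w_0^J(\alpha_{j_1}), w_0^J s_{j_1}(\alpha_{j_2}),\dots$ and you have no control over whether your chosen ``shortest-path'' roots $\gamma_i$ can be made to appear there, let alone consecutively and in linearly independent position. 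The paper avoids this entirely by building each new root as $w_s(\alpha_{s+1})$, which automatically has the needed ``new simple root with coefficient~$1$'' property. You also propose a type-by-type proof of part~(1), whereas the paper deduces part~(1) uniformly from part~(2) with $J=\{\alpha\}$ and $F=\emptyset$; this is not wrong in principle, but it is a great deal more work for no gain, and the explicit reduced expressions you would need to write down are exactly the combinatorial content you yourself flag as the ``main obstacle''. Finally, a small remark: the avoidance of $\Phi_F^+$ in the paper is not established by routing through a fixed $\alpha_0\notin F$, but by showing that $\beta_{s+1}$ has nonzero coefficient at some $\alpha_i\in J'$ (via the result of Papi on root orderings), so your argument for $\gamma_i\notin\Phi_F^+$ is not the one the construction actually needs.
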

\begin{proof}
  The first part of the proof is just combinatorics of the root system
  so it is identical to the first part of the proof of Lemma~4.1
  in~\cite{Mathieu}: Let us first prove assertion $2.$: If $J$ is
  empty we can choose $\alpha\in \Pi\backslash F$ and replace $J$ and
  $\Sigma'$ by $\{\alpha\}$. So assume from now on that $J\neq
  \emptyset$. Set $J' = J \backslash F$, $p= |J'|$, $q=|J|$. Let
  $J_1,\dots, J_k$ be the connected components of $J$ and set $J_i' =
  J' \cap J_i$, $F_i = F \cap J_i$, and $\Sigma_i' = \Sigma\cap
  \Phi_{J_i}$, for any $1\leq i \leq k$. Since $\Sigma'\subset \Phi_J$
  is a basis of $Q_J$, each $\Sigma_i'$ is a basis of $Q_{J_i}$. Since
  $\Sigma_i'$ lies in $\Phi_{J_i}^+ \backslash \Phi_{F_i}^+$, the set
  $J_i'=J_i\backslash F_i$ is not empty. Hence $J'$ meets every
  connected component of $J$. Therefore we can write
  $J=\{\alpha_1,\dots, \alpha_q\}$ in such a way that
  $J'=\{\alpha_1,\dots,\alpha_p\}$ and, for any $s$ with $p+1\leq
  s\leq q$, $\alpha_s$ is connected to $\alpha_i$ for some
  $i<s$. Since $\Pi$ is connected we can write $\Pi \backslash J =
  \{\alpha_{q+1},\dots, \alpha_n\}$ in such a way that, for any $s\geq
  q+1$, $\alpha_s$ is connected to $\alpha_i$ for some $i$ with $1\leq
  i < s$. So $\Pi = \{\alpha_1,\dots, \alpha_n\}$ such that for $s> p$
  we have that $\alpha_s$ is connected to some $\alpha_i$ with $1\leq
  i < s$.
  
  Let $\Sigma' = \{\beta_1,\dots,\beta_q\}$. We will define
  $\beta_{q+1},\dots,\beta_l$ inductively such that for each $s\geq
  q$, $\{\beta_1,\dots,\beta_s\}$ is a commuting set of roots which is
  a basis of $\Phi_{\{\alpha_1,\dots,\alpha_s\}}$. So assume we have
  defined $\beta_1,\dots,\beta_s$. Let $w_s$ be the longest word in
  $s_{\alpha_1},\dots,s_{\alpha_s}$ and let $w_{s+1}$ be the longest
  word in $s_{\alpha_1},\dots,s_{\alpha_{s+1}}$. Choose a reduced
  expression of $w_s$ such that the corresponding root vectors
  $\{F_\beta\}$ satisfies $[F_{\beta_j},F_{\beta_i}]_q=0$ for
  $i<j$. Choose a reduced expression of $w_{s+1}=w_{s}w'$ starting
  with the above reduced expression of $w_s$. Let $N_s$ be the length
  of $w_s$ and $N_{s+1}$ be the length of $w_{s+1}$. So we get an
  ordering of the roots generated by
  $\{\alpha_1,\dots,\alpha_{s+1}\}$:
  $\Phi_{\{\alpha_1,\dots,\alpha_{s+1}\}}^+=\{\gamma_1,\dots,\gamma_{N_s},\gamma_{N_s+1},\dots,\gamma_{N_{s+1}}\}$
  with
  $\Phi_{\{\alpha_1,\dots,\alpha_{s}\}}^+=\{\gamma_1,\dots,\gamma_{N_s}\}$. Consider
  $\gamma_{N_s+1}=w_s(\alpha_{s+1})$. Since $w_s$ only consists of the
  simple reflections corresponding to $\alpha_1,\dots,\alpha_s$ we
  must have that $\gamma_{N_s+1}=\alpha_{s+1}+\sum_{i=1}^s m_i
  \alpha_i$ for some coefficients $m_i\in \setN$. So
  $\{\beta_1,\dots,\beta_s,\gamma_{N_s+1}\}$ is a basis of
  $\Phi_{\{\alpha_1,\dots,\alpha_{s+1}\}}$. From Theorem~\ref{thm:DP}
  we get for $1\leq i \leq s$
  \begin{equation*}
    [F_{\gamma_{N_s+1}},F_{\beta_i}]_q \in \spa{\setC}{F_{\gamma_{2}}^{a_2}\cdots F_{\gamma_{N_s}}^{a_{N_s}}|a_i\in \setN\}}.
  \end{equation*}
  But since $\{\gamma_1,\dots,\gamma_{N_s}\} =
  \Phi^+_{\{\alpha_1,\dots,\alpha_s\}}$ and since
  $\gamma_{N_s+1}=\alpha_{s+1}+\sum_{i=1}^s m_i \alpha_i$ we get
  $[F_{\gamma_{N_s+1}},F_{\beta_i}]_q=0$.

  All that is left is to show that $\gamma_{N_s+1}\not \in \Phi_F$. By
  the above we must have that $\alpha_{s+1}$ is connected to some
  $\alpha_i\in J'$. We will show that the coefficient of $\alpha_i$ in
  $\gamma_{N_s+1}$ is nonzero. Otherwise $(\gamma_{N_s+1}|\alpha_i)<0$
  and so $\gamma_{N_s+1}+\alpha_i\in
  \Phi_{\{\alpha_1,\dots,\alpha_{s+1}\}}$ and by Theorem~1
  in~\cite{Papi}, $\gamma_{N_s+1}+\alpha_i = \gamma_{j}$ for some $1 <
  j \leq s$. This is impossible since $\gamma_{N_s+1}+\alpha_i\not \in
  \Phi_{\{\alpha_1,\dots,\alpha_{s}\}}$. So we can set $\beta_{s+1} =
  \gamma_{N_s+1}$ and the induction step is finished.

  To prove assertion $1.$ it can be assumed that $I=\Pi$. Thus
  assertion $1.$ follows from assertion $2.$ with $J=\{\alpha\}$ and
  $F=\emptyset$.
\end{proof}

\begin{lemma}
  \label{lemma:35}
  Let $\Sigma=\{\beta_1,\dots,\beta_n\}$ be a set of commuting roots
  with corresponding root vectors $F_{\beta_1},\dots,F_{\beta_n}$,
  then $F_{\beta_1}^{(l)},\dots,F_{\beta_n}^{(l)}$ commute.
\end{lemma}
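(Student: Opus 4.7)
The plan is to leverage the $q$-commutation relation given by the definition of a commuting set of roots and observe that raising to the $l$-th power kills the $q$-factor because $q^l = 1$.

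First, I would unpack what $[F_{\beta_j}, F_{\beta_i}]_q = 0$ means. Since $F_{\beta_i}$ has $Q$-degree $-\beta_i$ and $F_{\beta_j}$ has $Q$-degree $-\beta_j$, the definition of $[\cdot,\cdot]_v$ gives
\begin{equation*}
  F_{\beta_j} F_{\beta_i} = q^{-(\beta_i|\beta_j)} F_{\beta_i} F_{\beta_j}
\end{equation*}
for $i < j$. Next I would iterate this: an easy induction on $a$ and then on $b$ yields, for all $a,b \in \setN$,
\begin{equation*}
  F_{\beta_j}^{a} F_{\beta_i}^{b} = q^{-ab(\beta_i|\beta_j)} F_{\beta_i}^{b} F_{\beta_j}^{a}.
\end{equation*}

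Now I would specialize to $a = b = l$. Since the root system is not of type $G_2$ (though this is not even needed here), $(\beta_i|\beta_j) \in \setZ$, so the exponent $-l^{2}(\beta_i|\beta_j)$ is a multiple of $l$. Because $q$ is a primitive $l$-th root of unity, $q^{l^{2}(\beta_i|\beta_j)} = 1$, which yields $F_{\beta_j}^{l} F_{\beta_i}^{l} = F_{\beta_i}^{l} F_{\beta_j}^{l}$. Dividing both sides by the scalar $[l]_{\beta_i}! \, [l]_{\beta_j}!$ — this is done in $U_A$ before specialization, so there is no issue with the quantum factorials vanishing at $q$ — gives the desired identity $F_{\beta_j}^{(l)} F_{\beta_i}^{(l)} = F_{\beta_i}^{(l)} F_{\beta_j}^{(l)}$ in $U_q$. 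The pair $(i,j)$ with $i>j$ follows by symmetry.

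There is essentially no obstacle: the argument is a direct consequence of the defining $q$-commutation and the root-of-unity condition $q^l = 1$. The only mild point to be careful about is that $[F_{\beta_j}, F_{\beta_i}]_q = 0$ is formulated with the standard parameter $q$ (not $q_\beta$), so the exponent that must vanish modulo the order of $q$ is $l^{2}(\beta_i|\beta_j)$, which is visibly divisible by $l$ since $(\beta_i|\beta_j)\in\setZ$.
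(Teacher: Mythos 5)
Your proposal is correct and follows essentially the same route as the paper: both iterate the $q$-commutation relation to $F_{\beta_j}^{a}F_{\beta_i}^{b}=v^{-ab(\beta_i|\beta_j)}F_{\beta_i}^{b}F_{\beta_j}^{a}$ (the paper does this via the Leibniz rule of Proposition~\ref{derivative}, you by direct induction), divide by $[l]!^{2}$ inside $U_A$, and then specialize using $q^{l^{2}(\beta_i|\beta_j)}=1$. The one point to tighten is the order of operations: since $[l]_q=0$, the iteration and the division must both take place in $U_A$ with the $v$-power coefficient $v^{-l^{2}(\beta_i|\beta_j)}$, and the specialization at $q$ is the very last step — as written your text first derives $F_{\beta_j}^{l}F_{\beta_i}^{l}=F_{\beta_i}^{l}F_{\beta_j}^{l}$ "at $q$" and then says the division is done "before specialization," which reverses the logical order even though you clearly have the right idea.
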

\begin{proof}
  Calculating in $U_v$ for $i<j$ we get using
  Proposition~\ref{derivative}
  \begin{equation*}
    [F_{ \beta_j }^{(l)}, F_{ \beta_i }^{(l)}  ]_v =
    \frac{1}{([l]_v!)^2} [F_{\beta_j}^l,F_{\beta_i}^l]_v = 0
  \end{equation*}
  hence
  $v^{(l\beta_i|l\beta_j)}F_{\beta_i}^{(l)}F_{\beta_j}^{(l)}-F_{\beta_j}^{(l)}F_{\beta_i}^{(l)}=0$
  in $U_A$. Since $[F_{ \beta_i}^{(l)},F_{\beta_j}^{(l)}]_q =
  q^{l^2(\beta_i|\beta_j)}F_{\beta_i}^{(l)}F_{\beta_j}^{(l)}-F_{\beta_j}^{(l)}F_{\beta_i}^{(l)}
  =
  F_{\beta_i}^{(l)}F_{\beta_j}^{(l)}-F_{\beta_j}^{(l)}F_{\beta_i}^{(l)}
  $ we have proved the lemma.
\end{proof}

\begin{cor}
  \label{cor:7}
  Let $\Sigma=\{\beta_1,\dots,\beta_n\}$ be a set of commuting roots
  with corresponding root vectors $F_{\beta_1},\dots,F_{\beta_n}$. The
  set
  \begin{align*}
    F_\Sigma:=&\{r_1!F_{\beta_1}^{(r_1l)}\cdots
    r_n!F_{\beta_n}^{(r_nl)}|r_1,\dots,r_n\in \setN\}
    \\
    =&\{\left(F_{\beta_1}^{(l)}\right)^{r_1} \cdots
    \left(F_{\beta_n}^{(l)}\right)^{r_n}|r_1,\dots,r_n\in \setN\}
  \end{align*}
  is an Ore subset of $U_q$.
\end{cor}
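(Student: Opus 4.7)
The plan is to deduce the multi-variable Ore property for $F_\Sigma$ from the single-variable case Lemma~\ref{lemma:19}, using the pairwise commutativity supplied by Lemma~\ref{lemma:35} to rearrange factors.

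First I would reconcile the two descriptions of $F_\Sigma$. In $U_A$ one has
\[
(F_{\beta_i}^{(l)})^{r} \;=\; \frac{[rl]_{v_{\beta_i}}!}{([l]_{v_{\beta_i}}!)^{r}}\,F_{\beta_i}^{(rl)} \;=\; {rl \brack l,\dots,l}_{v_{\beta_i}} F_{\beta_i}^{(rl)},
\]
and at $q$ an odd primitive $l$-th root of unity a repeated application of the quantum Lucas identity ${ml \brack l}_{q_{\beta_i}} = m$ specializes this multinomial to $r!$. Combined with Lemma~\ref{lemma:35}, this shows $F_\Sigma$ is multiplicatively closed, invariant under reordering of its factors, and does not contain $0$ (the monomials $F_{\beta_1}^{(r_1 l)}\cdots F_{\beta_n}^{(r_n l)}$ are nonzero since they appear in a PBW-type basis of $U_q^-$ constructed from a reduced expression of $w_0$ compatible with the ordering of $\Sigma$).

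The main task is the Ore condition: given $u\in U_q$ and $s = (F_{\beta_1}^{(l)})^{r_1}\cdots (F_{\beta_n}^{(l)})^{r_n}\in F_\Sigma$, produce $s'\in F_\Sigma$ and $u'\in U_q$ with $s'u = u's$. The strategy is to iterate Lemma~\ref{lemma:19} one index at a time, working from the rightmost factor inward. Set $u_0 := u$. For $k = 0,1,\dots,n-1$, apply Lemma~\ref{lemma:19} to the single-variable Ore set $\{(F_{\beta_{n-k}}^{(l)})^r : r\in \setN\}$ to find $r_{n-k}' \in \setN$ and $u_{k+1}\in U_q$ with
\[
(F_{\beta_{n-k}}^{(l)})^{r_{n-k}'}\, u_k \;=\; u_{k+1}\,(F_{\beta_{n-k}}^{(l)})^{r_{n-k}}.
\]
Then set $s' := (F_{\beta_1}^{(l)})^{r_1'}\cdots (F_{\beta_n}^{(l)})^{r_n'}\in F_\Sigma$ and $u' := u_n$. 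Using Lemma~\ref{lemma:35} to commute each $(F_{\beta_i}^{(l)})^{r_i'}$ past the already-produced factors $(F_{\beta_j}^{(l)})^{r_j}$ with $i<j$ on the right, one telescopes
\[
s'u = (F_{\beta_1}^{(l)})^{r_1'}\cdots (F_{\beta_{n-1}}^{(l)})^{r_{n-1}'}\, u_1 (F_{\beta_n}^{(l)})^{r_n} = \cdots = u_n (F_{\beta_1}^{(l)})^{r_1}\cdots (F_{\beta_n}^{(l)})^{r_n} = u's.
\]
The symmetric right Ore condition is obtained by the same argument applied in the opposite order, again invoking Lemma~\ref{lemma:19} and Lemma~\ref{lemma:35}.

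The only potential obstacle is bookkeeping: one must be sure that at every intermediate stage the factors assembled so far on the right are already in the canonical order specified by $F_\Sigma$, so that Lemma~\ref{lemma:35} can be applied to push new $(F_{\beta_i}^{(l)})^{r_i'}$ through them. Inducting from the rightmost index and keeping the partial tail $(F_{\beta_{n-k+1}}^{(l)})^{r_{n-k+1}}\cdots (F_{\beta_n}^{(l)})^{r_n}$ fixed on the far right at each stage makes this transparent, and no further input beyond Lemmas~\ref{lemma:19} and~\ref{lemma:35} is needed.
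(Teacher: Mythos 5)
Your proof is correct and follows the paper's intended argument, which consists precisely in combining Lemma~\ref{lemma:19} (the single-root Ore property) with Lemma~\ref{lemma:35} (commutativity of the $F_{\beta_i}^{(l)}$). One small remark: in the telescope step Lemma~\ref{lemma:35} is never actually invoked, since at each stage the newly produced factor $(F_{\beta_{n-k}}^{(l)})^{r_{n-k}}$ lands directly to the left of the already-assembled tail $(F_{\beta_{n-k+1}}^{(l)})^{r_{n-k+1}}\cdots(F_{\beta_n}^{(l)})^{r_n}$, so no reordering is needed there; Lemma~\ref{lemma:35} is required only for multiplicative closure of $F_\Sigma$, which you also address.
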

\begin{proof}
  This follows from Lemma~\ref{lemma:35} and Lemma~\ref{lemma:19}.
\end{proof}

We let $U_{q(F_\Sigma)}$ denote the Ore localization of $U_q$ in the
Ore subset $F_\Sigma$. For a $U_q$-module $M$ we define $M_{F_\Sigma}
= U_{q(F_\Sigma)}\tensor_{U_q} M$.

\begin{defn}
  Let $\Sigma=\{\beta_1,\dots,\beta_n\}$ be a set of commuting roots
  that is a basis of $Q$ with a corresponding Ore subset $F_\Sigma$.
  Let $\nu \in \mathfrak{h}^*$, $\nu=\sum_{i=1}^n a_i \beta_i$ for
  some $a_i\in \setC$. For a $U_{q(F_\Sigma)}$-module $M$ we define
  $\psi_{F_\Sigma,\nu}.M = \psi_{F_{\beta_1},a_1}\circ \dots \circ
  \psi_{F_{\beta_n},a_n}.M$.
\end{defn}

\begin{cor}
  \label{cor:5}
  Let $\Sigma$ be a set of commuting roots that is a basis of $Q$. Let
  $\mu \in Q$ and let $M$ be a $U_{q(F_\Sigma)}$-module. Then
  \begin{equation*}
    \psi_{F_\Sigma,\mu}.M \iso M
  \end{equation*}
  as $U_{q(F_\Sigma)}$-modules. Also for $\lambda =
  (\lambda^0,\lambda^1)\in \wt M$:
  \begin{equation*}
    \psi_{F_\Sigma,\mu}.M_\lambda \iso M_{(\lambda^0,\lambda^1+\mu)}
  \end{equation*}
  as $(U_{q(F_\Sigma)})_0$-modules.
\end{cor}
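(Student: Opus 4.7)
The plan is to bootstrap from the single-root statement of Lemma~\ref{lemma:31} by iterating it $n=|\Sigma|$ times. Since $\Sigma=\{\beta_1,\dots,\beta_n\}$ is a $\setZ$-basis of $Q$, we can write $\mu=\sum_{i=1}^n a_i\beta_i$ uniquely with $a_i\in\setZ$. By the definition preceding the corollary, $\psi_{F_\Sigma,\mu}.M$ is the iterated twist $\psi_{F_{\beta_1},a_1}\circ\cdots\circ\psi_{F_{\beta_n},a_n}.M$, so it suffices to apply Lemma~\ref{lemma:31} successively.

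Concretely, I would write down the candidate isomorphism $\phi:\psi_{F_\Sigma,\mu}.M\to M$ by $\phi(\psi_{F_\Sigma,\mu}.m)=F_{\beta_1}^{(a_1l)}\cdots F_{\beta_n}^{(a_nl)}m$, extending the convention $F_\beta^{(-rl)}:=r!(r!F_\beta^{(rl)})\inv\in U_{q(F_\Sigma)}$ to negative exponents. By Lemma~\ref{lemma:35} the elements $F_{\beta_i}^{(l)}$ mutually commute inside $U_{q(F_\Sigma)}$, so this product is unambiguous regardless of the order of the factors and has an obvious two-sided inverse $F_{\beta_n}^{(-a_nl)}\cdots F_{\beta_1}^{(-a_1l)}$. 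One then checks step by step, peeling off one twist at a time and invoking Lemma~\ref{lemma:31} at each stage, that multiplying by $F_{\beta_i}^{(a_il)}$ converts the $\psi_{F_{\beta_i},a_i}$-twisted action into the original action; the commutativity provided by Lemma~\ref{lemma:35} is precisely what allows these $n$ individual conjugations to be strung together without interference with one another.

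For the statement about weight spaces, Lemma~\ref{lemma:31} shows that each twist $\psi_{F_{\beta_i},a_i}$ preserves the $\Lambda_l$-component of a weight (since $a_il\beta_i\in lQ$ acts trivially on that component) and shifts the $\mathfrak{h}^*$-component by the relevant multiple of $\beta_i$; summing over $i$ yields the total $\mathfrak{h}^*$-shift by $\mu$ stated in the corollary, while $\lambda^0$ is unaffected. The main obstacle is the bookkeeping in the previous paragraph: one must be sure the single-root isomorphisms from Lemma~\ref{lemma:31} compose into a $U_{q(F_\Sigma)}$-module isomorphism, not merely into a $U_{q(F_{\beta_n})}$-module isomorphism at each stage, and this reduces exactly to checking that a twist built from $F_{\beta_i}$ commutes past the $F_{\beta_j}^{(\pm l)}$ appearing in the definition of the others, which is Lemma~\ref{lemma:35}.
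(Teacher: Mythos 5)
Your proof is correct and follows the same approach as the paper's, which simply writes $\mu=\sum_{\beta\in\Sigma}a_\beta\beta$ using the basis property and iterates Lemma~\ref{lemma:31}. You spell out the explicit isomorphism and, usefully, note that Lemma~\ref{lemma:35} is what guarantees the single-root twists assemble into a $U_{q(F_\Sigma)}$-module isomorphism rather than merely isomorphisms over each $U_{q(F_{\beta_i})}$ — a point the paper's two-sentence proof leaves implicit.
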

\begin{proof}
  Since $\Sigma$ is a basis of $Q$ we can write $\mu = \sum_{\beta\in
    \Sigma} a_\beta \beta$ for some $a_\beta\in \setZ$. So the
  corollary follows from Lemma~\ref{lemma:31}.
\end{proof}

\begin{defn}
  A module $M\in \mathcal{F}$ is called admissible if its weights are
  contained in a single coset of $(\Lambda_l\times
  \mathfrak{h}^*)/(\Lambda_l \times Q)$ and if the dimensions of the
  weight spaces are uniformly bounded. $M$ is called admissible of
  degree $d$ if $d$ is the maximal dimension of the weight spaces in
  $M$.
\end{defn}

Of course all finite dimensional simple modules are admissible but the
interesting admissible modules are the infinite dimensional admissible
simple modules. In particular simple torsion free modules in
$\mathcal{F}$ are admissible. We show later that each infinite
dimensional simple module $L$ gives rise to a 'coherent family'
$\mathcal{EXT}(L)$ containing at least one simple highest weight
module that is admissible of the same degree.

We need the equivalent of Lemma~3.3 in~\cite{Mathieu}. Some of the
proofs leading up to this are more or less the same as
in~\cite{Mathieu} but for completenes we include it here as well.

\begin{defn}
  A cone $C$ is a finitely generated submonoid of the root lattice $Q$
  containing $0$.  If $L$ is a simple module define the cone of $L$,
  $C(L)$, to be the submonoid of $Q$ generated by $T_L$.
\end{defn}

\begin{lemma}
  \label{lemma:11}
  Let $L\in\mathcal{F}$ be an infinite dimensional simple module. Then
  the group generated by the submonoid $C(L)$ is $Q$.
\end{lemma}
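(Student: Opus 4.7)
The plan is to combine the structural constraints on $T_L$ imposed by the parabolic $P_L$ with the connectivity of the Dynkin diagram of $\mathfrak{g}$. First I reduce to the case $\Phi^+ \subset P_L$: twisting $L$ by some $w \in W$ sends $T_L$ to $w(T_L)$ and $W$ preserves $Q$, so $\langle T_L\rangle_{\mathbb{Z}} = Q$ if and only if $\langle T_{{^w}L}\rangle_{\mathbb{Z}} = Q$. Hence I may assume $P_L = \Phi^+ \cup \Phi_{\Pi'}^-$ for some $\Pi' \subset \Pi$, giving $P_L^s = \Phi_{\Pi'}$.

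Next I pin down the structure of $T_L$. The disjoint decomposition $\Phi_{\Pi'} = F_L^s \sqcup T_L^s$ of closed symmetric subsets must be orthogonal: if $\alpha \in F_L^s$ and $\beta \in T_L^s$ had nonzero pairing, the root $\alpha \pm \beta$ would, by closedness and symmetry of both pieces, have to lie in both, a contradiction. Hence $\Pi'$ splits as $\Pi' = \Pi'_F \sqcup \Pi'_T$ into two Dynkin-disconnected subsets with $F_L^s = \Phi_{\Pi'_F}$ and $T_L^s = \Phi_{\Pi'_T}$ (the same kind of splitting performed in the paper for $N = ({^w}L)^{\mathfrak{u}}$). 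Moreover every $\gamma \in \Phi^+ \setminus \Phi_{\Pi'}$ lies in $F_L$ (because $P_L \setminus P_L^s$ is asymmetric while $T_L^s$ is symmetric), and then $-\gamma \in T_L$ (else $\gamma \in F_L^s \subset \Phi_{\Pi'}$). Altogether
\[ T_L \;=\; \Phi_{\Pi'_T} \;\cup\; \bigl(-(\Phi^+ \setminus \Phi_{\Pi'})\bigr). \]
In addition $T_L \neq \emptyset$: otherwise $L$ would be $\beta$-finite for every $\beta \in \Phi$, so each $E_\alpha$, $F_\alpha$ ($\alpha \in \Pi$) would act locally nilpotently on $L$, making the simple module $L$ integrable and hence finite dimensional, contradicting the hypothesis.

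For the combinatorial step, $\Pi'_T \subset T_L^s$ and $\Pi \setminus \Pi' \subset -T_L$ sit immediately inside $\langle T_L \rangle_{\mathbb{Z}}$. If $\Pi = \Pi'$, then $\Pi = \Pi'_F \sqcup \Pi'_T$ disconnects the Dynkin diagram of the simple $\mathfrak{g}$, so one part is empty; since $T_L \neq \emptyset$ we must have $\Pi'_F = \emptyset$, giving $\Pi = \Pi'_T \subset \langle T_L \rangle_{\mathbb{Z}}$. Otherwise $\Pi \setminus \Pi' \neq \emptyset$, and for each $\alpha \in \Pi'_F$ the connectedness of $\Pi$ together with the disconnection $\Pi'_F \perp \Pi'_T$ yields a shortest Dynkin path $\alpha = \beta_0, \beta_1, \ldots, \beta_m$ with $\beta_0, \ldots, \beta_{m-1} \in \Pi'_F$ and $\beta_m \in \Pi \setminus \Pi'$. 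A standard root-string induction (using $\langle \sigma_{i+1}, \beta_i^\vee \rangle < 0$, which holds because $\beta_i$ is adjacent to $\beta_{i+1}$ and nonpositively paired with the remaining $\beta_j$) shows each partial sum $\sigma_i := \beta_i + \cdots + \beta_m$ is a positive root; it lies in $\Phi^+ \setminus \Phi_{\Pi'}$ since its $\beta_m$-coefficient equals $1$. Hence every $\sigma_i \in -T_L \subset \langle T_L \rangle_{\mathbb{Z}}$, and the telescoping identity $\beta_{i-1} = \sigma_{i-1} - \sigma_i$ places every $\beta_j$, in particular $\alpha = \beta_0$, into $\langle T_L \rangle_{\mathbb{Z}}$. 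Combining the three contributions yields $\Pi \subset \langle T_L \rangle_{\mathbb{Z}}$, and so $\langle T_L \rangle_{\mathbb{Z}} = Q$. The main technical obstacle will be the path-plus-telescoping argument, in particular the root-string verification that each $\sigma_i$ is indeed a positive root; the orthogonality of $F_L^s$ and $T_L^s$ and the nonemptyness of $T_L$ are comparatively routine.
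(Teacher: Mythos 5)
Your proof is correct, but it follows a genuinely different path from the paper's. The paper splits on whether $T_L \cap (-F_L)$ is empty. In the nonempty case it invokes \cite[Lemma~4.16]{Fernando} to identify $T_L \cap (-F_L)$ and $(-T_L)\cap F_L$ as the root sets of the nilradicals $\mathfrak{v}^\pm$ of a pair of opposite parabolic subalgebras, and then concludes from the structural fact $\mathfrak{g}=\mathfrak{v}^+ + \mathfrak{v}^- + [\mathfrak{v}^+,\mathfrak{v}^-]$ that these roots already generate $Q$; the empty case is dispatched in one line via connectedness of the Dynkin diagram, much as in your case $\Pi = \Pi'$. You instead normalize to $\Phi^+ \subset P_L$, pin down $T_L = \Phi_{\Pi'_T} \cup \bigl(-(\Phi^+ \setminus \Phi_{\Pi'})\bigr)$ explicitly (the orthogonal splitting $\Pi' = \Pi'_F \sqcup \Pi'_T$ is the same decomposition the paper derives later for $N = ({}^wL)^{\mathfrak{u}}$), and then recover the missing simple roots in $\Pi'_F$ by a hand-made Dynkin-path plus root-string plus telescoping argument. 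What you gain is that the whole thing is self-contained at the level of root combinatorics --- no appeal to opposite nilradicals generating $\mathfrak{g}$ --- at the cost of length and of having to verify the shortest-path and root-string details; what the paper's route buys is brevity by outsourcing the hard step to a known Lie-theoretic fact. Both arguments, like the paper's, rely on the background fact that $F_L = \Phi$ would force $L$ finite dimensional (you spell this out, the paper uses it implicitly), and neither has a gap.
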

Compare~\cite{Mathieu} Lemma~3.1
\begin{proof}
  First consider the case where $T_L \cap (-F_L) = \emptyset$. Then in
  this case we have $\Phi = T_L^s \cup F_L^s$. Since $F_L^s$ and
  $T_L^s$ contain different connected components of the Dynkin diagram
  and since $L$ is simple and infinite we must have $\Phi = T_L^s$ and
  therefore $C(L)=Q$.
  
  Next assume $T_L \cap (-F_L) \neq \emptyset$. By Lemma~4.16
  in~\cite{Fernando} $P_L=T_L^s\cup F_L$ and $P_L^-=T_L\cup F_L^s$ are
  two opposite parabolic subsystems of $\Phi$. So we have that
  $T_L\cap (-F_L)$ and $(-T_L)\cap F_L$ must be the roots
  corresponding to the nilradicals $\mathfrak{v}^\pm$ of two opposite
  parabolic subalgebras $\mathfrak{p}^\pm$ of $\mathfrak{g}$. Since we
  have $\mathfrak{g}=\mathfrak{v}^+ + \mathfrak{v}^- +
  [\mathfrak{v}^+,\mathfrak{v}^-]$ we get that $T_L\cap (-F_L)$
  generates $Q$. Since $C(L)$ contains $T_L\cap (-F_L)$ it generates
  $Q$.
\end{proof}

\begin{defn}
  Let $x\geq 0$ be a real number. Define $\rho_l(x) =
  \operatorname{Card}B_l(x)$ where $B_l(x)=\{\mu\in l Q|
  \sqrt{(\mu|\mu)} \leq x\}$ and $lQ = \{ l\mu \in Q| \mu\in Q\}$.

  Let $M\in \mathcal{F}$ be a weight module with all weights lying in
  a single coset of $(\Lambda_l\times \mathfrak{h}^*)/(\Lambda_l\times
  Q)$ say $(0,\lambda^1)+(\Lambda_l\times Q)$. The density of $M$ is
  \begin{equation*}
    \delta_l(M) =   \liminf_{x\to \infty} \rho_l(x)\inv \sum_{\mu^0\in \Lambda_l, \mu^1 \in
      B_l(x)} \dim M_{q^{\mu^1}(\mu^0,\lambda^1)}.
  \end{equation*}

  For a cone $C$ we define $\delta(C) = \liminf_{x\to \infty}
  \rho_1(x)\inv \operatorname{Card}(C \cap B_1(x))=\liminf_{x\to
    \infty} \rho_l(x)\inv \operatorname{Card}(l C \cap B_l(x))$ where
  $lC = \{ lc \in Q| c\in C\}$.
\end{defn}

\begin{lemma}
  \label{lemma:12}
  There exists a real number $\epsilon > 0$ such that $\delta_l
  (L)>\epsilon$ for all infinite dimensional simple modules $L$.
\end{lemma}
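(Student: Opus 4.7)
My plan is to bound $\delta_l(L)$ below by the density of the monoid $C(L)\subseteq Q$ in $Q$, and then use finiteness of the root system to get uniformity in $L$. First, by Proposition~\ref{prop:13}, each $\beta\in T_L$ satisfies $q^{\setN l\beta}\wt L\subseteq\wt L$; iterating over the generating set $T_L$ of $C(L)$ yields $q^{l\mu}\wt L\subseteq\wt L$ for every $\mu\in C(L)$. Fixing a reference weight $\lambda=(\lambda^0,\lambda^1)\in\wt L$ and using the identification $q^{l\mu}(\lambda^0,\lambda^1)=(\lambda^0,\lambda^1+\mu)$ (which follows from $q^l=1$ on the values at $K_\alpha$, and from Lemma~\ref{lemma:28} iterated on the values at ${K_\alpha;0\brack l}$), we conclude $\dim L_{(\lambda^0,\lambda^1+\mu)}\geq 1$ for every $\mu\in C(L)$. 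Restricting the defining sum of $\delta_l(L)$ to the contributions with $\mu^0=\lambda^0$ and $\mu^1=l\mu$, $\mu\in C(L)$, then gives
\[
\sum_{\mu^0\in\Lambda_l,\,\mu^1\in B_l(x)}\dim L_{q^{\mu^1}(\mu^0,\lambda^1)}\;\geq\;|lC(L)\cap B_l(x)|,
\]
and hence $\delta_l(L)\geq\delta(C(L))$, where for any submonoid $C\subseteq Q$ we set $\delta(C):=\liminf_{x\to\infty}\rho_l(x)^{-1}|lC\cap B_l(x)|$.

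The key step is then to show that $\delta(C)>0$ whenever $C\subseteq Q$ is a finitely generated submonoid whose $\setZ$-span equals $Q$; by Lemma~\ref{lemma:11} this hypothesis is met by $C=C(L)$ for every infinite-dimensional simple $L$. The argument is standard lattice geometry: since the $\setR$-span of $C$ equals $Q\otimes\setR$, one can choose $c_1,\dots,c_n\in C$ that are $\setR$-linearly independent, where $n$ is the rank of $Q$. Then $\setN c_1+\dots+\setN c_n\subseteq C$, and the number of lattice points of this simplicial subcone lying in $B_l(x)$ grows asymptotically like a positive constant multiple of $x^n/l^n$; this matches the growth rate of $\rho_l(x)$, so the ratio is bounded below by a positive constant, giving $\delta(C)>0$.

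Finally, uniformity over $L$ follows from finiteness: $C(L)$ is determined by $T_L\subseteq\Phi$, and $\Phi$ is finite, so at most $2^{|\Phi|}$ cones $C(L)$ can arise as $L$ ranges over infinite-dimensional simple modules. Taking $\epsilon>0$ to be the minimum of $\delta(C)$ over this finite, nonempty collection gives the required uniform lower bound. The main obstacle is the lattice-point count in the second paragraph: although geometrically transparent, it requires carefully tracking the scaling factor $l$ which appears simultaneously in $lC$, $B_l(x)$, and $\rho_l(x)$, and one must verify that the positive lower bound obtained for the simplicial subcone depends only on the cone $C$ (and not on extraneous choices), so that the final minimum over finitely many cones $C(L)$ is indeed strictly positive.
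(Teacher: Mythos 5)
Your proposal is correct and follows essentially the same route as the paper's proof: bound $\delta_l(L)$ below by $\delta(C(L))$ via $q^{lC(L)}\wt L\subseteq \wt L$, invoke Lemma~\ref{lemma:11} to see $C(L)$ spans $Q$ (hence $\delta(C(L))>0$), and use finiteness of $\Phi$ (so only finitely many cones occur) to extract a uniform $\epsilon$. The paper's proof is considerably terser, asserting ``any cone $C$ that generates $Q$ has $\delta(C)>0$'' without the simplicial-subcone argument you give; your expansion is a legitimate filling-in of that assertion, and the scaling concern you flag at the end is moot since the paper's definition of $\delta(C)$ already identifies $\rho_l(x)^{-1}\operatorname{Card}(lC\cap B_l(x))$ with the $l$-free form $\rho_1(x)^{-1}\operatorname{Card}(C\cap B_1(x))$.
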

\begin{proof}
  Note that since $q^{l C(L)}\lambda \subset \wt L$ for all
  $\lambda\in \wt L$ we have $\delta_l(L) \geq \delta(C(L))$.

  Since $C(L)$ is the cone generated by $T_L$ and $T_L\subset \Phi$ (a
  finite set) there can only be finitely many different cones.

  Since there are only finitely many different cones attached to
  infinite simple dimensional modules and since any cone $C$ that
  generates $Q$ has $\delta(C)>0$ we conclude via Lemma~\ref{lemma:11}
  that there exists an $\epsilon>0$ such that $\delta_l(L)>\epsilon$
  for all infinite dimensional simple modules.
\end{proof}

\begin{defn}
  Let $M$ be a $\mathfrak{g}$-module. We can make $M$ into a
  $U_q$-module by the Frobenius homomorphism: We define $M^{[l]}$ to
  be the $U_q$-module equal to $M$ as a vector space and with the
  action defined as follows: For $m\in M$, $\alpha\in \Pi$,
  \begin{align*}
    K_\alpha^{\pm 1} m =& m
    \\
    E_\alpha m =& 0
    \\
    E_\alpha^{(l)} m=& e_\alpha m
    \\
    F_\alpha m =& 0
    \\
    F_\alpha^{(l)} m =& f_\alpha m
  \end{align*}
  where $e_\alpha$ is a root vector of $\mathfrak{g}$ of weight
  $\alpha$ and $f_\alpha$ is such that $[e_\alpha,f_\alpha]=h_\alpha$.
  The above defines an action of $U_q$ on $M$ by Theorem~1.1
  in~\cite{KL2}.
\end{defn}

\begin{prop}
  \label{prop:21}
  Let $\lambda=(\lambda^0,\lambda^1)\in X$ and let $L(\lambda)$ be the
  unique simple highest weight module with weight $\lambda$. Then
  $L(\lambda)\iso L_\setC (\lambda^1)^{[l]}\tensor L((\lambda^0,0))$
  where $L_\setC(\lambda^1)$ denotes the unique simple
  $\mathfrak{g}$-module of highest weight $\lambda^1$.
\end{prop}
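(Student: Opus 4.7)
This is the quantum Steinberg tensor product theorem at an odd root of unity. I would follow the standard Lusztig-type argument in three steps: construct a highest weight vector in the tensor product, show it generates the whole tensor product, and show the tensor product is simple.

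Let $v^1 \in L_\setC(\lambda^1)$ and $v^0 \in L((\lambda^0,0))$ be highest weight vectors and set $v := v^1 \otimes v^0 \in L_\setC(\lambda^1)^{[l]} \otimes L((\lambda^0,0))$. I want to show that $v$ is a highest weight vector of weight $\lambda = (\lambda^0,\lambda^1)$. Using $\Delta(E_\alpha) = E_\alpha \otimes 1 + K_\alpha \otimes E_\alpha$ together with the fact that $E_\alpha$ acts as $0$ on the Frobenius twist $L_\setC(\lambda^1)^{[l]}$ and that $E_\alpha v^0 = 0$, one gets $E_\alpha v = 0$. The $K_\alpha$-eigenvalue of $v$ equals $q^{(\lambda^0|\alpha)}$ because $K_\alpha$ acts as $1$ on the Frobenius factor and as $q^{(\lambda^0|\alpha)}$ on $v^0$. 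For the eigenvalue of ${K_\alpha;0\brack l}$, I would expand its coproduct using the relations in~\cite[Section~6.4]{MR1066560}: the diagonal term contributes $\langle \lambda^1,\alpha^\vee\rangle$, since ${K_\alpha;0\brack l}$ acts as $h_\alpha$ on the Frobenius factor (by definition of $M^{[l]}$) and as $0$ on $v^0$ (because the $\lambda^1$-part of $(\lambda^0,0)$ is zero); the off-diagonal terms involve intermediate quantum binomials ${K_\alpha;c\brack r}$ with $0<r<l$, and these vanish on $v^0$ because the restricted condition $\lambda^0 \in \Lambda_l$ forces the corresponding $q_\alpha$-binomial coefficients to be zero at an $l$'th root of unity. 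Hence $v$ has weight $\lambda$.

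Next let $M := U_q \cdot v$. Applying non-divided $F_\alpha$'s via $\Delta(F_\alpha) = F_\alpha \otimes K_\alpha^{-1} + 1 \otimes F_\alpha$ and the vanishing of $F_\alpha$ on the Frobenius factor gives $F_\alpha v = v^1 \otimes F_\alpha v^0$, and iterating yields $v^1 \otimes L((\lambda^0,0)) \subseteq M$. Applying the divided power $F_\alpha^{(l)}$ and using its coproduct of the form $\sum_{k=0}^{l} v_\alpha^{-k(l-k)} F_\alpha^{(l-k)} K_\alpha^{-k} \otimes F_\alpha^{(k)}$, where the $k=0$ summand produces $f_\alpha v^1 \otimes v^0$ from the Frobenius action on $L_\setC(\lambda^1)^{[l]}$ and the intermediate terms $0<k<l$ land inside $v^1 \otimes L((\lambda^0,0)) \subseteq M$, one extracts $f_\alpha v^1 \otimes v^0 \in M$. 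Iterating in both directions produces $L_\setC(\lambda^1) \otimes v^0 \subseteq M$, and combining shows $M$ equals the whole tensor product. A direct weight count in each factor shows that the $\lambda$-weight space of the tensor product is one-dimensional, so any nonzero submodule must contain $v$; thus the tensor product is simple of highest weight $\lambda$ and therefore isomorphic to $L(\lambda)$. The main obstacle is the precise coproduct bookkeeping for ${K_\alpha;0\brack l}$ and $F_\alpha^{(l)}$, namely verifying that the mixed off-diagonal terms produce no unwanted contributions; this relies crucially on $l$ being odd and $\lambda^0 \in \Lambda_l$, and can be cleanly organized via the Hopf-algebra property of Lusztig's quantum Frobenius $\operatorname{Fr}'$ from~\cite{KL2}.
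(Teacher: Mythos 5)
Your overall strategy — exhibit $v = v^1 \otimes v^0$ as a highest weight vector of weight $\lambda$, show it generates, conclude the tensor product is simple — is exactly the Steinberg-type argument behind the reference the paper cites (Theorem 3.1 of \cite{catO}). The weight computation for $v$ and the generation step are sound: the coproduct bookkeeping for $F_\alpha^{(l)}$ is fine (in fact the intermediate terms $0<k<l$ are outright zero on the Frobenius factor, not merely ``land inside $v^1 \otimes L((\lambda^0,0))$'', since $F_\alpha^{(r)}$ kills $L_\setC(\lambda^1)^{[l]}$ whenever $l\nmid r$).

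However, the final step has a real gap. You conclude simplicity from ``the $\lambda$-weight space is one-dimensional, so any nonzero submodule must contain $v$.'' This implication is false for cyclic highest weight modules in general: a Verma module is cyclic with a one-dimensional top weight space, yet its proper submodules avoid the top weight space entirely. Having shown $M = U_q v$ you only know $M$ has a unique \emph{maximal} submodule $N$; you have not shown $N=0$, and the weight count gives no handle on $N$. The missing ingredient is the small quantum group $u_q$ (generated by $E_\alpha, F_\alpha, K_\alpha^{\pm1}$, $\alpha\in\Pi$): $u_q$ acts through the counit on the Frobenius factor, so the tensor product restricted to $u_q$ is a direct sum of $\dim L_\setC(\lambda^1)$ copies of the simple $u_q$-module $L((\lambda^0,0))$; consequently every $U_q$-submodule has the form $A'\otimes L((\lambda^0,0))$ for a $\mathfrak{g}$-submodule $A'\subseteq L_\setC(\lambda^1)$, which forces $A' = 0$ or $A' = L_\setC(\lambda^1)$ by simplicity of $L_\setC(\lambda^1)$. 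This is precisely the mechanism used again in the paper's own proof of Proposition~\ref{prop:22}, where simplicity of $\mathcal{M}[t] = (\mathcal{M}_\setC[t])^{[l]}\otimes L((\lambda^0,0))$ is derived from $L((\lambda^0,0))$ being $u_q$-simple (\cite[Section~3.2]{catO}). Replace your weight-count step by that argument and the proof is complete.
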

\begin{proof}
  The proof of Theorem~3.1 in~\cite{catO} works here in exactly the
  same way.
\end{proof}

\begin{lemma}
  \label{lemma:10}
  Let $M\in \mathcal{F}$ be an admissible module. Then $M$ has finite
  Jordan-Hölder length.
\end{lemma}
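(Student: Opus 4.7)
The plan is to bound the Jordan--Hölder length by splitting simple subquotients into infinite-dimensional and finite-dimensional classes and controlling each separately, mirroring Mathieu's argument in Lemma~3.3 of~\cite{Mathieu}. First, observe that every simple subquotient $L$ of an admissible $M$ of degree $d$ is itself admissible of degree at most $d$: the weights of $L$ lie inside those of $M$, hence in a single $\Lambda_l\times Q$-coset, and $\dim L_\mu \leq \dim M_\mu \leq d$.

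Next I would use Lemma~\ref{lemma:12} to bound the number of infinite-dimensional simple subquotients. For any (partial) composition series $0=M_0\subsetneq M_1\subsetneq\cdots\subsetneq M_n\subseteq M$ with simple quotients $L_i=M_i/M_{i-1}$, additivity of weight-space dimensions together with superadditivity of $\liminf$ for finite sums gives
\[
\delta_l(M) \;\geq\; \sum_{i=1}^n \delta_l(L_i).
\]
On the other hand $\delta_l(M)\leq d\cdot |\Lambda_l|$ follows directly from the admissibility of $M$, and by Lemma~\ref{lemma:12} each infinite-dimensional $L_i$ contributes at least a fixed $\epsilon>0$ to the sum. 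This forces the number of infinite-dimensional simple subquotients to be at most $d|\Lambda_l|/\epsilon$.

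For the finite-dimensional subquotients, note that every finite-dimensional simple module is highest weight, and so by Proposition~\ref{prop:21} has the form $L(\lambda)=L_\setC(\lambda^1)^{[l]}\tensor L((\lambda^0,0))$ with $\lambda^1\in\Lambda^+$ and $\lambda^1$ in the fixed $Q$-coset of $\wt M$. I would fix a convenient weight $\mu_0$ in that coset (compatibly chosen across the finitely many $\lambda^0\in\Lambda_l$) and argue that for all but finitely many such $\lambda$ we have $\mu_0\in\wt L(\lambda)$, using the classical fact that $\wt L_\setC(\lambda^1)$ fills out the Weyl-symmetric convex hull of $W\lambda^1$ intersected with $\lambda^1+Q$ once $\lambda^1$ is sufficiently deep in the dominant chamber. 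For the non-exceptional finite-dimensional subquotients the inequality $\sum\dim L_{\mu_0}\leq \dim M_{\mu_0}\leq d$ bounds their total multiplicity by $d$; each of the finitely many exceptional isomorphism classes is controlled by repeating the argument with an alternative weight $\mu_0'\in\wt L$ to bound its individual multiplicity by $d$ as well.

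Combining the two estimates, any composition series of $M$ has length bounded in terms of $d$, $|\Lambda_l|$, $\epsilon$, and the size of the exceptional set, proving finite Jordan--Hölder length. The main obstacle is the step for finite-dimensional subquotients, specifically justifying that only finitely many finite-dimensional simple modules in a fixed $\Lambda_l\times Q$-coset can fail to contain a prescribed weight $\mu_0$; once this finiteness of exceptional highest weights is established from the geometry of $W$-orbits of dominant weights in a single $Q$-coset together with the tensor product structure from Proposition~\ref{prop:21}, the remainder is a routine counting argument.
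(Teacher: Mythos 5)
Your overall strategy matches the paper's: split the simple subquotients into infinite-dimensional and finite-dimensional classes, use Lemma~\ref{lemma:12} and superadditivity of $\delta_l$ on short exact sequences to bound the number of infinite-dimensional factors, and count the finite-dimensional factors via weight-space dimensions. The place where you diverge — and the place you yourself flag as the obstacle — is the treatment of the finite-dimensional composition factors. The paper avoids your "exceptional set" entirely. It takes $Y=\{\mu\in\Lambda : |\langle\mu,\alpha^\vee\rangle|\leq 1 \text{ for all }\alpha\in\Pi\}$ (a finite set) and invokes Proposition~\ref{prop:21} together with the classical classification to assert that \emph{every} finite-dimensional simple $U_q$-module $L$ satisfies $L_{(\mu^0,\mu^1)}\neq 0$ for some $\mu^0\in\Lambda_l$ and $\mu^1\in Y$. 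The reason there are no exceptions is the standard fact that each $Q$-coset of $\Lambda$ contains a unique minimal dominant representative (either $0$ or a minuscule weight), which lies in $Y$ and occurs as a weight of $L_\setC(\lambda^1)$ for \emph{every} dominant integral $\lambda^1$ in that coset. This gives immediately that the number of finite-dimensional composition factors is at most $A=\sum_{\mu^0\in\Lambda_l,\mu^1\in Y}\dim M_{(\mu^0,\mu^1)}$, and combined with the infinite-dimensional count the total length is bounded by $A+\delta_l(M)/\epsilon$.

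Your misstatement of the convexity of weight diagrams is the source of the extra complication: the identity $\wt L_\setC(\lambda^1) = \operatorname{conv}(W\lambda^1)\cap(\lambda^1+Q)$ holds for \emph{all} dominant integral $\lambda^1$, not only those ``sufficiently deep in the dominant chamber''. Once this is corrected, the minimal dominant weight of the coset always lies in $\wt L_\setC(\lambda^1)$, your exceptional set is empty, and the back-up argument with alternative weights $\mu_0'$ is unnecessary. If you instead want to keep your choice of an arbitrary $\mu_0$ in the coset, your finiteness claim is still true (the exceptional dominant $\lambda^1$ are those $<\mu_0^+$ in dominance order, of which there are finitely many), but you would then still need to bound the multiplicity of each exceptional isomorphism class, which the paper's choice of $Y$ sidesteps cleanly.
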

\begin{proof}
  As $M$ is admissible, we have $\delta_l(M)<\infty$. For any exact
  sequence $0\to M_1 \to M_2 \to M_3 \to 0$, we have
  $\delta_l(M_2)\geq \delta_l(M_1)+\delta_l(M_3)$. Let $Y$ be the set
  of all $\mu\in \Lambda$ such that
  $|\left<\mu,\alpha^\vee\right>|\leq 1$ for all $\alpha\in \Pi$. By
  Proposition~\ref{prop:21} and the classification of classical simple
  finite dimensional $\mathfrak{g}$-modules any finite dimensional
  $U_q$-module $L$ has $L_{(\mu^0,\mu^1)}\neq 0$ for some $\mu^0\in
  \Lambda_l$ and some $\mu^1\in Y$. It follows like
  in~\cite[Lemma~3.3]{Mathieu} that the length of $M$ is finite and
  bounded by $A+\delta_l(M)/\epsilon$ where
  $A=\sum_{\mu^0\in\Lambda_l,\mu^1\in Y}\dim M_{(\mu^0,\mu^1)}$ and
  $\epsilon$ is the constant from Lemma~\ref{lemma:12}.
\end{proof}

\begin{lemma}
  \label{lemma:13}
  Let $M$ be an admissible module. Let $\Sigma\subset \Phi^+$ be a set
  of commuting roots and $F_\Sigma$ a corresponding Ore subset. Assume
  $-\Sigma\subset T_M$. Then for $\lambda=(\lambda^0,\lambda^1) \in
  X$:
  \begin{equation*}
    \dim (M_{F_\Sigma})_\lambda = \max_{\mu\in \setZ \Sigma}\{\dim M_{(\lambda^0,\lambda^1+\mu)}\}
  \end{equation*}
  and if $\dim M_\lambda = \max_{\mu\in \setZ \Sigma}\{\dim
  M_{(\lambda^0,\lambda^1+\mu)}\}$ then $(M_{F_\Sigma})_\lambda \iso
  M_\lambda$ as $(U_q)_0$-modules.

  In particular if $\Sigma\subset T_M$ as well then $M_{F_\Sigma}\iso
  M$ as $U_q$-modules.
\end{lemma}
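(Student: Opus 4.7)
The plan is to realize $(M_{F_\Sigma})_\lambda$ as a direct limit of weight spaces of $M$ under the injective action of the $F_\beta^{(l)}$'s, and then use the admissibility bound to stabilize this limit. First I would show that each $F_\beta^{(l)}$, $\beta\in\Sigma$, acts injectively on $M$. The hypothesis $-\Sigma\subset T_M$ gives $M^{[-\beta]}=0$. If $F_\beta^{(l)}m=0$ for some nonzero weight vector $m$, then combining the relation $F_\beta^{(s)}F_\beta^{(kl)}={kl+s \brack s}_{q_\beta}F_\beta^{(kl+s)}$ with the $q$-Lucas identity ${kl+s \brack s}_{q_\beta}=1$ for $0\leq s<l$ at a primitive odd $l$-th root of unity, one obtains inductively that $F_\beta^{(r)}m=0$ for all $r\geq l$. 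Hence $\left<F_\beta^{(r)}\right> m$ is finite dimensional, so $m\in M^{[-\beta]}=0$: a contradiction. By Lemma~\ref{lemma:35} the $F_\beta^{(l)}$'s commute, so every element of $F_\Sigma$ acts injectively on $M$ and in particular $M\hookrightarrow M_{F_\Sigma}$.

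Next, the Ore property lets me put any element of $M_{F_\Sigma}$ over a common denominator. For $\nu=\sum_\beta r_\beta\beta\in\setZ_{\geq 0}\Sigma$ set $F^\nu:=\prod_\beta r_\beta!F_\beta^{(r_\beta l)}\in F_\Sigma$, an element of weight $-l\nu$. Then
$$(M_{F_\Sigma})_\lambda \;=\; \bigcup_{\nu\in\setZ_{\geq 0}\Sigma}(F^{\nu})^{-1}\otimes M_{(\lambda^0,\lambda^1-\nu)}$$
with transition maps given by the (injective) multiplications $F^{\nu'-\nu}\colon M_{(\lambda^0,\lambda^1-\nu)}\hookrightarrow M_{(\lambda^0,\lambda^1-\nu')}$. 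Admissibility makes the weight-space dimensions of $M$ uniformly bounded, so this filtered system stabilizes at the (attained) supremum, giving $\dim(M_{F_\Sigma})_\lambda=\max_{\nu\in\setZ_{\geq 0}\Sigma}\dim M_{(\lambda^0,\lambda^1-\nu)}$. To identify this with $\max_{\mu\in\setZ\Sigma}\dim M_{(\lambda^0,\lambda^1+\mu)}$, write any $\mu\in\setZ\Sigma$ as $\mu_+-\mu_-$ with $\mu_\pm\in\setZ_{\geq 0}\Sigma$; injectivity of $F^{\mu_+}$ then gives $\dim M_{(\lambda^0,\lambda^1+\mu)}\leq\dim M_{(\lambda^0,\lambda^1-\mu_-)}$, so both maxima agree.

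When $\dim M_\lambda$ already equals this maximum, the filtered limit is attained at $\nu=0$, so the canonical $(U_q)_0$-equivariant inclusion $M_\lambda\hookrightarrow(M_{F_\Sigma})_\lambda$ is an isomorphism by a dimension count. For the final assertion, if in addition $\Sigma\subset T_M$ then $M^{[\beta]}=0$ for each $\beta\in\Sigma$, and the symmetric argument (swapping $E$'s for $F$'s) shows that each $E_\beta^{(l)}$ also acts injectively on $M$; this yields monotonicity of weight-space dimensions along $+\Sigma$ as well, forcing them to be constant on every $\setZ\Sigma$-orbit. The dimension criterion is then met at every weight, so $M\to M_{F_\Sigma}$ is a bijection on each weight space and hence a $U_q$-module isomorphism. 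The main technical obstacle is the injectivity of $F_\beta^{(l)}$ in step one: it rests on the $q$-Lucas behaviour at odd roots of unity, where the oddness of $l$ is essential so that $q_\beta$ remains a primitive $l$-th root of unity for every $\beta\in\Phi^+$.
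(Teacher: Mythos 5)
Your proof is correct and follows essentially the same strategy as the paper's: bound $(M_{F_\Sigma})_\lambda$ by clearing denominators with an element of $F_\Sigma$, stabilize using admissibility, and compare with the canonical map $M\to M_{F_\Sigma}$ restricted to weight spaces. You usefully make explicit the torsion-freeness of $M$ over $F_\Sigma$ (injectivity of each $F_\beta^{(l)}$ forced by $-\Sigma\subset T_M$ and the divided-power/$q$-binomial identities at an odd $l$-th root of unity), a step the paper takes for granted when asserting that $M\hookrightarrow M_{F_\Sigma}$ is injective.
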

Compare to Lemma~4.4(ii) in~\cite{Mathieu}.
\begin{proof}
  We have $\Sigma=\{\beta_1,\dots,\beta_r\}$ for some
  $\beta_1,\dots,\beta_r\in \Phi^+$. Let
  $F_{\beta_1},\dots,F_{\beta_r}$ be corresponding $q$-commuting root
  vectors. Let $\lambda\in X$ and set
  \begin{equation*}
    d=\max_{\mu\in \setZ
      \Sigma}\{\dim M_{(\lambda^0,\lambda^1+\mu)}\}.
  \end{equation*}
  Let $V$ be a finite dimensional subspace of
  $(M_{F_\Sigma})_\lambda$. Then there exists a homogenous element
  $s\in F_\Sigma$ such that $sV\subset M$. Let $\nu\in \setZ\Sigma$ be
  the degree of $s$. So $sV \subset M_{q^\nu\lambda}$ hence $\dim sV
  \leq d$. Since $s$ acts injectively on $M_{F_\Sigma}$ we have $\dim
  V \leq d$. Now the first claim follows because $F_\beta^{(\pm l)}$
  acts injectively on $M_{F_\Sigma}$ for all $\beta\in \Sigma$.
  
  We have an injective $U_q$-homomorphism from $M$ to $M_{F_\Sigma}$
  sending $m\in M$ to $1\tensor m\in M_{F_\Sigma}$ that restricts to a
  $(U_q)_0$-homomorphism from $M_\lambda$ to
  $(M_{F_\Sigma})_\lambda$. If $\dim M_\lambda = d$ then this is
  surjective as well. So it is an isomorphism. The last claim follows
  because $\pm \Sigma \subset T_M$ implies $\dim M_\lambda = \dim
  M_{q^\mu\lambda}$ for any $\mu\in \setZ l \Sigma$; so $M_\lambda
  \iso (M_{F_\Sigma})_\lambda$ for any $\lambda\in X$. Since $M$ is a
  weight module this implies that $M \iso M_{F_\Sigma}$ as
  $U_q$-modules.
\end{proof}

\begin{lemma}
  \label{lemma:21}
  Let $L\in\mathcal{F}$ be a simple $U_q(\mathfrak{sl}_2)$
  module. Then the weight spaces of $L$ are all $1$-dimensional.
\end{lemma}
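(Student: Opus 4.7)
The plan is to reduce the statement to the commutativity of the weight-zero subalgebra $C_q:=(U_q(\mathfrak{sl}_2))_0$. Once this is shown, the lemma follows formally: by Lemma~\ref{lemma:1} applied with $\mathfrak{l}=\mathfrak{sl}_2$, $L_\lambda$ is a simple $C_q$-module, and it is finite dimensional because $L\in\mathcal{F}$. A finite dimensional simple module over a commutative $\setC$-algebra is isomorphic to $C_q/\mathfrak{m}$ for some maximal ideal $\mathfrak{m}$, and as a finite field extension of the algebraically closed field $\setC$ this quotient is $\setC$ itself. Hence $\dim L_\lambda=1$.

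To prove that $C_q$ is commutative, first use the triangular decomposition of $U_q(\mathfrak{sl}_2)$ to write $C_q=\bigoplus_{n\geq 0}F_\alpha^{(n)}U_q^0 E_\alpha^{(n)}$. The Cartan-shift relations $K_\alpha E_\alpha^{(n)}=q^{2n}E_\alpha^{(n)}K_\alpha$ and $K_\alpha F_\alpha^{(n)}=q^{-2n}F_\alpha^{(n)}K_\alpha$, together with the analogous identities ${K_\alpha;c\brack r}E_\alpha^{(n)}=E_\alpha^{(n)}{K_\alpha;c+2n\brack r}$ and ${K_\alpha;c\brack r}F_\alpha^{(n)}=F_\alpha^{(n)}{K_\alpha;c-2n\brack r}$, imply that for any $h\in U_q^0$ there is $h'\in U_q^0$ with $hE_\alpha^{(n)}=E_\alpha^{(n)}h'$, and that the two opposite shifts cancel, giving $hF_\alpha^{(n)}E_\alpha^{(n)}=F_\alpha^{(n)}E_\alpha^{(n)}h$. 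Therefore each $X_n:=F_\alpha^{(n)}E_\alpha^{(n)}$ commutes with every element of $U_q^0$ and $C_q=\sum_{n\geq 0}X_n U_q^0$.

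It then suffices to show that the $X_n$ commute pairwise. For this I would compute $X_aX_b=F_\alpha^{(a)}E_\alpha^{(a)}F_\alpha^{(b)}E_\alpha^{(b)}$ by inserting Kac's formula
\begin{equation*}
E_\alpha^{(a)}F_\alpha^{(b)}=\sum_j F_\alpha^{(b-j)}{K_\alpha;2j-a-b\brack j}E_\alpha^{(a-j)}
\end{equation*}
and then using $F_\alpha^{(a)}F_\alpha^{(b-j)}={a+b-j\brack a}_q F_\alpha^{(a+b-j)}$ together with $E_\alpha^{(a-j)}E_\alpha^{(b)}={a+b-j\brack a-j}_q E_\alpha^{(a+b-j)}$. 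The resulting coefficient of $F_\alpha^{(a+b-j)}{K_\alpha;2j-a-b\brack j}E_\alpha^{(a+b-j)}$ is ${a+b-j\brack a}_q{a+b-j\brack a-j}_q$, which by the symmetry ${n\brack k}_q={n\brack n-k}_q$ equals ${a+b-j\brack b-j}_q{a+b-j\brack b}_q$; the latter is exactly the coefficient produced for $X_bX_a$ by exchanging $a\leftrightarrow b$. Hence $X_aX_b=X_bX_a$.

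The main substantive point is the binomial cancellation in the last step, which reduces the commutativity of $C_q$ to the standard symmetry of $q$-binomials; the remainder is bookkeeping with the triangular decomposition and the Cartan-shift identities, all of which are valid in $U_q$ even at an odd root of unity.
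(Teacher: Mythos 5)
Your proof is correct, and it takes a genuinely different — and more structural — route than the paper's. You establish that the weight-zero subalgebra $C_q = (U_q(\mathfrak{sl}_2))_0$ is commutative, from which the lemma follows immediately (a finite-dimensional simple module over a commutative $\setC$-algebra is $1$-dimensional). The paper instead argues module-theoretically: it fixes the weight space $L_\lambda$, picks a joint eigenvector $v_0$ of the Casimir $C = EF + \frac{q\inv K + qK\inv}{(q-q\inv)^2}$ and of $F^{(l)}E^{(l)}$ inside it, and proves by a two-case induction (on $n = i + rl$ with $0\le i<l$) that $E^{(n)}F^{(n)}v_0 \in \setC v_0$ for all $n$; since $L_\lambda$ is a cyclic simple $(U_q)_0$-module generated by $v_0$ (Lemma~\ref{lemma:1}), this forces $\dim L_\lambda = 1$. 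Your computation is right: writing $X_a X_b$ via Kac's formula gives
\begin{equation*}
X_a X_b = \sum_{j\ge 0} {a+b-j \brack a}{a+b-j \brack a-j}\, F^{(a+b-j)}\,{K_\alpha;2j-a-b \brack j}\, E^{(a+b-j)},
\end{equation*}
and the symmetry ${n\brack k}={n\brack n-k}$ makes this visibly symmetric under $a\leftrightarrow b$. Two small points worth noting: (i) the commutativity is really an identity in $U_A$ (indeed already in $U_v$ over $\setQ(v)$, since $(U_v)_0=\setQ(v)[K^{\pm 1},FE]$ there), so it specializes to every $q\in\setC^*$ and the restriction to odd $l$ plays no role in this lemma; (ii) your reduction from "finite-dimensional simple over a commutative algebra" to "one-dimensional" needs $\setC$ algebraically closed, which you correctly invoke. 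The paper's proof is more hands-on and avoids asserting the ring-theoretic fact; yours isolates the real reason the lemma holds, namely the commutativity of $C_q$, which is arguably the cleaner statement to have in hand.
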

\begin{proof}
  For $\mathfrak{sl}_2$ there is only one simple root $\alpha$ and we
  will denote the root vectors $E_{\alpha}$ and $F_{\alpha}$ by $E$
  and $F$ respectively. Similarly $K^{\pm 1} = K_\alpha^{\pm
    1}$. Consider the Casimir element $C= EF + \frac{q\inv K + q
    K\inv}{(q-q\inv)^2}$. Let $\lambda\in\wt L$ and let $c\in\setC$ be
  an eigenvalue of $C$ on $L_\lambda$. Consider the eigenspace $L(c) =
  \{v\in L_\lambda | C v = cv \}$. Then $F^{(l)}E^{(l)}$ acts on this
  space since $C$ commutes with all elements from
  $U_q(\mathfrak{sl}_2)$. Choose an eigenvector $v_0\in L(c)$ for
  $F^{(l)}E^{(l)}$. We will show by induction that
  $E^{(n)}F^{(n)}v_0\in \setC v_0$ for all $n\in\setN$. The induction
  start $n=0$ is obivous. Let $n\in\setN$ and assume $n=i+rl$ with
  $0\leq i< l$. If $i\neq 0$ then $[n] \neq 0$ and we have:
  \begin{align*}
    E^{(n)}F^{(n)}v_0 =& \frac{1}{[n]^2} E^{(n-1)} E F F^{(n-1)}v_0
    \\
    = &\frac{1}{[n]^2} E^{(n-1)}\left(C-\frac{q\inv K + q
        K\inv}{(q-q\inv)^2}\right)F^{(n-1)}v_0
    \\
    = & \frac{1}{[n]^2}
    E^{(n-1)}F^{(n-1)}\left(c-\frac{q^{\left(\lambda^0|\alpha
          \right)+1-2n} + q^{2n-1-\left(\lambda^0|\alpha
          \right)}}{(q-q\inv)^2}\right)v_0
  \end{align*}
  where $\alpha$ is the simple root. So the claim follows by
  induction. In the case that $i=0$ we have
  \begin{align*}
    E^{(n)}F^{(n)}v_0 =& \frac{1}{r} E^{(rl-l)}E^{(l)}F^{(rl)}v_0
    \\
    =& \frac{1}{r} E^{(rl-l)}\sum_{t=0}^l F^{(rl-t)}{K; 2t-rl-l\brack
      t} E^{(l-t)} v_0
    \\
    =& \frac{1}{r} E^{(rl-l)}\sum_{t=0}^l F^{(rl-l)}F^{(l-t)}
    E^{(l-t)}{K; l-rl\brack t} v_0
    \\
    =& \frac{1}{r} E^{(rl-l)}F^{(rl-l)}\left(F^{(l)}E^{(l)} +
      \sum_{t=1}^{l-1} F^{(l-t)} E^{(l-t)}{\left(\lambda^0|\alpha
        \right)+ l-rl\brack t} + \left<\lambda^1,\alpha^\vee\right>
      +1-r \right)v_0
  \end{align*}
  Since $v_0$ is an eigenvector for $F^{(l)}E^{(l)}$ we have only left
  to consider the action of $F^{(i)}E^{(i)}$ for $1\leq i< l$. But we
  can show like above that $F^{(i)}E^{(i)}v_0 \in \setC v_0$ by using
  that $C=FE + \frac{q K + K\inv q\inv}{(q-q\inv)^2}$.

  Now since $L$ is simple we must have that $L_\lambda$ is a simple
  $(U_q)_0$-module (Lemma~\ref{lemma:1}). So $L_\lambda$ is generated
  by $v_0$. Since $E^{(n)}F^{(n)}v_0\in \setC v_0$ for all $n\in\setC$
  we get $\dim L_\lambda = 1$.
\end{proof}

\begin{lemma}
  \label{lemma:24}
  Let $L$ be a simple infinite dimensional admissible module. Let
  $\beta\in (T_L^s)^+$. Then there exists a $b\in \setC$ such that
  $\psi_{F_\beta,b}.L_{F_\beta}$ contains a simple admissible
  $U_q$-submodule $L'$ with $T_{L'}\subset T_L$ and $\beta\not \in
  T_{L'}$.
\end{lemma}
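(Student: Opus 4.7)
The plan is to find a complex parameter $b_0$ at which the twisted localization $\mathcal{L}_b:=\psi_{F_\beta,b}.L_{F_\beta}$ acquires a $\beta$-finite submodule, and then extract $L'$ from there. Since $\pm\beta\in T_L$, Lemma~\ref{lemma:13} identifies $L\iso L_{F_\beta}$ as $U_q$-modules, and we set $\mathcal{L}_b:=\psi_{F_\beta,b}.L$ for $b\in\setC$. Each $\mathcal{L}_b$ is admissible of the same degree as $L$; by Lemma~\ref{lemma:10} its Jordan--Hölder length is bounded by some $N$ independent of $b$, and by Lemma~\ref{lemma:31}, $\mathcal{L}_b\iso L$ whenever $b\in\setZ$.

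The first move is a reduction: once we find some $b_0\in\setC$ with $\mathcal{L}_{b_0}^{[\beta]}\neq 0$ (a submodule by the root-of-unity analog of Proposition~\ref{prop:2}), any simple $U_q$-submodule $L'\subset\mathcal{L}_{b_0}^{[\beta]}$ -- which exists because $\mathcal{L}_{b_0}$ has finite length -- is automatically admissible, and simplicity combined with $(L')^{[\beta]}=L'\neq 0$ forces $\beta\notin T_{L'}$. The inclusion $T_{L'}\subset T_L$ follows by contrapositive: for any $\gamma\in\Phi\setminus\{\pm\beta\}$ with $\gamma\notin T_L$, $L$ is $\gamma$-finite, hence so is the twist $\mathcal{L}_{b_0}$, hence so is $L'$, so $\gamma\notin T_{L'}$; and the remaining case $\gamma=-\beta\in T_L$ is automatically allowed.

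To locate $b_0$ I would fix a weight $\lambda=(\lambda^0,\lambda^1)$ of $L$ realizing the maximal multiplicity $d$ along the $\beta$-string, a basis $v_1,\dots,v_d$ of $L_\lambda$, and consider for each $r\in\setN$ the matrix $M_r(b)$ representing $E_\beta^{(rl)}$ on $\{\psi_{F_\beta,b}.v_i\}$ with target in the weight space $(\mathcal{L}_b)_{(\lambda^0,\lambda^1-b\beta+rl\beta)}$. Using the explicit formulas of Lemma~\ref{lemma:17} -- in particular the genuinely quadratic-in-$b$ contribution $bF_\beta^{(-l)}(\left<\lambda^1,\beta^\vee\right>+1-b)$ appearing in $\psi_{F_\beta,b}(E_\beta^{(l)})$ -- together with the commutation formulas of Section~\ref{sec:u_a-calculations}, the entries of $M_r(b)$ are polynomials in $b$ whose degrees grow with $r$. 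Because $\mathcal{L}_b\iso L$ is $\beta$-free for every $b\in\setZ$, $M_r(b)$ is injective at each integer $b$, so any $d\times d$ minor of $M_r(b)$ that is nonzero at $b=0$ is a nonzero polynomial in $b$. For $r$ large enough this polynomial is non-constant, and so vanishes at some $b_0\in\setC$, producing a nonzero weight vector $v\in\mathcal{L}_{b_0}$ with $E_\beta^{(rl)}v=0$. The identity $E_\beta^{(rl+i)}={rl+i\brack i}_\beta\inv E_\beta^{(i)}E_\beta^{(rl)}$, together with ${rl+i\brack i}_\beta\neq 0$ for all $i\geq 0$ (via Lucas' theorem for quantum binomials at odd roots of unity, as used in the proof of Proposition~\ref{prop:13}), gives $E_\beta^{(s)}v=0$ for all $s\geq rl$, so $v\in\mathcal{L}_{b_0}^{[\beta]}\setminus\{0\}$ and the reduction applies.

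The main obstacle is establishing that the relevant determinantal polynomial really grows in degree with $r$, rather than being stuck at some low-degree polynomial in $b$: this requires tracing how the quadratic-in-$b$ term in $\psi_{F_\beta,b}(E_\beta^{(l)})$ compounds after $r$-fold composition when commuted past the intermediate factors, using Propositions~\ref{prop:16}--\ref{prop:17} and Lemma~\ref{lemma:22}. The admissibility hypothesis is essential both to make the matrices $M_r(b)$ of bounded size (so that $\det$ is defined) and, through Lemma~\ref{lemma:10}, to guarantee a simple submodule of $\mathcal{L}_{b_0}^{[\beta]}$ once the latter is known to be nonzero.
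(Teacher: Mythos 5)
Your overall skeleton matches the paper (identify $L\iso L_{F_\beta}$ via Lemma~\ref{lemma:13}, locate $b_0$ with $(\psi_{F_\beta,b_0}.L)^{[\beta]}\neq 0$, extract a simple submodule via Lemma~\ref{lemma:10}, and prove $T_{L'}\subset T_L$ by a weight/finiteness argument), and the $T_{L'}\subset T_L$ step is essentially the contrapositive of what the paper does and is fine. The problem is that the core step --- actually producing $b_0$ --- is not established, and you flag this yourself.

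Your determinantal argument has a genuine gap precisely where you point: you need the minor of $M_r(b)$ to be a non-constant polynomial in $b$ for some $r$, and nothing in the proposal forces this. A priori the polynomial could be a nonzero constant for every $r$, and the approach would produce no root $b_0$ at all. You cannot get this growth for free from the quadratic term in $\psi_{F_\beta,b}(E_\beta^{(l)})$: after composition and commutation the $b$-dependence could collapse, and controlling this via Propositions~\ref{prop:16}--\ref{prop:17} and Lemma~\ref{lemma:22} is exactly the hard estimate you would need but did not supply.

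The paper sidesteps this entirely with a reduction you did not make. It restricts to the $U_{q_\beta}(\mathfrak{sl}_2)$-subalgebra $D_\beta$ generated by $E_\beta^{(n)},K_\beta^{\pm 1},F_\beta^{(n)}$ and, using admissibility plus Lemma~\ref{lemma:10}, finds a \emph{simple} $D_\beta$-submodule; inside it a vector $v$ with $E_\beta v=0$ exists because $E_\beta^l=0$. Lemma~\ref{lemma:21} (simple $U_q(\mathfrak{sl}_2)$-modules have one-dimensional weight spaces) then guarantees $F_\beta^{(l)}E_\beta^{(l)}v=cv$ is a \emph{scalar}, and the explicit formula in Lemma~\ref{lemma:17} turns $F_\beta^{(l)}E_\beta^{(l)}\psi_{F_\beta,b}.v = \bigl(c + b(\langle\lambda^1,\beta^\vee\rangle+1-b)\bigr)\,\psi_{F_\beta,b}.v$ into a manifestly degree-$2$ polynomial in $b$ (the leading coefficient is $-1$), which therefore has a root $b_0$ over $\setC$. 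Injectivity of $F_\beta^{(l)}$ then gives $E_\beta^{(l)}\psi_{F_\beta,b_0}.v=0$, hence a nonzero element of $(\psi_{F_\beta,b_0}.L)^{[\beta]}$. In short: instead of a determinant whose $b$-degree you cannot control, reduce to a one-dimensional weight space in a simple $\mathfrak{sl}_2$-submodule, where the computation closes cleanly.
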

\begin{proof}
  By Lemma~\ref{lemma:13} $L\iso L_{F_\beta}$ as $U_q$-modules so we
  will write $L$ instead of $L_{F_\beta}$ when taking twist. The
  $U_{q(F_\beta)}$-module structure on $L$ coming from the
  isomorphism. Let $D_\beta$ be the subalgebra of $U_q$ generated by
  $E_{\beta}^{(n)}$, $K_{\beta}^{\pm 1}$, $F_{\beta}^{(n)}$, $n\in
  \setN$. Then $D_{\beta}$ is isomorphic to the algebra
  $U_{q_\beta}(\mathfrak{sl}_2)$. Let $v\in L$ and consider the
  $D_\beta$-module $D_\beta v$. Since $L$ is admissible so is $D_\beta
  v$. So $D_\beta v$ has a simple $D_\beta$-submodule $V$ by
  Lemma~\ref{lemma:10}.

  Let $v\in V$ be a weight vector such that $E_\beta v=0$ (such a $v$
  always exists since $E_\beta^l=0$). Assume $\lambda$ is the weight
  of $v$. By Lemma~\ref{lemma:21} $F_{\beta}^{(l)}E_{\beta}^{(l)}v = c
  v$ for some $c\in \setC$.
  
  Then by (the proof of) Lemma~\ref{lemma:17} we get
  \begin{align*}
    F_{\beta}^{(l)}E_{\beta}^{(l)}& \psi_{F_\beta,b}.v
    \\
    =& \psi_{F_\beta,b}.\left( c + b \sum_{t=1}^{l-1} F_\beta^{(l-t)}
      E_\beta^{(l-t)} {\left<\lambda_0,\beta^\vee\right> \brack
        t}_\beta + b (\left<\lambda_1,\beta^\vee\right> +1- b)\right)v
    \\
    =& \psi_{F_\beta,b}.\left( c + b
      (\left<\lambda_1,\beta^\vee\right> +1- b)\right)v.
  \end{align*}
  
  Since $\setC$ is algebraically closed the polynomial in $b$, $ c + b
  (\left<\lambda_1,\beta^\vee\right> +1- b)$ has a zero. Assume from
  now on that $b\in \setC$ is chosen such that $c +
  b(\left<\lambda_1,\beta^\vee\right> +1- b)=0$.

  Thus $\psi_{F_{\beta} ,b}.L$ contains an element
  $v'=\psi_{F_\beta,b}.v$ such that $F_\beta^{(l)} E_\beta^{(l)} v'=0$
  and since $F_\beta^{(l)}$ acts injectively on $\psi_{F_{\beta}
    ,b}.L$, we have $E_\beta^{(l)} v'=0$. Set $V=\{m\in
  \psi_{F_{\beta} ,b}.L| E_\beta^{(N)} m = 0, N>>0\}=(\psi_{F_{\beta}
    ,b}.L)^{[\beta]}$. By Proposition~\ref{prop:2} this is a
  $U_q$-submodule of the $U_q$-module $\psi_{F_{\beta} ,b}.L$. It is
  nonzero since $v'\in V$. By Lemma~\ref{lemma:10} $V$ has a simple
  $U_q$-submodule $L'$.

  We have left to show that $T_{L'} \subset T_L$. Assume $\gamma \in
  T_{L'}$. Then $q^{l \setN \gamma}\wt L' \subset \wt L'$ by
  Proposition~\ref{prop:13}. But since $\wt L' \subset \{(\lambda^0,\lambda^1-b\beta)| (\lambda^0,\lambda^1)\in \wt L\}$ we get for some $\nu \in \wt L$, $ \{(\nu^0,\nu^1-b\beta + r\gamma)|r\in \setN\} \subset \{(\lambda^0,\lambda^1-b\beta)| (\lambda^0,\lambda^1)\in \wt L\}$ or equivalently $q^{l \setN
    \gamma} \nu \subset \wt L$. But this shows that $\gamma \not \in
  F_L$ and since $L$ is a simple $U_q$-module this implies that
  $\gamma \in T_L$. By construction we have $\beta\not \in T_{L'}$.
\end{proof}

\begin{lemma}
  \label{lemma:6}
  Let $L\in \mathcal{F}$ be a simple module. Then there exists a $w\in
  W$ such that $w(F_L\backslash F_L^s) \subset \Phi^+$ and
  $w(T_L\backslash T_L^s) \subset \Phi^-$.
\end{lemma}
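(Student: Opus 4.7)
The plan is to reduce the two stated conditions on $w$ to a single one, and then invoke the parabolic structure of $P_L = F_L \cup T_L^s$ that has already been set up in the paper.

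First I would unpack the set differences using simplicity. Since $L$ is simple, Proposition~\ref{prop:12} gives the disjoint decomposition $\Phi = F_L \cup T_L$, so $\beta \in F_L \setminus F_L^s$ is equivalent to $\beta \in F_L$ and $-\beta \notin F_L$, i.e.\ $\beta \in F_L$ and $-\beta \in T_L$. Replacing $\beta$ by $-\beta$ in this characterisation shows $F_L \setminus F_L^s = -(T_L \setminus T_L^s)$. For any $w \in W$ we then have $w(F_L \setminus F_L^s) \subset \Phi^+$ if and only if $w(T_L \setminus T_L^s) \subset \Phi^-$, so it suffices to produce a $w \in W$ with $w(F_L \setminus F_L^s) \subset \Phi^+$.

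Next I would exploit the parabolic subset $P_L = F_L \cup T_L^s$ discussed right after Proposition~\ref{prop:5}. By Fernando's Lemma~4.16 cited there (together with Proposition~\ref{prop:12}), $P_L$ is a closed subset of $\Phi$ with $P_L \cup (-P_L) = \Phi$. A direct check using $F_L \cap T_L = \emptyset$ yields the Levi/nilpotent splitting $P_L \cap (-P_L) = F_L^s \cup T_L^s$ and $P_L \setminus (-P_L) = F_L \setminus F_L^s$. The same paragraph of the paper records the standard fact about parabolic subsets of a root system: some $W$-translate of $P_L$ contains $\Phi^+$. So pick $w \in W$ with $\Phi^+ \subset w(P_L)$, equivalently $w^{-1}\Phi^+ \subset P_L$, and therefore also $w^{-1}\Phi^- \subset -P_L$.

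To finish, take any $\beta \in F_L \setminus F_L^s$. By the nilpotent/Levi decomposition $\beta \notin -P_L$, hence $\beta \notin w^{-1}\Phi^-$, so $w(\beta) \in \Phi^+$. This gives $w(F_L \setminus F_L^s) \subset \Phi^+$, and by the first paragraph also $w(T_L \setminus T_L^s) \subset \Phi^-$, as required. The main obstacle is essentially none: the only nontrivial input is the combinatorial identity $F_L \setminus F_L^s = -(T_L \setminus T_L^s)$, which rests on the disjointness $\Phi = F_L \sqcup T_L$ granted by simplicity, together with the $W$-conjugacy statement for parabolic subsets already used earlier.
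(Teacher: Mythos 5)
Your argument is correct and reaches the conclusion via a slightly different route than the paper. The paper simply cites Fernando's Lemma~4.16 a second time to extract directly a basis $B$ of $\Phi$ with $F_L\setminus F_L^s \subset \Phi_B^+$ and $T_L\setminus T_L^s\subset\Phi_B^-$, and then uses $W$-conjugacy of bases. You instead reuse only the facts already established earlier in the paper — that $P_L=F_L\cup T_L^s$ is a parabolic subset and that some $w$ satisfies $\Phi^+\subset w(P_L)$ — and supplement them with two short combinatorial checks: that $F_L\setminus F_L^s=-(T_L\setminus T_L^s)$ (so the two conditions in the lemma are equivalent) and that $P_L\setminus(-P_L)=F_L\setminus F_L^s$. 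Both checks rest correctly on the disjointness $\Phi=F_L\sqcup T_L$ from Proposition~\ref{prop:12}. The payoff of your version is that it is more self-contained and makes explicit the elementary structure of $P_L$; the cost is a bit more bookkeeping, whereas the paper offloads the entire combinatorics to a single citation. There is no gap.
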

\begin{proof}
  Lemma~4.16 in~\cite{Fernando} tells us that there exists a basis $B$
  of the root system $\Phi$ such that the antisymmetrical part,
  $F_L\backslash F_L^s$, of $F_L$ is contained in the positive roots
  $\Phi_B^+$ corresponding to the basis $B$ and the antisymmetrical
  part, $T_L\backslash T_L^s$, of $T_L$ is contained in the negative
  roots $\Phi_B^-$ corresponding to the basis. Since all bases of a
  root system are $W$-conjugate the claim follows.
\end{proof}

\begin{lemma}
  \label{lemma:26}
  Let $L$ be an infinite dimensional admissible simple module. Let
  $w\in W$ be such that $w(F_L\backslash F_L^s) \subset \Phi^+$. Let
  $\alpha\in \Pi$ be such that $-\alpha\in w(T_L)$ (such an $\alpha$
  always exists). Then there exists a commuting set of roots $\Sigma$
  with $\alpha\in \Sigma$ which is a basis of $Q$ such that $ -\Sigma
  \subset w(T_L)$.
\end{lemma}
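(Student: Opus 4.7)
The plan is to reduce the statement to a direct application of Lemma~\ref{lemma:7}(2). Set $T := w(T_L)$ and $F^s := w(F_L^s)$; the hypothesis $w(F_L\setminus F_L^s)\subset\Phi^+$ is equivalent to $\Phi^+\subset w(P_L)$, so by the structural discussion preceding Theorem~\ref{thm:classification} we may write $w(P_L)\cap(-w(P_L)) = \langle\Pi'\rangle$ with $\Pi' = \Pi_{F_L^s}'\sqcup\Pi_{T_L^s}'$ disconnected in the Dynkin diagram, and in particular $F^s = \Phi_{F'}$ where $F' := \Pi_{F_L^s}'$.

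The key identity is $-T\cap\Phi^+ = \Phi^+\setminus\Phi_{F'}^+$. For $\gamma\in\Phi^+$ one has $\gamma\in -T$ iff $-\gamma\notin w(F_L)$; since $w(F_L)\setminus F^s\subset\Phi^+$, this is equivalent to $-\gamma\notin F^s\cap\Phi^-$, i.e.\ to $\gamma\notin F^s\cap\Phi^+ = \Phi_{F'}^+$. Granting this, note that $\alpha\notin F'$: otherwise $\alpha\in F^s\subset w(F_L)$ contradicts $-\alpha\in T$ together with disjointness of $w(T_L)$ and $w(F_L)$. Thus $\Sigma':=\{\alpha\}$ trivially satisfies the hypotheses of Lemma~\ref{lemma:7}(2) with the lemma's $J$ taken to be $\{\alpha\}$ and the lemma's $F$ taken to be $F'$ (we have $\{\alpha\}\subset\Phi_{\{\alpha\}}^+\setminus\Phi_{\{\alpha\}\cap F'}^+ = \{\alpha\}$, $F'\subsetneq\Pi$ since $\alpha\notin F'$, and a singleton is trivially a commuting basis of $Q_{\{\alpha\}}$). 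The lemma then yields a commuting set $\Sigma$ with $\alpha\in\Sigma\subset\Phi^+\setminus\Phi_{F'}^+ = -T\cap\Phi^+$ which is a basis of $Q$; taking negatives gives $-\Sigma\subset T = w(T_L)$, as required.

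For the existence of $\alpha$ in the parenthetical remark: by Lemma~\ref{lemma:11} the cone $C(L)$ generates $Q$, so $T_L$ (and hence $T$) is nonempty, and the simple roots $\alpha\in\Pi$ with $-\alpha\in T$ are exactly those in $\Pi_{T_L^s}'\cup(\Pi\setminus\Pi')$; emptiness of this set would force $w(F_L) = F^s = \Phi$, contradicting $T\neq\emptyset$. I do not foresee any substantive obstacle: all the real content is in the identity $-T\cap\Phi^+ = \Phi^+\setminus\Phi_{F'}^+$, after which Lemma~\ref{lemma:7}(2) does all the combinatorial work.
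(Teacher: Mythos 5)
Your proof is correct and follows essentially the same route as the paper: set $L'={^w}L$, take $F' = F_{L'}^s\cap\Pi$, and invoke Lemma~\ref{lemma:7}(2) with $J=\Sigma'=\{\alpha\}$ and $F=F'$ to produce $\Sigma\subset\Phi^+\setminus\Phi_{F'}^+$. The only substantive difference is how the inclusion $\Phi^+\setminus\Phi_{F'}^+ \subset -T_{L'}$ is justified. The paper proves it from scratch via an induction on the height of $\beta\in F_{L'}^s\cap\Phi^+$, showing $(F_{L'}^s)^-\subset\Phi_{F'}^-$; you instead cite the identity $F_{L'}^s=\Phi_{F'}$ from the discussion preceding Theorem~\ref{thm:classification} and deduce $-T_{L'}\cap\Phi^+=\Phi^+\setminus\Phi_{F'}^+$ in one line. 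That citation is legitimate, but note that the claim there is stated for the simple $U_q(\mathfrak{l})$-module $N=({^w}L)^{\mathfrak{u}}$ rather than for ${^w}L$ itself, so one has to observe that $F_N^s=F_{L'}^s$ and $T_N^s=T_{L'}^s$ (which holds, since $N\subset L'$ and both $F_N,T_N$ are defined by restriction to $\Phi^{\mathfrak{l}}$) before it transfers; the paper's height induction is really an inline re-proof of the same combinatorial fact without that indirection. Both resolutions of the dichotomy $-\alpha'\in T_{L'}$ versus $\pm\alpha'\in F_{L'}^s$ boil down to closedness of $F_{L'}$ and $T_{L'}$, so nothing new is gained or lost; your version is slightly shorter at the cost of depending on a less formally stated earlier passage.
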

\begin{proof}
  Set $L' = {^w}L$. Since $w(T_L) = T_{{^{w}}L}=T_{L'}$ we will just
  work with $L'$. Then $F_{L'}\backslash F_{L'}^s \subset \Phi^+$.
  
  Note that it is always possible to choose a simple root $\alpha\in
  -T_{L'}$ since $L'$ is infinite dimensional: If this was not
  possible we would have $\Phi^- \subset F_{L'}$. But since
  $F_{L'}\backslash F_{L'}^s \subset \Phi^+$ this implies $F_{L'} =
  \Phi$.

  Set $F = F_{L'}^s \cap \Pi$. Since $L'$ is infinite dimensional
  $F\neq \Pi$.  By Lemma~\ref{lemma:7} $2.$ applied with
  $J=\{\alpha\}=\Sigma'$ there exists a commuting set of roots
  $\Sigma$ that is a basis of $Q$ such that $\Sigma \subset \Phi^+
  \backslash \Phi^+_F$. Since $F_{L'}\backslash F_{L'}^s \subset
  \Phi^+$ we have $\Phi^- = T_{L'}^- \cup (F_{L'}^s)^-$.  To show
  $-\Sigma \subset T_{L'}$ we show $\left( \Phi^- \backslash \Phi^-_F
  \right)\cap F_{L'}^s=\emptyset$ or equivalently $(F_{L'}^s)^-
  \subset \Phi_F^-$.

  Assume $\beta\in F_{L'}^s\cap \Phi^+$, $\beta = \sum_{\alpha\in \Pi}
  a_\alpha \alpha$, $a_\alpha\in\setN$. The height of $\beta$ is the
  sum $\sum_{\alpha\in \Pi} a_\alpha$. We will show by induction on
  the height of $\beta$ that $-\beta\in \Phi_F^-$. If the height of
  $\beta$ is $1$ then $\beta$ is a simple root and so $\beta\in
  F$. Clearly $-\beta\in \Phi_F^-$ in this case. Assume the height of
  $\beta$ is greater than $1$. Let $\alpha'\in \Pi$ be a simple root
  such that $\beta-\alpha'$ is a root. There are two possibilities:
  $-\alpha' \in T_{L'}$ or $\pm \alpha' \in F_{L'}^s$.
  
  In the first case where $-\alpha'\in T_{L'}$ we must have $-\beta +
  \alpha' \in F_{L'}^s$ since if $-\beta + \alpha' \in T_L$ then
  $-\beta = (-\beta + \alpha') - \alpha' \in T_{L'}$ because $T_{L'}$
  is closed (Proposition~\ref{prop:14}). So $\beta-\alpha' \in
  F_{L'}^s$ and $\beta \in F_{L'}^s$. Since $F_{L'}$ is closed
  (Proposition~\ref{prop:5}) we get $-\alpha' = (\beta-\alpha') -
  \beta \in F_L$ which is a contradiction. So the first case
  ($-\alpha' \in T_{L'}$) is impossible.
  
  In the second case since $F_{L'}$ is closed we get $\pm (\beta -
  \alpha') \in F_{L'}$ i.e. $\beta- \alpha' \in F_{L'}^s$. By the
  induction $-(\beta-\alpha') \in \Phi_F^-$ and since $-\beta =
  -(\beta-\alpha') - \alpha'$ we are done.
\end{proof}

\section{Coherent families}
\label{sec:coherent-families}
As in the above section $q$ is a complex primitive $l$'th root of
unity with $l$ odd in this section. For $\lambda\in X$ we write
$\lambda = (\lambda^0,\lambda^1)$ like above.

\begin{lemma}
  \label{lemma:32}
  Let $M,N\in \mathcal{F}$ be semisimple $U_q$-modules. If $\Tr^M =
  \Tr^N$ then $M\iso N$.
\end{lemma}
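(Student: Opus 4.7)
The plan is to reduce the equality of $M$ and $N$ to a weight-by-weight comparison as $(U_q)_0$-modules, invoking Lemmas~\ref{lemma:1}--\ref{lemma:2} to lift back to $U_q$-modules.

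First I would observe that since $M, N \in \mathcal{F}$ are finitely generated and semisimple, they are in fact \emph{finite} direct sums of simple submodules: write $M \iso \bigoplus_L L^{m_L}$ and $N \iso \bigoplus_L L^{n_L}$ over isomorphism classes of simples $L \in \mathcal{F}$, with only finitely many $m_L, n_L$ nonzero (because any finite generating set has components in only finitely many summands, and each summand is itself a submodule). The goal becomes showing $m_L = n_L$ for every simple $L$.

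Next, I would fix a weight $\lambda \in X$ and study the finite-dimensional semisimple $(U_q)_0$-module $M_\lambda = \bigoplus_L (L_\lambda)^{m_L}$. By Lemma~\ref{lemma:1}, each nonzero $L_\lambda$ is simple as a $(U_q)_0$-module, and by Lemma~\ref{lemma:2}, non-isomorphic simple $U_q$-modules $L, L'$ with $L_\lambda, L'_\lambda \neq 0$ give non-isomorphic $(U_q)_0$-modules $L_\lambda, L'_\lambda$. So the isotypic decomposition of $M_\lambda$ as a $(U_q)_0$-module records exactly the multiplicities $m_L$ for those simples with $L_\lambda \neq 0$, and similarly $N_\lambda$ records the $n_L$.

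Finally, the hypothesis $\Tr^M = \Tr^N$ restricted to the $\lambda$-weight space gives equality of the $(U_q)_0$-characters of $M_\lambda$ and $N_\lambda$. By the classical linear independence of characters of pairwise non-isomorphic simple modules over any algebra---a direct consequence of Jacobson density, since each finite-dimensional simple $(U_q)_0$-module has the algebra surject onto its full endomorphism ring---this forces $M_\lambda \iso N_\lambda$ as $(U_q)_0$-modules, and hence $m_L = n_L$ for every $L$ with $L_\lambda \neq 0$. Letting $\lambda$ range over $X$ then covers every simple summand of either $M$ or $N$, so $m_L = n_L$ universally and $M \iso N$. The only substantive input is the linear independence of characters at a fixed weight; everything else is formal bookkeeping from Lemmas~\ref{lemma:1}~and~\ref{lemma:2} together with the semisimplicity hypothesis.
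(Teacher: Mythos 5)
Your proof is correct and follows essentially the same route as the paper's: fix a weight $\lambda$, use Lemma~\ref{lemma:1} and Lemma~\ref{lemma:2} (packaged in the paper as Theorem~\ref{thm:Lemire}) to identify $U_q$-multiplicities with $(U_q)_0$-multiplicities in the weight space $M_\lambda$, and then invoke the fact that traces determine semisimple finite-dimensional modules. The only cosmetic difference is that the paper first passes to the finite-dimensional quotient algebra $(U_q)_0/\ker(\text{action on }M_\lambda)$ in order to cite~\cite[Theorem~7.19]{Lam}, whereas you give the linear-independence-of-characters argument directly via Jacobson density; these are the same underlying fact.
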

\begin{proof}
  Theorem~7.19 in~\cite{Lam} states that this is true for modules over
  a \emph{finite dimensional} algebra. So we will reduce to the case
  of modules over a finite dimensional algebra. Let $L$ be a
  composition factor of $M$ and $\lambda$ a weight of $L$. Then the
  multiplicity of the $U_q$-composition factor $L$ in $M$ is the
  multiplicity of the $(U_q)_0$-composition factor $L_\lambda$ in
  $M_\lambda$ by Theorem~\ref{thm:Lemire}. $M_\lambda$ is a finite
  dimensional $(U_q)_0$-module. Let $I$ be the kernel of the
  homomorphism $(U_q)_0 \to \End_{\setC}(M_\lambda )$ given by the
  action of $(U_q)_0$. Then $(U_q)_0 / I$ is a finite dimensional
  $\setC$ algebra and $M_\lambda$ is a module over $(U_q)_0/
  I$. Furthermore since $\Tr^M(\lambda,u)=0$ for all $u\in I$ the
  trace of an element $u\in (U_q)_0$ is the same as the trace of $u+I
  \in (U_q)_0/I$ on $M_\lambda$ as a $(U_q)_0/I$-module. So if $\Tr^M
  = \Tr^N$ the multiplicity of $L_\lambda$ in $M_\lambda$ and
  $N_\lambda$ are the same and hence the multiplicity of $L$ in $M$ is
  the same as in $N$.
\end{proof}

\begin{defn}
  \begin{equation*}
    T^* = \mathfrak{h}^* / Q.
  \end{equation*}
\end{defn}

By Corollary~\ref{cor:5} it makes sense to write $\psi_{F_\beta,t}.M$
for $t\in T^*$ up to isomorphism for a $U_{q(F_\Sigma)}$-module $M$.

\begin{defn}
  \label{sec:root-unity-stuff}
  A (quantized) coherent family is a $U_q$-module $\mathcal{M}$ such
  that for all $\mu \in \Lambda_l$:
  \begin{itemize}
  \item $\dim \mathcal{M}_{(\mu,\nu)} = \dim \mathcal{M}_{(\mu , \nu')
    }$ for all $\nu,\nu' \in \mathfrak{h}^*$.
  \item For all $u\in (U_q)_0$, the map
    $\mathfrak{h}^* \ni \nu \mapsto \Tr u|_{\mathcal{M}_{(\mu ,\nu)}}$
    is polynomial.
  \end{itemize}
  
  For a coherent family $\mathcal{M}$ and $t\in T^*$ define
  \begin{align*}
    \mathcal{M}[t] = \bigoplus_{\mu^0\in \Lambda_l, \mu^1 \in t}
    \mathcal{M}_{(\mu^0,\mu^1)}.
  \end{align*}
  
  $\mathcal{M}$ is called irreducible if there exists a $t\in T^*$
  such that $\mathcal{M}[t]$ is a simple $U_q$-module.
\end{defn}

\begin{lemma}
  \label{lemma:8}
  Let $\mathcal{M}$ be a coherent family. Let $\mu \in
  \Lambda_l$. Then the set $\Omega$ of all weights $\nu\in
  \mathfrak{h}^*$ such that the $(U_q)_0$-module
  $\mathcal{M}_{(\mu,\nu)}$ is simple is a Zariski open subset of
  $\mathfrak{h}^*$. 

  If $\mathcal{M}$ is irreducible then $\Omega\neq \emptyset$ if
  $\mathcal{M}_{(\mu,\nu)}\neq 0$ for any $\nu\in \mathfrak{h}^*$
  (equivalently for all $\nu\in \mathfrak{h}^*$).
\end{lemma}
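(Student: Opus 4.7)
The plan is to reduce each claim to something concrete: for part 1, a polynomial non-vanishing condition in $\nu$; for part 2, a direct application of Lemma~\ref{lemma:1} to the simple $U_q$-module $\mathcal{M}[t]$. Fix $\mu\in\Lambda_l$ and set $d:=\dim\mathcal{M}_{(\mu,\nu)}$, which by the first axiom of a coherent family is independent of $\nu$. For each $\nu$ we obtain a representation $\rho_\nu:(U_q)_0\to\End_\setC(V_\nu)$ with $V_\nu:=\mathcal{M}_{(\mu,\nu)}$, and the second axiom of a coherent family guarantees that for every $u\in(U_q)_0$ the function $\nu\mapsto\Tr(u|_{V_\nu})$ is polynomial in $\nu$.

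For part 1, I would use the fact that, over $\setC$, a finite-dimensional module $V_\nu$ is simple as a $(U_q)_0$-module iff $\rho_\nu$ is surjective onto $\End_\setC(V_\nu)$ (Jacobson density together with Schur's lemma, using that $\setC$ is algebraically closed). Surjectivity is equivalent to the existence of $d^2$ elements $u_1,\dots,u_{d^2}\in(U_q)_0$ whose images are linearly independent in $\End_\setC(V_\nu)$. Since the bilinear form $(x,y)\mapsto\Tr(xy)$ on $\End_\setC(V_\nu)$ is non-degenerate, linear independence of these images is equivalent to the non-vanishing of the Gram determinant
\begin{equation*}
  D_{u_1,\dots,u_{d^2}}(\nu):=\det\bigl(\Tr(u_iu_j|_{V_\nu})\bigr)_{i,j=1}^{d^2}.
\end{equation*}
Since each $u_iu_j$ lies in $(U_q)_0$, the function $D_{u_1,\dots,u_{d^2}}$ is polynomial in $\nu$, so the locus where it is nonzero is Zariski open. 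Therefore $\Omega$, being the union of all such open sets as $(u_1,\dots,u_{d^2})$ ranges over $((U_q)_0)^{d^2}$, is Zariski open.

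For part 2, assume $\mathcal{M}$ is irreducible and pick $t\in T^*$ so that $\mathcal{M}[t]$ is a simple $U_q$-module. If $\mathcal{M}_{(\mu,\nu_0)}\neq 0$ for some $\nu_0$, then by the first axiom of a coherent family $\mathcal{M}_{(\mu,\nu)}\neq 0$ for every $\nu\in\mathfrak{h}^*$; in particular $(\mu,\nu)$ is a weight of $\mathcal{M}[t]$ for any $\nu\in t$. Applying Lemma~\ref{lemma:1} to the simple $U_q(\mathfrak{g})$-module $\mathcal{M}[t]$ (i.e.\ with $\mathfrak{l}=\mathfrak{g}$) shows that $\mathcal{M}[t]_{(\mu,\nu)}=\mathcal{M}_{(\mu,\nu)}$ is a simple $(U_q)_0$-module, so any such $\nu$ lies in $\Omega$; this also gives the equivalence of ``some $\nu$'' and ``all $\nu$'' in the non-vanishing hypothesis.

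The main technical point is the first paragraph of part 1: one must use the Jacobson density characterisation to translate simplicity into a polynomial condition, and one must verify that the trace-form Gram determinant correctly encodes linear independence. Part 2 is then essentially immediate from Lemma~\ref{lemma:1} together with the equidimensionality built into the definition of a coherent family.
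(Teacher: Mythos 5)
Your proof is correct and follows essentially the same route as the paper: part 2 applies the ``simple module $\Rightarrow$ simple weight space'' fact (you cite Lemma~\ref{lemma:1}, the paper cites Theorem~\ref{thm:Lemire}, which contains it), and part 1 reformulates simplicity of $\mathcal{M}_{(\mu,\nu)}$ as maximality of the rank of the trace-form $B_\nu(u,v)=\Tr(uv|_{\mathcal{M}_{(\mu,\nu)}})$, then expresses that as non-vanishing of a determinant which is polynomial in $\nu$ by the second coherent-family axiom; your Gram-determinant phrasing is precisely the paper's ``rank $d^2$'' condition made explicit.
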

\begin{proof}
  If $\mathcal{M}_{(\mu,\nu)}=0$ for all $\nu\in \mathfrak{h}^*$ then
  $\Omega=\emptyset$. Assume $\dim \mathcal{M}_{(\mu,\nu)} = d>0$ for
  all $\nu \in \mathfrak{h}^*$.  If $\mathcal{M}$ is irreducible
  there exists $t\in T^*$ such that $\mathcal{M}[t]$ is a simple
  $U_q$-module. Then for $\nu\in t$, $\mathcal{M}_{(\mu,\nu)}=
  \mathcal{M}[t]_{(\mu,\nu)}$ is a simple $U_q$-module by
  Theorem~\ref{thm:Lemire}. So in this case $\Omega\neq \emptyset$.

  Now the proof goes exactly like in~\cite[Lemma~4.7]{Mathieu}: The
  $(U_q)_0$-module $\mathcal{M}_{(\mu,\nu)}$ is simple if and only if
  the bilinear map $B_{\nu}:(U_q)_0\times (U_q)_0 \ni (u,v) \mapsto
  \Tr (uv|_{\mathcal{M}_{(\mu,\nu)}})$ has maximal rank $d^2$. For any
  finite dimensional subspace $E\subset (U_q)_0$ the set $\Omega_E$ of
  all $\nu$ such that $B_{\nu}|_{E}$ has rank $d^2$ is open. Therefore
  $\Omega = \cup_{E} \Omega_E$ is open.
\end{proof}

\begin{defn}
  Let $L$ be an admissible $U_q$-module and let $\mu\in \Lambda_l$.
  \begin{equation*}
    \Supp (L,\mu) = \{ \nu \in \mathfrak{h}^* | \dim L_{(\mu, \nu)}>0 \}
  \end{equation*}
  and
  \begin{equation*}
    \Suppess(L,\mu) = \{ \nu \in \Supp(L,\mu) | \dim L_{(\mu, \nu)} \text{ is maximal in } \{\dim L_{(\mu, \nu')}| \nu'\in\mathfrak{h}^*\} \}.
  \end{equation*}
\end{defn}



\begin{defn}
  Let $M$ be an admissible module. Define $M^{ss}$ to be the unique
  (up to isomorphism) semisimple module with the same composition
  factors as $M$.

  Let $V$ be a $U_q$-module such that $V=\bigoplus_{i\in I} V_i$ for
  some index set $I$ and some admissible $U_q$-modules $V_i$. Then
  $V^{ss} = \bigoplus_{i\in I} V_i^{ss}$.
\end{defn}

  

  

\begin{prop}
  \label{prop:20}
  Let $L$ be an infinite dimensional admissible simple
  $U_q$-module. Then there exists a unique (up to isomorphism)
  semisimple irreducible coherent family $\mathcal{EXT}(L)$ containing
  $L$.
\end{prop}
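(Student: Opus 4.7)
The plan is to construct $\mathcal{EXT}(L)$ directly via the twist functors $\psi_{F_\Sigma,\nu}$ introduced in the previous section, then check the two defining coherent-family axioms, and finally deduce uniqueness from the trace criterion of Lemma~\ref{lemma:32}. By Lemma~\ref{lemma:6} I may twist $L$ by a Weyl group element so as to assume $F_L \setminus F_L^s \subset \Phi^+$ and $T_L \setminus T_L^s \subset \Phi^-$ (untwisting at the end). Lemma~\ref{lemma:26} then supplies a commuting set of roots $\Sigma \subset \Phi^+$ which is a basis of $Q$ with $-\Sigma \subset T_L$, and Corollary~\ref{cor:7} turns $\Sigma$ into an Ore subset $F_\Sigma$. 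For each coset $t \in T^*$ fix a representative $\nu_t \in \mathfrak{h}^*$ and set
\[
\mathcal{EXT}(L) := \bigoplus_{t \in T^*} \bigl(\psi_{F_\Sigma, \nu_t}.L_{F_\Sigma}\bigr)^{ss};
\]
by Corollary~\ref{cor:5} each summand is independent of the choice of $\nu_t$ up to isomorphism.

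The coherent-family axioms are then verified as follows. Constancy of $\dim \mathcal{EXT}(L)_{(\mu^0,\nu)}$ in $\nu \in \mathfrak{h}^*$ for fixed $\mu^0 \in \Lambda_l$ comes from Lemma~\ref{lemma:13} inside a single coset (the dimension equals the maximal weight-space dimension of $L$ along that coset) together with Lemma~\ref{lemma:31} across cosets, since the twists preserve weight-space dimensions. Polynomiality of $\nu \mapsto \Tr(u|_{\mathcal{EXT}(L)_{(\mu^0,\nu)}})$ for $u \in (U_q)_0$ is inherited directly from the polynomial dependence of $\psi_{F_\beta,b}(u)$ on $b$ proved in Lemma~\ref{lemma:17}, applied coordinate by coordinate in the $\Sigma$-basis. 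The inclusion $L \subset \mathcal{EXT}(L)$ is immediate: $L$ is a composition factor of $L_{F_\Sigma}$, hence a direct summand of the semisimple module $\mathcal{EXT}(L)[t_0]$, where $t_0 \in T^*$ is the coset of $\wt L$.

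For irreducibility I have to exhibit a $t^* \in T^*$ with $\mathcal{EXT}(L)[t^*]$ simple. The plan is to show that for $\nu_{t^*}$ in a Zariski-dense subset of $\mathfrak{h}^*$ the $(U_q)_0$-weight spaces of $\psi_{F_\Sigma,\nu_{t^*}}.L_{F_\Sigma}$ are simple, invoking Lemma~\ref{lemma:8}; then Theorem~\ref{thm:Lemire} upgrades simplicity of a single weight space to simplicity of the whole $U_q$-module. Uniqueness then falls out of Lemma~\ref{lemma:32}: given any other semisimple irreducible coherent family $\mathcal{M} \supset L$, the trace functions of $\mathcal{M}$ and of $\mathcal{EXT}(L)$ are each polynomial in the $\mathfrak{h}^*$-coordinate, and they must agree on the Zariski-dense locus where both families have simple weight spaces (since Theorem~\ref{thm:Lemire} identifies the corresponding simple $U_q$-modules with the same twist of $L$); hence they agree on all of $\mathfrak{h}^*$ and $\mathcal{M} \iso \mathcal{EXT}(L)$. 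The irreducibility step is the main obstacle I foresee; in particular one has to rule out the possibility that the generic simple $(U_q)_0$-weight space of $\psi_{F_\Sigma,\nu}.L_{F_\Sigma}$ extends to a $U_q$-module strictly larger than a twist of $L$, which amounts to exploiting the Ore-localization structure and the explicit formulas of Lemma~\ref{lemma:17} carefully.
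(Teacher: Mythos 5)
Your construction of $\mathcal{EXT}(L)$ and the uniqueness argument via Lemma~\ref{lemma:32} match the paper's approach; up to that point the proposal is essentially correct (though the uniqueness step should be spelled out by showing the traces of $\mathcal{M}$ and $\mathcal{EXT}(L)$ agree on $\Suppess(L,\mu)$, which is Zariski dense since $\lambda_\mu - \setN\Sigma \subset \Suppess(L,\mu)$ and $\Sigma$ is a basis of $Q$). The problem is the irreducibility step, which is the real content of the proposition and where your sketch goes wrong.

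You propose to invoke Lemma~\ref{lemma:8} to find $\nu$ with $\mathcal{EXT}(L)_{(\mu,\nu)}$ a simple $(U_q)_0$-module, and then to "upgrade simplicity of a single weight space to simplicity of the whole $U_q$-module" via Theorem~\ref{thm:Lemire}. That inference is false: Theorem~\ref{thm:Lemire} says that a simple $U_q(\mathfrak{l})$-module with $V_\lambda\neq 0$ has $V_\lambda$ simple, and gives a bijection with simple $C_q$-modules; it does \emph{not} say that a weight module with one simple weight space must itself be simple. Indeed $\mathcal{EXT}(L)[t]$ is semisimple, and it could well be a direct sum $L_1\oplus L_2$ where $(L_2)_{(\mu,\nu)}=0$ — then $\mathcal{EXT}(L)[t]_{(\mu,\nu)}=(L_1)_{(\mu,\nu)}$ is simple while the module is not. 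This is exactly what could happen, because $L_{F_\Sigma}$ is typically strictly larger than $L$ (you only arranged $-\Sigma\subset T_L$, not $\Sigma\subset T_L$, so the localization changes the module), and its other composition factors can vanish at the weights you control.

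The paper's irreducibility argument takes a different and more careful route. Using the explicit polynomial formulas of Lemma~\ref{lemma:17}, one forms the product of determinants
\[
p_\mu(\nu) := \prod_{\beta\in\Sigma} \det F_\beta^{(l)}E_\beta^{(l)}\big|_{\psi_{F_\Sigma,\nu}.(({^w}L)_{F_\Sigma})_{(\mu,\lambda_\mu)}},
\]
which is a nonzero polynomial in $\nu$, and then uses Mathieu's Lemma~5.2~i) to find a single $\nu$ such that $p_\mu(\nu+\mu^1)\neq 0$ for every $\mu\in Y$ and every $\mu^1\in Q$. For such $\nu$, every $F_\beta^{(l)}E_\beta^{(l)}$, $\beta\in\Sigma$, acts bijectively on the whole block $\psi_{F_\Sigma,\nu}.({^w}L)_{F_\Sigma}$; since the $F_\beta^{(l)}$ already act injectively, so do the $E_\beta^{(l)}$. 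Any simple $U_q$-submodule $L_1$ then has $\pm\Sigma\subset T_{L_1}$, and because $\Sigma$ is a basis of $Q$, Proposition~\ref{prop:13} gives $T_{L_1}=\Phi$, i.e.\ $L_1$ is torsion free. Now Lemma~\ref{lemma:13} with $\pm\Sigma\subset T_{L_1}$ gives $(L_1)_{F_\Sigma}\iso L_1$, so $\mathcal{EXT}(L_1)[\lambda'+Q]=L_1$ is simple, showing $\mathcal{EXT}(L_1)$ is irreducible; finally one compares $\mathcal{EXT}(L)$ and $\mathcal{EXT}(L_1)$ to conclude they are isomorphic. This torsion-freeness mechanism — not a single simple weight space — is what forces a whole coset of $\mathcal{EXT}(L)$ to be simple, and it is the essential idea missing from your sketch.
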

\begin{proof}
  Let $w\in W$ be such that $w(F_L\backslash F_L^s)\subset \Phi^+$ and
  $\Sigma$ a set of commuting roots that is a basis of $Q$ such that
  $-\Sigma\subset w(T_L)$ (Exists by Lemma~\ref{lemma:6} and
  Lemma~\ref{lemma:26}) with corresponding Ore subset $F_\Sigma$. Set
  \begin{equation*}
    \mathcal{EXT}(L):= \left(\bigoplus_{t \in T^*} {^{\bar{w}}}\left(
        \psi_{F_\Sigma,t} . ({^w}L)_{F_\Sigma} \right)
    \right)^{ss}.
  \end{equation*}
  For each $t\in T^*$ choose a representative $\nu_t\in t$. As a
  $(U_{q(F_\Sigma)})_0$-module
  \begin{equation*}
    \mathcal{EXT}(L)= \bigoplus_{t \in T^*}
    {^{\bar{w}}}\left( \psi_{F_\Sigma,\nu_t} . ({^w}L)_{F_\Sigma} \right)^{ss}.
  \end{equation*}
  Define $Y := \{\mu\in \Lambda_l | \Supp({^w}L,\mu)\neq
  \emptyset\}$. For each $\mu\in Y$ let $\lambda_\mu \in
  \Suppess({^w}L,\mu)$. By Corollary~\ref{cor:5}
  \begin{equation*}
    ({^w}L)_{F_\Sigma} \iso \bigoplus_{\mu \in Y} \bigoplus_{\nu\in Q} \psi_{F_\Sigma,\nu}.(({^w}L)_{F_\Sigma})_{(\mu,\lambda_\mu)}
  \end{equation*}
  as $(U_{q(F_\Sigma)})_0$-modules.
  
  So we have the following $(U_{q(F_\Sigma)})_0$-module isomorphisms:
  \begin{align*}
    \mathcal{EXT}(L)\iso& \bigoplus_{\mu\in Y} \bigoplus_{t \in T^*}
    \bigoplus_{\nu \in Q} {^{\bar{w}}}\left( \psi_{F_\Sigma,\nu_t+\nu}
      . (({^w}L)_{F_\Sigma})_{(\mu,\lambda_\mu)} \right)^{ss}
    \\
    \iso& \bigoplus_{\mu\in Y} \bigoplus_{\nu \in \mathfrak{h}^* }
    {^{\bar{w}}}\left( \psi_{F_\Sigma, \nu }
      . (({^w}L)_{F_\Sigma})_{(\mu,\lambda_\mu)} \right)^{ss}.
  \end{align*}
  
  Let $u\in (U_q)_0$ and $\mu\in Y$. Then we see from the above and
  Lemma~\ref{lemma:13} that
  \begin{equation*}
    \Tr u|_{\mathcal{EXT}(L)_{(\mu,\nu)}} = \Tr \psi_{F_\Sigma,\nu-\lambda_\mu}^{(\mu,\lambda_\mu)}(T_w\inv (u))|_{ ({^w} L)_{(\mu,\lambda_\mu)}}.
  \end{equation*}
  By Lemma~\ref{lemma:17} this is polynomial in $\nu-\lambda_\mu$
  hence also polynomial in $\nu$.  We know that this polynomial is
  determined in all $\nu$ such that $\nu-\lambda_\mu \in
  \Suppess(L,\mu)$. $\Suppess(L,\mu)$ is Zariski dense in
  $\mathfrak{h}^*$ because $\lambda_\mu -\setN \Sigma \subset
  \Suppess(L,\mu)$ and $\Sigma$ is a basis of $Q$. So $\Tr$ is
  determined on all of $\mathcal{EXT}(L)$ by $L$. For any
  $(\mu,\nu)\in X$ we have
  \begin{align*}
    \dim \mathcal{EXT}(L)_{(\mu,\lambda_\mu+\nu)}=&
    \dim\left(\psi_{F_\Sigma,\nu}.(({^w}L)_{F_\Sigma})_{(\mu,\lambda_\mu)}\right)^{ss}
    \\
    =& \dim (({^w}L)_{F_\Sigma})_{(\mu,\lambda_\mu)}
  \end{align*}
  so $\mathcal{EXT}(L)$ is a coherent family.
  
  Assume $\mathcal{M}$ is a semisimple irreducible coherent family
  containing $L$.  Let $\mu\in Y$. By Lemma~\ref{lemma:8} the set
  $\Omega_1$ of $\nu\in\mathfrak{h}^*$ such that
  $\mathcal{EXT}(L)_{(\mu,\nu)}$ is simple and the set $\Omega_2$ of
  $\nu\in\mathfrak{h}^*$ such that $\mathcal{M}_{(\mu,\nu)}$ is simple
  are non-empty open subsets of $\mathfrak{h}^*$ ($\Omega_1\neq
  \emptyset$ because $\mathcal{EXT}(L)_{(\mu,\nu)}= L_{(\mu,\nu)}$ for
  $\nu\in \Suppess(L,\mu)$). So their intersection $\Omega_1\cap
  \Omega_2$ is open and non-empty (since any Zariski open set of
  $\mathfrak{h}^*$ is Zariski dense in $\mathfrak{h}^*$). Since
  $\Suppess(L,\mu)$ is Zariski dense we get that there exists a
  $\nu\in \Omega_1\cap \Omega_2 \cap \Suppess(L,\mu)$ such that
  $\mathcal{M}_{(\mu,\nu)}$ and $\mathcal{EXT}(L)_{(\mu,\nu)}$ are
  simple. Since $L_{(\mu,\nu)}\subset \mathcal{M}_{(\mu,\nu)}$ and
  $L_{(\mu,\nu)}\subset \mathcal{EXT}(L)_{(\mu,\nu)}$ we get
  $\mathcal{M}_{(\mu,\nu)} \iso L_{(\mu,\nu)} \iso
  \mathcal{EXT}(L)_{(\mu,\nu)}$. This is true for any $(\mu,\nu)$ such
  that $\nu\in \Suppess(L,\mu)$. Let $u\in (U_q)_0$ and $\mu\in
  Y$. Then we see that $\Tr u|_{\mathcal{EXT}(L)_{(\mu,\nu)}}=\Tr
  u|_{L_{(\mu,\nu)}}=\Tr u|_{\mathcal{M}_{(\mu,\nu)}}$ for any $\nu\in
  \Suppess(L,\mu)$. Since $\Suppess(L,\mu)$ is Zariski dense this
  implies $\Tr u|_{\mathcal{EXT}(L)_{(\mu,\nu)}}=\Tr
  u|_{\mathcal{M}_{(\mu,\nu)}}$ for all $\nu\in \mathfrak{h}^*$. So by
  Lemma~\ref{lemma:32} $\mathcal{EXT}(L)_{(\mu,\nu)}\iso
  \mathcal{M}_{(\mu,\nu)}$ as $(U_q)_0$-modules for any $(\mu,\nu)\in
  \wt \mathcal{EXT}(L)$.

  Then by Theorem~\ref{thm:Lemire} we get that $\mathcal{M} \iso
  \mathcal{EXT}(L) \oplus \mathcal{N}$ for some coherent family
  $\mathcal{N}$ with the property that $\mathcal{N}_{(\mu,\nu)}=0$ for
  any $(\mu,\nu)\in X$ such that $\Supp(L,\mu)\neq \emptyset$. Since
  $\mathcal{M}$ is irreducible there exists a $t\in T^*$ such that the
  $U_q$-module $\mathcal{M}[t]$ is simple. We have $\mathcal{M}[t]
  \iso \mathcal{EXT}(L)[t] \oplus \mathcal{N}[t]$. Since
  $\mathcal{M}[t]$ is simple and $\mathcal{EXT}(L)[t]\neq 0$ we get
  that $\mathcal{N}[t]=0$. Since $\mathcal{N}$ is a coherent family
  this implies that $\mathcal{N}=0$. So $\mathcal{M} \iso
  \mathcal{EXT}(L)$.

  So we have left to show that $\mathcal{EXT}(L)$ is irreducible. Let
  $F_{\beta_1},\dots,F_{\beta_n}$ be the root vectors corresponding to
  $\Sigma=\{\beta_1,\dots,\beta_n\}$ and
  $E_{\beta_1},\dots,E_{\beta_n}$ the corresponding $E$-root
  vectors. Let $\mu \in Y$. As above we choose a $\lambda_\mu\in
  \Suppess({^w}L)$. The elements $F_{\beta_i}^{(l)}E_{\beta_i}^{(l)}$,
  $i=1,\dots,n$ act on
  $\psi_{F_\Sigma,\nu}.(({^w}L)_{F_\Sigma})_{(\mu,\lambda_\mu)}$ by
  $\sum_{j=1}^s p_{i,j}^\mu (\nu) u_{i,j}$ for some $u_{i,j}^\mu \in
  U_{q(F_\Sigma)}$ and some polynomials $p_{i,j}^\mu:\mathfrak{h}^*\to
  \setC$ so
  \begin{align*}
    p_\mu := \prod_{i=1}^n \det F_{\beta_i}^{(l)}E_{\beta_i}^{(l)}
    |_{\psi_{F_\Sigma,\nu}.(({^w}L)_{F_\Sigma})_{(\mu,\lambda_\mu)}}
  \end{align*}
  is a nonzero polynomial in $\nu$ by (the proof of)
  Lemma~\ref{lemma:17}. Set $p = \prod_{\mu\in Y} p_\mu$. Let $\Omega$
  be the set of non-zero points for
  $p$. By~\cite[Lemma~5.2~i)]{Mathieu} the set $T(\Omega):=
  \bigcap_{\mu\in Q} (\mu+\Omega)$ is non-empty. So there exists a
  $\nu\in \mathfrak{h}^*$ such that $p(\nu+\mu^1)\neq 0$ for any
  $\mu^1\in Q$. For such a $\nu$ we see that
  $F_{\beta_i}^{(l)}E_{\beta_i}^{(l)}$ act bijectively on
  \begin{align*}
    \bigoplus_{\mu \in Y}\bigoplus_{\mu^1\in Q}
    \psi_{F_\Sigma,\nu}.(({^w}L)_{F_\Sigma})_{(\mu,\lambda_\mu+\mu^1)}
    =& \psi_{F_{\Sigma},\nu}.({^w}L)_{F_\Sigma}.
  \end{align*}
  Since $F_{\beta_i}^{(l)}$ act injectively on
  $\psi_{F_{\Sigma},\nu}.({^w}L)_{F_\Sigma}$ this implies that
  $E_{\beta_i}^{(l)}$ act injectively on
  $\psi_{F_{\Sigma},\nu}.({^w}L)_{F_\Sigma}$. 
  Let $L_1\subset \psi_{F_{\Sigma},\nu}.({^w}L)_{F_\Sigma}$ be a
  simple $U_q$-submodule of
  $\psi_{F_{\Sigma},\nu}.({^w}L)_{F_\Sigma}$. By the above we have
  $\pm \Sigma \subset T_{L_1}$. So by Proposition~\ref{prop:13} we get
  $T_{L_1} = \Phi$. Define $\mathcal{EXT}(L_1) = \left(\bigoplus_{t
      \in T^*} \left( \psi_{F_\Sigma,t} . (L_1)_{F_\Sigma} \right)
  \right)^{ss}$. Then as above this is a coherent family. Let
  $\lambda'\in \wt L_1$. Then
  $\mathcal{EXT}[\lambda'+Q]=(L_1)_{F_\Sigma}=L_1$ by
  Lemma~\ref{lemma:13} so $\mathcal{EXT}(L_1)$ is an irreducible
  coherent family.

  Let $\mu \in \Lambda_l$ be such that $\Supp(L_1,\mu)\neq
  \emptyset$. $\Suppess(L_1,\mu)$ is Zariski dense in $\mathfrak{h}^*$
  so $\Suppess(L_1,\mu)\cap \Omega_1\neq \emptyset$. Let $\nu'\in
  \Omega_1 \cap \Suppess(L_1,\mu)$. Then $(L_1)_{(\mu,\nu')}\iso
  \left(
    \psi_{F_\Sigma,\nu}.({^w}L)_{F_\Sigma}\right)_{(\mu,\nu')}$. Then
  as above (with $\mathcal{M}=\mathcal{EXT}(L)$ and $L$ replaced by
  $L_1$) we get $\mathcal{EXT}(L) \iso \mathcal{EXT}(L_1) \oplus
  \mathcal{N}$ for some semisimple coherent family $\mathcal{N}$ with
  $\mathcal{N}_{(\mu,\nu)}=0$ for any $(\mu,\nu)\in X$ such that
  $\Supp(L_1,\mu)\neq \emptyset$. Since $\mathcal{EXT}(L)$ contains
  $L$ we get that $L = M' \oplus M''$ for some $U_q$-modules $M'
  \subset \mathcal{EXT}(L_1)$ and $M''\subset \mathcal{N}$. Since $L$
  is simple and since there exists a $\mu \in \Lambda_l$ such that
  $\Supp(L,\mu)\neq \emptyset$ and $\Supp(L_1,\mu)\neq \emptyset$ we
  must have $M''=0$ and $L=M'$. But then we have proved that the
  irreducible coherent family $\mathcal{EXT}(L_1)$ contains $L$. Hence
  $\mathcal{EXT}(L)\iso \mathcal{EXT}(L_1)$ by the above and
  $\mathcal{EXT}(L)$ is irreducible.
\end{proof}

\begin{thm}
  \label{thm:contains_highest_weight}
  Let $L$ be an admissible infinite dimensional simple module. Then
  there exists a $w\in W$ and a $\lambda\in X$ such that
  ${^w}\mathcal{EXT}(L)$ contains an infinite dimensional simple
  highest weight module $L(\lambda)$ and ${^w}\mathcal{EXT}(L)\iso
  \mathcal{EXT}(L(\lambda))$.
\end{thm}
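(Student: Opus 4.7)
The plan is to apply Lemma~\ref{lemma:24} repeatedly to produce a simple submodule of $\mathcal{EXT}(L)$ with empty symmetric torsion-free part, and then to finish with a single Weyl twist.

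I would set $L_0:=L$. Inductively, as long as $(T_{L_i}^s)^+\neq\emptyset$, pick any $\beta\in(T_{L_i}^s)^+$ and apply Lemma~\ref{lemma:24} to obtain $b\in\setC$ and an admissible simple submodule $L_{i+1}\subset\psi_{F_\beta,b}.(L_i)_{F_\beta}$ with $T_{L_{i+1}}\subset T_{L_i}$ and $\beta\notin T_{L_{i+1}}$. The module $\psi_{F_\beta,b}.(L_i)_{F_\beta}$ appears, after semisimplification, as a direct summand in the explicit description of $\mathcal{EXT}(L_i)$ given in Proposition~\ref{prop:20}, so $L_{i+1}\in\mathcal{EXT}(L_i)=\mathcal{EXT}(L)$ by the uniqueness clause of that proposition. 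Moreover $L_{i+1}$ should remain infinite dimensional, since all simple summands of an irreducible coherent family built from an infinite dimensional admissible simple module share its positive density $\delta_l$.

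The strict inclusion $(T_{L_{i+1}}^s)^+\subsetneq(T_{L_i}^s)^+$ forces the process to terminate at some $L_k$ with $(T_{L_k}^s)^+=\emptyset$, whence $T_{L_k}^s=\emptyset$ by symmetry. Applying Lemma~\ref{lemma:6} to $L_k$ then yields $w\in W$ with $w(T_{L_k})=w(T_{L_k}\backslash T_{L_k}^s)\subset\Phi^-$, equivalently $\Phi^+\subset F_{{^w}L_k}$. Hence ${^w}L_k$ is $\alpha$-finite for every $\alpha\in\Pi$. By Proposition~\ref{prop:10} every weight vector $v$ of ${^w}L_k$ satisfies $E_\alpha^{(r)}v=0$ for $r\gg 0$, and a standard argument using the finite-dimensional weight spaces and the simplicity of ${^w}L_k$ extracts a highest weight vector. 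Setting $\lambda\in X$ to be its weight, we obtain ${^w}L_k\iso L(\lambda)$, which is infinite dimensional since $L_k$ is.

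Since $L(\lambda)={^w}L_k$ is an infinite dimensional admissible simple module contained in the irreducible semisimple coherent family ${^w}\mathcal{EXT}(L_k)={^w}\mathcal{EXT}(L)$, the uniqueness part of Proposition~\ref{prop:20} identifies ${^w}\mathcal{EXT}(L)\iso\mathcal{EXT}(L(\lambda))$. The main obstacle I anticipate lies in rigorously justifying that every $L_{i+1}$ produced in the iteration is infinite dimensional, which requires the uniform density bound for simple summands of an irreducible coherent family, and in cleanly producing a highest weight vector of ${^w}L_k$ from $\alpha$-finiteness for all simple $\alpha$ in the root of unity setting.
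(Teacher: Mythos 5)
Your overall strategy—iterating Lemma~\ref{lemma:24} to shrink the symmetric torsion-free part down to nothing, and then reading off a highest weight module after one Weyl twist—is the same strategy as the paper's, but the details differ in two ways that matter. The paper twists \emph{first}, replacing $L$ by $L'={^w}L$ so that $T_{L'}\backslash T_{L'}^s\subset\Phi^-$, and then inside the iteration it always picks a \emph{simple} root $\alpha\in T_{L'}^+\cap\Pi$ (which automatically lies in $T_{L'}^s$). You iterate on $L$ directly with an arbitrary $\beta\in(T_{L_i}^s)^+$ and twist at the end. That ordering is where the real gap appears.

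The step that does not go through as you state it is ``$L_{i+1}\in\mathcal{EXT}(L_i)$ by the uniqueness clause''. To invoke the uniqueness of Proposition~\ref{prop:20} you must exhibit $L_{i+1}$ as a subquotient of a semisimple irreducible coherent family that you already know is $\mathcal{EXT}(L_i)$. The explicit description of $\mathcal{EXT}(L_i)$ in Proposition~\ref{prop:20} is $\bigl(\bigoplus_{t}{^{\bar w_i}}(\psi_{F_{\Sigma_i},t}.({^{w_i}}L_i)_{F_{\Sigma_i}})\bigr)^{ss}$ for a specific $w_i$ with $w_i(F_{L_i}\backslash F_{L_i}^s)\subset\Phi^+$ and a specific commuting basis $\Sigma_i$ with $-\Sigma_i\subset w_i(T_{L_i})$. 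Your module $\psi_{F_\beta,b}.(L_i)_{F_\beta}$ is a twist along a single root $\beta$ and does not appear as one of those direct summands unless $\beta$ (more precisely $w_i(\beta)$) lies in $\Sigma_i$. Arranging that is exactly what the paper's machinery provides: after twisting so that $F\backslash F^s\subset\Phi^+$, one can find a \emph{simple} root $\alpha$ in $T^+$, and Lemma~\ref{lemma:26} (which hinges on Lemma~\ref{lemma:7} and requires the distinguished root to be simple) extends $\alpha$ to a commuting basis $\Sigma$ with $-\Sigma\subset T$. There is no analogue of Lemma~\ref{lemma:26} that extends an arbitrary positive root $\beta$ to such a basis, so as written the isomorphism $\mathcal{EXT}(L_{i+1})\iso\mathcal{EXT}(L_i)$ is unjustified. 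You would essentially have to re-introduce the paper's twist and the restriction to simple roots to close this gap.

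Two smaller points. Your justification that each $L_{i+1}$ is infinite dimensional via a uniform density bound for summands of a coherent family is roundabout and not quite what is needed; the direct argument is that $F_\beta^{(l)}$ is invertible on $\psi_{F_\beta,b}.(L_i)_{F_\beta}$, hence acts injectively on the submodule $L_{i+1}$, so $-\beta\in T_{L_{i+1}}$ by Proposition~\ref{prop:13} and $L_{i+1}$ has infinitely many weights. (The paper uses the analogous fact $-\Sigma\subset T_{L''}$.) Finally, the passage from ``${^w}L_k$ is $\alpha$-finite for all $\alpha\in\Phi^+$'' to ``${^w}L_k$ is a highest weight module'' is not just a ``standard argument'' about finite weight spaces in the root-of-unity setting; it relies on the reduction of Section~3 (with $P_{{^w}L_k}$ standard parabolic and $T^s_{{^w}L_k}=\emptyset$, so that $({^w}L_k)^{\mathfrak u}$ is a finite dimensional $U_q(\mathfrak l)$-module and ${^w}L_k=L(({^w}L_k)^{\mathfrak u})$ is a simple quotient of a generalized Verma module). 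The paper states this terminal case tersely too, but it is worth being explicit that the reduction theorem is what makes it work.
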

\begin{proof}
  Let $w\in W$ be such that $w(F_L\backslash F_L^s) \subset \Phi^+$
  and $w(T_L\backslash T_L^s)\subset \Phi^-$ and let $\Sigma$ be a set
  of commuting roots that is a basis of $Q$ such that $-\Sigma\subset
  w(T_L)$ (Exists by Lemma~\ref{lemma:6} and
  Lemma~\ref{lemma:26}). Let $F_{\Sigma}$ be a corresponding Ore
  subset. Then
  
  \begin{align*}
    \mathcal{EXT}(L) =& \left(\bigoplus_{t \in T^*} {^{\bar{w}}}\left(
        \psi_{F_\Sigma,t} . ({^w}L)_{F_\Sigma} \right) \right)^{ss}
  \end{align*}
  so
  \begin{align*}
    {^w}\mathcal{EXT}(L) =& \left(\bigoplus_{t \in T^*} \left(
        \psi_{F_\Sigma,t} . ({^w}L)_{F_\Sigma} \right) \right)^{ss} = \mathcal{EXT}({^w}L).
  \end{align*}

  Set $L'={^{w}}L$. We will show by induction on $|T_{L'}^+|$ that
  there exists a $\lambda\in X$ such that $L(\lambda)$ is infinite
  dimensional and $\mathcal{EXT}(L')\iso \mathcal{EXT}(L(\lambda))$:

  If $|T_{L'}^+|=0$ then $L'$ is itself an infinite dimensional
  highest weight module. Assume $|T_{L'}^+|>0$. Then $T_{L'}^+\cap
  \Pi\neq \emptyset$ because if this was not the case then
  $\Phi^+\subset F_{L'}$ since $F_{L'}$ is closed. But $\Phi^+ \subset
  F_{L'}$ implies $|T_{L'}^+|=0$.

  Let $\alpha \in T_{L'}^+\cap \Pi$. Then $\alpha \in T_{L'}^s$ since
  $T_{L'}\backslash T_{L'}^s \subset \Phi^-$. So $-\alpha \in
  T_{L'}$. Then by Lemma~\ref{lemma:24} there exists a $b\in \setC$
  such that $\psi_{F_\alpha,b}.L'_{F_\alpha}$ contains a simple
  $U_q$-submodule $L''$ with $T_{L''}\subset T_{L'}$ and $\alpha \not
  \in T_{L''}$.  By Lemma~\ref{lemma:26} there exists a set of
  commuting roots $\Sigma$ that is a basis of $Q$ such that $\alpha\in
  \Sigma$ and $-\Sigma\subset T_{L'}$. Then by the above there exists
  a $\nu = b\alpha$ such that $\psi_{F_\Sigma,\nu}.L'_{F_\Sigma}$
  contains a simple $U_q$-submodule $L''$ with $T_{L''}\subset T_{L'}$
  and $\alpha \not \in T_{L''}$. $L''$ is infinite dimensional since
  $-\Sigma\subset T_{L''}$ and $\mathcal{EXT}(L'') \iso
  \mathcal{EXT}(L')$ by Proposition~\ref{prop:20}.

  By induction there exists a $\lambda\in X$ such that $L(\lambda)$ is
  infinite dimensional and $\mathcal{EXT}(L'')\iso
  \mathcal{EXT}(L(\lambda))$.
\end{proof}

The twists we have defined for quantum group modules are analogues of
the twists that can be made of normal Lie algebra modules as described
in~\cite{Mathieu}. In the next proposition we will use these Lie
algebra module twists denoted by $f_\Sigma^\nu$ given a set of
commuting roots $\Sigma$ and a $\nu\in T^*$ (see Section~4
in~\cite{Mathieu}). For $\lambda^1\in \mathfrak{h}^*$ let
$L_{\setC}(\lambda^1)$ denote the simple highest weight Lie algebra
$\mathfrak{g}$-module with highest weight $\lambda^1$. Let
$e_\beta,f_\beta$ denote root vectors in $\mathfrak{g}$ such that
$[e_\beta,f_\beta]=h_\beta$.

\begin{prop}
  \label{prop:22}
  Let $\lambda^1 \in \mathfrak{h}^*$ be such that
  $L_{\setC}(\lambda^1)$ is admissible. Let $\Sigma$ be a set of
  commuting roots that is a basis of $Q$ with $f_\beta$ acting
  injectively on $L_{\setC}(\lambda^1)$ for each $\beta\in
  \Sigma$. Let $\lambda^0\in \Lambda_l$. Define $\mathcal{M} = \left(
    \bigoplus_{\nu \in T^*} f_\Sigma^\nu.L_\setC
    (\lambda^1)_{f_\Sigma} \right)^{[l]}\tensor
  L((\lambda^0,0))$. Then $\mathcal{M}$ is an irreducible coherent
  family containing the simple highest weight module
  $L((\lambda^0,\lambda^1))$. 
\end{prop}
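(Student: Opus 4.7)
The plan is to reduce the statement to Mathieu's classical theorem via the Kazhdan-Lusztig Frobenius homomorphism plus the tensor-product factorization of simple $U_q$-modules from Proposition~\ref{prop:21}.

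First, set $\mathcal{N} = \bigoplus_{\nu \in T^*} f_\Sigma^\nu . L_\setC(\lambda^1)_{f_\Sigma}$. By Mathieu's construction in Sections~4-5 of~\cite{Mathieu}, $\mathcal{N}$ is an irreducible semisimple $\mathfrak{g}$-coherent family: the dimension of the weight space $\mathcal{N}_\mu$ is constant as $\mu$ varies over each $Q$-coset in $\mathfrak{h}^*$, and $\mu \mapsto \Tr(x|_{\mathcal{N}_\mu})$ is polynomial for every $x \in U(\mathfrak{g})_0$. Pushing $\mathcal{N}$ forward through $\operatorname{Fr}': U(\mathfrak{g}) \to U_q/\left<K_\alpha^l - 1\right>$ gives $\mathcal{N}^{[l]}$, a semisimple $U_q$-module whose $(0,\nu)$-weight space equals $\mathcal{N}_\nu$ as a vector space. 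On $\mathcal{N}^{[l]}$ each $K_\alpha$ acts trivially, $E_\alpha$ and $F_\alpha$ act as $0$, and $E_\alpha^{(l)},F_\alpha^{(l)}$ act as $e_\alpha,f_\alpha$; the action of any $u\in(U_q)_0$ thus factors through an element of $U(\mathfrak{g})_0$, with the $U_q^0$-dependence on $\nu$ captured by Lemma~\ref{lemma:25} and Lemma~\ref{lemma:28} as a polynomial in $\nu$.

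Second, I would verify the coherent-family axioms for $\mathcal{M} = \mathcal{N}^{[l]} \tensor L((\lambda^0, 0))$. Using $\Delta(K_\alpha)=K_\alpha\tensor K_\alpha$ and the fact that $L((\lambda^0,0))$ is finite dimensional with weights in a finite subset $S \subset \Lambda_l \times \mathfrak{h}^*$, the weight space decomposes as
\begin{equation*}
  \mathcal{M}_{(\mu,\nu)} \iso \bigoplus_{(\mu_2,\nu_2) \in S,\, \mu_2 = \mu} \mathcal{N}^{[l]}_{(0, \nu-\nu_2)} \tensor L((\lambda^0,0))_{(\mu_2,\nu_2)}.
\end{equation*}
Each $\dim \mathcal{N}^{[l]}_{(0,\nu-\nu_2)}$ equals a fixed constant $d$ (the generic weight dimension of $\mathcal{N}$), so $\dim \mathcal{M}_{(\mu,\nu)}$ is independent of $\nu$. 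For the polynomial-trace axiom, for $u \in (U_q)_0$ I write $\Delta(u) = \sum u_{(1)} \tensor u_{(2)}$ and note that only finitely many weight-matched terms contribute, yielding
\begin{equation*}
  \Tr\bigl(u\mid_{\mathcal{M}_{(\mu,\nu)}}\bigr) = \sum \Tr\bigl(u_{(1)}\mid_{\mathcal{N}^{[l]}_{(0, \nu-\nu_2)}}\bigr) \cdot \Tr\bigl(u_{(2)}\mid_{L((\lambda^0,0))_{(\mu_2,\nu_2)}}\bigr),
\end{equation*}
where the first factor is polynomial in $\nu$ by the first step and the second is a $\nu$-independent scalar.

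Third, containment of $L((\lambda^0,\lambda^1))$: the classical coherent family $\mathcal{N}$ contains $L_\setC(\lambda^1)$ as the $\nu = 0$ summand, so $L_\setC(\lambda^1)^{[l]}$ sits in $\mathcal{N}^{[l]}$. Tensoring with $L((\lambda^0,0))$ and applying Proposition~\ref{prop:21} gives $L_\setC(\lambda^1)^{[l]} \tensor L((\lambda^0,0)) \iso L((\lambda^0,\lambda^1))$ as a $U_q$-submodule of $\mathcal{M}$. For irreducibility, take $t = \lambda^1 + Q \in T^*$: the weights of $f_\Sigma^\nu.L_\setC(\lambda^1)_{f_\Sigma}$ lie in $\lambda^1 + \nu + Q$, so
\begin{equation*}
  \mathcal{M}[t] \iso L_\setC(\lambda^1)^{[l]} \tensor L((\lambda^0,0)) \iso L((\lambda^0,\lambda^1)),
\end{equation*}
which is simple; hence $\mathcal{M}$ is irreducible. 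The main obstacle will be the polynomial-trace verification in the second step: one must track the coproduct of a generic element of $(U_q)_0$, confirm only weight-zero $(u_{(1)},u_{(2)})$-pairs survive, and combine the polynomial dependence coming from the Frobenius factor with the scalar contributions from the finite-dimensional factor. This bookkeeping parallels Mathieu's classical argument and should carry over essentially unchanged once the Frobenius structure on $\mathcal{N}^{[l]}$ is in hand.
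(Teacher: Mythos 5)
Your construction of $\mathcal{M}$ and the containment argument via Proposition~\ref{prop:21} follow the paper's approach, and your plan for the polynomial-trace axiom (factoring through the Frobenius and using the tensor decomposition) is essentially the route the paper takes. However, the irreducibility argument has a genuine gap.

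You claim that taking $t = \lambda^1 + Q$ gives $\mathcal{M}[t] \iso L_\setC(\lambda^1)^{[l]} \tensor L((\lambda^0,0))$. This is false: for $t = \lambda^1 + Q$, the only summand of $\bigoplus_{\nu \in T^*} f_\Sigma^\nu . L_\setC(\lambda^1)_{f_\Sigma}$ contributing to $\mathcal{M}[t]$ is the $\nu = 0$ summand, which is the \emph{localization} $L_\setC(\lambda^1)_{f_\Sigma}$, not $L_\setC(\lambda^1)$ itself. Since $L_\setC(\lambda^1)$ is a highest weight module and $\Sigma \subset \Phi^+$, the elements $f_\beta$ with $\beta \in \Sigma$ act injectively but not surjectively on $L_\setC(\lambda^1)$, so $L_\setC(\lambda^1) \subsetneq L_\setC(\lambda^1)_{f_\Sigma}$ strictly and the latter is not simple. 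Therefore $\mathcal{M}[\lambda^1 + Q]$ properly contains $L((\lambda^0,\lambda^1))$ and is not simple. What the paper does instead is invoke Mathieu's Lemma~5.3(i) and Proposition~5.4 to obtain a \emph{different}, generic $t \in T^*$ for which $\mathcal{M}_\setC[t]$ is simple as a $U(\mathfrak{g})$-module.

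There is also a second omission: even granting that $\mathcal{M}_\setC[t]$ is a simple $U(\mathfrak{g})$-module, one must still show that the $U_q$-module $(\mathcal{M}_\setC[t])^{[l]} \tensor L((\lambda^0,0))$ is simple. This is not automatic for a tensor product and is the point of the paper's small quantum group argument: $L((\lambda^0,0))$ restricted to $u_q$ is simple (by~\cite[Section~3.2]{catO}), so for any nonzero $v_0 \tensor v_1$ one has $U_q(v_0 \tensor v_1) = U_q u_q (v_0 \tensor v_1) = U_q(L((\lambda^0,0)) \tensor v_1) = L((\lambda^0,0)) \tensor U_\setC v_1 = L((\lambda^0,0)) \tensor \mathcal{M}_\setC[t]$. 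Your proposal skips this step entirely by conflating $\mathcal{M}[t]$ with $L((\lambda^0,\lambda^1))$.
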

\begin{proof}
  $\mathcal{M}$ contains $L((\lambda^0, \lambda^1))$ by
  Proposition~\ref{prop:21}.

  Set $\mathcal{M}_\setC = \bigoplus_{\nu\in T^*} f_\Sigma^\nu.L_\setC
  (\lambda^1)_{f_\Sigma}$. So $\mathcal{M} =
  (\mathcal{M}_\setC)^{[l]}\tensor L((\lambda^0,0))$. Let $\mu\in
  \Lambda_l$ and $u\in (U_q)_0$. We need to show that the map $\nu
  \mapsto \Tr u|_{\mathcal{M}_{(\mu,\nu)}}$ is polynomial.
  
  \begin{align*}
    \mathcal{M}_{(\mu,\nu)} =& \bigoplus_{\eta \in \Lambda} \left(
      (\mathcal{M}_\setC)^{[l]}\right)_{q^\eta (0, \nu)} \tensor
    L((\lambda^0,0))_{q^{-\eta}(\mu,0)}
    \\
    =& \bigoplus_{\eta \in l\Lambda} \left(
      (\mathcal{M}_\setC)_{\nu+\frac{\eta}{l}}\right)^{[l]} \tensor
    L((\lambda^0,0))_{q^{-\eta}(\mu,0)}
    \\
    =& \bigoplus_{\eta \in l\Lambda} \left(
      f_\Sigma^{\nu+\frac{\eta}{l}}.(\mathcal{M}_\setC)_{0}\right)^{[l]}
    \tensor L((\lambda^0,0))_{q^{-\eta}(\mu,0)}.
  \end{align*}

  The action on $\left( f_\Sigma^\nu. (\mathcal{M}_\setC)_0
  \right)^{[l]}\tensor L(\lambda^0)$ is just the action on $\left(
    (\mathcal{M}_\setC)_0 \right)^{[l]}\tensor L(\lambda^0)$ twisted
  with the automorphism $u' \mapsto f_\Sigma^{\nu} u' f_\Sigma^{-\nu}$
  on the first tensor factor where $u'=\operatorname{Fr} (u)$
  ($\operatorname{Fr}$ is the Frobenius twist defined
  in~\cite[Theorem~1.1]{KL2}). The map $u' \mapsto f_\Sigma^{\nu} u'
  f_\Sigma^{-\nu}$ is of the form $\sum_i p_i(\nu) u_i$ for some
  polynomials $p_i$ and some $u_i\in (U_\setC)_0$ where
  $U_\setC:=U(\mathfrak{g})$ is the classical universal enveloping
  algebra of $\mathfrak{g}$. Composing a polynomial map with the map
  $\lambda\mapsto \lambda + \frac{\eta}{l}$ is still polynomial. So
  the trace is a finite sum of polynomials in $\lambda$ which is still
  polynomial.

  Let $u_q$ be the small quantum group as defined
  in~\cite{catO} i.e. the subalgebra of $U_q$
  generated by $E_{\alpha},K_{\alpha}^{\pm 1},F_{\alpha}$,
  $\alpha\in\Pi$. Then $L((\lambda^0,0))$ restricted to $u_q$ is a
  simple $u_q$-module by~\cite[Section~3.2]{catO}.
  
  By \cite[Lemma~5.3~i)]{Mathieu} and \cite[Proposition~5.4]{Mathieu}
  there exists a $t\in T^*$ such that $\mathcal{M}_{\setC}[t]$ is
  simple. Then $\mathcal{M}[t] = \left(
    \mathcal{M}_{\setC}[t]\right)^{[l]}\tensor L((\lambda^0,0))$ is
  simple: Let $0\neq
  v_0\tensor v_1\in L((\lambda^0,0)) \tensor \left(
    \mathcal{M}_{\setC}[t]\right)^{[l]}$.
  Then
  \begin{align*}
    U_q (v_0\tensor v_1) =& U_q u_q (v_0\tensor v_1)
    \\
    =& U_q (L((\lambda^0,0))\tensor v_1)
    \\
    =& L((\lambda^0,0)) \tensor U_q v_1
    \\
    =& L((\lambda^0,0)) \tensor (U_\setC v_1)^{[l]}
    \\
    =& L((\lambda^0,0)) \tensor \left(
      \mathcal{M}_{\setC}[t]\right)^{[l]}
  \end{align*}
  since $L((\lambda^0,0))$ is a simple $u_q$-module and since
  $\mathcal{M}_{\setC}[t]$ is a simple $U_\setC$-module.
\end{proof}

\begin{cor}
  \label{cor:4}
  $\left(\bigoplus_{\nu \in T^*} \left(
      f_\Sigma^\nu.L(\lambda^1)_{f_\Sigma} \right)^{[l]}\tensor
    L((\lambda^0,0)) \right)^{ss}\iso
  \mathcal{EXT}(L((\lambda^0,\lambda^1)))$.
\end{cor}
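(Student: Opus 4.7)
The plan is to derive this corollary as an immediate consequence of Proposition~\ref{prop:22} combined with the uniqueness part of Proposition~\ref{prop:20}. Write $\mathcal{M} := \left(\bigoplus_{\nu \in T^*} f_\Sigma^\nu.L_\setC(\lambda^1)_{f_\Sigma}\right)^{[l]}\tensor L((\lambda^0,0))$ for the module on the left-hand side (here $L(\lambda^1)$ must be read as the classical module $L_\setC(\lambda^1)$). By Proposition~\ref{prop:22}, $\mathcal{M}$ is an irreducible coherent family and it contains the simple highest weight module $L((\lambda^0,\lambda^1))$. We also need $L((\lambda^0,\lambda^1))$ to be infinite dimensional; this follows from the standing assumption in Proposition~\ref{prop:22} that $L_\setC(\lambda^1)$ is admissible (and the $f_\beta$ act injectively, forcing infinite dimension) together with Proposition~\ref{prop:21}.

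Next I would verify that passing to the semisimplification $\mathcal{M}^{ss}$ preserves both coherence and irreducibility. Since $\mathcal{M}_\lambda$ and $\mathcal{M}^{ss}_\lambda$ have the same composition factors as $(U_q)_0$-modules, the weight space dimensions agree and, for every $u\in (U_q)_0$, $\Tr u|_{\mathcal{M}_{(\mu,\nu)}} = \Tr u|_{\mathcal{M}^{ss}_{(\mu,\nu)}}$. Hence the two conditions of Definition~\ref{sec:root-unity-stuff} transfer directly from $\mathcal{M}$ to $\mathcal{M}^{ss}$, so $\mathcal{M}^{ss}$ is a coherent family. For irreducibility, pick $t\in T^*$ with $\mathcal{M}[t]$ simple (supplied by Proposition~\ref{prop:22}); then $\mathcal{M}[t]^{ss}=\mathcal{M}[t]$, so $\mathcal{M}^{ss}[t]$ is simple as well. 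Moreover $L((\lambda^0,\lambda^1))$ is a composition factor of $\mathcal{M}$, hence occurs as a direct summand in $\mathcal{M}^{ss}$, which is therefore a semisimple irreducible coherent family containing $L((\lambda^0,\lambda^1))$.

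Finally I invoke the uniqueness statement of Proposition~\ref{prop:20}: there is (up to isomorphism) only one semisimple irreducible coherent family containing an infinite dimensional admissible simple module, namely $\mathcal{EXT}(L((\lambda^0,\lambda^1)))$. Therefore $\mathcal{M}^{ss}\iso \mathcal{EXT}(L((\lambda^0,\lambda^1)))$, which is exactly the desired conclusion.

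There is no real obstacle here; the only minor point worth checking is that $\mathcal{M}^{ss}$ is well defined, i.e.\ that $\mathcal{M}$ decomposes as a direct sum of admissible modules so that the construction $M \mapsto M^{ss}$ applies. This is clear since each summand $f_\Sigma^\nu.L_\setC(\lambda^1)_{f_\Sigma}$ is admissible (as it is obtained from the admissible module $L_\setC(\lambda^1)$ by Ore localization in a set of commuting roots and twisting, both of which preserve the uniform bound on weight multiplicities) and tensoring with the finite dimensional $u_q$-module $L((\lambda^0,0))$ preserves admissibility of each graded piece indexed by the $\Lambda_l$-component.
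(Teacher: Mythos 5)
Your proof is correct and takes exactly the approach the paper intends; the paper's own proof is simply the one-liner ``This follows by the uniqueness of $\mathcal{EXT}(L(\lambda))$,'' which you have faithfully unpacked by applying Proposition~\ref{prop:22} and the uniqueness in Proposition~\ref{prop:20} and checking the minor points (semisimplification preserves coherence and irreducibility, and $L((\lambda^0,\lambda^1))$ is infinite dimensional).
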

\begin{proof}
  This follows by the uniqueness of $\mathcal{EXT}(L(\lambda))$.
\end{proof}

\begin{cor}
  \label{cor:3}
  Let $L$ be an infinite dimensional admissible simple module. Then
  $\mathcal{EXT}(L)$ is of the form $\left(
    \left(\mathcal{M}\right)^{[l]} \tensor L((\lambda^0,0))
  \right)^{ss}$ for some $\mathfrak{g}$ coherent family $\mathcal{M}$
  (in the sense of~\cite{Mathieu}).
\end{cor}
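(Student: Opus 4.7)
The plan is to chain Theorem~\ref{thm:contains_highest_weight} with Corollary~\ref{cor:4} and then reconcile the resulting Weyl group twist with the Frobenius--small-quantum-group decomposition.

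First, Theorem~\ref{thm:contains_highest_weight} supplies $w\in W$ and $\lambda=(\lambda^0,\lambda^1)\in X$ with $L(\lambda)$ infinite dimensional and ${^w}\mathcal{EXT}(L)\iso\mathcal{EXT}(L(\lambda))$. Applying Corollary~\ref{cor:4} to the right-hand side rewrites this as
\begin{equation*}
  {^w}\mathcal{EXT}(L)\iso\bigl(\mathcal{M}_0^{[l]}\tensor L((\lambda^0,0))\bigr)^{ss},
\end{equation*}
where $\mathcal{M}_0=\bigoplus_{\nu\in T^*}f_\Sigma^\nu.L_\setC(\lambda^1)_{f_\Sigma}$ is a classical $\mathfrak{g}$-coherent family in Mathieu's sense. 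Applying the inverse twist gives
\begin{equation*}
  \mathcal{EXT}(L)\iso{^{\bar w}}\bigl(\mathcal{M}_0^{[l]}\tensor L((\lambda^0,0))\bigr)^{ss},
\end{equation*}
and it remains to identify this with a module of the form $\bigl(\mathcal{M}^{[l]}\tensor L((\mu^0,0))\bigr)^{ss}$ for some classical $\mathfrak{g}$-coherent family $\mathcal{M}$ and some $\mu^0\in\Lambda_l$.

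The main obstacle is that the braid operators $T_w$ do not commute with the coproduct of $U_q$, so ${^{\bar w}}$ does not factor over a tensor product on the nose. My approach is to bypass this by exploiting the uniqueness in Proposition~\ref{prop:20}: rather than split the twist directly, I would produce a candidate irreducible coherent family of the required form and appeal to uniqueness. Concretely, I would pick $\mu^0\in\Lambda_l$ so that $L((\mu^0,0))$ is a composition factor of the restriction of $\mathcal{EXT}(L)$ to the small quantum group $u_q$, and take $\mathcal{M}$ to be the classical $w$-twist of $\mathcal{M}_0$; this is again a $\mathfrak{g}$-coherent family since the ordinary Weyl group action on $\mathfrak{h}^*$ permutes Mathieu's coherent families among each other and preserves the density/trace polynomiality conditions. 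Proposition~\ref{prop:22} then shows $\bigl(\mathcal{M}^{[l]}\tensor L((\mu^0,0))\bigr)^{ss}$ is itself an irreducible semisimple coherent family, and by the uniqueness part of Proposition~\ref{prop:20} it must agree with $\mathcal{EXT}(L)$ as soon as it shares a single simple composition factor with $\mathcal{EXT}(L)$.

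I expect the hardest technical step to be exhibiting such a common composition factor, which reduces to checking that ${^{\bar w}}(\mathcal{N}^{[l]})\iso(\mathcal{N}')^{[l]}$ for an appropriate classical $\mathfrak{g}$-module $\mathcal{N}'$ — this follows from the compatibility of $T_w$ with the Frobenius map on the Lusztig $A$-form — and that ${^{\bar w}}L((\lambda^0,0))$ restricts to a simple $u_q$-module, hence to some $L((\mu^0,0))$. Once these two compatibilities are in place, tracing $L$ through the previous two displays locates it inside $\bigl(\mathcal{M}^{[l]}\tensor L((\mu^0,0))\bigr)^{ss}$ without ever having to split ${^{\bar w}}$ over the tensor factor, and Proposition~\ref{prop:20} completes the argument.
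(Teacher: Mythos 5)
Your opening steps match the paper exactly: invoke Theorem~\ref{thm:contains_highest_weight} to produce $w$ and $\lambda$, then Corollary~\ref{cor:4} to rewrite $\mathcal{EXT}(L(\lambda))$ as $(\mathcal{M}_0^{[l]}\tensor L((\lambda^0,0)))^{ss}$. After that the two arguments diverge, and I think yours has a gap that the paper's route sidesteps.

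The paper does not attempt to push the twist ${}^{\bar w}$ through the tensor product at all. Instead it observes that the target module $(\mathcal{M}_0^{[l]}\tensor L((\lambda^0,0)))^{ss}$ is \emph{already invariant} under every Weyl group twist: by \cite[Proposition~6.2]{Mathieu} the classical semisimple coherent family $\mathcal{M}_0$ satisfies ${}^w\mathcal{M}_0\iso\mathcal{M}_0$, and $L((\lambda^0,0))$ is finite dimensional so its twist is also trivial up to isomorphism; together this gives ${}^w(\mathcal{M}_0^{[l]}\tensor L((\lambda^0,0)))^{ss}\iso(\mathcal{M}_0^{[l]}\tensor L((\lambda^0,0)))^{ss}$ for all $w$. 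Applying this with the particular $w$ from the theorem kills the twist, and one simply reads off $\mathcal{EXT}(L)\iso(\mathcal{M}_0^{[l]}\tensor L((\lambda^0,0)))^{ss}$ with no change to $\mathcal{M}_0$. You actually never need to replace $\mathcal{M}_0$ by ``its classical $w$-twist'' — that twist is isomorphic to $\mathcal{M}_0$ by exactly the same Proposition~6.2 of Mathieu, so your candidate family is the paper's family in disguise, but you never exploit this.

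The concrete gap in your version is the step you flag as ``the hardest technical step''. You propose to exhibit a common simple composition factor between $\mathcal{EXT}(L)$ and $(\mathcal{M}^{[l]}\tensor L((\mu^0,0)))^{ss}$ by establishing the two compatibilities ${}^{\bar w}(\mathcal{N}^{[l]})\iso(\mathcal{N}')^{[l]}$ and ${}^{\bar w}L((\lambda^0,0))\iso L((\mu^0,0))$, and then claim that ``tracing $L$ through'' locates it in the candidate \emph{without splitting ${}^{\bar w}$ over the tensor factor}. But the submodule $L\subset{}^{\bar w}(\mathcal{M}_0^{[l]}\tensor L((\lambda^0,0)))^{ss}$ is of the form ${}^{\bar w}(N_0^{[l]}\tensor L((\lambda^0,0)))$ for a simple $\mathfrak{g}$-module $N_0$, and identifying this with some $N_1^{[l]}\tensor L((\mu^0,0))$ is precisely the assertion that the twist factors over the Steinberg tensor decomposition of a simple module. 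Knowing the two factor-level compatibilities does not by itself give this: that is the very issue you correctly identified (that $T_w$ is not a coalgebra map) and which your plan claims to bypass but actually re-encounters. The paper avoids the whole question by the $W$-invariance argument above; if you want to keep your route you would have to actually prove the Steinberg-level compatibility, at which point the detour through Proposition~\ref{prop:20} becomes unnecessary.
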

\begin{proof}
  By Theorem~\ref{thm:contains_highest_weight} there exists a $w\in W$
  and a $\lambda\in X$ such that ${^w}\mathcal{EXT}(L)\iso
  \mathcal{EXT}(L(\lambda))$. By Corollary~\ref{cor:4}
  $\mathcal{EXT}(L(\lambda))\iso \left( \mathcal{M}\tensor
    L((\lambda^0,0))\right)^{ss}$ for some $\mathfrak{g}$ coherent
  family $\mathcal{M}$. By~\cite[Proposition~6.2]{Mathieu} and the
  fact that $L((\lambda^0,0))$ is finite dimensional we see that
  ${^{w}}\left( \mathcal{M}\tensor L((\lambda^0,0)) \right)^{ss}\iso
  \left( \mathcal{M} \tensor L((\lambda^0,0))\right)^{ss}$ for all
  $w\in W$.
\end{proof}

So in the root of unity case the classification of torsion free
modules reduces to the classification of classical torsion free
modules. By Proposition~\ref{prop:20} a torsion free module is a
submodule of a semisimple irreducible coherent family so the problem
reduces to classifying semisimple irreducible coherent families. By
Corollary~\ref{cor:3} the classification of these coherent families
reduces to the classification in the classical case.

\bibliography{lit} \bibliographystyle{amsalpha}


\end{document}